\tikzstyle{green}=[text={black!30!green}]
\tikzstyle{blue}=[text=blue]
\tikzstyle{red}=[text=red]
\tikzstyle{puncture}=[fill=white, draw=red, shape=circle, minimum size=1pt]
\tikzstyle{blackpuncture}=[fill=white, draw=black, shape=circle, minimum size=1pt]
\tikzstyle{cyan}=[text=cyan]
\tikzstyle{markings}=[-, draw=red, line width=1pt, line cap=round]
\tikzstyle{overbraid}=[-, draw=white, fill=none, line width=6pt]
\tikzstyle{thick}=[-, line width=2pt, draw=blue]
\tikzstyle{dashedline}=[-, dashed]
\tikzstyle{dottedline}=[-, dash pattern=on 0.75pt off 0.75pt, line width=0.75pt]
\tikzstyle{thin red}=[-, line width=0.25pt, draw=black]
\tikzstyle{tangle}=[-, draw=blue, line width=1pt, fill={blue!20}]
\tikzstyle{scc}=[-, draw={black!30!green}, fill={blue!20}, line width=1pt]
\tikzstyle{inner boundary}=[-, fill=white]
\tikzstyle{outer boundary}=[-, fill={red!20}]
\tikzstyle{lowerboundery}=[-, line width=1.5pt, line cap=round, draw=red]
\tikzstyle{upperboundery}=[-, line width=1.5pt, line cap=round, draw=blue]
\tikzstyle{dottedcycle}=[-, draw=blue, dash pattern=on 0.5pt off 1pt on 4pt off 1pt, decoration={markings, mark=at position 0.5 with {\arrow{>}}}, postaction=decorate]
\tikzstyle{cycle}=[-, draw=blue, decoration={markings, mark=at position 0.5 with {\arrow{>}}}, postaction=decorate]
\tikzstyle{path}=[-, draw=cyan, line width=0.25pt]
\tikzstyle{arrowpath}=[-, draw=cyan, line width=0.25pt, decoration={markings, mark=at position 0.5 with {\arrow{>}}}, postaction=decorate]
\tikzstyle{orientedpath}=[-, line width=0.25pt, decoration={markings, mark=at position 0.5 with {\arrow{<}}}, postaction=decorate]
\tikzstyle{inner square}=[-, fill={blue!20}]
\tikzstyle{outer square}=[-, fill={red!20}]
\tikzstyle{blueline}=[-, draw=blue]
\tikzstyle{greenline}=[-, draw=green]
\tikzstyle{bluesquare}=[-, draw=blue, fill={blue!20}]
\numberwithin{figure}{section}
\newtheorem{theorem}{Theorem}[section]
\newtheorem*{theorem*}{Theorem}
\newtheorem{proposition}[theorem]{Proposition}
\newtheorem*{proposition*}{Proposition}
\newtheorem{corollary}[theorem]{Corollary}
\theoremstyle{remark}
\newtheorem{remark}[theorem]{Remark}
\newtheorem{notation}[theorem]{Notation}
\newtheorem*{question*}{Question}
\theoremstyle{definition}
\newtheorem{definition}[theorem]{Definition}
\newcommand{\ext}{\mathrm{ext}}
\newcommand{\Disk}{\mathbb{D}}
\newcommand{\OK}{\mathcal{O}_K}
\newcommand{\sline}{\overline{s}}
\newcommand{\SO}{\mathrm{SO}(3)}
\newcommand{\SU}{\mathrm{SU}(2)}
\newcommand{\sltwo}{\mathfrak{sl}_2}
\newcommand{\pr}[1]{\mathrm{pairs}(#1)}
\newcommand{\cp}[1]{\mathrm{comp}(#1)}
\newcommand{\Cn}{C_\infty}
\newcommand{\scc}{simple closed curve}
\newcommand{\lra}{\longrightarrow}
\newcommand{\lla}{\longleftarrow}
\newcommand{\ra}{\rightarrow}
\newcommand{\Nu}{\mathcal{V}}
\newcommand{\Line}{\mathcal{L}}
\newcommand{\Lq}{\Line^q}
\newcommand{\PMod}[1]{\mathrm{PMod}(#1)}
\newcommand{\Mod}[1]{\mathrm{Mod}(#1)}
\newcommand{\Modp}[2]{\mathrm{Mod}_{#1}(#2)}
\newcommand{\PModl}[2]{\mathrm{PMod}^{#1}(#2)}
\newcommand{\Modl}[2]{\mathrm{Mod}^{#1}(#2)}
\protected\def\myphantom#1{\vphantom{#1}}
\newcommand{\myleftidx}[3]{{\myphantom{#2}}#1\!#2#3}
\newcommand{\mysecondleftidx}[3]{{\myphantom{#2}}#1#2#3}
\newcommand{\sM}[1]{\mathcal{M}_{0,{#1}}}
\newcommand{\M}[1]{\overline{\mathcal{M}}_{0,{#1}}}
\newcommand{\Mrp}[2]{\overline{\mathcal{M}}_{0,{#1}}(#2)}
\newcommand{\Mrpo}[2]{\mysecondleftidx{^1}{\overline{\mathcal{M}}}{_{0,{#1}}}(#2)}
\newcommand{\sMrpo}[2]{\myleftidx{^1}{\mathcal{M}}{_{0,{#1}}}(#2)}
\newcommand{\Crp}[2]{\overline{\mathcal{C}}_{0,{#1}}(#2)}
\newcommand{\Mrpb}[3]{\overline{\mathcal{M}}_{0,{#1}}^{#2}(#3)}
\newcommand{\sMrpb}[3]{\mathcal{M}_{0,{#1}}^{#2}(#3)}
\newcommand{\Mrpbo}[3]{\mysecondleftidx{^1}{\overline{\mathcal{M}}}{_{0,{#1}}^{#2}}(#3)}
\newcommand{\sMrpbo}[3]{\myleftidx{^1}{\mathcal{M}}{_{0,{#1}}^{#2}}(#3)}
\newcommand{\Mrb}[2]{\overline{\mathcal{M}}_{0}^{#1}(#2)}
\newcommand{\Mrbo}[2]{\mysecondleftidx{^1}{\overline{\mathcal{M}}}{_{0}^{#1}}(#2)}
\newcommand{\Mp}{\mathcal{M}'}
\newcommand{\justM}{\mathcal{M}}
\newcommand{\Mpb}{\overline{\mathcal{M}}'}
\newcommand{\Mb}{\overline{\mathcal{M}}}
\newcommand{\pb}{\overline{p}}
\newcommand{\Conf}[2]{\mathrm{Conf}_{#1}(#2)}
\newcommand{\Nuab}{\Nu^{\mathrm{ab}}_{\zeta_r}}
\newcommand{\Nusl}{\Nu_{\zeta_r}}
\newcommand{\NuO}{\Nu^{\mathcal{O}}}
\newcommand{\NuabO}{\Nu^{\mathrm{ab},\mathcal{O}}_{\zeta_r}}
\newcommand{\NuslO}{\Nu^{\mathcal{O}}_{\zeta_r}}
\newcommand{\NuOroot}{\Nu^{\mathcal{O},\mathrm{root}}}
\newcommand{\NuOaphi}{\Nu^{\mathcal{O},a,\phi}}
\newcommand{\Hmid}{\mathrm{H}}
\newcommand{\sheafHmid}{\mathcal{H}}
\newcommand{\B}{{\mathcal{B}}}
\newcommand{\GL}[1]{\mathrm{GL}(#1)}
\newcommand{\U}[2]{\mathrm{U}_{#1}(#2)}
\newcommand{\GLn}[2]{\mathrm{GL}_{#1}(#2)}
\newcommand{\PU}[2]{\mathrm{PU}(#1,#2)}
\newcommand{\tr}[1]{\mathrm{tr}(#1)}
\newcommand{\qtr}[1]{\mathrm{qtr}(#1)}
\newcommand{\bhom}[2]{\overline{\mathrm{hom}}({#1},{#2})}
\newcommand{\myhom}[2]{\mathrm{hom}({#1},{#2})}
\newcommand{\myim}[1]{\mathrm{im}\left({#1}\right)}
\newcommand{\Z}{\mathbb{Z}}
\newcommand{\N}{\mathbb{N}}
\newcommand{\C}{\mathbb{C}}
\newcommand{\Q}{\mathbb{Q}}
\newcommand{\R}{\mathbb{R}}
\newcommand{\id}{\mathrm{id}}
\newcommand{\Dc}{\mathcal{D}}
\newcommand{\oD}{\overline{D}}
\newcommand{\ua}{\underline{a}}
\newcommand{\um}{\underline{m}}
\newcommand{\us}{\underline{s}}
\newcommand{\WF}[2]{W^{\overline{F}}_{#2}(#1)}
\newcommand{\WFt}[1]{W^{\overline{F}}(#1)}
\newcommand{\WE}[2]{W^{E}_{#2}(#1)}
\newcommand{\WEt}[1]{W^{E}(#1)}
\newcommand{\Fb}[1]{\overline{F}^{({#1})}}
\newcommand{\Eb}[1]{E^{({#1})}}
\newcommand{\UF}{U^{\overline{F}}_{\zeta_r}}
\newcommand{\UE}{U^{E}_{\zeta_r}}
\newcommand{\Ufin}{U_{\zeta_r}^{\mathrm{fin}}}
\newcommand{\VF}[1]{\hat{V}^{\overline{F}}_{#1}}
\newcommand{\VE}[1]{\hat{V}^{E}_{#1}}
\newcommand{\qbin}[2]{\begin{bmatrix}{#2}\\{#1}\end{bmatrix}}
\newcommand{\Kr}{\begin{bmatrix}K\\r\end{bmatrix}}
\newcommand{\Lset}{\{0,1,\dotsc,r-2\}}
\title{\texorpdfstring{
    Construction of Hodge structures on the $\SO$ modular functors
    }{
    Construction of Hodge structures on the SO(3) modular functors
    }
}
\author{Pierre Godfard}
\begin{document}

\begin{abstract}
    We prove that $\SO$ modular functors in genus $0$ have geometric origin and support
    integral variations of Hodge structures for any odd level $r$ and $r$-th root of unity $\zeta_r\in\C$.
    We identify the TQFT intersection forms and integral structures with the geometric ones.
    Moreover, the gluing property of the modular functors is recovered geometrically as a Künneth formula.
    The construction is based on the homological models of Felder-Wieczerkowski and Martel.
\end{abstract}

\maketitle

\section{Introduction}

This article is the first in a series of two which aims to construct and study a Hodge structure on the vector bundles associated with TQFTs.
This structure comes from a geometric interpretation of the fibers of these bundles as cohomology
of a configuration space in $\mathbb{P}^1$ minus a finite number of points with coefficients in a local system.

In this article, we explicit this geometric interpretation and show that it is compatible with the TQFT hermitian and integral structure,
and the TQFT gluing isomorphisms.

In the second article \cite{godfardHodgeTheoryConformalInpreparation} of this series,
we will use Non-Abelian Hodge theory to prove that all semi-simple TQFTs support unique Hodge structures
and that the characteristic classes of these Hodge structures form Cohomological Field Theories.
We will also use the constructions of this first paper to compute the dimensions of the Hodge decomposition (Hodge numbers)
for all the $\SO$ representations (for any genus).

\subsection{Modular Functors}

The modular functors we study are families of representations of mapping class groups with compatibility conditions.
In this paper, we restrict to the cases of the so-called $\SO$\footnote{or equivalently $\SU$ since we are in genus $0$.}
and $\mathrm{U}(1)$ modular functors and mapping class groups of genus $0$ surfaces.
These representations can be constructed via Skein theory (see \Cref{subsectionskein}), representations of the quantum group $\mathrm{U}_{\zeta_r}(sl_2)$
(see \Cref{subquantumconstruction})
or the monodromy of the Knizhnik–Zamolodchikov connection (see \cite[5.16]{ohtsuki2001quantum}).
For $\SO$, they are known to be isomorphic to the spaces of $sl_2$ conformal blocks.

More precisely, for each $n\geq 0$ and $b,a_1,\dotsc,a_n\in\Lambda=\{0,1,\dotsc,r-2\}$, the $\SO$ modular functor of level $r$ gives a representation:
\begin{equation*}
    \rho_{n}(b;\ua):PB_n\lra \mathrm{GL}_{d}(\Q(\zeta_r))
\end{equation*}
where $PB_n$, the pure braid group on $n$ strands, is identified to the mapping class group of the disk with $n$ punctures, and $\Q(\zeta_r)$
is the cyclotomic field of $r$-th roots of unity. The dimension $d$ of the representation depends on $b$ and $\ua$.

The representation $\rho_{n}(b;\ua)$ factors through the quotient of $PB_n$ by the subgroup generated by the $r$-th
powers of Dehn twists.
This quotient is the fundamental group of the compact moduli space 
$\Mrpo{n}{r}$ of genus $0$ $r$-twisted nodal curves with $n$ marked points and $1$ orbifold point of order $r$ (see \Cref{definitionmodulispaces}).
Hence, we can equivalently see $\rho_{n}(b;\ua)$ as a $\Q(\zeta_r)$-vector bundle with flat connection:
\begin{equation*}
    \Nusl(b;a_1,\dotsc,a_n)\lra \Mrpo{n}{r}.
\end{equation*}

Similarly, in the case $G=\mathrm{U}(1)$, for each $n\geq 0$ and $b,a_1,\dotsc,a_n\in\Lambda=\Z/r\Z$, we have a flat $\Q(\zeta_r)$-line bundle:
\begin{equation*}
    \Nuab(b;a_1,\dotsc,a_n)\lra \Mrpo{n}{r}.
\end{equation*}
The modular functors for $G=\mathrm{U}(1)$ are called the \textit{abelian} modular functors.

\subsection{Geometric construction and existence of Hodge structures}\label{CVHS}

The bundles $\Nusl(b;a_1,\dotsc,a_n)$ are rigid for $r=5$ and expected to be rigid for all $r$ by Kazhdan's property (T) (yet to be proved for mapping class groups).
As $\Mrpo{n}{r}$ is a compact Kähler orbifold, such rigidity would imply that for each inclusion $\Q(\zeta_r)\hookrightarrow \C$,
$\Nusl(b;a_1,\dotsc,a_n)\otimes\C$ supports a complex variation of Hodge structures.
See \cite[1.2]{godfardRigidityFibonacciRepresentations2023} for a discussion of rigidity.

\begin{definition}
    Let $(E,\nabla)\ra B$ be a flat $\C$-local system over an analytic space. A complex variation of Hodge structure on $(E,\nabla)$ is a decomposition:
    \begin{equation}\label{equationHodgedecomposition}
        E = \bigoplus_{p+q=n}E^{p,q}
    \end{equation}
    into smooth subbundles satisfying Griffiths transversality, ie. for all $p,q$:
    \begin{equation*}
        \nabla E^{p,q}\subset E^{p-1,q+1}\otimes\Omega_B^{1,0}\oplus E^{p,q}\otimes\Omega_B^{1}\oplus E^{p+1,q-1}\otimes\Omega_B^{0,1}.
    \end{equation*}
    If the local system is irreducible, then a complex variation of Hodge structure on $E$ is unique up to translation
    (ie. $\forall p,q,\;E'^{p,q}=E^{p+t,q+s}$).

    Let $(E,\nabla)\ra B$ be a flat $\mathcal{O}_K$-local system over an analytic space for $K$ a number field.
    An integral variation of Hodge structures on $(E,\nabla)$ is the data of a positive integer $n$ called weight, and for each embedding
    $\iota:K\hookrightarrow\C$, of a decomposition of $E_\iota=E\otimes \C$ as in \Cref{equationHodgedecomposition} 
    satisfying Griffiths transversality. We also ask that if $\iota:K\hookrightarrow\C$ and
    $\overline{\iota}:K\hookrightarrow\C$ are complex conjugate embeddings, than for all $p,q$, $\overline{E_\iota^{p,q}}=E_{\overline{\iota}}^{q,p}$.
\end{definition}

A large class of flat bundles that support integral variations of Hodge structures is that of bundles of geometric origin:

\begin{definition}\label{definitiongeometricorigin}
    A flat bundle $E$ on a variety $B$ is called \textbf{of geometric origin} if on some dense Zariski open subset $U\subset B$,
    there exists a proper fibration $p:X\ra U$ and $m\geq 0$ such that $E$ is a flat subquotient of $R^mp_*\underline{\Z}$.
\end{definition}

We will indeed show that the $\SO$ modular functors have a geometric origin and thus supports an integral variation of Hodge structures.
More precisely, the bundle $\Nusl(b;a_1,\dotsc,a_n)$ of the $\SO$ modular functor is obtained from the abelian modular functor
by a forgetful pushforward between moduli spaces as follows:

\begin{theorem}[\ref{theoremgeometricconstruction}]
    Let $r\geq 3$ be an odd integer, $n\geq 2$ and $b,a_1,\dotsc,a_n\in\{0,1,\dotsc,r-2\}$,
    such that $m=\frac{a_1+\dotsb+a_n-b}{2}$ is an integer.
    Consider the $\Q(\zeta_r)$-local system $\Line$ 
    over $\sMrpo{n+m}{r}/S_m$ corresponding to the antisymmetric part of $\Nuab(b;a_1,\dotsc,a_n,-2,\dotsc,-2)$.
    Let us consider the forgetful map:
    \begin{equation*}
        p:\sMrpo{n+m}{r}/S_m\lra \sMrpo{n}{r}.
    \end{equation*}
    Then there is an isomorphism:
    \begin{equation*}
        \Nusl(b;a_1,\dotsc,a_n) \simeq \mathrm{im}\left(R^mp_!\Line\lra R^mp_*\Line\right).
    \end{equation*}
\end{theorem}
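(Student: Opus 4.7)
The plan is to leverage the homological models of Felder-Wieczerkowski and Martel, which realize each fiber of $\Nusl(b;\ua)$ as the image of an intersection pairing between twisted compactly supported and ordinary (co)homology of a configuration space, and then to recognize this pairing as the fiber of the natural map $R^m p_!\Line\to R^m p_*\Line$. Fix a base point in $\sMrpo{n}{r}$ represented by a disk with $n$ marked points of colors $\ua$. The fiber of $p$ is, up to the finite covers arising from the orbifold $r$-twisting, the unordered configuration space $\Conf{m}{\Disk\setminus\{z_1,\dotsc,z_n\}}$ of $m$ additional points. The local system $\Line$, being by construction the antisymmetric $S_m$-isotypical component of the abelian modular functor bundle with the $m$ extra punctures labeled $-2$, is to be identified with the rank-one local system appearing in the Felder-Wieczerkowski/Martel hypergeometric integrals, via the standard dictionary between the $\mathrm{U}(1)$ quadratic form evaluated at the weight $-2$ and the Koszul form on the configuration space.

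Once this identification is in place, proper base change and its compactly supported analogue identify $(R^m p_*\Line)_x$ with the twisted cohomology $H^m$ of the fiber and $(R^m p_!\Line)_x$ with its compactly supported version $H^m_c$. Under Poincaré-Lefschetz duality, the forget-supports map $R^m p_!\Line\to R^m p_*\Line$ becomes the intersection pairing between relative and absolute twisted homology classes on the configuration space, whose image at $x$ is identified with $\Nusl(b;\ua)_x$ by the Felder-Wieczerkowski/Martel theorem. To promote this pointwise match to an isomorphism of flat bundles over $\sMrpo{n}{r}$, one uses the naturality of proper base change together with the fact that the comparison isomorphism in the homological model is equivariant for the $PB_n$-action on configuration space (co)homology and on the quantum group module, and hence descends to the quotient by $r$-th powers of Dehn twists that defines $\pi_1(\sMrpo{n}{r})$.

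The main obstacle will be this first identification step: carefully matching the antisymmetric part of $\Nuab(b;\ua,-2,\dotsc,-2)$ with the Felder-Wieczerkowski local system in the orbifold setting. This requires tracking the signs that force the antisymmetric (rather than invariant) $S_m$-isotypic component to appear, reconciling the $r$-twisting of $\sMrpo{n+m}{r}/S_m$ with the $r$-th roots of parallel transport prescribed by the hypergeometric local system, and verifying that the identification isolates precisely the middle part and not a strictly larger subquotient when passing through Poincaré-Lefschetz duality. The subsequent comparison of flat structures is then essentially formal, provided one is careful about the normalizations that enter both the Martel basis and the pushforward construction.
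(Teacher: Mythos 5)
Your overall architecture — identify the fiber of $p$ with a configuration space, identify $\Line$ with the Felder--Wieczerkowski local system, read off $(R^m p_!\Line)_x$ and $(R^m p_*\Line)_x$ as $H^m_c$ and $H^m$ of the fiber by proper base change, and then pass to homology by Poincar\'e--Lefschetz — agrees with the paper's framing up to the level of \Cref{theoremhomologicalconstruction}. You also correctly flag that the argument is fiberwise and is promoted to an isomorphism of local systems by equivariance of the homological model under the braid group action, as in \Cref{subsublocalsystemGaussManin}. But the central step of your proposal, namely that ``the image at $x$ is identified with $\Nusl(b;\ua)_x$ by the Felder--Wieczerkowski/Martel theorem,'' does not hold in the form you state it, and it hides where the bulk of the work actually goes.

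Martel's theorem (\Cref{theoremmartel}) identifies a \emph{semi-relative} Borel--Moore homology $H^{BM}_m(\Conf{m}{\oD_n,\partial_-\oD_n};\Line_m)$ with a tensor product of \emph{Verma modules} $\VE{a_1}\otimes\dotsb\otimes\VE{a_n}$ for the integral form of the quantum group — it does not identify the image of $H_m\to H^{BM}_m$ on the \emph{absolute} configuration space with the finite-dimensional TQFT space $\Nusl(\Dc)$. Two nontrivial bridges are missing from your account. First, one must pass from the semi-relative groups that Martel computes to the absolute groups $H_m(\Conf{m}{D_n};\Line_m)$ and $H^{BM}_m(\Conf{m}{D_n};\Line_m)$; the paper does this via an ad hoc Fox--Neuwirth--Fuks-type cellular decomposition (\Cref{subsubcellularcomplex}, \Cref{theoremBMhomology}, \Cref{corollaryabsolutehomology}), which exhibits the absolute $H^{BM}_m$ as the quotient of the Verma weight space $\bigl(\bigotimes_i\VE{a_i}\bigr)_b$ by $\mathrm{im}\:\overline{F}^{(1)}+\mathrm{im}\:\overline{F}^{(r)}$, and dually the absolute $H_m$ as a kernel of $E^{(1)}$ and $E^{(r)}$. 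Second, one must recognize the image of the resulting algebraic map $\mathrm{red}_{\ua}$ between these weight spaces as $\Nusl(\Dc)$; this is the content of \Cref{propositionimagequantumgroups} and \Cref{imageofred}, which in turn rest on the Skein-module-to-quantum-group comparison of \Cref{theoremquantumgroupconstruction}. Without these two bridges the claimed identification of the image with $\Nusl(\Dc)_x$ is unsupported: Martel's statement concerns Verma modules, whereas $\Nusl(\Dc)$ is $\bhom{V_b}{V_{a_1}\otimes\dotsb\otimes V_{a_n}}\otimes\Q$ built out of the irreducible finite-dimensional modules $V_\alpha$, and the passage from one to the other is exactly where the root-of-unity degeneration (divided powers $E^{(r)}$, $\overline{F}^{(r)}$, torsion quotients) has to be controlled. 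Your ``main obstacle'' — normalizing the local system — is addressed explicitly in \Cref{subsubUaction} and is comparatively routine; the genuine difficulty you omit is the cellular computation and the quantum-group algebra that connects Verma modules to $\Nusl$.
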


The theorem can be rephrased as saying that the fiber of $\Nusl(b;a_1,\dotsc,a_n)$ over the smooth curve $x=[C]\in \sMrpo{n}{r}$ is:
\begin{align*}
    \Nusl(b;a_1,\dotsc,a_n)_x &\simeq \mathrm{im}\left(H_c^m(\Conf{m}{C};\Line)
        \lra H^m(\Conf{m}{C};\Line)\right)\\
        &\simeq \mathrm{im}\left(H_m(\Conf{m}{C};\Line)
        \lra H_m^{BM}(\Conf{m}{C};\Line)\right)
\end{align*}
where $C$ is the curve with the $n+1$ marked points removed.
Here $\sMrpo{n}{r}$ is the open locus of smooth curves in $\Mrpo{n}{r}$.

After the completion of this article, we learned that in their recent paper \cite[4.17]{belkaleConformalBlocksGenus2023a},
P. Belkale and N. Fakhruddin identify $\mathrm{im}\left(R^mp_!\Line\lra R^mp_*\Line\right)$
with spaces of conformal blocks. See \Cref{subsectionrelations} below for comments.

Now, as the flat bundles of the abelian modular functor have finite monodromy, they are of geometric origin, and hence so is $\Nusl(b;a_1,\dotsc,a_n)$:

\begin{corollary}
    The flat bundle $\Nusl(b;a_1,\dotsc,a_n)$ is of geometric origin and supports an integral
    variation of Hodge structures of weight $m=\frac{a_1+\dotsb+a_n-b}{2}$.
\end{corollary}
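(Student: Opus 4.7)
The plan is to deduce both claims from \Cref{theoremgeometricconstruction}, which identifies, over the smooth locus $U=\sMrpo{n}{r}$,
\begin{equation*}
    \Nusl(b;a_1,\dotsc,a_n)\simeq \mathrm{im}\left(R^mp_!\Line\lra R^mp_*\Line\right),
\end{equation*}
where $p:\sMrpo{n+m}{r}/S_m\lra U$ is the forgetful map and $\Line$ is the antisymmetric summand of an abelian modular functor bundle. The overall strategy is to exploit that $\Line$ has finite monodromy in order to reduce to higher direct images with constant coefficients on a finite cover, and then to invoke classical Hodge theory.

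\textbf{Geometric origin.} The abelian bundle $\Nuab$ has Dehn twists acting by $r$-th roots of unity, and its monodromy factors through the orbifold fundamental group of $\sMrpo{n+m}{r}/S_m$; hence $\Line$ has finite monodromy. First, I would choose a finite étale Galois cover $\pi:Y\lra \sMrpo{n+m}{r}/S_m$ with group $G$ trivializing $\Line$, so that an $\OK$-lattice $\Lambda\subset \Line$ (with $K=\Q(\zeta_r)$) becomes an isotypic summand of $\pi_*\underline{\OK}$. Second, I would take the normalization $\overline{Y}$ of $\Mrpo{n+m}{r}/S_m$ in the function field of $Y$, and compose with the proper compactified forgetful map $\Mrpo{n+m}{r}/S_m\lra \Mrpo{n}{r}$ to produce a proper map $\bar f:\overline{Y}\lra \Mrpo{n}{r}$. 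Over $U$, the sheaf $R^mp_*\Lambda$ is a direct summand of $R^m\bar f_*\underline{\OK}|_U$, so the image of $R^mp_!\Lambda\lra R^mp_*\Lambda$ is a flat subquotient of $R^m\bar f_*\underline{\Z}|_U$, yielding geometric origin.

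\textbf{Integral variation of Hodge structures of weight $m$.} After resolving $\overline{Y}$ (shrinking $U$ if necessary) to a smooth compactification with simple normal crossings complement, $R^m\bar f_*\underline{\Z}|_U$ underlies a polarizable integral variation of mixed Hodge structures by the theory of Deligne, or equivalently of Saito. The image of $R^mp_!\Line\to R^mp_*\Line$ is then pure of weight $m$: the weights on $R^mp_!$ are $\leq m$ while those on $R^mp_*$ are $\geq m$, so their image has pure weight $m$ exactly. This produces a Hodge decomposition on each fiber that assembles into a smooth decomposition satisfying Griffiths transversality. The integral lattice is inherited from $\Lambda$, and the required compatibility under complex conjugation of embeddings $\iota:K\hookrightarrow \C$ comes from the intrinsic conjugation symmetry of the abelian bundle, which exchanges the characters $\chi$ and $\bar\chi$ of $G$ cutting out $\Line$.

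The main obstacle is the compactification and resolution step: since $\overline{Y}$ is typically singular, one must either pass to a smooth resolution with normal-crossings boundary or work directly in the language of mixed Hodge modules. Once such a model is fixed, both assertions follow from standard purity and weight estimates for middle cohomology of smooth open families with finite-monodromy coefficients.
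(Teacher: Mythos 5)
Your proposal takes essentially the same route as the paper: use finite monodromy of $\Line$ to realize it inside $f_*\underline{\Z}$ for a finite cover $f$, compactify $p\circ f$ to a proper map, and identify $\myim{R^mp_!\Line\to R^mp_*\Line}$ with the lowest-weight part of $R^m(p\circ f)_*$. Two remarks.

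First, a genuine slip in the geometric-origin step: you claim $R^mp_*\Lambda$ is a \emph{direct summand} of $R^m\bar f_*\underline{\OK}|_U$. This cannot be right: $\bar f$ is proper, so $(R^m\bar f_*\underline{\OK})|_U$ computes cohomology of the \emph{compact} fibers, while $R^mp_*\Lambda$ computes cohomology of the \emph{open} configuration spaces; these are related only by the (neither injective nor surjective) restriction map $R^m\bar f_*\underline{\OK}|_U\to R^m(p\circ\pi)_*\underline{\OK}$. What saves the argument, and what the paper uses, is that this restriction map surjects onto the lowest-weight ($=m$) piece of $R^m(p\circ\pi)_*\underline{\Q}$, and the image of $R^mp_!\Line\to R^mp_*\Line$ sits inside that piece; one gets a flat \emph{subquotient} of $R^m\bar f_*\underline{\Q}|_U$, which is exactly what \Cref{definitiongeometricorigin} asks for. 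Second, the resolution step you worry about is unnecessary: for the Hodge structure the paper works directly with the \emph{smooth} open family $p\circ f$, putting an integral variation of \emph{mixed} Hodge structures on $R^m(p\circ f)_*\underline{\Z}$ and $R^m(p\circ f)_!\underline{\Z}$ and extracting the pure weight-$m$ image exactly as you describe; the singular compactification $\overline{Y}$ is only used for the geometric-origin statement, where no smoothness is needed.

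Finally, you stop at $\sMrpo{n}{r}$. The corollary concerns the flat bundle on the compact moduli space $\Mrpo{n}{r}$, so one must still propagate the IVHS across the boundary divisor; the paper does this by invoking Griffiths' extension theorem \cite[(9.5)]{griffithsPeriodsIntegralsAlgebraic1970}. That step should be added to complete the argument.
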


Note, that if $\Nusl(b;a_1,\dotsc,a_n)$ is irreducible, then for each choice of root of unity, this constructs \textbf{the} complex variation of Hodge structures on
$\Nusl(b;a_1,\dotsc,a_n)\otimes\C$. The irreducibility of $\Nusl(b;a_1,\dotsc,a_n)$  has been proved for $r$ prime
(see \cite{koberdaIrreducibilityQuantumRepresentations2018} and \cite[A.5]{godfardRigidityFibonacciRepresentations2023} for a discussion).
See \Cref{sectionproofHodge} for details on the construction of the Hodge structures.

\subsection{Identification of the polarization and integral structure}

The geometric construction of $\Nusl(b;a_1,\dotsc,a_n)$ gives it an integral structure and a flat hermitian structure,
which we will prove to be a polarization for the Hodge structures in \cite{godfardHodgeTheoryConformalInpreparation}.
The Skein module construction of $\rho_n(b;\ua)$ also comes with a natural invariant hermitian form and, for $r$ prime, with a natural
integral structure (see \Cref{subsectionskein}). Although these structures are constructed in very different ways, we show that they coincide:

\begin{theorem}[\ref{theoremintersectionform}, \ref{intersectionformpolarizes} and \ref{theoremintegralstructure}]
    Let $r\geq 3$ be an odd integer, $n\geq 2$ and $b,a_1,\dotsc,a_n\in\{0,1,\dotsc,r-2\}$,
    such that $m=\frac{a_1+\dotsb+a_n-b}{2}$ is an integer.
    Let $s_{\Dc}$ be the intersection form on $\Nusl(b;a_1,\dotsc,a_n)$ coming from the geometric construction
    and let $h_{\Dc}$ be the hermitian on $\Nusl(b;a_1,\dotsc,a_n)$ form coming from the Skein module construction.
    Then:
    \begin{equation*}
        s_{\Dc}=(\zeta_r^2-\zeta_r^{-2})^mh_{\Dc}
    \end{equation*}
    and $(-1)^{\frac{m(m-1)}{2}}s_{\Dc}$ is a polarization for the Hodge structure on $\Nusl(b;a_1,\dotsc,a_n)$.
    In particular, if $\Nusl(b;a_1,\dotsc,a_n)=\bigoplus_{p+q=m}H^{p,q}$ is the Hodge decomposition,
    then it is orthogonal for $h_{\Dc}$
    and $(-1)^{\frac{m(m-1)}{2}}i^{p-q}(\zeta_r^2-\zeta_r^{-2})^mh_{\Dc}$ is positive definite on $H^{p,q}$.

    Moreover if $r$ is prime, then the geometric integral structure coincides with the Skein module one.
\end{theorem}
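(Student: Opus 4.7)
The plan is to exploit the compatibility of both pairings with a pants decomposition $\Dc$ of the $(n+1)$-punctured sphere. On the skein side, the hermitian form $h_{\Dc}$ factorizes through the TQFT gluing by construction. On the geometric side, the identification of the intersection form $s_{\Dc}$ with a Poincaré pairing on the configuration space, together with the Künneth formula realizing the modular functor gluing (established earlier in the paper), yields an analogous factorization of $s_{\Dc}$. Fixing an admissible coloring of the trivalent tree dual to $\Dc$, both forms split as a product of local contributions, one per pair of pants, on the corresponding pants-basis vector. Since both forms are flat and $PB_n$-equivariant, it suffices to establish the identity on a single pants-basis vector.

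For each pair of pants with colors $(a,b,c)$, the fiber is at most one-dimensional, so the per-pants comparison is a comparison of scalars. The pants skein basis vector is normalized so that $h_{\Dc}$ returns $1$, while the geometric side reduces to the self-intersection of a single Pochhammer cycle inside the configuration space of $m_{abc}=(a+b-c)/2$ points on a thrice-punctured sphere with coefficients in the abelian local system $\Line$. The explicit local monodromies of $\Line$ around each new puncture contribute a factor $\zeta_r^2-\zeta_r^{-2}$, so the product over the $m_{abc}$ new punctures gives $(\zeta_r^2-\zeta_r^{-2})^{m_{abc}}$. Multiplying over all pairs of pants of $\Dc$ yields the global factor $(\zeta_r^2-\zeta_r^{-2})^m$ because $m=\sum_v m_v$, where $v$ ranges over the trivalent vertices.

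The polarization statement then follows from the Hodge--Riemann bilinear relations applied to the image of $H_c^m\to H^m$ with unitary local system coefficients: Poincaré duality twisted by $(-1)^{m(m-1)/2}$ is a polarization of the pure Hodge structure on this image, and on each $H^{p,q}$ the form $i^{p-q}(-1)^{m(m-1)/2}s_{\Dc}$ is positive definite; transporting through the scalar identification above gives the asserted positivity of $h_{\Dc}$. Finally, for $r$ prime, I would match the two integral lattices by producing, for each trivalent admissible coloring, an explicit integral Pochhammer cycle whose class in $\myim{H_m(\Conf{m}{C};\Line)\to H_m^{BM}(\Conf{m}{C};\Line)}$ represents the corresponding skein basis vector. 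The fact that the resulting $\mathcal{O}_K$-lattice is the geometric one uses that $\mathcal{O}_K=\Z[\zeta_r]$ is a principal ideal domain when $r$ is prime, and that the characteristic polynomial of local monodromy of $\Line$ is the cyclotomic polynomial, which removes any torsion obstruction. The principal obstacle is the pants-level comparison of scalars: it requires careful bookkeeping of the orientation conventions of Pochhammer cycles and of the local monodromies of $\Line$ at each new puncture, so as to isolate the factor $\zeta_r^2-\zeta_r^{-2}$ rather than any of the cosmetically similar quantum scalars that could appear.
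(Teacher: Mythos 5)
Your architecture — factor both pairings through the gluing decomposition, reduce the comparison to a scalar on one-dimensional pieces, then invoke Hodge--Riemann — is in the right spirit but differs from the paper's route in its key lemmas, and two of your steps contain genuine gaps. The paper does not reduce to pairs of pants. Instead it reduces (via \Cref{theoremgluinghomological}) to the all-$1$ case $\Dc'=(D^N,b,1,\dotsc,1)$, where $S(\Dc')\otimes\Q(q)$ is an \emph{irreducible} Temperley--Lieb representation; Schur's lemma (\Cref{corollaryuniqueform}) then guarantees the two invariant forms differ by a single scalar over $\Q(q)$, so it suffices to compute one Gram entry. That entry is computed (\Cref{propositiongenericform}) for a very specific nested sequence $w=\text{``()\ldots()\!$\cdot\cdots\cdot$''}$, where the cycle $\Cn(w)$ is a disjoint product of $m$ unknotted noodles, each winding around a single adjacent pair of punctures, making the self-intersection a product of $m$ identical local contributions computable from a two-point picture. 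This is precisely what makes the scalar tractable. Your per-pants Pochhammer cycle computation in $\Conf{m_{abc}}{\mathbb{P}^1\setminus\{3\text{ pts}\}}$ for general $(a,b,c)$ is substantially harder, and your heuristic that the factor $(\zeta_r^2-\zeta_r^{-2})^{m_{abc}}$ comes from ``local monodromy around each new puncture'' is not correct: in the abelian local system $\Line$, the exchange of two configuration points contributes $-q^{-2}$, while the factor $(q-q^{-1})^m$ arises from the composite $(1-q^{-2})^m(-1-q^2)^m/(-q-q^{-1})^m$ of noodle-to-fork normalization, geometric intersection, and skein-form value. Also, the skein basis vector of $\Nusl(c;a,b)$ is \emph{not} $h$-normalized to $1$; $h_\Dc(e,e)$ is a nontrivial quotient of quantum integers (cf.\ the proof of \Cref{theoremquantumgroupconstruction}), so your scalar comparison must track that as well.

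Two further points. For the integral structure at prime $r$, your appeal to ``$\Z[\zeta_r]$ is a PID when $r$ is prime'' is false — cyclotomic fields of prime conductor generally have class number $>1$ ($r=23$ already fails). The paper instead uses that $[k]\in\Z[\zeta_r]$ is a unit for $1\leq k\leq r-1$ when $r$ is prime; this makes the section of $\pi^E_{\underline{\alpha}}$ in \Cref{propositionimagequantumgroups} integral and makes the computed norm $h_\Dc(e,e)$ a unit, leading to the lattice identifications of \Cref{imageofred} and \Cref{theoremintegralstructure}. For the polarization, you apply Hodge--Riemann to $\Hmid=\myim{H^m_c\to H^m}$ as if this were automatic. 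The Hodge--Riemann bilinear relations hold on \emph{primitive} cohomology of a projective compactification, so the missing step is to show that the image of $R^mp_!\Line\to R^m\overline{p}_*\Line$ lies in $PR^m\overline{p}_*\Line$. The paper establishes this by showing that $H^2$ of the fiber compactification $\Mrpo{m+n}{r}/S_m$ is rationally spanned by boundary divisor classes, so the Kähler class restricts to zero against any compactly supported class of the open fiber — that is a genuine, non-formal argument that your proposal skips.
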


See \Cref{definitionpolarization} for the definition of a polarization on a Hodge structure.
For the hermitian structure, this is not surprising for $r$ prime, as an irreducible representation can have only one invariant hermitian form,
up to a scalar factor.

\subsection{The gluing property seen geometrically}

Perhaps the most important property of modular functors is the gluing axiom.
Let $n\geq 2$ and $n_1+n_2=n$ with $n_1\geq 2$ and $n_2\geq 1$. Then we have an an inclusion of boundary divisor:
\begin{equation*}
    q:\Mrpo{n_1}{r}\times\Mrpo{n_2+1}{r}\lra \Mrpo{n}{r}
\end{equation*}
induced by the gluing the $0$-th section of a curve in $\Mrpo{n_1}{r}$
to the $(n_2+1)$-th section of a curve in $\Mrpo{n_2+1}{r}$ (see \Cref{gluing_map} and \Cref{subsubframingluing}).

\begin{figure}
    \ctikzfig{gluing_map}
    \caption{The gluing map $q:\Mrpo{n_1}{r}\times\Mrpo{n_2+1}{r}\ra \Mrpo{n}{r}$ represented at the level of generic Riemann surfaces. The curve on the right is nodal.}
    \label[figure]{gluing_map}
\end{figure}

Then, as part of the data of the $\SO$ modular functor of level $r$, we have an isomorphism:
\begin{equation}\label{equationgluingintro}
    q^*\Nusl(b;a_1,\dotsc,a_n)\simeq \bigoplus_c\Nusl(c;a_1,\dotsc,a_{n_1},\mu)\otimes
    \Nusl(b;c,a_{n_1+1},\dotsc,a_n).
\end{equation}

This isomorphism has a geometric interpretation as a Künneth formula on configuration spaces.
We prove an exact identification between the geoemtric Künneth isomorphism and the $\SO$ modular functor gluing isomorphism.
See \Cref{theoremgluinghomological,theoremgluinggeometric} for details.

From the geometric interpretation of gluing maps we will deduce the following.

\begin{theorem}[\ref{gluingishodge}]
    The gluing map of \Cref{equationgluingintro} is an isomorphism of rational variations of Hodge structures.
\end{theorem}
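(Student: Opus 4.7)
The plan is to leverage the geometric identification of the modular-functor gluing with a Künneth decomposition, which is the content of Theorems \ref{theoremgluinghomological} and \ref{theoremgluinggeometric}. Once we know that the gluing isomorphism of \Cref{equationgluingintro} is, under the geometric construction of Theorem \ref{theoremgeometricconstruction}, an algebro-geometric Künneth isomorphism between relative cohomologies of configuration spaces (with coefficients in a local system that itself has geometric origin via the finite monodromy of the abelian modular functor), the claim reduces to the general fact that Künneth isomorphisms on the cohomology of a product of varieties with coefficients in an external tensor product of geometric local systems are morphisms of rational variations of Hodge structure.

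Concretely, after restricting to the smooth loci $\sMrpo{n_1}{r}\times\sMrpo{n_2+1}{r}$ and $\sMrpo{n}{r}$, both $q^*\Nusl(b;a_1,\dotsc,a_n)$ and the direct sum on the right of \Cref{equationgluingintro} are images of the natural map from compactly supported to ordinary cohomology of the relevant configuration-space fibrations in $\Line$; on the product side, the base factorizes, the configuration space over the gluing divisor decomposes into a product with twisting recorded by the intermediate label $c$, and the local system $\Line$ restricts as an external tensor product of the corresponding $\Line$'s on each factor. The geometric gluing theorem \ref{theoremgluinggeometric} identifies the modular-functor gluing with the external-product map on the images of the $!\to*$ morphisms. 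Since the pushforwards $Rp_!\Line$ and $Rp_*\Line$ are rational variations of mixed Hodge structure (the local system $\Line$ being of geometric origin), the external product and hence its image are morphisms of rational VHS. Restricting to the image and using that the Hodge structure on $\Nusl$ constructed in \Cref{CVHS} is precisely this image of the $!\to*$ map, the gluing preserves the Hodge filtration and real structure, hence is a morphism of rational VHS. A morphism of VHS between bundles of equal rank which is a fiberwise isomorphism is an isomorphism of VHS.

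The main obstacle is not Hodge-theoretic but bookkeeping: one must verify that the identification of $q^*\Nusl(b;\ua)$ with a cohomology on a product — coming from a degeneration at the boundary divisor of $\Mrpo{n}{r}$ — is literally the same isomorphism as the one produced by applying Theorem \ref{theoremgeometricconstruction} separately on each factor and taking external product. This is precisely what Theorems \ref{theoremgluinghomological} and \ref{theoremgluinggeometric} provide, so the role of the present theorem is essentially to package their geometric conclusion with the standard statement that Künneth is a morphism of variations of Hodge structure. No further analytic input is needed beyond the geometric origin of $\Line$, which was already used to construct the VHS on $\Nusl$.
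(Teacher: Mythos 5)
Your proposal correctly identifies the geometric picture — the gluing map factors through a configuration-space degeneration over the boundary divisor $\B$, and \Cref{theoremgluinghomological,theoremgluinggeometric} identify the TQFT gluing with the corresponding geometric map — but the Hodge-theoretic input you invoke is not the right one, and the gap it leaves is substantial.

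You reduce the claim to ``Künneth is a morphism of variations of Hodge structure.'' That is fine for comparing the Hodge structures on the right-hand side of \Cref{equationgluingintro}, where everything lives over a genuine product $\sMrpo{n_1}{r}\times\sMrpo{n_2+1}{r}$ and the coefficients are external tensor products. But the Hodge structure on the left-hand side, $q^*\Nusl(b;\ua)$, is the restriction to $\B$ of a variation that was constructed on the locus of \emph{smooth} curves $\sMrpo{n}{r}$ and extended across the boundary: $\B$ lies in the \emph{nodal} locus, so the fibers of the forgetful map $p'$ over $\B$ are degenerate, and the extended Hodge structure on $\B$ is a limit. There is no a priori reason why this limit Hodge structure should agree with the Künneth product Hodge structure on $\B$; the local-system-level identification from \Cref{theoremgluinggeometric} does not by itself carry any Hodge-theoretic content. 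What is actually needed is that the specialization map (the map $\nu$ in the paper's proof, from $R^m\overline{p}_*\Line$ restricted to $\B$ to the Deligne canonical extension of $R^mp_*\Line$) is strict for the Hodge filtration. This is a Clemens--Schmid/Steenbrink-type degeneration statement, not a Künneth statement. In the paper this is \Cref{propositionschmid}, and the whole proof hinges on decomposing the map of \Cref{equationgeometricmap} into a chain whose middle arrow is $\nu$.

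Concretely, your sentence ``Since the pushforwards $Rp_!\Line$ and $Rp_*\Line$ are rational variations of mixed Hodge structure, the external product and hence its image are morphisms of rational VHS'' implicitly assumes those pushforwards exist as VHS on $\B$ in a way that is compatible with the extension from $\sMrpo{n}{r}$. That compatibility is the theorem, not a hypothesis. You would need to add: (i) pass to a finite cover to make the monodromy around $\B$ unipotent; (ii) take the Deligne canonical extension of $\Nusl(b;\ua)$; (iii) invoke the Steenbrink/Clemens--Schmid theorem to show that the resulting degeneration map to the limit fiber is a morphism of Hodge filtrations; (iv) only then conclude by matching with the Künneth side. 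Without step (iii) the argument does not close.
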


This theorem is the main point behind the proof that the characteristic classes of these Hodge structures satisfy
the axioms of a Cohomological Field Theory.

After the completion of this article, we learned that in their recent paper \cite{belkaleMotivicFactorisationKZ2023},
P. Belkale and N. Fakhruddin and S. Mukhopadhyay prove similar geometric factorization rules 
for conformal blocks and prove in particular that they preserve Hodge structures. See \Cref{subsectionrelations} below for more comments.

\subsection{Relation to previous works}\label{subsectionrelations}

\subsubsection*{Works on cohomological models via holomorphic forms}

Extensive work has been done on geometric models for solutions of the Knizhnik-Zamolodchikov connection,
notably by Schechtman and Varchenko (see \cite{schechtmanArrangementsHyperplanesLie1991a} for example).
The TQFT representations we consider are known to be isomorphic to representations constructed from the monodromy of some special cases
of the  Knizhnik-Zamolodchikov connection. The proof of this isomorphism is non trivial and due, in the generic case,
to Kohno and Drinfel'd (see \cite{kohnoMonodromyRepresentationsBraid1987,kohnoLinearRepresentationsBraid1988}
and \cite{drinfeldQuasiHopfAlgebras1990,drinfeldQuasitriangularQuasiHopfAlgebras1991}).
Varchenko also gives a geometric proof in the generic case in his book \cite{varchenkoMultidimensionalHypergeometricFunctions1995}.

Most of the work in this topic is done in the generic case. However, in this article, we deal with the non-generic case.
We thus mention here works that apply to the non-generic case.

The spaces $R^mp_*\Line$ in \Cref{theoremgeometricconstruction} have been considered by Silvotti in \cite{silvottiLocalSystemsComplement1994}
and Schechtman-Varchenko in \cite{schechtmanArrangementsHyperplanesLie1991a}.
Silvotti conjectured that the space of conformal blocks was the degree $0$ part of the weight filtration of $R^mp_*\Line$ and proved it for $n=2$.

In \cite{feiginAlgebraicEquationsSatisfied1995}, Feigin-Schechtman-Varchenko identify
conformal blocks as a subspace of $R^mp_*\Line$.
The case of $\sltwo$ is further studied by Varchenko in his book \cite{varchenkoMultidimensionalHypergeometricFunctions1995}.

Using the results in \cite{schechtmanArrangementsHyperplanesLie1991a}, in \cite{ramadasHarderNarasimhanTraceUnitarity2009}
Ramadas proves that $\sltwo$ conformal blocks give square integrable forms in $R^m p_*\Line$.
Belkale then extended these results to all simple Lie algebras in \cite{belkaleUnitarityKZHitchin2012}.
In \cite{looijengaUnitaritySLConformal2009}, Looijenga
then showed that $\sltwo$ conformal blocks correspond to all $L^2$-integrable forms.
Again, this result was extended to all classical Lie algebras and $G_2$ by Belkale and Mukhopadhyay in \cite{belkaleConformalBlocksCohomology2014}.

After the completion of this paper, we learned that in the recent paper
\cite{belkaleConformalBlocksGenus2023a}, Belkale and Fakhruddin identify spaces of conformal blocks with
the image of $R^mp_!\Line$ in $R^mp_*\Line$ for any Lie algebra,
and that in the other recent paper \cite{belkaleMotivicFactorisationKZ2023}, Belkale, Fakhruddin and Mukhopadhyay
study geometric gluing isomorphisms of genus $0$ conformal blocks for any simple Lie algebra, prove that they preserve Hodge
structures and give an algorithm to compute corresponding Hodge numbers in genus $0$ for $\mathfrak{sl}_n$.

We chose not to use the approach of holomorphic forms as it is not best suited for our purpose.
More precisely, we want results over $\Q(\zeta_r)$ and not just $\C$
and we want to be able to identify the TQFT hermitian form with the geometric intersection form.
This would not be possible with these cohomological models as there is no good formula for the TQFT form on solutions of the Knizhnik-Zamolodchikov
equation. Actually, one of the motivations behind the works of Ramadas, Looijenga and Belkale mentioned above was to construct a hermitian
structure on spaces of conformal blocks.

\subsubsection*{Works on homological models via cycles}

Our approach relies heavily on results of Martel \cite{martelHomologicalModelUq2022}, elucidating and improving work of Felder-Wieczerkowski
\cite{felderTopologicalRepresentationsQuantum1991}.
These results identify some semi-relative version of $R^mp_*\Line$ with tensor products of Verma modules.
The main advantage of this work is that it is done for general local systems
but in such a way that it remains true for any choice of the quantum parameter $q$. Hence it is well suited for our case, where $q$ is a root of unity.
See \Cref{subsubbases,subsubUaction} below for the statements we will need from Martel's work.

\subsubsection*{Cell decomposition for configuration spaces}

\Cref{theoremgeometricconstruction} is not a direct corollary of Martel's work.
Indeed, Martel works with a semi-relative version of $R^mp_*\Line$. Hence we need a way to compute $R^mp_*\Line$
and link it to Martel's work.

The space $R^mp_*\Line$ is the cohomology of a configuration space. Several cell decompositions have been used to compute this cohomology,
for example with the Salvetti complex (see, for example \cite[chp. 2]{varchenkoMultidimensionalHypergeometricFunctions1995}).
However these decompositions have a lot of cells and do not work well with Martel's results.
In \Cref{subsubcellularcomplex}, we solve this issue by describe an ad-hoc version of the Fox-Neuwirth-Fuks stratification of configuration spaces
that has less cells and such that its $m$-cells are the elements of Martel's basis.
This cell complex is isomorphic to some Hochschild homology complex and recovers in the generic case
the Hochschild complex described by Schechtman-Varchenko in \cite[3.8]{schechtmanQuantumGroupsHomology1991}.
Hence our complex extends the Hochschild complex described by Schechtman-Varchenko
to the non-generic case and gives a direct proof that it computes $R^mp_*\Line$.

Similar cell complexes on the plane rather than the plane minus a finite number of points are used by
Ellenberg, Tran and Westerland in \cite{ellenbergFoxNeuwirthFuksCellsQuantum2023} and by
Kapranov and Schechtman in \cite{kapranovShuffleAlgebrasPerverse2020a}.

\subsubsection*{The Lawrence representations}

The idea of considering the image of $R^mp_!\Line$ in $R^mp_*\Line$ comes from Bigelow's formulation in
\cite{bigelowHomologicalRepresentationsIwahori2004} of
Lawrence's work \cite{lawrenceHomologicalRepresentationsHecke1990a} on geometrical models for representations of Temperley-Lieb algebras.
These representations have non trivial intersection with the quantum representations we study (see \Cref{subsubBigelow} below for examples).

Bigelow proves his results on the image of $R^mp_!\Line$ in $R^mp_*\Line$ for generic values of $q$ and conjectures that it holds in general
(\cite[Conjecture 6.1]{bigelowHomologicalRepresentationsIwahori2004}). Our approach, which is different from Bigelow's,
proves this conjecture when the representations of the Temperley-Lieb algebra are quantum representations and can be used to prove it in general.
The image of $R^mp_!\Line$ in $R^mp_*\Line$ is also mentioned in the generic case in \cite[(5.2)]{schechtmanQuantumGroupsHomology1991}.

\subsubsection*{Main contributions}

Our main contribution to \Cref{theoremgeometricconstruction} is to use Martel's homological models to identify the TQFT representations 
with the image of $R^mp_!\Line$ in $R^mp_*\Line$, and doing so over $\Q(\zeta_r)$ with explicit isomorphisms with the Skein module and quantum groups constructions
of these representations.
This then enables our identification of the intersection form
and the gluing morphisms in the geometric context with the TQFT hermitian form and gluing isomorphisms (\Cref{theoremintersectionform,theoremgluinggeometric}),
as well as the study of rational Hodge structures of TQFT representations conducted in the upcoming work \cite{godfardHodgeTheoryConformalInpreparation}.

An advantage of the homological approach we use is that the results of Martel have been partially extended to higher
by De Renzi and Martel in \cite{derenziHomologicalConstructionQuantum2023},
whereas the picture on the side of holomorphic forms is more complicated
(see the recent paper \cite{looijengaConformalBlocksCohomology2021} of Looijenga on the subject).
We plan to extend our results to higher genus
using De Renzi and Martel's paper.

\subsection{Outline of the paper}

In \Cref{sectionmodularfunctors} we axiomatize the notion of genus $0$ modular functor, as well as rootings and framings of such functors.
We also define the abelian modular functors and discuss properties of the $\SO$ modular functors.

\Cref{sectionmainresults} is devoted to the statements of the main results of the paper on the geometric construction.

In \Cref{sectionmodularfunctorconstruction}, we give the two constructions of the $\SO$ modular functors that are relevant to us: the skein module construction
(\Cref{subsectionskein}) and the quantum group construction (\Cref{subquantumconstruction}). We also discuss Verma modules for quantum
groups associated to $sl_2$ and study maps between them.

The proof of the geometric construction is carried out in \Cref{sectiongeometricproofs}. The section also includes statements from
the work \cite{martelHomologicalModelUq2022} of J. Martel that we use and the construction of a Borel-Moore cellular decomposition for configuration spaces
(see \Cref{subsubcellularcomplex}).

Finally, in \Cref{sectionproofHodge}, we prove the existence of the integral variations of Hodge structures on the $\SO$ modular functor in genus $0$.

\subsection{Acknowledgements}

This paper forms part of the PhD thesis of the author.
The author thanks Julien Marché for his help in writing this paper.
The author also thanks Prakash Belkale, Bertrand Deroin, Javier Fresán, Jules Martel, Ramanujan Santharoubane
and Alexander Zakharov for helpful discussions and correspondances.

%-------------------------------------------------------------------------------------------------------------------------

\section{\texorpdfstring{Modular functors in genus $0$}
{Modular functors in genus 0}}\label{sectionmodularfunctors}

%-------------------------------------------------------------------------------------------------------------------------

\subsection{Axiomatic definition}

%-------------------------------------------------------------------------------------------------------------------------

\subsubsection{Topological definition}

\begin{definition}\label{definitioncolourset}
    A set of colors is a finite set $\Lambda$ with a preferred element $0\in \Lambda$ and an involution $\lambda\mapsto \lambda^\dagger$
    such that $0^\dagger=0$.
\end{definition}

\begin{definition}\label{definitionsurfaces}
    Let $n\geq 0$. We define $S_0^n$ to be the compact sphere with $n$ boundary components,
    and $S_{0,n}$ to be the sphere with $n$ punctures and no boundary.
\end{definition}

We can now define the source category of modular functors.

\begin{definition}\label{definitioncolouredcategory}
    Let $\Lambda$ be a set of colors. The category of genus $0$ surfaces colored with $\Lambda$ is such that:
    \begin{description}
        \item[(1)] its objects are compact oriented surfaces $S$ of genus $0$ together with an identification
        $\varphi_B: B\simeq S^1$ and a color $\lambda_B\in \Lambda$ for every component $B$ of $\partial S$ ;
        \item[(2)] its morphisms from $\Sigma_1=(S_1,\varphi^1,\underline{\lambda}^1)$
        to $\Sigma_2=(S_2,\varphi^2,\underline{\lambda}^2)$
        are homeomorphisms $f:S_1\lra S_2$ preserving orientation such for every component $B_1\subset\partial S_1$
        and its image $f(B_1)=B_2\subset \partial S_2$, we have $\lambda_{B_1}=\lambda_{B_2}$ and $\varphi^2_{B_2}\circ f=\varphi^1_{B_1}$.
        \item[(3)] the composition of $f_1:\Sigma_0\lra\Sigma_1$ and $f_2:\Sigma_1\lra\Sigma_2$ is simply $f_2\circ f_1$.
    \end{description}
    This category has a natural monoidal structure induced by the disjoint union $\sqcup$.
\end{definition}

In the rest of the paper, $\Sigma=(S,\varphi,\underline{\lambda})$ will be abbreviated $(S,\underline{\lambda})$, or even
$(S,\lambda_1,\lambda_2,\dotsc)$ where $\lambda_1,\lambda_2,\dotsc$ are the colors relevant to the argument and the other colors are omitted.

\begin{definition}\label{definitionmappingclassgroupofspheres}
    Let $S$ be a genus $0$ surface with boundary and punctures. We note by $\Mod{S}$ its mapping class group, ie. the group of connected components
    of the group of orientation preserving homeomorphisms of $S$ fixing the boundary $\partial S$ pointwise. This group may permute
    punctures but not boundary components. We denote by $\PMod{S}$ the subgroup of $\Mod{S}$ of homeomorphisms that do not permute the punctures.

    Let $r\geq 1$, we will denote by $\PModl{r}{S}$ (respectively $\Modl{r}{S}$) the quotient of $\PMod{S}$ (respectively $\Mod{S}$)
    by all $r$-th powers of Dehn twists.
\end{definition}

Let $S$ be a compact genus $0$ surface and $\partial_+S\sqcup\partial_-S\subset\partial S$ be two components of its boundary
such that $\partial_+S$ and $\partial_-S$ lie on different connected components of $S$.
Let $\varphi_{\partial_{\pm}S}:\partial_{\pm}S\simeq S^1$ be identifications of these components with $S^1$.

Let $S_{\pm}$ be the genus $0$ surface obtained from $S$ by gluing $\partial_+S$ to $\partial_-S$
along $\varphi_{\partial_-S}^{-1}\circ\varphi_{\partial_+S}$.
Then $S_{\pm}$ is called the gluing of $S$ along $\partial_{\pm}S$.

We now introduce the notion of genus $0$ modular functor.

\begin{definition}[Modular Functor]\label{definitionmodularfunctor}
    Let $\Lambda$ be a set of colors and $\mathrm{C}$ be the associated category of colored surfaces
    as defined in \Cref{definitioncolouredcategory}.
    Then a modular functor is the data an integer $r\geq 1$ called \textit{level} and of a monoidal functor:
    $$\Nu:\mathrm{C}\lra \Q(\zeta_r)-\text{vector spaces}$$
    where the monoidal structure on $\Q(\zeta_r)$-vector spaces is understood to be the tensor product.
    This data is augmented by the following isomorphisms.
    \begin{description}
        \item[(G)] For any genus $0$ surface $S$ and pair of boundary components $\partial_{\pm}S$ lying on distinct connected components of $S$,
        let $S_{\pm}$ be the gluing of $S$ along $\partial_{\pm}S$. For any coloring $\underline{\lambda}$
        of the components of $\partial S_{\pm}$, an isomorphism as below is given:
        \begin{equation}\label{gluingunrooted}
            \Nu(S_{\pm},\underline{\lambda})\simeq \bigoplus_{\mu\in\Lambda}\Nu(S,\mu,\mu^\dagger,\underline{\lambda})
            \otimes \Nu(S_0^2,\mu,\mu^\dagger)^\vee.
        \end{equation}
    \end{description}
    The isomorphisms of \textbf{(G)} are assumed to be functorial and compatible with disjoint unions.
    This rule, also sometimes called fusion or factorization rule, is the most important property of modular functors.
    The functor is also assumed to verify $3$ more axioms:
    \begin{description}
        \item[(1)] $\dim\Nu(S_0^1,\lambda)=1$ if $\lambda=0$ and $0$ otherwise;
        \item[(2)] $\dim\Nu(S_0^2,\lambda,\mu)=1$ if $\lambda=\mu^\dagger$ and $0$ otherwise. Moreover, $\Nu(S_0^2,\lambda,\lambda^\dagger)$
        is canonically isomorphic to $\Q(\zeta_r)$;
        \item[(3)] For every color $\lambda$, the left Dehn twist in $S_0^2$ acts on $\dim\Nu(S_0^2,\lambda,\lambda^\dagger)$
        by multiplication by some $r$-th root of unity, denoted by $t_\lambda=t_{\lambda^\dagger}$.
    \end{description}
\end{definition}

\begin{remark}
    One can think of \Cref{gluingunrooted} as the gluing of $\partial S_{\pm}$ to the boundaries of a cylinder $S_0^2$.
    From the additional axiom \textbf{(2)}, one has $\Nu(S_0^2,\mu,\mu^\dagger)\simeq \Q(\zeta_r)$ canonically.
    Hence we could write \Cref{gluingunrooted} simply as:
    \begin{equation*}
        \Nu(S_{\pm},\underline{\lambda})\simeq \bigoplus_{\mu\in\Lambda}\Nu(S,\mu,\mu^\dagger,\underline{\lambda}).
    \end{equation*}
    However, this would make the isomorphism not an isometry in the case where $\Nu$ has a Hermitian structure.
    See \Cref{subsubhermitianintegral} for details. This is also one of the reasons for rooting modular functors,
    see \Cref{subrooting}.
\end{remark}

\begin{remark}\label{remarklevel}
    Let $S_{\pm}$ be a genus $0$ colored surface constructed as a gluing of $S$ along $\partial_{\pm}S$.
    Let $\gamma$ denote the simple closed curve that is the image of $\partial_{\pm}S$ in $S_{\pm}$.
    Then the Dehn twist $T_\gamma$ acts block-diagonally on the decomposition \textbf{(G)}.

    Moreover, one can easily see that it acts on the block $\Nu(S,\mu,\mu^\dagger,\underline{\lambda})\otimes \Nu(S_0^2,\mu,\mu^\dagger)^\vee$ by the scalar $t_\mu$,
    that depends only on $\mu$, and not on the surface $S$.
\end{remark}

From the modular functor, we get representations of mapping class groups.

\begin{definition}[Quantum representations]
    Let $\Nu$ be a modular functor of level $r$.
    Then, for any genus $0$ surface $S_0^n$ and coloring $\underline{\lambda}$ of its boundary components, the functor yields a representation:
    $$\rho_{n}(\underline{\lambda}):\Mod{S_0^n}\lra \GL{\Nu(S_0^n,\underline{\lambda})}$$
    which factors as a representation:
    $$\rho_{n}^r(\underline{\lambda}):\Modl{r}{S_0^n}\lra \GL{\Nu(S_0^n,\underline{\lambda})}.$$

    These latter representations will be called the (quantum) representations associated to the modular functor $\Nu$.
\end{definition}

%-------------------------------------------------------------------------------------------------------------------------

\subsubsection{Geometric interpretation}\label{sectiongeometricdefinitions}

The quantum representations have geometric counterparts, that enable an equivalent geometric definition of genus $0$ modular functors.

\begin{definition}\label{definitionmodulispaces}
    Let $n\geq 3$. Let us denote by $\sM{n}$ the moduli space of smooth genus $0$ Riemann surface with $n$ distinct sections.
    We shall denote by $\Mrp{n}{r}$ the moduli space of stable $r$-twisted curves with $n$ distinct smooth sections
    (see \cite[1.3]{chiodoStableTwistedCurves2008}).
    
    Consider the universal curve $\Crp{n}{r}$ over $\Mrp{n}{r}$ with orbifold structure of order $r$ at the marked points.
    Then for each $1\leq i\leq n$, the $i$-th marked point is a $\mu_r$-gerbe over $\Mrp{n}{r}$. Denote it by $\sigma_i$.

    For $0\leq m\leq n$, denote by $\Mrpb{n-m}{m}{r}$ the fiber product of $\sigma_1,\dotsc,\sigma_m$ over $\Mrp{n}{r}$.
    For $m=n$ we abbreviate the notation to $\Mrb{n}{r}$.
\end{definition}

\begin{proposition}
    The orbifold fundamental group of $\sM{n}$ is $\PMod{S_{0,n}}$, where the identification is given by the monodromy of the universal curve
    $\mathcal{C}_{0,n}\ra\sM{n}$.
    Then the orbifold fundamental group of $\Mrp{n}{r}$ is $\PModl{r}{S_{0,n}}$ and that of $\Mrpb{m}{n}{r}$ is $\PModl{r}{S_{0,m}^{n}}$.
    In particular, the orbifold fundamental group of $\Mrb{n}{r}$ is $\Modl{r}{S_0^n}$.
\end{proposition}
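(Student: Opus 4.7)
The plan is to handle the three identifications in succession, building on the classical first one.

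First, for $\sM{n}$, I would invoke the standard result that the Teichmüller space $T_{0,n}$ of the $n$-punctured sphere is contractible, and that the pure mapping class group $\PMod{S_{0,n}}$ acts on it properly discontinuously with quotient stack $\sM{n}$. This immediately gives $\pi_1^{\mathrm{orb}}(\sM{n}) \simeq \PMod{S_{0,n}}$, and the isomorphism is implemented by monodromy of the universal curve $\mathcal{C}_{0,n}\to\sM{n}$: parallel transport along a loop in $\sM{n}$ returns a self-homeomorphism of the reference fiber, well-defined up to isotopy.

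Next, for $\Mrp{n}{r}$, I would compare with $\sM{n}$ via the open dense embedding $\sM{n} \hookrightarrow \Mrp{n}{r}$. The complement is the boundary divisor of nodal curves, whose irreducible components correspond to unordered partitions of $\{1,\dotsc,n\}$ into two subsets of cardinality at least $2$, and hence to isotopy classes of essential simple closed curves $\gamma$ on $S_{0,n}$ separating the punctures according to the partition. Every essential simple closed curve on a punctured sphere arises in this way. The $r$-twisted structure makes each such boundary component a $\mu_r$-gerbe, so a Van Kampen-style computation for root stacks gives
\[
\pi_1^{\mathrm{orb}}(\Mrp{n}{r}) \simeq \pi_1(\sM{n})/N,
\]
where $N$ is the normal subgroup generated by the $r$-th powers of small loops around the boundary components. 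Under the monodromy identification of the first step, each such loop is a Dehn twist around the corresponding vanishing cycle by Picard-Lefschetz, so $N$ equals the normal closure of $\{T_\gamma^r\}$ as $\gamma$ ranges over essential simple closed curves, and we conclude $\pi_1^{\mathrm{orb}}(\Mrp{n}{r}) \simeq \PModl{r}{S_{0,n}}$.

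Finally, for $\Mrpb{m}{n}{r}$, I would use the fiber-product definition: it is obtained from $\Mrp{m+n}{r}$ by pulling back along several of the section-gerbes $\sigma_i$, each a $\mu_r$-gerbe. Fiber-producing with a $\mu_r$-gerbe amounts to rigidifying it by the choice of a section, which on the side of Riemann surfaces is the data of an $r$-th-root framing at the corresponding marked point; topologically this replaces the puncture with a boundary component along which Dehn twists become non-trivial but only modulo their $r$-th powers. Combined with the previous paragraph this yields $\pi_1^{\mathrm{orb}}(\Mrpb{m}{n}{r}) \simeq \PModl{r}{S_{0,m}^n}$; the specialization to $\Mrb{n}{r}$ and the appearance of the full (non-pure) quotient $\Modl{r}{S_0^n}$ reflect the additional freedom to permute the rigidified sections when their ordering is forgotten. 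The main obstacle throughout is the root-stack Van Kampen computation in the second step: one must set up local analytic coordinates near a generic boundary point of $\Mrp{n}{r}$ and match the $\mu_r$-stabilizer there with the relation $T_\gamma^r = 1$, after which the standard Picard-Lefschetz analysis supplies the rest.
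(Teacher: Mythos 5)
Your argument for the first two parts is essentially the paper's: Teichm\"uller theory gives the first identification, and for $\Mrp{n}{r}$ the paper also runs a Van Kampen argument against the boundary divisor, identifying loops around it with $r$-th powers of Dehn twists via Picard--Lefschetz. One minor difference: the paper first restricts to the locus $\justM'\subset\Mrp{n}{r}$ of curves with at most one node (the complement has codimension $2$, so $\pi_1$ is unchanged), so that the relevant boundary divisor is smooth and the Van Kampen step needs only the single-divisor case. Your phrasing ``a Van Kampen-style computation for root stacks'' glosses over the normal-crossings structure of the full boundary; the codimension-two reduction is the clean way to sidestep it, and you should make it explicit.

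The third part is where the proposal has a genuine gap. Your sentence about rigidification ``replacing the puncture with a boundary component along which Dehn twists become non-trivial but only modulo their $r$-th powers'' is a motivating heuristic, not a proof: it asserts the answer without identifying the group extension. The paper's argument is that the fibration $(\Br{r})^n \to \Mrpb{m}{n}{r}\to\Mrp{n+m}{r}$ exhibits $\pi_1(\Mrpb{m}{n}{r})$ as a central extension of $\PModl{r}{S_{0,n+m}}$ by a quotient of $\mu_r^n$, and then \emph{identifies which} extension it is by pulling back over the smooth locus and comparing with the $(\C^*)^n$-bundle $\mathcal{M}_{0,m}^n\to\sM{n+m}$ of non-vanishing tangent vectors at the first $n$ marked points, whose extension $\Z^n\to \PMod{S_{0,m}^n}\to\PMod{S_{0,n+m}}$ is classically understood. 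Your proposal does not produce this comparison, and without it the claimed identification of $\pi_1(\Mrpb{m}{n}{r})$ with $\PModl{r}{S_{0,m}^n}$ is unjustified --- a priori you could land on a different extension of $\PModl{r}{S_{0,n+m}}$ by $\mu_r^n$.

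Finally, your closing remark about $\Mrb{n}{r}$ is based on a misreading of the notation. The ``In particular'' clause is just the $m=0$ case, and $\Mod{S_0^n}=\PMod{S_0^n}$ because $S_0^n$ has only boundary components and no punctures, so by \Cref{definitionmappingclassgroupofspheres} no permutations are allowed. The sections $\sigma_1,\dotsc,\sigma_n$ defining $\Mrb{n}{r}$ remain ordered; nothing about the ordering is forgotten, and no ``additional freedom to permute the rigidified sections'' appears.
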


\begin{proof}
    Let $\justM'\subset\Mrp{n}{r}$ be the locus of curves with at most one node. Then as $\Mrp{n}{r}\setminus\justM'$
    has codimension $2$, we have $\pi_1(\justM')=\pi_1(\Mrp{n}{r})$. Set $D=\justM'\setminus\sM{n}$ and let
    $U\subset \justM'$ be a tubular neighborhood of $D$. Now the monodromies around the components of $D$
    correspond to the $r$-th powers of Dehn twists under the isomorphism $\pi_1(\sM{n})=\PMod{S_{0,n}}$.
    Hence, applying groupoïd Van-Kampen to the covering $U\cup\sM{n}=\justM'$, we get $\pi_1(\justM')=\PModl{r}{S_{0,n}}$.

    To compute the fundamental group of $\Mrpb{m}{n}{r}$, we use the fibration $(B\mu_r)^m\ra\Mrpb{m}{n}{r}\ra\Mrp{n+m}{r}$.
    This fibration shows that $\pi_1(\Mrpb{m}{n}{r})$ is a central extension of $\pi_1(\Mrp{n+m}{r})$ by a quotient of $\mu_r^n$.
    We need only compute the extension on the loci of smooth curves. Let $\mathcal{M}_{0,m}^n$ be the $(\C^*)^n$-bundle
    over $\mathcal{M}_{0,n+m}$ corresponding to choice of non-zero tangent bundles at the first $n$ sections.
    It is well known that the extension $\pi_1((C^*)^n)\ra\pi_1(\mathcal{M}_{0,m}^n)\ra\pi_1(\sM{n+m})$
    is naturally identified to $\Z^n\ra \PMod{S_{0,m}^{n}} \ra \PMod{S_{0,n+m}}$. Now $\sMrpb{m}{n}{r}$ is constructed from
    $\mathcal{M}_{0,n}^m$ by replacing the $(\C^*)^n$-bundle by the corresponding $\mu_r^n$-gerbe. Hence the result.
\end{proof}

We can define the geometric counterpart to the gluing maps.

\begin{proposition}
    Let $n,m\geq 2$. Then the gluing map:
    \begin{equation*}
        \Mrb{n+1}{r}\times\Mrb{m+1}{r}\lra \Mrb{n+m}{r}
    \end{equation*}
    is the composition of a $\mu_r$-gerbe and the embedding of a divisor.
    The induced map of orbifold fundamental groups is the morphism:
    \begin{equation*}
        \Modl{r}{S_{0}^{n+1}}\times\Modl{r}{S_{0}^{m+1}}\lra \Modl{r}{S_{0}^{n+m}}
    \end{equation*}
    obtained by gluing the $n+1$-th boundary component of $S_{0}^{n+1}$ to the $m+1$-th boundary component of $S_{0}^{m+1}$.
\end{proposition}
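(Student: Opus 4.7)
The plan is to treat the two assertions separately, handling first the geometric factorization and then the fundamental group computation.

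For the first statement, I would start from the universal family. The image of the gluing map is contained in the boundary stratum $D \subset \Mrb{n+m}{r}$ parameterizing stable $r$-twisted curves with exactly one node that separates the marked points into a group of $n$ and a group of $m$. Classically (forgetting the twisting), this stratum is the image of the gluing morphism $\Mb_{0,n+1}\times \Mb_{0,m+1} \to \Mb_{0,n+m}$, and it is a smooth boundary divisor. To lift this to the $r$-twisted setting one uses Chiodo's local description: étale locally near a node, a balanced $r$-twisted curve looks like $[\mathrm{Spec}(k[x,y,t]/(xy-t))/\mu_r]$ with $\mu_r$ acting by $(x,y)\mapsto(\zeta x,\zeta^{-1}y)$ and trivially on the smoothing parameter $t$. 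The partial normalization of such a node produces two marked points, each of which is a $\mu_r$-gerbe. On $\Mrb{n+1}{r}\times\Mrb{m+1}{r}$ these gerbes have been rigidified (by definition of $\Mrb{}{r}$ as the fiber product over the gerbe sections), while on $D$ they have not; the ambiguity in choosing such rigidifications on each side gives a single $\mu_r$ (the antidiagonal inside $\mu_r\times\mu_r$ acting trivially on the balanced node), so the gluing morphism factors as a $\mu_r$-gerbe onto $D$ followed by the inclusion $D\hookrightarrow\Mrb{n+m}{r}$.

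For the second statement, I would proceed in two steps. First, by the previous paragraph and the fact that codimension $\geq 2$ loci do not contribute to $\pi_1$, the computation reduces to working in a tubular neighborhood of $D$ inside the smooth locus, where the picture is simply: smooth curves on either side together with a smoothing parameter $t$ at the node. Classically, the local monodromy of loops around $D$ acts on the nearby fiber by the Dehn twist $T_\gamma$ along the separating simple closed curve $\gamma$ coming from the vanishing cycle. In the $r$-twisted setting, the smoothing parameter on the universal deformation of a $\mu_r$-node is an $r$-th root of the usual one, so $T_\gamma^r$ becomes nullhomotopic, which precisely corresponds to the passage from $\PMod{S_0^{n+m}}$ to $\PModl{r}{S_0^{n+m}}$. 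Second, the identification of the morphism on $\pi_1$ with the topological gluing $\Modl{r}{S_0^{n+1}}\times\Modl{r}{S_0^{m+1}} \to \Modl{r}{S_0^{n+m}}$ is forced by functoriality of the universal curve: a loop in $\Mrb{n+1}{r}\times\Mrb{m+1}{r}$ comes from a pair of isotopies of the two pieces fixing the boundary, and glueing them along $\gamma$ gives the corresponding isotopy of $S_0^{n+m}$.

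The main obstacle I expect is the first step, namely pinning down the $\mu_r$-gerbe structure correctly. The bookkeeping of which gerbes are rigidified (the marked points of $\Mrb{n+1}{r}$ and $\Mrb{m+1}{r}$) versus which are not (the node in $\Mrb{n+m}{r}$, which by convention is not rigidified in $\Mrb{}{r}$), together with the balanced constraint at the node, needs to be done carefully to land on a single copy of $\mu_r$ rather than $\mu_r\times\mu_r$ or a trivial gerbe. Once this is settled, the fundamental group computation is essentially formal, since the Van Kampen argument of the preceding proposition already treated the analogous situation along a boundary divisor and only needs to be repeated with the divisor $D$ in the role of the smooth-over-nodal locus.
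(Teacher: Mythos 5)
The paper states this proposition without proof, so there is no argument in the source to compare against; what follows is an evaluation of your plan on its own terms. The overall two-part structure (gerbe analysis at the node, then functoriality of the universal curve to pin down the $\pi_1$-map) is the right shape, and you are right to single out the $\mu_r$-bookkeeping as the delicate point: the $\mu_r\times\mu_r$ of rigidifications of the two branches after normalization, reduced by the balanced condition to a single $\mu_r$ acting trivially on the glued curve, is exactly the mechanism producing the gerbe. Once you have that, the $\pi_1$-identification really does follow from the monodromy description of $\Modl{r}{S_0^k}$ and the compatibility of the universal curves under gluing.

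Two points to tighten. First, the displayed local model with $\mu_r$ acting trivially on $t$ is a description of the universal twisted \emph{curve} over a chart of the base, not of the moduli stack near $D$; the ghost automorphism of the nodal fiber does interact with the smoothing parameter, and keeping this separate from the relation between the stacky parameter $t$ and the coarse parameter $t^r$ is exactly what you need to make the Van Kampen count in the previous proposition come out right. Your downstream statement that the monodromy of a transverse disk is $T_\gamma^r$ is correct; just do not conflate the $\mu_r$ of the curve-stack with the orbifold structure of the moduli stack. Second, the closing sentence that the Van Kampen argument "only needs to be repeated" is the wrong framing: Van Kampen computes a fundamental group, but both sides here are already identified in the previous proposition, so what remains is to identify a morphism between two known groups. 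The actual ingredients are (a) the retraction of a tubular neighborhood of $D$ onto $D$, needed to pass between the monodromy of the nodal universal curve over $D$ and the monodromy of the smooth universal curve defining $\Modl{r}{S_0^{n+m}}$; and (b) the compatibility of the universal curves under the gluing morphism, which is your second step and is the one that does the work. Finally, it is worth recording that the two halves of the proposition constrain each other: the kernel of the topological gluing $\Modl{r}{S_0^{n+1}}\times\Modl{r}{S_0^{m+1}}\to\Modl{r}{S_0^{n+m}}$ is cyclic of order $r$, generated by a difference of boundary Dehn twists, and this is precisely the $\mu_r$ of the gerbe; so if your gerbe analysis had produced $\mu_r\times\mu_r$ or a trivial gerbe, the $\pi_1$-statement would already rule it out.
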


We can now explain the quantum representations of a modular functor in geometric terms.

\begin{proposition}\label{gluinggeometric}
    Let $\Nu$ be a modular functor of level $r$. Let $n\geq 1$ and $\lambda_1,\dotsc,\lambda_n$ be some colors.
    Then the quantum representation $\rho_{n}^r(\underline{\lambda})$ corresponds to a flat $\Q(\zeta_r)$-vector bundle
    $\Nu(\lambda_1,\dotsc,\lambda_n)$ over $\Mrb{n}{r}$.

    Now let $n=n_1+n_2$ with $n_1,n_2\geq 2$. Denote by $q$ the gluing map:
    \begin{equation*}
        \Mrb{n_1+1}{r}\times\Mrb{n_2+1}{r}\lra \Mrb{n}{r}.
    \end{equation*}
    Then the gluing axiom for $\Nu$ translates into the isomorphism of flat bundles:
    \begin{equation*}
        q^*\Nu(\lambda_1,\dotsc,\lambda_n)\simeq \bigoplus_\mu\Nu(\lambda_1,\dotsc,\lambda_{n_1},\mu)\otimes
        \Nu(\lambda_{n_1+1},\dotsc,\lambda_n,\mu^\dagger).
    \end{equation*}
\end{proposition}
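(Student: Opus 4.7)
The plan is to invoke (an orbifold version of) the Riemann--Hilbert correspondence: finite-dimensional $\Q(\zeta_r)$-representations of the orbifold fundamental group of a connected analytic Deligne--Mumford stack correspond to flat $\Q(\zeta_r)$-bundles on it. For the first assertion, I would combine this with the preceding proposition, which identifies $\pi_1^{\mathrm{orb}}(\Mrb{n}{r})$ with $\Modl{r}{S_0^n}$ via the monodromy of the universal family. Under this identification, $\rho_n^r(\underline{\lambda})$ yields the desired flat bundle $\Nu(\lambda_1,\dotsc,\lambda_n)$.

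For the gluing assertion, I would first use functoriality of Riemann--Hilbert to see that $q^*\Nu(\lambda_1,\dotsc,\lambda_n)$ corresponds to the representation of the product group $\Modl{r}{S_0^{n_1+1}}\times\Modl{r}{S_0^{n_2+1}}$ obtained by composing $\rho_n^r(\underline{\lambda})$ with the map on orbifold fundamental groups induced by $q$. The preceding proposition identifies this induced map with the topological gluing morphism attaching the last boundary of $S_0^{n_1+1}$ to the last boundary of $S_0^{n_2+1}$.

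Next I would apply axiom \textbf{(G)} of the modular functor to the surface $S_0^n$ realized as this gluing, with $\mu$ and $\mu^\dagger$ running over the colors of the two boundary components being identified. This produces the decomposition
\begin{equation*}
    q^*\Nu(\lambda_1,\dotsc,\lambda_n)\simeq \bigoplus_{\mu\in\Lambda}\Nu(\lambda_1,\dotsc,\lambda_{n_1},\mu)\otimes \Nu(\lambda_{n_1+1},\dotsc,\lambda_n,\mu^\dagger)\otimes \Nu(S_0^2,\mu,\mu^\dagger)^\vee,
\end{equation*}
as an isomorphism of representations of the product mapping class group, hence as an isomorphism of flat bundles on $\Mrb{n_1+1}{r}\times\Mrb{n_2+1}{r}$. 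By axiom \textbf{(2)} each $\Nu(S_0^2,\mu,\mu^\dagger)^\vee$ is canonically $\Q(\zeta_r)$, which yields the stated formula.

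The main subtle point I would want to check is that the factorization of $q$ recorded in the previous proposition as a $\mu_r$-gerbe followed by the embedding of a boundary divisor is genuinely compatible with the two sides of the isomorphism. The gerbe part corresponds to an $r$-torsion ambiguity in the monodromy around the nodal divisor, namely the $r$-th power of the Dehn twist around the vanishing cycle $\gamma$ on the glued surface. Since our representation factors through $\Modl{r}{S_0^n}$ this power acts trivially; on the right-hand side, \Cref{remarklevel} ensures the Dehn twist $T_\gamma$ acts on the summand indexed by $\mu$ by the root of unity $t_\mu$, whose $r$-th power is $1$. So the gerbe direction contributes no extra data and the identification goes through; the rest is a direct matching of functorial data.
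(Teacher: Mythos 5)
The paper states \Cref{gluinggeometric} without proof, treating it as a direct translation of the modular functor axioms through the identification of orbifold fundamental groups given in the two preceding propositions, so there is no official argument to compare against. Your proposal supplies the expected reasoning and is essentially correct: the Riemann--Hilbert dictionary gives the flat bundle from $\rho_n^r(\underline{\lambda})$ once $\pi_1^{\mathrm{orb}}(\Mrb{n}{r})\simeq \Modl{r}{S_0^n}$ is in hand, the preceding proposition identifies $q_*$ on $\pi_1$ with the topological gluing homomorphism, and then axiom \textbf{(G)} together with monoidality and axiom \textbf{(2)} yields the decomposition as an isomorphism of $\Modl{r}{S_0^{n_1+1}}\times\Modl{r}{S_0^{n_2+1}}$-representations, hence of flat bundles.

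One small imprecision in your final paragraph: the central $\mu_r$ of the gerbe is not ``the $r$-th power of the Dehn twist around $\gamma$'' but rather the kernel of the gluing homomorphism, generated (modulo $r$-th powers) by the pair $(T_{\partial_+}, T_{\partial_-}^{-1})$ of opposite boundary twists, both mapping to $T_\gamma$ in $\Modl{r}{S_0^n}$. The relevant compatibility check on the right-hand side is therefore that this element acts by $t_\mu\cdot t_{\mu^\dagger}^{-1}=1$, using $t_\lambda = t_{\lambda^\dagger}$ from axiom \textbf{(3)}, rather than that $t_\mu^r=1$ (which is the separate fact that $T_\gamma$ itself has order dividing $r$). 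In any case this check is automatic once you invoke the preceding proposition's computation of $q_*$ on $\pi_1$ and the equivariance built into axiom \textbf{(G)}, so the conclusion stands.
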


Since a modular functor is a little more than just its quantum representations, we have a little more structure.

\begin{proposition}\label{datageometricmodularfunctor}
    Let $\Nu$ be a modular functor of level $r$. Let $n\geq 1$ and $\mu_1,\dotsc,\mu_n$ be some colors.
    Then we have a flat bundle $\Nu^\mathrm{sym}_m(\mu_1,\dotsc,\mu_n)$ over the orbifold quotient $\Mrb{m+n}{r}/S_m$,
    such that its pullback to $\Mrb{m+n}{r}$ is:
    \begin{equation*}
        \bigoplus_{\lambda_1,\dotsc,\lambda_m\in \Lambda} \Nu(\lambda_{1},\dotsc,\lambda_{m},\mu_1,\dotsc,\mu_n).
    \end{equation*}
    Then the gluing isomorphisms translate as follows. For $n_1+n_2=n$ and $m_1+m_2=m$ with $m_1+n_1\geq 2$ and $m_1+n_1\geq 2$,
    we have a gluing map:
    \begin{equation*}
        q: \Mrb{m_1+n_1+1}{r}/S_{m_1}\times\Mrb{m_2+n_2+1}{r}/S_{m_2} \lra \Mrb{m+n}{r}/S_{m}.
    \end{equation*}
    Then the gluing is an isomorphism:
    \begin{equation*}
        q^*\Nu^\mathrm{sym}_m(\mu_1,\dotsc,\mu_n)\simeq \bigoplus_\nu\Nu^\mathrm{sym}_{m_1}(\mu_1,\dotsc,\mu_{n_1},\nu)\otimes
        \Nu^\mathrm{sym}_{m_2}(\mu_1,\dotsc,\mu_{n_2},\nu^\dagger).
    \end{equation*}
    Moreover, if we consider the forgetful map:
    \begin{equation*}
        p: \Mrb{m+1}{r}/S_{m} \lra \Mrb{m}{r}/S_{m}
    \end{equation*}
    and property \textbf{(1)} of \Cref{definitionmodularfunctor} induce a forgetting isomorphism:
    \begin{equation*}
        p^*\Nu^\mathrm{sym}_m\simeq \Nu^\mathrm{sym}_m(0).
    \end{equation*}
\end{proposition}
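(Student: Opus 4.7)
The plan is to obtain each claim by $S_m$-equivariant descent from the corresponding non-symmetric statement in \Cref{gluinggeometric}, using the functoriality of the modular functor $\Nu$ under boundary-permuting homeomorphisms.

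First, I construct $\Nu^\mathrm{sym}_m(\mu_1,\dotsc,\mu_n)$ as follows. The symmetric group $S_m$ acts on $\Mrb{m+n}{r}$ by permuting the first $m$ marked sections, with orbifold quotient $\Mrb{m+n}{r}/S_m$. For each $\sigma\in S_m$, functoriality of $\Nu$ applied to a boundary-permuting homeomorphism of $S_0^{m+n}$ provides a canonical isomorphism $\Nu(\lambda_1,\dotsc,\lambda_m,\underline{\mu})\simeq \Nu(\lambda_{\sigma^{-1}(1)},\dotsc,\lambda_{\sigma^{-1}(m)},\underline{\mu})$. Packaging these upgrades the flat bundle $\bigoplus_{\underline{\lambda}\in\Lambda^m}\Nu(\underline{\lambda},\underline{\mu})$ on $\Mrb{m+n}{r}$ to an $S_m$-equivariant flat bundle, and $\Nu^\mathrm{sym}_m(\underline{\mu})$ is defined as its descent to $\Mrb{m+n}{r}/S_m$.

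Second, for the gluing isomorphism I lift $q$ to a map $\widetilde{q}: \Mrb{m_1+n_1+1}{r}\times\Mrb{m_2+n_2+1}{r}\to \Mrb{m+n}{r}$ equivariant for the inclusion $S_{m_1}\times S_{m_2}\hookrightarrow S_m$. For each $\underline{\lambda}\in\Lambda^m$, \Cref{gluinggeometric} yields
\[
\widetilde{q}^*\Nu(\underline{\lambda},\underline{\mu})\simeq \bigoplus_{\nu\in\Lambda}\Nu(\lambda_1,\dotsc,\lambda_{m_1},\mu_1,\dotsc,\mu_{n_1},\nu)\otimes\Nu(\lambda_{m_1+1},\dotsc,\lambda_m,\mu_{n_1+1},\dotsc,\mu_n,\nu^\dagger),
\]
and summing over $\underline{\lambda}$ while reindexing $\underline{\lambda}=\underline{\lambda}^{(1)}\sqcup\underline{\lambda}^{(2)}$ with $\underline{\lambda}^{(i)}\in\Lambda^{m_i}$ distributes the sum into a sum over $\nu$ of tensor products matching the right-hand side of the statement before descent. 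Naturality of the gluing axiom ensures that this identification is $(S_{m_1}\times S_{m_2})$-equivariant; descent gives the claimed isomorphism.

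Finally, I deduce the forgetful isomorphism from the gluing axiom applied to $S_0^{m+1}\sqcup S_0^1$, gluing the $(m+1)$-th boundary of $S_0^{m+1}$ to the boundary of the disk (the result being $S_0^m$). Axioms~\textbf{(1)} and~\textbf{(2)} of \Cref{definitionmodularfunctor} force all summands in the gluing formula with $\nu\neq 0$ to vanish, leaving a canonical isomorphism $\Nu(S_0^m,\underline{\lambda})\simeq \Nu(S_0^{m+1},\underline{\lambda},0)$. Summing over $\underline{\lambda}\in\Lambda^m$, this is manifestly $S_m$-equivariant and descends to $p^*\Nu^\mathrm{sym}_m\simeq \Nu^\mathrm{sym}_m(0)$. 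The main obstacle throughout is bookkeeping: one must verify that the termwise isomorphisms produced by the gluing axiom are natural and $(S_{m_1}\times S_{m_2})$-equivariant, so that descent to the orbifold quotients is well defined. This naturality is immediate from the definition of $\Nu$ as a \emph{functor}, so the argument reduces to combinatorial reindexing of direct sums.
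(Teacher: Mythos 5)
The paper states this proposition without a written proof, treating it as a routine consequence of the modular functor axioms and \Cref{gluinggeometric}, and your proposal supplies exactly the expected $S_m$-equivariant descent argument: define $\Nu^\mathrm{sym}_m(\underline{\mu})$ by descending the direct sum $\bigoplus_{\underline{\lambda}}\Nu(\underline{\lambda},\underline{\mu})$ equipped with the $S_m$-structure coming from functoriality of $\Nu$ under boundary-permuting homeomorphisms, lift $q$ to $\widetilde q$ to apply the non-symmetric gluing termwise and then descend, and obtain the forgetful isomorphism by gluing on a disk $S_0^1$ and using axiom \textbf{(1)} to kill all summands with $\nu\neq 0$. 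This is correct and is the natural route.

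One small point worth making explicit: in the last step the gluing axiom gives
$\Nu(S_0^m,\underline{\lambda})\simeq\Nu(S_0^{m+1},\underline{\lambda},0)\otimes\Nu(S_0^1,0)\otimes\Nu(S_0^2,0,0)^\vee$,
and while $\Nu(S_0^2,0,0)^\vee$ is canonically $\Q(\zeta_r)$ by axiom \textbf{(2)}, axiom \textbf{(1)} only asserts $\dim\Nu(S_0^1,0)=1$. So strictly speaking the forgetting isomorphism you produce depends on fixing a trivialization of the one-dimensional space $\Nu(S_0^1,0)$; the resulting identification is canonical only up to an overall scalar independent of $\underline{\lambda}$, which is enough for the statement as given (the paper calls it ``a forgetting isomorphism'', not a canonical one). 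The $S_m$-equivariance of both the gluing and forgetful isomorphisms does follow immediately from functoriality of the gluing axiom under morphisms of colored surfaces, as you say, so the descent steps are justified.
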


The bundles $\Nu^\mathrm{sym}_m(\mu_1,\dotsc,\mu_n)$ together with the gluing and forgetting isomorphisms
described in \Cref{datageometricmodularfunctor} contain all the data of the modular functor.

%-------------------------------------------------------------------------------------------------------------------------

\subsubsection{The abelian modular functors}

Here we explicitly define the so called abelian modular functors.
They are called abelian because they can be constructed through quantization of $\mathrm{U}(1)$-character varieties of surfaces.
It can be constructed with classical $\Theta$ functions (see \cite{gelcaClassicalThetaFunctions2014} for example).
However, we only use the genus $0$ part of the abelian modular functor, which can be given explicitly as follows.

\begin{definition}
    Let $r\geq 1$ be an odd integer. Let $\Lambda=\Z/2r\Z$ with $0$ as distinguished element and $a^\dagger= -a$ as involution.
    Then define the abelian genus $0$ modular functor of level $2r$ by:
    \begin{equation*}
        \Nuab(S_0^n,a_1,\dotsc,a_n) = \begin{cases}
            \Q(\zeta_r)\text{ if }a_1+\dotsb+a_n=0\;[2r] \\
            0\text{ otherwise.}
        \end{cases}
    \end{equation*}
    Then the other data of $\Nuab$ are completely determined by:
    \begin{description}
        \item[(1)] the value of $t_a$ for $a\in \Z/2r\Z$, that is set to:
        \begin{equation*}
            t_a = q^{-\frac{a^2}{2}}\text{ where }q^{\frac{1}{2}}=-\zeta_r.
        \end{equation*}
        \item[(2)] the action of half-twists. Let $\sigma_1$ be the map of colored surface:
        \begin{equation*}
            \sigma_1: (S_0^n,a_1,a_2,a_3\dotsc,a_n)\lra (S_0^n,a_2,a_1,a_3\dotsc,a_n)
        \end{equation*}
        corresponding to the left half-twist exchanging the first two boundary components in $S_0^n$.
        Then it acts by multiplication by $q^{-\frac{a_1a_2}{2}}$.
    \end{description}
\end{definition}

\begin{remark}
    Although $\Nuab$ has level $2r$, we will often call it the abelian modular functor of level $r$.
    This can be explained by the fact that its rooted and framed version has level $r$ (see \Cref{remarklevelframing}).
\end{remark}

\begin{remark}
    On can compute the action of a Dehn twist around $\gamma\subset S_0^n$ on $\Nuab(S_0^n,a_1,\dotsc,a_n)$ as follows.
    The {\scc} separates the boundary components into $I$ and $J$ with $I\sqcup J=\{1,\dotsc,n\}$.
    Let $b=\sum_{i\in I}a_i$, then the left twist along $\gamma$ acts by $q^{-\frac{b^2}{2}}$.

    As for the gluing axiom at $\gamma$, let $S_1\sqcup S_2$ be the surface obtained by cutting along $\gamma$,
    with the boundary components numbered by $I$ in $S_1$.
    Then:
    \begin{equation*}
        \Nuab(S_0^n,a_1,\dotsc,a_n)=\Nuab(S_1,(a_i)_{i\in I},-b)\otimes \Nuab(S_2,(a_j)_{j\in J},b)
    \end{equation*}
    as $\Nuab(S_1,(a_i)_{i\in I},-c)\neq 0$ if and only if $c=b$.
\end{remark}

%-------------------------------------------------------------------------------------------------------------------------

\subsubsection{\texorpdfstring{The $\SO$ modular functors}
{The SO(3) modular functors}}

\begin{theorem}\label{existenceSOmodularfunctors}
    Let $r\geq 3$ be an odd integer. Let $\Lambda=\{0,1,2,\dotsc,r-2\}$, with distinguished element $0$ and trivial involution.

    Then there exists a genus $0$ modular functor $\Nusl$ on the surfaces colored with $\Lambda$,
    called the $\SO$ modular functor of level $2r$. It satisfies:
    \begin{description}
        \item[(1)] For $a\in \{0,1,2,\dotsc,r-2\}$, $t_a=q^{\frac{a(a+2)}{2}}$;
        \item[(2)] For every $a,b,c\in\{0,1,2,\dotsc,r-2\}$, $\Nusl(S_0^3,a,b,c)$ has dimension $0$ or $1$;
        \item[(3)] For every $\Sigma=(S,\lambda_1,\dotsc,\lambda_n)$, $\Nu(\Sigma)$ is $0$ if $\sum_i\lambda_i$ is odd.
    \end{description}
\end{theorem}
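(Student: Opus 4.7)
The plan is to invoke the Kauffman bracket skein-module construction at the $2r$-th root of unity $-\zeta_r$, which is developed in full in \Cref{subsectionskein}. For a colored genus $0$ surface $\Sigma=(S,\ul)$, fix a handlebody $H$ with $\partial H=S$ and consider the skein module of $H$ whose banded boundary points lie on $\partial S$ and carry the colors $\ul$. One then quotients by negligible elements at $-\zeta_r$; equivalently, this amounts to inserting a Jones--Wenzl idempotent at color $r-1$ whenever possible. Functoriality of $\Nusl$ with respect to orientation-preserving, color-preserving homeomorphisms is automatic from the skein formalism, since the mapping class group acts on skein modules by pushforward.

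Next, I would verify the gluing axiom \textbf{(G)}. Given $S$ with two boundary circles $\partial_\pm S$ in distinct components, one picks handlebodies $H_\pm$ whose boundary contains these components, then $S_\pm$ bounds the handlebody built from $H_+\sqcup H_-$ by gluing a tube joining disks in $\partial_\pm S$. The completeness of the Jones--Wenzl basis in the skein algebra of an annulus then decomposes the skein module of this handlebody as a sum over colors $\mu\in\Lambda$ running along the tube, which is exactly the decomposition in \Cref{gluingunrooted}. The fact that a Dehn twist around the gluing curve acts on the $\mu$-summand by multiplication by the ribbon element of $V_\mu$ gives the scalar $t_\mu$, which is independent of $S$ as required by \Cref{remarklevel}.

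Properties (1)--(3) then follow from standard computations in quantum $\sltwo$ at $-\zeta_r$. For (1), $t_a=q^{a(a+2)/2}$ is the ribbon balancing on the $(a+1)$-dimensional simple module $V_a$. For (2), the truncated fusion rule for $\SO$ is $\dim\Nusl(S_0^3,a,b,c)\in\{0,1\}$, being $1$ exactly when $a+b+c$ is even and $|a-b|\leq c\leq\min(a+b,2(r-2)-(a+b))$; the multiplicity-one property comes from the multiplicity-one fusion in $\sltwo$. Property (3) is immediate from (2) applied recursively via the pair-of-pants decomposition of $S_0^n$: the only nonzero three-point spaces have $a+b+c$ even, so the total parity of the labels on the external boundaries must also be even.

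The main obstacle is not conceptual but bookkeeping: one must check that the skein construction, which is usually formulated for TQFTs in the Turaev sense, matches our axiomatization in \Cref{definitionmodularfunctor} on the nose, in particular that the gluing isomorphisms are functorial under the mapping class group action and that the scalar $t_a$ comes out with the correct power of $q$ (the sign ambiguity in $q^{1/2}=-\zeta_r$ must be tracked carefully, since $\SO$ of level $2r$ corresponds to $\SU$ at a $2r$-th root). Once these compatibilities are verified, the three asserted properties drop out.
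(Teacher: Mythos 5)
Your proposal takes the same route as the paper: $\Nusl$ is built via the Kauffman bracket skein-module construction of \Cref{subsectionskein} at $q^{1/2}=-\zeta_r$, with Jones--Wenzl idempotents inserted at the colored boundary components and a quotient by negligible elements (realized in the paper as the kernel of the Hermitian form $h_{\Dc}$), and properties (1)--(3) together with the gluing axiom are delegated to standard skein/quantum $\sltwo$ computations, just as the paper does via \cite{marcheIntroductionQuantumRepresentations2021} and \cite{blanchetTopologicalQuantumField1995}. The only superficial difference is that you phrase the construction in terms of handlebody skein modules, while the paper uses marked cubes and the rooted-disk formalism of \Cref{subrooting} before passing to general genus $0$ surfaces.
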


\begin{remark}
    Although $\Nusl$ has level $2r$, we will often call it the $\SO$ modular functor of level $r$.
    This can be explained by the fact that its rooted and framed version has level $r$ (see \Cref{remarklevelframing}).
\end{remark}

Two constructions of these functors are given in \Cref{sectionmodularfunctorconstruction}.

A triplet $(a,b,c)$ such that $\Nusl(S_0^3,a,b,c)\neq 0$ will be called admissible.

\begin{proposition}\label{colorconditions}
    Let $a,b,c\in\{0,1,\dotsc,r-2\}$. Then $(a,b,c)$ is admissible for $\Nusl$ if and only if:
    \begin{itemize}
        \item $a+b+c$ is even;
        \item $a,b,c$ verify triangular inequalities, ie. $|a-b|\leq c \leq a+b$;
        \item $a+b+c<2r-2$.
    \end{itemize}
\end{proposition}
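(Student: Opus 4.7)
The plan is to use the quantum group construction of $\Nusl$ that will be developed in \Cref{subquantumconstruction} (or equivalently the Skein-theoretic construction of \Cref{subsectionskein}). Either construction identifies $\Nusl(S_0^3,a,b,c)$ with the space of invariants of $V_a\otimes V_b\otimes V_c$ inside the semisimple quotient of the category of tilting $U_{\zeta_r}(\sltwo)$-modules, where $V_k$ denotes the simple module of highest weight $k$. Since the involution on $\Lambda$ is trivial (all labels are self-dual), this invariant space is naturally isomorphic to $\mathrm{Hom}(V_c,V_a\otimes V_b)$ in the truncated fusion category, so the proposition reduces to the Clebsch--Gordan/Verlinde fusion rule in this category.

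First I would recall the classical $\sltwo$ decomposition $V_a\otimes V_b=\bigoplus_c V_c$, where $c$ runs over $|a-b|,|a-b|+2,\dots,a+b$, each summand appearing with multiplicity one. This accounts immediately for the parity condition $a+b+c\equiv 0\pmod 2$ and the classical triangle inequalities $|a-b|\le c\le a+b$.

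Next I would impose semisimplification at the root of unity. For $r$ odd and $\zeta_r$ the chosen primitive root of unity, one has $[r]_{\zeta_r}=0$, so $V_{r-1}$ has vanishing quantum dimension and is killed in the semisimple quotient; the surviving simple modules are exactly $V_0,\dots,V_{r-2}$. The summands $V_c$ of the classical $V_a\otimes V_b$ that survive are precisely those for which both $c$ and the Weyl-reflected weight $2(r-2)-c$ lie in $\{0,\dots,r-2\}$. Combined with $c\le a+b$, this becomes the single inequality $a+b+c\le 2r-4$, which under the parity condition is equivalent to $a+b+c<2r-2$. This is the well-known Verlinde truncation at level $k=r-2$.

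The only real obstacle is verifying that the quantum group construction of \Cref{subquantumconstruction} really does compute $\Nusl(S_0^3,a,b,c)$ as the invariant space described above; once the construction is in place this identification is essentially definitional. The rest of the argument is the classical $\sltwo$ Clebsch--Gordan rule plus the standard truncation at an odd root of unity, both of which are entirely classical.
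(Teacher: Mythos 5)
Your overall strategy is sound, and it is in fact a legitimate alternative route: the paper itself never proves this proposition directly, but instead deduces it in \Cref{dimensionproperties} from the skein-theoretic construction, citing \cite[2.5, 3.7]{marcheIntroductionQuantumRepresentations2021}. Your proposal instead argues on the quantum-group side, computing $\dim\bhom{V_c}{V_a\otimes V_b}$ from the structure of tilting modules for $U_{\zeta_r}(\sltwo)$. One logical caveat: within the paper, \Cref{theoremquantumgroupconstruction} (identifying $\Nusl(\Dc)$ with a $\overline{\mathrm{hom}}$ of $U_{\zeta_r}$-modules) is itself proved \emph{using} \Cref{dimensionproperties}, so if you invoke that theorem you must be sure the quantum-group fusion rule is established independently (it can be, e.g. via \cite[11.3.16]{chariGuideQuantumGroups1995} or the general theory of tilting modules), to avoid circularity.

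The key intermediate step is, however, stated incorrectly. You claim that a classical summand $V_c$ of $V_a\otimes V_b$ survives semisimplification precisely when \emph{both} $c$ and $2(r-2)-c$ lie in $\{0,\dots,r-2\}$. These two conditions together force $c=r-2$, so this cannot be the criterion; it also does not involve $a,b$, so combining it with $c\le a+b$ cannot yield a condition of the shape $a+b+c\le 2r-4$. The correct mechanism is the following. For $c> r-1$ the indecomposable tilting $T_c$ is negligible and has a Weyl filtration with factors $V_c$ and $V_{2(r-1)-c}=V_{2r-2-c}$. Thus the low-weight factor $V_{c'}$ (with $c'\le r-2$) of the classical decomposition is \emph{absorbed} into a negligible tilting precisely when the reflected index $2(r-1)-c'=2r-2-c'$ also occurs in the classical Clebsch--Gordan range, i.e. when $2r-2-c'\le a+b$. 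Therefore $V_{c'}$ survives in the semisimple quotient iff $a+b+c'<2r-2$, which under parity is equivalent to $a+b+c'\le 2r-4$. Your final answer is correct, but your stated reason does not produce it; the criterion should be phrased in terms of whether $2r-2-c$ lies in the classical range $\{|a-b|,\dots,a+b\}$, not in terms of $c$ and $2(r-2)-c$ both lying in $\{0,\dots,r-2\}$.
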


\begin{remark}
    As we are only dealing with genus $0$ surfaces, the $\SO$ and $\SU$ modular functors are isomorphic
    (see \cite[1.5]{blanchetTopologicalQuantumField1995}). We call it the $\SO$ modular functor because
    we use $\SO$ conventions of \cite{blanchetTopologicalQuantumField1995}.
\end{remark}

%-------------------------------------------------------------------------------------------------------------------------

\subsubsection{Hermitian and integral structures}\label{subsubhermitianintegral}

\begin{definition}[Hermitian structure]
    Let $\Nu$ be a genus $0$ modular functor. A Hermitian structure on $\Nu$ is the data, for each colored surface $\Sigma$,
    of a Hermitian form $h_\Sigma$ on $\Nu(\Sigma)$ such that:
    \begin{description}
        \item[(1)] For each morphism $f:\Sigma_1\ra\Sigma_2$ in $\mathrm{C}$, $f$ is an isometry;
        \item[(2)] Each gluing isomorphism
        \begin{equation*}
            \Nu(S_{\pm},\underline{\lambda})\simeq \bigoplus_{\mu\in\Lambda}\Nu(S,\mu,\mu^\dagger,\underline{\lambda})
            \otimes \Nu(S_0^2,\mu,\mu^\dagger)^\vee
        \end{equation*} 
        as in \Cref{gluingunrooted}, is an isometry.
    \end{description}
\end{definition}

\begin{remark}
    If $\Nu$ has a Hermitian structure $h$, then for each $\Sigma=(S_0^n,\underline{\lambda})$, the quantum representation
    $\rho_n(\underline{\lambda})$ takes values in the group $\U{h_\Sigma}{\Nu(\Sigma)}$ of isomorphisms preserving $h_\Sigma$.
    If we choose an embedding $\Q(\zeta_r)\subset \C$, then $\rho_n(\underline{\lambda})$ takes values in $\PU{p}{q}$,
    where $(p,q)$ is the signature of $h_\Sigma\otimes\C$ for this embedding.
\end{remark}

\begin{definition}[Integral structure]
    Let $\Nu$ be a genus $0$ modular functor. An integral structure on $\Nu$ is the data, for each colored surface $\Sigma$,
    of a $\Z[\zeta_r]$-lattice $\NuO(\Sigma)\subset \Nu(\Sigma)$ such that:
    \begin{description}
        \item[(1)] For each morphism $f:\Sigma_1\ra\Sigma_2$ in $\mathrm{C}$, $f$ maps $\NuO(\Sigma_1)$ into $\NuO(\Sigma_2)$;
        \item[(2)] Each gluing isomorphism
        \begin{equation*}
            \Nu(S_{\pm},\underline{\lambda})\simeq \bigoplus_{\mu\in\Lambda}\Nu(S,\mu,\mu^\dagger,\underline{\lambda})
            \otimes \Nu(S_0^2,\mu,\mu^\dagger)^\vee
        \end{equation*} 
        as in \Cref{gluingunrooted}, induces an isomorphism:
        \begin{equation*}
            \NuO(S_{\pm},\underline{\lambda})\simeq \bigoplus_{\mu\in\Lambda}\NuO(S,\mu,\mu^\dagger,\underline{\lambda})
            \otimes \NuO(S_0^2,\mu,\mu^\dagger)^\vee;
        \end{equation*}
        \item[(3)] For each $\lambda\in\Lambda$, the canonical isomorphism $\Nu(S_0^2,\lambda,\lambda^\dagger)\simeq \Q(\zeta_r)$
        identifies $\NuO(S_0^2,\lambda,\lambda^\dagger)$ with $\Z[\zeta_r]$.
    \end{description}
    If, moreover, $\Nu$ is given a Hermitian structure $h$, we say that $\NuO$ is compatible with it if for each colored surface $\Sigma$,
    $h_\Sigma$ restricted to $\NuO(\Sigma)$ is a perfect $\Z[\zeta_r]$-valued Hermitian pairing.
\end{definition}

The abelian modular functors have straightforward compatible Hermitian and integral structures.

\begin{proposition}
    Let $r\geq 1$ be an odd integer. Then $\Nuab$ has a Hermitian structure $h$ given by $(a,b)\mapsto a\overline{b}$
    on each $\Nuab(\Sigma)=\Q(\zeta_r)$~or~$0$. This Hermitian structure is compatible with the integral structure:
    \begin{equation*}
        \NuabO(S_0^n,a_1,\dotsc,a_n) = \begin{cases}
            \Z[\zeta_r]\subset\Q(\zeta_r)\text{ if }a_1+\dotsb+a_n=0\;[2r] \\
            0\text{ otherwise}.
        \end{cases}
    \end{equation*}
\end{proposition}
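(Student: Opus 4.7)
The strategy is to verify each axiom by reducing it to a one-dimensional computation. Every nonzero $\Nuab(\Sigma)$ is canonically identified with $\Q(\zeta_r)$, so any structural map between nonzero spaces is multiplication by a scalar. The form $(a,b)\mapsto a\overline{b}$ is preserved by multiplication by $\lambda$ precisely when $\lambda\overline{\lambda}=1$, and the lattice $\Z[\zeta_r]$ is preserved precisely when $\lambda\in\Z[\zeta_r]^\times$. Both conditions are satisfied by roots of unity in $\Q(\zeta_r)$; since $r$ is odd, $\Z[\zeta_{2r}]=\Z[\zeta_r]$, so even $2r$-th roots of unity qualify.

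To verify the functoriality axioms, it suffices to check the generators of the relevant mapping class groups: half-twists and Dehn twists. By the definition of $\Nuab$, a half-twist exchanging colors $a_1,a_2$ acts by $q^{-a_1a_2/2}=(-\zeta_r)^{-a_1a_2}$, and, by the remark following that definition, a Dehn twist around a simple closed curve separating boundaries of total color $b$ acts by $q^{-b^2/2}=(-\zeta_r)^{-b^2}$. Both scalars are $2r$-th roots of unity lying in $\Z[\zeta_r]^\times$, so axiom (1) of the Hermitian structure and axiom (1) of the integral structure both hold.

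For the gluing axioms, the abelian case is especially simple: writing $S=S_1\sqcup S_2$ with $\partial_{\pm} S\subset S_{\pm}$, the space $\Nuab(S,\mu,\mu^\dagger,\underline{\lambda})$ is nonzero for only one value of $\mu$ (determined by the coloring), so the direct sum in \Cref{gluingunrooted} reduces to a single summand. The gluing isomorphism then becomes the canonical identification $\Q(\zeta_r)\simeq\Q(\zeta_r)\otimes\Q(\zeta_r)\otimes\Q(\zeta_r)^\vee$, which is automatically an isometry and an isomorphism of $\Z[\zeta_r]$-lattices; this yields axiom (2) of both structures. Axiom (3) of the integral structure follows directly from the canonical isomorphism $\Nuab(S_0^2,\lambda,\lambda^\dagger)\simeq\Q(\zeta_r)$ specified in axiom (2) of the modular functor. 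Finally, the compatibility assertion is automatic: restricted to $\Z[\zeta_r]$, the form $(a,b)\mapsto a\overline{b}$ takes values in $\Z[\zeta_r]$, and it is perfect since $h(1,1)=1$.

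The main obstacle is pinning down the gluing isomorphism unambiguously in the abelian case to confirm it is the canonical identity rather than a twist by a nontrivial root of unity. This is forced by the coherence axioms of the modular functor together with the explicit tensor product factorization noted in the remark after the definition of $\Nuab$; once the bookkeeping of boundary colorings is set up carefully, the remaining verification is routine.
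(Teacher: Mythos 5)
The paper does not give a proof of this proposition; it is introduced with the remark that the structures are ``straightforward,'' so your write-up is filling in exactly what the author leaves implicit. Your verification is correct and is the natural approach: every nonzero value of $\Nuab$ is canonically $\Q(\zeta_r)$, so one only needs to check that the scalars attached to (i) generators of the mapping class groupoid and (ii) gluing maps are $2r$-th roots of unity, which both preserve $(a,b)\mapsto a\overline{b}$ (since $\lambda\overline{\lambda}=1$) and $\Z[\zeta_r]$ (since $r$ odd implies $\Z[\zeta_{2r}]=\Z[\zeta_r]$). The explicit formulas $t_a=q^{-a^2/2}$ for Dehn twists and $q^{-a_1a_2/2}$ for half-twists, with $q^{1/2}=-\zeta_r$ a primitive $2r$-th root of unity, supply exactly these scalars, and the remark following the definition of $\Nuab$ pins down the gluing isomorphism as the canonical tensor-product identification, resolving the ``obstacle'' you flag. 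Your final observations that the form is $\Z[\zeta_r]$-valued on the lattice and perfect because $h(1,1)=1$ is a unit complete the compatibility check. The one place I would tighten the exposition is the phrase ``it suffices to check the generators'': for a morphism $f:\Sigma_1\ra\Sigma_2$ between distinct colored surfaces one should say explicitly that the hom-set is a torsor over $\Aut(\Sigma_1)$, so it suffices to check Dehn twists (generating $\PMod{S_0^n}$), half-twists (realizing permutations of equal colors), and a single permutation homeomorphism between the two colorings; but all of these are covered by your list, so the gap is cosmetic rather than mathematical.
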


The $\SO$ modular functors are given with a Hermitian structure. However, an integral structure is only known when the level $r$
is a prime number.

\begin{proposition}
    Let $r\geq 3$ be an odd integer. Then $\Nusl$ has a Hermitian structure $h$.
    If $r$ is prime, then $\Nusl$ has a compatible integral structure $\NuslO$.
\end{proposition}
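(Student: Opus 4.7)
The plan is to exploit the skein-module model of $\Nusl$ to be set up in \Cref{subsectionskein}. For the Hermitian structure, I would realize $\Nusl(\Sigma)$ as a quotient of the $\Q(\zeta_r)$-module of colored ribbon graphs properly embedded in a collar $S\times[0,1]$ meeting $\partial S\times\{1/2\}$ as prescribed by $\underline{\lambda}$, and define $h_\Sigma(x,y)=\langle x\cup\overline{y}\rangle$, where $\overline{y}$ is obtained from $y$ by reflecting across $S\times\{1/2\}$ and applying complex conjugation $\zeta_r\mapsto\zeta_r^{-1}$ to scalar coefficients, and $\langle\cdot\rangle$ denotes the Kauffman bracket evaluation of the closed diagram thereby obtained. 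The Kauffman bracket relations ensure that $h_\Sigma$ descends to $\Nusl(\Sigma)$, and Hermitian symmetry follows from invariance of the bracket under the global reflection of $S\times[0,1]$. Mapping class group invariance (axiom (1)) is immediate since homeomorphisms act by isotopy. Gluing compatibility (axiom (2)) is a direct computation via the trivalent-graph factorization: cutting $x\cup\overline{y}$ along the gluing curve decomposes it as a sum over interior colors $\mu$ of products of factor brackets weighted by the theta-coefficient of the interior circle coloured by $\mu$, which is exactly the pairing with $\Nusl(S_0^2,\mu,\mu^\dagger)^\vee$.

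For the integral structure when $r$ is prime, I would fix a pants decomposition $\mathcal{P}$ of each $\Sigma$ and let $\NuslO(\Sigma)$ be the $\Z[\zeta_r]$-span of the admissibly coloured trivalent-graph basis dual to $\mathcal{P}$, normalized so that axiom (3) holds on $S_0^2$. Gluing compatibility is built in: a pants decomposition of the glued surface that refines pants decompositions of the two factors identifies the lattices tautologically. The genuine content lies in well-definedness of $\NuslO(\Sigma)$, that is, independence of the choice of $\mathcal{P}$. Since in genus $0$ any two pants decompositions are related by a sequence of elementary fusion moves, this reduces to showing that the fusion matrices ($6j$-symbols) have entries in $\Z[\zeta_r]$; these entries are explicit rational functions of quantum integers $[n]_q$ for $1\leq n\leq r-1$.

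The principal obstacle is therefore to verify that every such quantum integer is a unit in $\Z[\zeta_r]$ when $r$ is an odd prime. Writing $q=-\zeta_r$ one has $q^2=\zeta_r^s$ with $\gcd(s,r)=1$, so $[n]_q=q^{-(n-1)}(1-\zeta_r^{sn})/(1-\zeta_r^s)$. For $r$ prime and $1\leq n\leq r-1$ both $1-\zeta_r^{sn}$ and $1-\zeta_r^s$ are associate generators of the unique prime of $\Z[\zeta_r]$ lying above $r$, so their ratio is a unit. Consequently all denominators appearing in the $6j$-symbol expressions clear over $\Z[\zeta_r]$, yielding well-definedness of $\NuslO$ together with its invariance under the mapping class group action demanded by axiom (1), and compatibility with the Hermitian form follows from the matching normalizations on the pants basis. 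The primality hypothesis is essential at precisely this step: for composite $r$ the quotient $(1-\zeta_r^{sn})/(1-\zeta_r^s)$ fails to be a unit when $\gcd(n,r)>1$, so the change-of-basis matrices no longer preserve the $\Z[\zeta_r]$-lattice and this construction of $\NuslO$ breaks down.
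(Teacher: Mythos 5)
Your proposal is correct and uses essentially the paper's skein-theoretic strategy, though the integral structure is organized somewhat differently. The Hermitian form you construct by doubling across $S\times\bigl\{\frac{1}{2}\bigr\}$ is exactly what the paper does in \Cref{hermitianform}, implemented on marked cubes in the rooted (disk) formulation; there the reflected tangle is stacked to land in $f_{\nu}TL_{\nu}\simeq A_{r-1}$, which also resolves the question of what ``closed diagram'' to evaluate when $S$ has boundary (the root boundary is absorbed by the Jones--Wenzl projector). For the lattice, the paper does not pick a pants decomposition: the skein module $S(\Dc)$ is defined over $A_{r-1}=\Z[q^{\pm\frac{1}{2}},[1]^{-1},\dots,[r-1]^{-1}]$, and \Cref{remarklattice} observes that for $r$ prime the ring map $A_{r-1}\to\Z[\zeta_r]$ exists because each $[n]$ with $1\le n\le r-1$ is a unit, giving an intrinsic lattice $S^\mathcal{O}_{\zeta_r}(\Dc)$ whose image $\NuslO(\Dc)$ is automatically $\Mod{D^n}$-invariant and gluing-compatible (\Cref{skeingluingaxiom}). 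Your pants-basis version rests on the same arithmetic crux you correctly isolate --- $(1-\zeta_r^{sn})/(1-\zeta_r^s)$ is a unit when $r$ is prime and $\gcd(n,r)=1$ --- but would additionally require checking that the \emph{inverse} $6j$-matrices, not only their entries, lie over $\Z[\zeta_r]$; the intrinsic $A_{r-1}$-approach sidesteps that verification entirely. Finally, perfectness of the pairing on the lattice (what the paper calls compatibility of $\NuslO$ with $h$) is not formal in either construction: your ``matching normalizations'' remark has the right content, but the paper (\Cref{remarkintegral}) backs it up by citing the explicit norm computations on the flat-tangle basis in \cite[3.11--3.12]{marcheIntroductionQuantumRepresentations2021}, and a pants-basis version would need the analogous calculation.
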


The construction of the rooted Hermitian structure is given in \Cref{hermitianform}.
For the relation between rooted and unrooted Hermitian structures, see \Cref{rootinghermitianstructure}.
See \Cref{remarkintegral}, \Cref{skeingluingaxiom} and \Cref{theoremquantumgroupconstruction} for the integral structure at prime levels.

%-------------------------------------------------------------------------------------------------------------------------

\subsection{Rooting of a modular functor}\label{subrooting}

%-------------------------------------------------------------------------------------------------------------------------

\subsubsection{With the topological definition}\label{subsubrootingtopological}

The geometric construction will make the modular functors appear in a slightly different form,
that we call \textit{rooted}.

\begin{definition}\label{definitiondisks}
    Let $n\geq 0$. We define $D^n$ to be the closed disk with $n$ open disks removed.
    It is homeomorphic to a sphere with $n+1$ boundary components, $1$ outer and $n$ inner.
    We will call a disk any compact connected genus $0$ surface embedded in $\R^2$. Hence any disk is isomorphic to a $D^n$.

    We will denote by $D_n$ the open disk with $n$ points removed and $\oD_n$ the closed disk with $n$ interior points removed.
\end{definition}

We will often think of a disk $D$ as a framed sphere, where the framing is given by the inclusion $D\subset \R^2$.

\begin{proposition}
    The mapping class group $\Mod{D^n}$ is isomorphic to the group $UPB_n$ of pure framed braids on $n$ strands.
\end{proposition}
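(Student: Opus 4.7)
The plan is to construct a split short exact sequence
$$1 \lra \Z^n \lra \Mod{D^n} \lra PB_n \lra 1$$
and identify the resulting direct product with $UPB_n = PB_n \times \Z^n$, whose $\Z^n$ factor records the framing of each strand.

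I would construct the surjection by collapsing inner boundaries to punctures. Choose a point $x_i$ inside each of the $n$ removed open disks; collapsing the $i$-th inner boundary of $D^n$ to $x_i$ yields a quotient map $D^n \to \oD_n$. The induced map $c: \Mod{D^n} \to \PMod{\oD_n}$ is well-defined, since any isotopy of $D^n$ fixing $\partial D^n$ pointwise descends to an isotopy of $\oD_n$ fixing the outer boundary and each $x_i$ pointwise. The classical Birman fibration identifies $\PMod{\oD_n}$ with $\pi_1(\Conf{n}{\mathrm{int}(\oD)}) = PB_n$, giving the map to $PB_n$. For the kernel, a capping-style argument shows that any class in $\ker c$ admits a representative supported in disjoint annular collars of the inner boundaries, and the mapping class group of an annulus fixed pointwise on both boundary circles is $\Z$, generated by a boundary-parallel Dehn twist. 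Hence $\ker c = \Z^n$, with generators $T_1,\dotsc,T_n$ the Dehn twists along curves parallel to each inner boundary.

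To split the sequence I would fix a tangent frame at each $x_i$ and lift a pure braid to the unique-up-to-isotopy homeomorphism of $D^n$ preserving these frames; since composition of frame-preserving maps is frame-preserving, this lift is a group homomorphism. This yields $\Mod{D^n} \simeq PB_n \times \Z^n$, and identifies the result with $UPB_n$ once one checks that the $\Z^n$ factor matches the standard framing convention, i.e.\ that each boundary-parallel Dehn twist $T_i$ corresponds to a $\pm 1$ shift in the framing of the $i$-th strand with the correct sign relative to the orientation. This convention check is the only real bookkeeping subtlety; the underlying topology is standard.
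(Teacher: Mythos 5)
Your proof is correct and is the standard argument; the paper itself does not prove this proposition, it only cites \cite[9.1.3]{farbPrimerMappingClass2011}, which contains exactly the capping exact sequence you build. One small notational mismatch worth flagging: in this paper $\Conf{n}{M}$ denotes the \emph{unordered} configuration space, so $\pi_1(\Conf{n}{\mathrm{int}(\oD)})$ is $B_n$, not $PB_n$; you should either invoke the ordered configuration space here or simply appeal directly to the Alexander method identification $\PMod{\oD_n}\simeq PB_n$. Also, the claim that elements of $\ker c$ have representatives supported in collar neighborhoods of the inner boundaries is a slightly informal shorthand for the capping lemma (applied $n$ times), which is what actually shows the kernel is free abelian of rank $n$ generated by the boundary-parallel twists $T_1,\dotsc,T_n$; these are central, so the split extension is a direct product, matching the paper's later proposition $PB_n\times\Z^n\simeq UPB_n$.
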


For details on the link between mapping class group of disks and braid groups see \cite[9.1.3]{farbPrimerMappingClass2011}.

We define a category of colored disks in a similar fashion to \Cref{definitioncolouredcategory}.

\begin{definition}\label{definitioncoloureddiskcategory}
    Let $\Lambda$ be a set of colors. The category of disks colored with $\Lambda$ is such that:
    \begin{description}
        \item[(1)] its objects are disks $D$ together with an identification
        $\varphi_B: B\simeq S^1$ and a color in $\Lambda$ for every component $B$ of $\partial D$.
        We will denote by $(\nu,\underline{\lambda})$ the colors, where $\nu$ is associated to the outer boundary;
        \item[(2)] its morphisms from $\Dc_1=(D^1,\varphi^1,\nu^1,\underline{\lambda}^1)$
        to $\Dc_2=(D^2,\varphi^2,\nu^2,\underline{\lambda}^2)$
        are homeomorphisms $f:D^1\lra D^2$ preserving orientation such for every inner component $B_1\subset\partial D^1$,
        its image $f(B_1)=D^2\subset \partial S_2$ is an inner component, $\lambda_{B_1}=\lambda_{B_2}$
        and $\varphi^2_{B_2}\circ f=\varphi^1_{B_1}$. We also ask that $\nu^1=\nu^2$.
        \item[(3)] the composition of $f_1:\Dc_0\lra\Dc_1$ and $f_2:\Dc_1\lra\Dc_2$ is simply $f_2\circ f_1$.
    \end{description}
\end{definition}

\begin{notation}
    We will often abbreviate $\Dc=(D,\varphi,\nu,\underline{\lambda})$ to $\Dc=(D,\nu,\underline{\lambda})$.
\end{notation}

The real distinction with colored surfaces is in the way gluings are represented.

For every colored disk $\Dc=(D,\nu,\underline{\lambda})$ together with a simple closed curve $\gamma$ in $\mathrm{int}(D)$,
we can cut $D$ along $\gamma$ to obtain two disks $D'$ and $D''$, where $D'$ has $\gamma$ as outer boundary.
For every color $\mu$, we will denote by $\Dc_{\mu}'$, respectively $\Dc_{\mu}''$ the disk $D'$ (respectively $D''$)
colored by $\nu,\underline{\lambda}$ at the boundary components it has in common with $D$
and by $\mu$ at the boundary component corresponding to $\gamma$.
See \Cref{cut_disk} for an example.

\begin{figure}
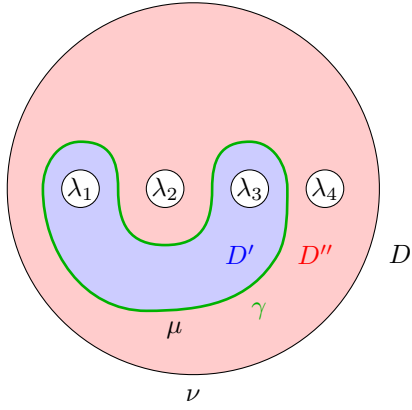

    \ctikzfig{cut_disk}
    \caption{An example of a cut of a disk $\Dc=(D,\nu,\underline{\lambda})$ with $D=D^4$ into $\Dc_{\mu}'=(D^2,\mu,(\lambda_1,\lambda_3))$
    and $\Dc_{\mu}''=(D^3,\nu,(\mu,\lambda_2,\lambda_4))$.}
    \label[figure]{cut_disk}
\end{figure}

\begin{definition}
    Let $\Dc=(D,\nu,\underline{\lambda})$ be a colored disk, then the associated colored surface is defined to be
    $\Sigma(\Dc)=(D,(\nu^\dagger,\lambda_1,\dotsc,\lambda_n))$.
\end{definition}

The reason for the use of $\nu^\dagger$ over $\nu$ is made obvious by the following definition.

\begin{definition}\label{definitionrooting}
    Let $\Nu$ be a genus $0$ modular functor of level $r$ with set of colors $\Lambda$.
    Denote by $\mathrm{C}$ the category of genus $0$ surfaces with set of colors $\Lambda$ and by $\mathrm{D}$ the category of colored disks
    with set of colors $\Lambda$.
    Then the associated rooted modular functor is:
    \[\begin{array}{rcl}
        \mathrm{D} &\lra     & \Q(\zeta_r)-\text{vector spaces} \\
        \Dc         &\mapsto & \Nu(\Sigma(\Dc)).
    \end{array}\]
    and will also be denoted $\Nu$.
    The additional gluing isomorphism translate as follows. Consider $\Dc=(D,\nu,\underline{\lambda})$ a colored disk together with
    a simple closed curve $\gamma$. Define $\Dc_{\mu}'$ and $\Dc_{\mu}''$ for each color $\mu$ as above. Then we have an isomorphism:
    \begin{equation}\label{gluingrooted}
        \Nu(\Dc)\simeq \bigoplus_{\mu\in\Lambda}\Nu(\Dc_{\mu}')\otimes\Nu(\Dc_{\mu}'')
    \end{equation}
    which commutes with the action of $\Mod{D'}\times\Mod{D''}$.
    The additional axioms become:
    \begin{description}
        \item[(1)] $\dim\Nu(D^0,\lambda,\varnothing)=1$ if $\lambda=0$ and $0$ otherwise;
        \item[(2)] $\dim\Nu(D^1,\lambda,\mu)=1$ if $\lambda=\mu$ and $0$ otherwise. Moreover, $\Nu(D^1,\lambda,\lambda)$
        is canonically isomorphic to $\Q(\zeta_r)$;
        \item[(3)] For every color $\lambda$, the left Dehn twist in $D^1$ acts on $\dim\Nu(D^1,\lambda,\lambda)$
        by multiplication by some $r$-th root of unity, denoted by $t_\lambda$.
    \end{description}
\end{definition}

Notice how the $(\cdot)^\dagger$ does not appear in the gluing or the additional axioms.

%-------------------------------------------------------------------------------------------------------------------------

\subsubsection{Hermitian and integral structures in the rooted case}\label{subsubhemitianintegralrooted}

The Hermitian and integral structures translate to the rooted case. Any such structures on a genus $0$ modular functor induce a
structure on the rooting. However, the existence of a Hermitian or integral structure on the rooting does not imply the existence
of the same structure on the modular functor.

\begin{definition}[Rooted Hermitian structure]
    Let $\Nu$ be a genus $0$ modular functor. A rooted Hermitian structure on $\Nu$ is the data of, for each colored disk $\Dc$,
    of a Hermitian form $h_{\Dc}$ on $\Nu(\Dc)$ such that:
    \begin{description}
        \item[(1)] For each morphism $f:\Dc_1\ra\Dc_2$ in $\mathrm{D}$, $f$ is an isometry;
        \item[(2)] Each gluing isomorphism
        \begin{equation*}
            \Nu(\Dc)\simeq \bigoplus_{\mu\in\Lambda}\Nu(\Dc_{\mu}')\otimes\Nu(\Dc_{\mu}'')
        \end{equation*}
        as in \Cref{gluingrooted}, is an isometry.
    \end{description}
\end{definition}

\begin{definition}[Integral structure]
    Let $\Nu$ be a genus $0$ modular functor. A rooted integral structure on $\Nu$ is the data of, for each colored disk $\Dc$,
    of a $\Z[\zeta_r]$-lattice $\NuO(\Dc)\subset \Nu(\Dc)$ such that:
    \begin{description}
        \item[(1)] For each morphism $f:\Dc_1\ra\Dc_2$ in $\mathrm{D}$, $f$ maps $\NuO(\Dc_1)$ into $\NuO(\Dc_2)$;
        \item[(2)] Each gluing isomorphism
        \begin{equation*}
            \Nu(\Dc)\simeq \bigoplus_{\mu\in\Lambda}\Nu(\Dc_{\mu}')\otimes\Nu(\Dc_{\mu}'')
        \end{equation*}
        as in \Cref{gluingunrooted}, induces an isomorphism:
        \begin{equation*}
            \NuO(\Dc)\simeq \bigoplus_{\mu\in\Lambda}\NuO(\Dc_{\mu}')\otimes\NuO(\Dc_{\mu}'').
        \end{equation*}
        \item[(3)] For each $\lambda\in\Lambda$, the canonical isomorphism $\Nu(D^1,\lambda,\lambda)\simeq \Q(\zeta_r)$
        identifies $\NuO(D^1,\lambda,\lambda)$ to $\Z[\zeta_r]$.
    \end{description}
    If, moreover, $\Nu$ is given a rooted Hermitian structure $h$, we say that $\NuO$ is compatible with it
    if for each colored disk $\Dc$,
    $h_{\Dc}$ restricted to $\NuO(\Dc)$ is a perfect $\Z[\zeta_r]$-valued Hermitian pairing.
\end{definition}

\begin{remark}
    If $\Nu$ of level $r$ has a rooted Hermitian structure $h$, for $a\in \Q(\zeta_r)$ and $\phi:\Lambda\ra \N$ a map,
    we can define another rooted
    Hermitian structure $h^{a,\phi}$ on $\Nu$ by $h^{a,\phi}_{\Dc}=a^{-2m}h_{\Dc}$ for every $\Dc=(D,\nu,\underline{\lambda})$, where
    $m=\frac{\sum_i\phi(\lambda_i)-\phi(\nu)}{2}$.

    Similarly, if $\Nu$ has a rooted integral structure $\NuO$, then we can define another rooted integral structure $\NuOaphi$
    by $\NuOaphi(\Dc)=a^m\NuO(\Dc)$. Here we assume that $m$ is an integer whenever $\Nu(\Dc)$ is non zero.

    If $h$ and $\NuO$ are compatible, then so are $h^{a,\phi}$ and $\NuOaphi$.

    We will only be concerned with the case of the $\SO$ modular functors in the sequel. And in this case the function $\phi$
    will simply be the inclusion $\{0,1,\dotsc,r-2\}\subset\N$ so that $m$ is an integer whenever $\Nusl(\Dc)$ is non zero
    by \Cref{existenceSOmodularfunctors}.
\end{remark}

\begin{proposition}\label{rootinghermitianstructure}
    Let $\Nu$ be a genus $0$ modular functor. Any Hermitian structure $h$ on $\Nu$ induces a rooted Hermitian structure
    $h^{\mathrm{root}}$ by:
    \begin{equation*}
        h^{\mathrm{root}}_{\Dc}=h_{\Sigma(\Dc)}\otimes h^{-1}_{(S_0^2,\nu,\nu^\dagger)}
    \end{equation*}
    for every $\Dc=(D,\nu,\underline{\lambda})$, where $\Nu(S_0^2,\nu,\nu^\dagger)$ is identified with $\Q(\zeta_r)$.

    Any integral structure $\NuO$ on $\Nu$ induces a rooted integral structure $\NuOroot$ simply by
    $\NuOroot(\Dc)=\NuO(\Sigma(\Dc))$.

    If $h$ and $\NuO$ are compatible, then so are $h^{\mathrm{root}}$ and $\NuOroot$.
\end{proposition}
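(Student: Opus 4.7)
The plan is to verify directly that $h^{\mathrm{root}}$ (resp. $\NuOroot$) satisfies the axioms of a rooted Hermitian (resp. integral) structure, reducing each rooted axiom to the corresponding unrooted one via the functor $\Sigma$ from colored disks to colored surfaces. The key geometric observation is that a simple closed curve $\gamma$ in the interior of a colored disk $\Dc=(D,\nu,\underline{\lambda})$ cuts $\Sigma(\Dc)$ into the disjoint union of $\Sigma(\Dc_{\mu}')$ and $\Sigma(\Dc_{\mu}'')$, with the image of $\gamma$ colored by $\mu^\dagger$ in the former and by $\mu$ in the latter---precisely the configuration of the unrooted gluing axiom \textbf{(G)}.

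For the Hermitian structure, axiom (1) is immediate: a morphism $f:\Dc_1\to\Dc_2$ in $\mathrm{D}$ preserves the outer color $\nu$, so $\Sigma(f)$ is a morphism in $\mathrm{C}$ and hence an isometry for $h$, while the scalar factor $h^{-1}_{(S_0^2,\nu,\nu^\dagger)}$ is common to source and target. For axiom (2), I would apply the unrooted gluing isometry to $\Sigma(\Dc)$ to obtain
\begin{equation*}
h_{\Sigma(\Dc)}=\bigoplus_\mu h_{\Sigma(\Dc_{\mu}')}\otimes h_{\Sigma(\Dc_{\mu}'')}\otimes h^{-1}_{(S_0^2,\mu,\mu^\dagger)},
\end{equation*}
then multiply both sides by the scalar form $h^{-1}_{(S_0^2,\nu,\nu^\dagger)}$ and redistribute: since $\Dc_{\mu}'$ has outer color $\mu$, the factor $h^{-1}_{(S_0^2,\mu,\mu^\dagger)}$ pairs with $h_{\Sigma(\Dc_{\mu}')}$ to form $h^{\mathrm{root}}_{\Dc_{\mu}'}$; since $\Dc_{\mu}''$ has outer color $\nu$, the factor $h^{-1}_{(S_0^2,\nu,\nu^\dagger)}$ pairs with $h_{\Sigma(\Dc_{\mu}'')}$ to form $h^{\mathrm{root}}_{\Dc_{\mu}''}$. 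The result is the rooted gluing decomposition written as an isometry.

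For the integral structure, axioms (1) and (2) follow by the same argument, now using that the dual lattice $\NuO(S_0^2,\mu,\mu^\dagger)^\vee$ is canonically $\Z[\zeta_r]$ by axiom (3) of the unrooted integral structure, so the extra tensor factor in the unrooted gluing becomes trivial. Axiom (3) of the rooted integral structure follows from the identification $\Sigma(D^1,\lambda,\lambda)=(S_0^2,\lambda^\dagger,\lambda)$ combined with axioms (2) and (3) of the unrooted case. Compatibility of $h^{\mathrm{root}}$ with $\NuOroot$ then reduces to compatibility of $h$ with $\NuO$, since perfectness of $\Z[\zeta_r]$-valued Hermitian pairings is preserved under tensor products and under duals (a perfect pairing induces an isomorphism $E\simeq E^\vee$ of $\Z[\zeta_r]$-modules). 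The only (mild) obstacle is the bookkeeping of how the two scalar factors $h^{-1}_{(S_0^2,\mu,\mu^\dagger)}$ and $h^{-1}_{(S_0^2,\nu,\nu^\dagger)}$ are reallocated between the pieces of the decomposition; once this reallocation is carried out correctly, the whole statement is a routine translation between the unrooted and rooted axioms.
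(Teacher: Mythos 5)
Your proof is correct, and the paper gives no explicit proof of this proposition (it is left as a routine verification). Your approach — passing through the functor $\Sigma$, applying the unrooted gluing isometry, and observing that the two scalar factors $h^{-1}_{(S_0^2,\mu,\mu^\dagger)}$ and $h^{-1}_{(S_0^2,\nu,\nu^\dagger)}$ reattach to the two factors $\Sigma(\Dc'_\mu)$ and $\Sigma(\Dc''_\mu)$ according to their outer colors — is exactly the intended reduction, and the point underlying your compatibility argument is that $h^{-1}_{(S_0^2,\nu,\nu^\dagger)}$ acts by a unit of $\Z[\zeta_r]$ because $h$ is perfect on $\NuO(S_0^2,\nu,\nu^\dagger)\simeq\Z[\zeta_r]$.
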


In the paper, we will often denote $h^{\mathrm{root}}$ and $\NuOroot$ simply by $h$ and $\NuO$.

%-------------------------------------------------------------------------------------------------------------------------

\subsection{Framing of a rooted modular functor}

%-------------------------------------------------------------------------------------------------------------------------

\subsubsection{Topological definition}

\begin{definition}
    For $n,m\geq 0$, let $D_m^n\subset\R^2$ be the closed disk with $n$ open disks and $m$ points removed, all disjoint.
\end{definition}

\begin{proposition}\label{mcgsection}
    Let $n\geq 0$, then blackboard framing induces a map:
    \begin{equation*}
        \PMod{D_n}\lra \Mod{D^{n}}
    \end{equation*}
    that is a section of projection $\Mod{D^{n}}\ra \PMod{D_n}$.

    Let us denote by $\Modp{m}{D^{n+m}}$ the group of orientation preserving homeomorphisms of $D^{n+m}$ up to isotopy,
    where we allow permutations of the $m$ last boundary components $B_1,\dotsc,B_m$ and require compatibility with
    some fixed identifications $S^1\simeq B_i$. 

    Denote by $\Modp{m}{D_{n+m}}$ the subgroup of homeomorphisms fixing the first $n$ marked points in $\Mod{D_{n+m}}$. 

    Then, as above, blackboard framing induces a map:
    \begin{equation*}
        \Modp{m}{D_{n+m}}\lra \Modp{m}{D^{n+m}}
    \end{equation*}
    that is a section of projection $\Modp{m}{D^{n+m}}\ra \Modp{m}{D_{n+m}}$.
\end{proposition}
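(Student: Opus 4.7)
The plan is to exploit the identifications $\PMod{D_n}\cong PB_n$ (pure braids) and $\Mod{D^n}\cong UPB_n$ (pure framed braids) recalled just above the proposition. Under these, the projection $\Mod{D^n}\to\PMod{D_n}$ becomes the forgetful map $UPB_n\to PB_n$, whose kernel is the free abelian subgroup generated by the Dehn twists around the $n$ inner boundary components.

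To construct the section, I would represent a pure braid $b\in PB_n$ by a planar braid diagram in the $(x,z)$-plane with the usual over/under crossing information and with strands monotonic in the vertical direction $z$. The \emph{blackboard framing} of each strand is then the constant unit normal vector pointing in the $y$-direction (out of the page). This yields a framed pure braid $s(b)\in UPB_n$. For well-definedness, note that two planar diagrams of the same pure braid differ by a planar isotopy and finitely many Reidemeister moves of type II and type III; type I moves never occur because all strands are monotonic in $z$, so no kinks can form. Planar isotopy and type II/III Reidemeister moves preserve the blackboard framing, so $s:PB_n\to UPB_n$ is well-defined. It is a homomorphism because vertical concatenation of diagrams preserves the blackboard framing on each strand, and it is a section of the forgetful map since forgetting the framing of $s(b)$ returns $b$.

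For the second part of the proposition, the same argument applies after relaxing to braids whose last $m$ strands may be permuted: such strands remain monotonic in $z$, so type I moves are again avoided and the blackboard framing descends to a section $\Modp{m}{D_{n+m}}\to\Modp{m}{D^{n+m}}$ of the analogous forgetful projection.

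The main technical point is to check that the blackboard framing is preserved under Reidemeister moves of type II and III. This is a standard and short computation in framed knot theory (in both moves the net rotation of the framing along each affected strand is zero), so I do not anticipate a serious obstacle; the rest of the proof is a matter of assembling these pieces.
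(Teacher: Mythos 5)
Your argument is correct, and in fact the paper states this proposition without proof, treating it as standard background, so there is no proof in the paper to compare against. The diagram-theoretic route you take is the usual one. The one place where the burden lies — and which you correctly isolate — is that any two monotone braid diagrams of the same pure braid differ only by planar isotopy and Reidemeister moves of types II and III. The reason type I cannot occur is worth making explicit: a Reidemeister I move arises at a cusp of the projection, i.e.\ a moment where the tangent vector of a strand is parallel to the direction of projection; but a monotone strand always has nonzero vertical component, so its tangent is never horizontal, which simultaneously rules out cusps and guarantees that the blackboard normal $(0,1,0)$ is never tangent, so the framing is nowhere degenerate. With that, preservation of the framing under II and III and under vertical concatenation is immediate since the framing is the restriction of a globally constant vector field on $\R^3$, and the same reasoning goes through verbatim when the last $m$ strands are allowed to permute. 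An alternative, slightly slicker phrasing that avoids diagrams and is closer in spirit to the paper's later computation of $\pi_1(\Mrpb{m}{n}{r})$ via the $(\C^*)^n$-bundle $\mathcal{M}_{0,m}^n\to\sM{n+m}$: identify $UPB_n$ with $\pi_1$ of the framed configuration space of $n$ points in $\C$ (a trivial $(\C^*)^n$-bundle over $\Conf{n}{\C}$ since the tangent bundle of $\C$ is trivialized by $\partial/\partial z$), and take the section on $\pi_1$ induced by the constant section $x\mapsto (x,(1,\dotsc,1))$; this is exactly the blackboard framing. Either route is fine; yours is more elementary, the bundle-theoretic one generalizes more readily to the moduli-space setting used in the rest of the paper.
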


\begin{definition}\label{definitionframing}
    Let $\Nu$ be a genus $0$ modular functor, with set of colors $\Lambda$.
    Let $n\geq 0$ and $\Dc=(D^{n},\nu,\underline{\lambda})$ be a colored disk.
    Then the first map of \Cref{mcgsection} induces a representation:
    \begin{equation*}
        \PMod{D_n}\lra \GL{\Nu(\Dc)}
    \end{equation*}
    that we call the framing of the quantum representation.
    
    Let $k\leq n$. If the colors $\lambda_{n-k+1},\dotsc,\lambda_{n}$ corresponding to the $k$ last boundary components are equal,
    then the second map of \Cref{mcgsection} induces a representation:
    \begin{equation*}
        \Modp{k}{D_n}\lra \GL{\Nu(\Dc)}.
    \end{equation*}
\end{definition}

%-------------------------------------------------------------------------------------------------------------------------

\subsubsection{Geometric interpretation}

By \Cref{sectiongeometricdefinitions} and \Cref{mcgsection} we have maps $\pi_1(\Mrpb{n}{1}{r})\ra\pi_1(\Mrb{1+n}{r})$
and $\pi_1(\Mrp{1+n+m}{r}/S_m)\ra\pi_1(\Mrb{1+n+m}{r}/S_m)$. These maps depend on the choice of 
one of the $n+1$ sections to be the outer boundary of the disk.
Hence we will use more precise notations.

\begin{notation}
    Let us denote by $\Mrpbo{m}{n}{r}$ the space $\Mrpb{m}{n+1}{r}$, where the sections with automorphisms are numbered
    $0,\dotsc,n$. We think of the $0$-th section as the outer boundary of the disk.
    We shorten the notation to $\Mrbo{n}{r}$ if $m=0$ and $\Mrpo{m}{r}$ if $n=0$.
\end{notation}

\begin{proposition}
    Let $n\geq 2$, then the map $\pi_1(\Mrpo{n}{r})\ra\pi_1(\Mrbo{n}{r})$ induces a map
    \begin{equation*}
        \Mrpo{n}{r}\lra \Mrbo{n}{r}
    \end{equation*}
    of gerbes over $\Mrpo{n}{r}$.
    
    Let $n,m\geq 0$ such that $n+m\geq 2$. then the map $\pi_1(\Mrpo{n+m}{r}/S_m)\ra\pi_1(\Mrbo{n+m}{r}/S_m)$ induces a map
    \begin{equation*}
        \Mrpo{n+m}{r}/S_m\lra\Mrbo{n+m}{r}/S_m
    \end{equation*}
    of gerbes over $\Mrpo{n+m}{r}$.
\end{proposition}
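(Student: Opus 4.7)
The plan is to realize the homomorphism on orbifold fundamental groups as a morphism of stacks constructed directly at the moduli-theoretic level. Given an $S$-family $(C\to S,\sigma_0,\sigma_1,\dots,\sigma_n)$ representing an object of $\Mrpo{n}{r}(S)$, with rigidification of the $\mu_r$-gerbe at $\sigma_0$, I would use that rigidification to single out a canonical meromorphic relative $1$-form $\omega$ on $C$ whose only poles lie along $\sigma_0$ and which is non-vanishing elsewhere. On a smooth fiber $C_s\cong\mathbb{P}^1$, taking a coordinate $z$ such that $\sigma_0=\infty$ is compatible with the rigidification, $\omega$ is just $dz$. Restricting $\omega$ to $\sigma_i$ for $i\geq 1$ produces a canonical non-zero element of the cotangent line at $\sigma_i$, which via the $\mu_r$-gerbe structure there determines a canonical rigidification. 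Packaging all these rigidifications together gives the desired lift $\Mrpo{n}{r}\to\Mrbo{n}{r}$.

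That this is a map of gerbes over $\Mrpo{n}{r}$ is immediate from the construction: composing with the forgetful map $\Mrbo{n}{r}\to\Mrpo{n}{r}$ (which drops the rigidifications at $\sigma_1,\dots,\sigma_n$) recovers the identity. To identify the induced map on orbifold fundamental groups with the blackboard framing of \Cref{mcgsection}, one notes that a loop in $\Mrpo{n}{r}$ corresponds to a pure braid moving $n$ punctures inside $C_s\setminus\{\sigma_0\}\cong\mathbb{C}$. As these punctures move along the braid, the values $\omega|_{\sigma_i}=dz|_{\sigma_i}$ transform exactly as the constant frame $dz$ of $\mathbb{C}$ does, which is precisely the blackboard framing under the embedding of $D_n$ into $\mathbb{R}^2$.

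The second part of the proposition then follows immediately: the construction is manifestly symmetric in the last $m$ sections, since $\omega$ depends only on $\sigma_0$ and its rigidification. Hence permutations act compatibly on the induced rigidifications, and the map descends to the $S_m$-quotient $\Mrpo{n+m}{r}/S_m\to\Mrbo{n+m}{r}/S_m$.

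The main obstacle is the rigorous identification of the $\pi_1$ map with blackboard framing: this requires unwinding the topological definition of blackboard framing (given via the embedding $D\subset\mathbb{R}^2$ of \Cref{mcgsection}) and matching it with the complex-analytic transformation of $\omega|_{\sigma_i}$ along a braid. A secondary technical point is extending the construction across boundary strata of nodal curves, where $\omega$ must be replaced by an appropriate section of the relative dualizing sheaf and one must verify that the rigidifications propagate consistently across nodes.
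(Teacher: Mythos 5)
Your construction is an explicit moduli-theoretic realization of the section, a reasonable route given that the paper states this proposition without a written proof. But as written it has a gap at the opening step. By the paper's conventions, $\Mrpo{n}{r}$ is the $\mu_r$-gerbe at $\sigma_0$ over $\Mrp{n+1}{r}$, i.e.\ the gerbe of $r$-th roots of the tangent line $\sigma_0^*T_{C/S}$; hence a rigidification at $\sigma_0$ over $S$ amounts to a line bundle $L_0$ on $S$ together with an isomorphism $L_0^{\otimes r}\cong\sigma_0^*T_{C/S}$. This is strictly weaker than a trivialization of $\sigma_0^*T_{C/S}$. The leading Laurent coefficient of your $1$-form $\omega$ at its order-$2$ pole along $\sigma_0$ lies precisely in $\sigma_0^*T_{C/S}$, so pinning down $\omega$ canonically requires exactly the trivialization that the rigidification does not supply. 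Even over a single $\mathbb{P}^1$, changing the basis of $L_0$ changes the induced basis of $L_0^{\otimes r}$ by an arbitrary $r$-th power, i.e.\ an arbitrary nonzero scalar; $\omega$ is therefore undetermined, not even up to $\mu_r$. Consequently ``a coordinate $z$ compatible with the rigidification'' is not well-defined, and the downstream claim that $\omega|_{\sigma_i}$ yields a canonical rigidification at $\sigma_i$ fails.

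The idea is salvageable once you stop asking for $\omega$ itself. A nowhere-vanishing section of $\omega_{C/S}(2\sigma_0)$ is unique up to a global unit, and that unit cancels when one extracts from it the canonical isomorphism $\sigma_i^*\Omega^1_{C/S}\cong\sigma_0^*T_{C/S}$; dualizing gives $\sigma_i^*T_{C/S}\cong (L_0^{\otimes r})^{\vee}$, so $L_i:=L_0^{\vee}$ is a canonical rigidification at $\sigma_i$ and no trivialization of any tangent line is ever chosen. Separately, the boundary issue you flag at the end is genuine rather than routine: on a nodal genus-$0$ curve $\omega_C(2\sigma_0)$ has non-zero multidegree and admits no nowhere-vanishing section, so the argument has to be recast as a relation in $\mathrm{Pic}(\Mrp{n+1}{r})$ between $\psi_0$, $\psi_i$ and boundary divisors, using the canonical $r$-th roots of boundary classes available on the twisted moduli stack. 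And the identification of the resulting map on $\pi_1$ with blackboard framing still has to be carried out in detail, as you acknowledge.
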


\begin{definition}
    Let $\Nu$ be a genus $0$ modular functor, with set of colors $\Lambda$.
    Let $m,n\geq 0$ and $\nu,\lambda_1,\dotsc,\lambda_{n+m}$ be colors.
    Then pulling back $\Nu(\nu;\underline{\lambda})$ by the map $\Mrpo{n+m}{r}\ra\Mrbo{n+m}{r}$
    we get a flat vector bundle over $\Mrpo{n+m}{r}$, that we also denote $\Nu(\nu;\underline{\lambda})$.
    We call it the framing of the modular functor.

    Moreover, if $\lambda_{n+1}=\dotsb=\lambda_{n+m}=\mu$, then this bundle comes from a flat bundle
    $\Nu(\nu;\lambda_1,\dotsc,\lambda_n,\mu^{\bullet m})$ over $\Mrpo{n+m}{r}/S_m$.
\end{definition}

The monodromy of the flat bundle $\Nu(\nu;\underline{\lambda})$ over $\Mrpo{n+m}{r}$
is the framing of the associated quantum representation, as defined in \Cref{definitionframing}.

\begin{proposition}\label{propprojectiveequivalence}
    Let $n\geq 2$.
    Let $p: \Mrbo{n}{r}\lra \Mrpo{n}{r}$ be the projection and $s:\Mrpo{n}{r}\lra \Mrbo{n}{r}$ be the section given by blackboard framing.
    Let $\mathcal{E}$ be a $\Z[\zeta_r]$-local system on $\Mrbo{n}{r}$. Then there exists a $\Z[\zeta_r]$-flat line bundle $\Line$ on
    $\Mrbo{n}{r}$ with finite monodromy such that:
    \begin{equation*}
        p^*s^*\mathcal{E}\simeq \mathcal{E}\otimes \Line.
    \end{equation*}
\end{proposition}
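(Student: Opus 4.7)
The plan is to interpret $p$ as a $\mu_r^n$-gerbe, use $s$ to split the resulting central extension of orbifold fundamental groups, and read $\Line$ off the corresponding character.

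First, I would observe that by construction $\Mrbo{n}{r}$ adjoins a $\mu_r$-automorphism group at each of the $n$ inner sections relative to $\Mrpo{n}{r}$, so $p$ is the structure map of a $\mu_r^n$-gerbe. Passing to orbifold fundamental groups yields a central extension
\[1\lra \mu_r^n\lra \pi_1(\Mrbo{n}{r})\lra \pi_1(\Mrpo{n}{r})\lra 1,\]
in which each $\mu_r$ factor is generated by the class of a Dehn twist around the corresponding inner boundary circle. By the analog of \Cref{mcgsection} at the level of orbifold fundamental groups, the blackboard framing section $s$ splits this extension; since the extension is central, the splitting yields an identification $\pi_1(\Mrbo{n}{r})\simeq \pi_1(\Mrpo{n}{r})\times \mu_r^n$ under which $s_*$ becomes the inclusion of the first factor and $p_*$ the projection onto it.

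Second, the local system $\mathcal{E}$ corresponds to a representation $\rho$ of this product on a $\Z[\zeta_r]$-module $V$, whose restriction $\tau$ to the central subgroup $\mu_r^n$ commutes with all of $\rho$. In the modular functor cases of interest $\tau$ acts by a single character $\chi\colon \mu_r^n\to\mu_r\subset\Z[\zeta_r]^\times$, because the Dehn twists around the inner boundary circles act by the scalars $t_{\lambda_i}$; in general one first decomposes $\mathcal{E}$ into $\mu_r^n$-isotypic summands (using the orthogonal idempotents living in $\Z[\zeta_r][1/r][\mu_r^n]$) and treats each summand separately.

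Third, I would construct $\Line$ as the flat $\Z[\zeta_r]$-line bundle associated with the one-dimensional representation $(g,a)\mapsto \chi(a)^{-1}$ of $\pi_1(\Mrpo{n}{r})\times \mu_r^n$; its monodromy lies in $\mu_r$, hence is finite. The isomorphism then follows on representations: $p^*s^*\mathcal{E}$ corresponds to $(g,a)\mapsto \rho_0(g)$, because $p_*$ kills the $\mu_r^n$ factor and $s_*$ then re-embeds only the first factor, while $\mathcal{E}\otimes \Line$ corresponds to $(g,a)\mapsto \rho_0(g)\chi(a)\chi(a)^{-1}=\rho_0(g)$. Both sides therefore define the same local system. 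The main subtlety will be to transfer the mapping class group splitting of \Cref{mcgsection} to a splitting of the $\mu_r^n$-extension of orbifold fundamental groups: this amounts to checking that modding out by $r$-th powers of Dehn twists turns the $\Z^n$ framing summand of $\Mod{D^n}$ into the $\mu_r^n$ center of the gerbe while preserving the blackboard section.
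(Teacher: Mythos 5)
Your proof follows essentially the same route as the paper's, which compresses the argument into the single assertion that $p^*s^*\mathcal{E}$ and $\mathcal{E}$ are projectively equivalent and then reads off $\Line$ as the resulting scalar-valued character $\rho'\rho^{-1}$; you simply make explicit the underlying split central $\mu_r^n$-extension and the fact that the blackboard-framing section of \Cref{mcgsection} descends to provide the splitting. One caveat on your fallback remark: if $\mu_r^n$ acted on $\mathcal{E}$ through more than one character the statement would actually fail as written (different isotypic summands would require different line bundles $\Line$, and the idempotents you invoke require inverting $r$, leaving the $\Z[\zeta_r]$-integral setting), but the paper's proof carries the same implicit scalar-action hypothesis inside the phrase ``projectively equivalent,'' so this is not a gap in your argument relative to the paper's.
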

\begin{proof}
    The flat bundles $p^*s^*\mathcal{E}$ and $\mathcal{E}$ are projectively equivalent.
    Let $\rho', \rho:\PModl{r}{D^n}\ra\GLn{d}{\Z[\zeta_r]}$
    be their associated monodromy representations. Then $\rho'\rho^{-1}$ is a central representation
    $\PModl{r}{D^n}\ra\Z[\zeta_r]^\times\ra\GLn{d}{\Z[\zeta_r]}$, and corresponds to a flat $\Z[\zeta_r]$-line bundle $\Line$ on $\Mrbo{n}{r}$.
    Clearly, $p^*s^*\mathcal{E}\simeq \mathcal{E}\otimes \Line$.
    Now any flat $\Z[\zeta_r]$-line bundle has finite monodromy.
\end{proof}

\begin{remark}\label{remarklevelframing}
    Let $r\geq 3$ be an odd integer. Although the modular functors $\Nuab$ and $\Nusl$ have level $2r$,
    the associated framed rooted flat vector bundles have monodromies of order dividing $r$ around boundary divisors.
    More precisely, for $b,a_1,\dotsc,a_n\in\Z/2r\Z$, the bundle $\Nuab(b;\ua)$ over $\Mrbo{n}{2r}$
    is the pullback of a bundle on $\Mrbo{n}{r}$. Similarly, for $b,a_1,\dotsc,a_n\in\{0,1,\dotsc,r-2\}$,
    $\Nusl(b;\ua)$ can be defined over $\Mrbo{n}{r}$. \textbf{Hence, form now on, when writing $\Nuab(b;\ua)$ or $\Nusl(b;\ua)$,
    we will mean the bundles over $\Mrbo{n}{r}$ and not $\Mrbo{n}{2r}$.}
\end{remark}

%-------------------------------------------------------------------------------------------------------------------------

\subsubsection{Gluing, integral and hermitian structures for the framing}\label{subsubframingluing}

The framing changes nothing of the integral and hermitian structures on a rooted modular functor.
Here we discuss how the gluing map translates to the framing.
Let $n\geq 2$ and $n_1+n_2=n$ with $n_1\geq 2$ and $n_2\geq 1$. Then we have an an inclusion of boundary divisor:
\begin{equation*}
    q:\Mrpo{n_1}{r}\times\Mrpo{n_2+1}{r}\lra \Mrpo{n}{r}
\end{equation*}
induced by the gluing the $0$-th section of a curve in $\Mrpo{n_1}{r}$
to the $(n_2+1)$-th section of a curve in $\Mrpo{n_2+1}{r}$.
More precisely, we can see it as the composition:
\begin{equation*}
    q:\Mrpo{n_1}{r}\times\Mrpo{n_2+1}{r}\lra \Mrpo{n_1}{r}\times\Mrpbo{n_2}{1}{r}\lra \Mrpo{n}{r}
\end{equation*}
where the map $\Mrpo{n_2+1}{r}\lra \Mrpbo{n_2}{1}{r}$ is given by blackboard framing of the $(n_2+1)$-th strand.

If we use the map $q$ above for the gluing axiom in the framed case, everything works as in the rooted case.
For example, for $\Nu$ a genus $0$ modular functor and $b,a_1,\dotsc,a_n\in \Lambda$,
we have an isomorphism of flat bundle on $\Mrpo{n_1}{r}\times\Mrpo{n_2+1}{r}$:
\begin{equation}
    q^*\Nu(b;a_1,\dotsc,a_n)\simeq \bigoplus_c\Nu(c;a_1,\dotsc,a_{n_1},\mu)\otimes
    \Nu(b;c,a_{n_1+1},\dotsc,a_n).
\end{equation}

%-------------------------------------------------------------------------------------------------------------------------

\section{Main results on the geometric construction}\label{sectionmainresults}

%-------------------------------------------------------------------------------------------------------------------------

\subsection{\texorpdfstring{The $\SO$ quantum representations as Gauss-Manin connections}
{The SO quantum representations as Gauss-Manin connections}}

\begin{definition}
    We will denote $\sMrpbo{m}{n}{r}$ the locus of smooth curves in $\Mrpbo{m}{n}{r}$.
    It is a $(\mu_r)^{n+1}$-gerbe over $\sM{n+m}$.
\end{definition}

\begin{notation}
    We will denote by $\epsilon$ the flat $\Z$ line bundle over $\mathrm{B}S_m$ corresponding to the signature.
    We shall also denote $\epsilon$ any of its pullbacks.
\end{notation}

\begin{theorem}\label{theoremgeometricconstruction}
    Let $r\geq 3$ be an odd integer, $n\geq 2$ and $b,a_1,\dotsc,a_n\in\{0,1,\dotsc,r-2\}$,
    such that $m=\frac{a_1+\dotsb+a_n-b}{2}$ is an integer.
    Consider the $\Q(\zeta_r)$-local system $\Line=\Nuab(b;a_1,\dotsc,a_n,(-2)^{\bullet m})\otimes\epsilon$ 
    over $\sMrpo{n+m}{r}/S_m$.
    Let us consider the forgetful map:
    \begin{equation*}
        p:\sMrpo{n+m}{r}/S_m\lra \sMrpo{n}{r}.
    \end{equation*}
    Then the framing of the $\SO$ modular functor local system associated to $b,a_1,\dotsc,a_n$ over $\sMrpo{n}{r}$ is described by:
    \begin{equation}\label{equationgeometricconstruction}
        \Nusl(b;a_1,\dotsc,a_n) \simeq \mathrm{im}\left(R^mp_!\Line\lra R^mp_*\Line\right)
    \end{equation}
    where $p_!$ is the restricted pushforward and the connection is the Gauss-Manin connection.
\end{theorem}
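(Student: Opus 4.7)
The plan is to establish \Cref{equationgeometricconstruction} fiberwise over a smooth point $[C]\in\sMrpo{n}{r}$, and then promote the identification to flat bundles by checking equivariance for the braid monodromy on both sides. Unwinding the fiber of $p$, up to the orbifold gerbe structure $p^{-1}([C])$ is the unordered configuration space $\Conf{m}{C}/S_m$ of $m$ points in the $(n+1)$-punctured curve $C$; the restriction of $\Line=\Nuab(b;\ua,(-2)^{\bullet m})\otimes\epsilon$ is a rank one $\Q(\zeta_r)$-local system whose monodromy around the $i$th puncture (resp.\ around the diagonal divisors) is controlled by $a_i$ (resp.\ by $-2$), and the sign twist $\epsilon$ converts the ordered $S_m$-invariant computation into its antisymmetric counterpart. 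Thus the fiber of the right-hand side of \Cref{equationgeometricconstruction} identifies with the image of
\begin{equation*}
H^m_c(\Conf{m}{C}/S_m;\Line)\lra H^m(\Conf{m}{C}/S_m;\Line).
\end{equation*}

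To compute both cohomology groups and the map between them I would use the ad-hoc Fox--Neuwirth--Fuks cellular model of \Cref{subsubcellularcomplex}, whose top-dimensional cells are arranged to match Martel's basis from \cite{martelHomologicalModelUq2022}. This gives explicit finite chain complexes (compactly supported and absolute) in which the natural forgetful map becomes a concrete linear map. I would then invoke Martel's results (\Cref{subsubbases,subsubUaction}) to identify the top-dimensional chain space with the weight-$b$ subspace of the tensor product $V_{a_1}\otimes\dotsb\otimes V_{a_n}$ of Verma modules for $U_{\zeta_r}(\sltwo)$. Under this dictionary the image of $R^mp_!\Line\to R^mp_*\Line$ corresponds to the subquotient of the Verma tensor product cutting out highest-weight-$b$ intertwiners modulo the negligible ideal at the root of unity, which is precisely the quantum-group description of $\Nusl(b;\ua)$ from \Cref{subquantumconstruction}. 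This yields the pointwise isomorphism.

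To globalize, I would use that $\sMrpo{n}{r}$ is a $K(\pi,1)$ for (a framed quotient of) the pure braid group, so the Gauss--Manin connection on both $R^mp_*\Line$ and $R^mp_!\Line$ is encoded by braid monodromy on the fiber. The braid action on Martel's homological basis is shown in \cite{martelHomologicalModelUq2022} to agree with the braid action built from the universal $R$-matrix on the Verma tensor product; this matches the monodromy of $\Nusl(b;\ua)$ in the quantum group construction. Combined with \Cref{propprojectiveequivalence} to pass between the framed and unframed settings, the fiberwise isomorphism upgrades to an isomorphism of flat $\Q(\zeta_r)$-bundles on $\sMrpo{n}{r}$.

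The main obstacle is the passage from generic $q$ to the specific root of unity $\zeta_r$. Generically the map $R^mp_!\Line\to R^mp_*\Line$ is already known to be an isomorphism onto the full Verma tensor product by results of Schechtman--Varchenko type, and the identification with conformal blocks is classical; at a root of unity, however, both groups acquire extra kernels and cokernels, and one must show that the image is \emph{exactly} the non-negligible quotient of the Verma tensor product defining $\Nusl(b;\ua)$. Carrying this through requires the cellular complex of \Cref{subsubcellularcomplex} to behave well at $\zeta_r$ and Martel's model to specialize correctly to roots of unity; it is precisely this good specialization behavior that motivates choosing the homological approach over the holomorphic-forms approach of Schechtman--Varchenko, Ramadas, Looijenga and Belkale.
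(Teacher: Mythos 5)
Your proposal follows essentially the same route as the paper's proof: pass to the fiber (the homological reformulation of \Cref{theoremhomologicalconstruction}), compute $H_m$ and $H_m^{BM}$ of $\Conf{m}{D_n}$ via the Fox--Neuwirth--Fuks-type cellular complex of \Cref{subsubcellularcomplex}, identify the resulting spaces with weight spaces of Verma tensor products using Martel's theorems (\Cref{subsubbases,subsubUaction}), match the forgetful map to $\mathrm{red}_{\ua}$, and recognize the image as $\Nusl(\Dc)$ via \Cref{imageofred} and \Cref{propositionimagequantumgroups}, with braid-equivariance built into Martel's isomorphisms handling the globalization. The paper runs the argument homologically through the intermediate semi-relative groups $H_m^{BM}(\Conf{m}{\oD_n,\partial_-\oD_n};\Line_m)$ rather than cohomologically, but this is Poincaré dual to what you describe and not a substantive difference.
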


The spaces $R^mp_*\Line$ in \Cref{theoremgeometricconstruction} have been considered by Silvotti in \cite{silvottiLocalSystemsComplement1994}
and Schechtman-Varchenko in \cite{schechtmanArrangementsHyperplanesLie1991a}.
Silvotti conjectured that the space of conformal blocks was the degree $0$ part of the weight filtration of $R^mp_*\Line$ and proved it for $n=2$.

In \cite{feiginAlgebraicEquationsSatisfied1995}, Feigin-Schechtman-Varchenko identify
conformal blocks as a subspace of $R^mp_*\Line$.
Using the results in \cite{schechtmanArrangementsHyperplanesLie1991a}, in \cite{ramadasHarderNarasimhanTraceUnitarity2009}
Ramadas proves that $\sltwo$ conformal blocks give square integrable forms in $R^m p_*\Line$.
Belkale then extended these results to all simple Lie algebras in \cite{belkaleUnitarityKZHitchin2012}.
In \cite{looijengaUnitaritySLConformal2009}, Looijenga
then showed that $\sltwo$ conformal blocks correspond to all $L^2$-integrable forms.

After the completion of this paper, we learned that in the recent paper \cite{belkaleConformalBlocksGenus2023a},
Belkale and Fakhruddin identify spaces of conformal blocks with the image of $R^mp_!\Line$ in $R^mp_*\Line$ for any Lie algebra.
See \Cref{subsectionrelations} for more comments.

Our main contribution to \Cref{theoremgeometricconstruction} is to use Martel's homological models to identify the TQFT representations 
with the image of $R^mp_!\Line$ in $R^mp_*\Line$, and doing so over $\Q(\zeta_r)$ with explicit isomorphisms with the Skein module and quantum groups constructions
of these representations.
This then enables our identification of the intersection form
and the gluing morphisms in the geometric context with the TQFT hermitian form and gluing isomorphisms (\Cref{theoremintersectionform,theoremgluinggeometric}),
as well as the study of rational Hodge structures of TQFT representations conducted in the upcoming work \cite{godfardHodgeTheoryConformalInpreparation}.

The proof relies heavily on results of Martel \cite{martelHomologicalModelUq2022}, elucidating and improving work of Felder-Wieczerkowski
\cite{felderTopologicalRepresentationsQuantum1991}.
These results identify some semi-relative version of $R^mp_*\Line$ with tensor products of Verma modules.
See \Cref{subsubbases,subsubUaction} below for the statements we will need from Martel's work. The main reason for using Martel's
homological approach over the cohomological approach of Schechtman-Varchenko, Ramadas and Looijenga
is that we want explicit isomorphism over $\Q(\zeta_r)$ and not just $\C$. Indeed, in the cohomological approach
$(-\zeta_r)^2$ needs to be $e^{\pm\frac{(r-1)\pi i}{r}}$.

The idea of considering the image of $R^mp_!\Line$ in $R^mp_*\Line$ comes from Bigelow's formulation in
\cite{bigelowHomologicalRepresentationsIwahori2004} of
Lawrence's work \cite{lawrenceHomologicalRepresentationsHecke1990a} on geometrical models for representations of Temperley-Lieb algebras.
See \Cref{subsubBigelow} below for examples of such models.

\begin{remark}
    The local system $\Line$ is always of rank $1$ with finite monodromy.
\end{remark}

\begin{corollary}\label{corollaryHodge}
    For any level $r$ and $a_1,\dotsc,a_n\in\{0,1,\dotsc,r-2\}$, the corresponding flat bundle $\Nusl(a_1,\dotsc,a_n)$
    over $\Mrb{n}{r}$ for the genus $0$ $\SO$ modular functor is of geometric origin and supports
    an integral variation of Hodge structures.
\end{corollary}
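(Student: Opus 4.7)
The plan is to deduce geometric origin from \Cref{theoremgeometricconstruction} via a finite étale cover and a compactification, then invoke the general principle that flat bundles of geometric origin on smooth quasi-projective varieties support integral variations of Hodge structures. First, by \Cref{propprojectiveequivalence}, the unrooted bundle $\Nusl(a_1,\dotsc,a_n)$ on $\Mrb{n}{r}$ differs from the framed rooted bundle $\Nusl(a_n;a_1,\dotsc,a_{n-1})$ on $\Mrpo{n-1}{r}$ only by a flat $\Z[\zeta_r]$-line bundle with finite monodromy, and such a twist preserves being of geometric origin. On the smooth locus $U_0 = \sMrpo{n-1}{r}$, \Cref{theoremgeometricconstruction} realises it as
\[
    \mathrm{im}\bigl(R^mp_!\Line \lra R^mp_*\Line\bigr),
\]
where $p:\sMrpo{n+m-1}{r}/S_m \lra U_0$ is the forgetful map and $\Line$ is a rank-one local system with finite monodromy.

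Next, I would exploit finite monodromy: pick a finite étale Galois cover $\pi:Y_0\lra \sMrpo{n+m-1}{r}/S_m$ trivialising $\Line$, so that $\Line$ is a flat $\Z[\zeta_r]$-direct summand of $\pi_*\underline{\Z[\zeta_r]}$. Set $q_0=p\circ\pi$; by Nagata compactification followed by Hironaka resolution and generic smoothness, after possibly shrinking $U_0$ to a dense open $U$ we may extend $q_0$ to a smooth proper morphism $\bar{q}:\bar{Y}\lra U$ with $Y_0\subset\bar{Y}$ open and $\bar{Y}\setminus Y_0$ a relative simple normal crossings divisor. By Deligne's mixed Hodge theory, on each fibre the image of $H^m_c(Y_{0,u};\Q) \lra H^m(Y_{0,u};\Q)$ is the pure weight-$m$ part and coincides with the image of $H^m(\bar{Y}_u;\Q) \lra H^m(Y_{0,u};\Q)$. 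Globalising, $\mathrm{im}(R^mq_{0,!}\underline{\Z}\lra R^mq_{0,*}\underline{\Z})$ is a flat subquotient of $R^m\bar{q}_*\underline{\Z}$; taking the $\Line$-isotypic component under the Galois action of $\pi$ yields that $\mathrm{im}(R^mp_!\Line\lra R^mp_*\Line)$ is a flat subquotient of $R^m\bar{q}_*\underline{\Z}\otimes_{\Z}\Q(\zeta_r)$, establishing geometric origin on the dense open $U\subset\Mrb{n}{r}$.

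For the integral variation of Hodge structures of weight $m$, one restricts the natural polarized integral VHS on $R^m\bar{q}_*\underline{\Z}$ (Deligne's degeneration of the Leray spectral sequence for smooth projective morphisms, together with the fibrewise Hodge decomposition) to the pure weight-$m$ sub-VHS singled out by the above subquotient description. The main obstacle is making the purity statement work in families: the pointwise identification of $\mathrm{im}(H^m_c\to H^m)$ with the image from $H^m(\bar{Y}_u)$ must globalise to a statement about local subsystems on $U$, which requires $\bar{q}$ to be smooth with relative NCD boundary and may force further shrinking of $U$—acceptable because the definition of geometric origin only requires a dense open subset.
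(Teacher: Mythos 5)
Your proof reaches the correct conclusion but by a heavier and somewhat different route than the paper. The paper does not invoke Nagata, Hironaka, or generic smoothness, and never shrinks the base: the abelian covering $f$ trivialising $\Line$ extends over the compactification to a finite covering $\overline{f}$ of $\Mrpo{n+m}{r}/S_m$, and the composite $\overline{p}\circ\overline{f}$ (with $\overline{p}$ the forgetful map between the compact moduli stacks) is already the required proper morphism, with $\Nusl(b;\ua)$ a subquotient of $R^m(\overline{p}\circ\overline{f})_*\underline{\Q}$ restricted to the smooth locus. Properness, not smoothness, is all the definition of geometric origin demands, so no resolution is needed. For the Hodge structure the paper is also more direct: since $p\circ f$ is a topological fibration by smooth quasi-projective fibres, both $R^m(p\circ f)_!\underline{\Z}$ and $R^m(p\circ f)_*\underline{\Z}$ carry integral variations of mixed Hodge structure, the comparison map between them is a morphism of mixed Hodge structures, its image is therefore automatically a pure integral variation, and one passes to the $\epsilon$-isotypic piece; this side-steps the pointwise purity identification that you flag as ``the main obstacle'' to globalise. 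Two points are left out of your argument. First, you only obtain the variation on a dense open $U$ that may be strictly smaller than the smooth locus, whereas the paper's final step extends the variation from $\sMrpo{n}{r}$ to the compact orbifold $\Mrpo{n}{r}$ by Griffiths' regularity theorem (\cite{griffithsPeriodsIntegralsAlgebraic1970}), which is what gives the statement over all of $\Mrb{n}{r}$; you would still need an extension argument at the end. Second, the base spaces are Deligne--Mumford stacks, and the reduction to honest varieties (which the paper arranges via the global-finite-quotient statement of \Cref{globalquotient}) is used implicitly in your argument and deserves to be flagged.
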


\begin{remark}
    The map $p$ has fibers isomorphic to unordered configuration spaces.
    More precisely, if $x\in \sMrpo{n}{r}$ represents the punctured genus $0$ Riemann surface $C$,
    then the fiber over $x$ is:
    \begin{equation*}
        \Conf{m}{C}=\{\{z_1,\dotsc z_m\}\subset C\;\mid\; z_i\neq z_j\text{ for }i\neq j\}.
    \end{equation*}
    One can see that $p$ is a topological fibration. Hence:
    \begin{align*}
        (R^mp_!\Line)_x&\simeq H_c^m(\Conf{m}{C};\Line_{\mid \Conf{m}{C}}) \\
        (R^mp_*\Line)_x&\simeq H^m(\Conf{m}{C};\Line_{\mid \Conf{m}{C}}).
    \end{align*}
    By Poincaré duality, the image in \Cref{equationgeometricconstruction}, can be translated to homologies:
    \begin{equation*}
        \Nusl(b;a_1,\dotsc,a_n)_x \simeq \mathrm{im}\left(H_m(\Conf{m}{C};\Line_{\mid \Conf{m}{C}})
        \lra H_m^{BM}(\Conf{m}{C};\Line_{\mid \Conf{m}{C}})\right).
    \end{equation*}
    Here the connection is again the Gauss-Manin connection, and $H_*^{BM}$ is Borel-Moore homology, see 
    \Cref{subsubBorelMoore} below.
\end{remark}

\begin{notation}\label{notationLine}
    Let $b,a_1,\dotsc,a_n\in\{0,1,\dotsc,r-2\}$,
    such that $m=\frac{a_1+\dotsb+a_n-b}{2}$ is an integer.
    Let $\Dc=(D^n,b,\underline{a})$ be the corresponding colored disk.
    We will denote the restriction of the local system
    $\Nuab(b;a_1,\dotsc,a_n;(-2)^{\bullet m})\otimes\epsilon$ to $\Conf{m}{C}$
    by $\Line_{\Dc}$.
\end{notation}

From the general theory of Gauss-Manin connection with local systems described briefly in \Cref{subsublocalsystemGaussManin} below,
one gets the following homological description of the connection.

\begin{theorem}[Homological action formulation]\label{theoremhomologicalconstruction}
    Let $r\geq 3$ be an odd integer, $n\geq 2$ and $b,a_1,\dotsc,a_n\in\{0,1,\dotsc,r-2\}$,
    such that $m=\frac{a_1+\dotsb+a_n-b}{2}$ is an integer.
    Let $\Dc=(D^n,b,a_1,\dotsc,a_n)$ be the associated colored disk.
    Let $C=D_n$.
    The mapping class group $\PMod{\oD_n}\simeq PB_n$ acts on $\Conf{m}{C}$ by
    $f(\{z_1,\dotsc,z_m\})=\{f(z_1),\dotsc,f(z_m)\}$. This action, together with the action of $\PMod{\oD_n}$
    on the local system $\Line_{\Dc}$ of \Cref{notationLine} make $\PMod{\oD_n}$ act on
    $H_m^{BM}(\Conf{m}{C};\Line_{\Dc})$, $H_m(\Conf{m}{C};\Line_{\Dc})$ and thus also on:
    \begin{equation*}
        \Hmid(\Conf{m}{C};\Line_{\Dc}) = \mathrm{im}\left(H_m(\Conf{m}{C};\Line_{\Dc})
        \lra H_m^{BM}(\Conf{m}{C};\Line_{\Dc})\right).
    \end{equation*}
    Then $\Hmid(\Conf{m}{C};\Line_{\Dc})$ is isomorphic to $\Nusl(\Dc)$ as a representation of $\PMod{\oD_n}$,
    where the action on $\Nusl(\Dc)$ is given by the full framing of \Cref{definitionframing}.
\end{theorem}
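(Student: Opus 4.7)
The plan is to deduce Theorem \ref{theoremhomologicalconstruction} essentially as a corollary of Theorem \ref{theoremgeometricconstruction} by taking fibers of the flat bundle isomorphism over a well-chosen basepoint of $\sMrpo{n}{r}$ and translating the Gauss--Manin monodromy action into the mapping class group action on homology.

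First, I would fix a point $x\in\sMrpo{n}{r}$ representing the marked curve $C=D_n$ (viewed as $\mathbb{P}^1$ minus $n+1$ points), so that the orbifold fundamental group of $\sMrpo{n}{r}$ based at $x$ is naturally identified with $\PMod{\oD_n}\simeq PB_n$. Since the map $p:\sMrpo{n+m}{r}/S_m\to\sMrpo{n}{r}$ is a topological fibration with fiber over $x$ equal to the unordered configuration space $\Conf{m}{C}$, standard proper base change gives
\begin{equation*}
    (R^mp_!\Line)_x\simeq H^m_c(\Conf{m}{C};\Line_{\Dc}),\qquad (R^mp_*\Line)_x\simeq H^m(\Conf{m}{C};\Line_{\Dc}).
\end{equation*}
Taking fibers of the isomorphism in Theorem \ref{theoremgeometricconstruction} and applying Poincar\'e duality for the $2m$-dimensional oriented manifold $\Conf{m}{C}$ with coefficients in the rank-one local system $\Line_{\Dc}$ then identifies the image of $R^mp_!\Line\to R^mp_*\Line$ at $x$ with $\Hmid(\Conf{m}{C};\Line_{\Dc})$ as a $\Q(\zeta_r)$-vector space.

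Next, I would identify the two actions of $\PMod{\oD_n}$. On the left, the monodromy of the flat bundle $\Nusl(b;a_1,\dotsc,a_n)$ along a loop in $\sMrpo{n}{r}$ is, by construction of the framing in \Cref{definitionframing} and the geometric description in \Cref{datageometricmodularfunctor}, exactly the full framed quantum representation on $\Nusl(\Dc)$. On the right, the Gauss--Manin monodromy on $R^mp_*\Line$ and $R^mp_!\Line$ can be computed using the recipe recalled in \Cref{subsublocalsystemGaussManin}: a loop $\gamma\in\pi_1(\sMrpo{n}{r},x)$ lifts to an ambient isotopy of $C$ whose time-one map $f\in\PMod{\oD_n}$ acts on $\Conf{m}{C}$ by $f(\{z_1,\dotsc,z_m\})=\{f(z_1),\dotsc,f(z_m)\}$, together with the induced action on the restriction of the local system $\Line_{\Dc}$. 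Hence the Gauss--Manin monodromy coincides with the action described in the statement, and the image subspace $\Hmid(\Conf{m}{C};\Line_{\Dc})$ is preserved.

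The main obstacles in this argument are bookkeeping issues rather than substantive ones. The delicate point is that the local system $\Line_{\Dc}$ comes from an abelian modular functor on the moduli space $\sMrpo{n+m}{r}/S_m$, not directly from a character of $\pi_1(\Conf{m}{C})$, so one must verify carefully that its restriction to $\Conf{m}{C}$ is $\PMod{\oD_n}$-equivariant in the way required, and that the Poincar\'e duality isomorphism between $H^m_c\to H^m$ and $H_m\to H_m^{BM}$ intertwines the two actions. Once these compatibilities are checked (using the explicit description of $\Line$ in terms of the colors $a_1,\dotsc,a_n,-2,\dotsc,-2$ and the sign character $\epsilon$), the theorem follows directly from Theorem \ref{theoremgeometricconstruction}.
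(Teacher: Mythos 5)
Your proposal is circular: you derive \Cref{theoremhomologicalconstruction} from \Cref{theoremgeometricconstruction}, but the paper has no proof of \Cref{theoremgeometricconstruction} other than through \Cref{theoremhomologicalconstruction}. The proof in \Cref{subsubmainproofs} opens with ``We can now prove \Cref{theoremgeometricconstruction}. We prove the equivalent homological reformulation (\Cref{theoremhomologicalconstruction}).'' The two statements are presented as equivalent formulations of a single result, related by taking fibers of the local systems, Poincar\'e duality from $H^m_c\to H^m$ to $H_m\to H_m^{BM}$, and the Gauss--Manin description of monodromy recalled in \Cref{subsublocalsystemGaussManin}; what you wrote is a careful spelling-out of one direction of that equivalence. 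It does not prove either statement, because the thing you take as input is proved only via the thing you are trying to prove.

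The substantive content of the paper's proof is entirely on the homological side and does not appear in your proposal. Concretely: \Cref{theoremBMhomology} uses the ad hoc Borel--Moore cellular decomposition of \Cref{subsubcellularcomplex} to identify $H_m^{BM}(\Conf{m}{D_n};\Line_m)$ with the quotient $\bigl(\bigotimes_i\hat V^E_{a_i}\bigr)_b/(\mathrm{im}\,\overline{F}^{(1)}+\mathrm{im}\,\overline{F}^{(r)})$; \Cref{corollaryabsolutehomology} gives the Poincar\'e-dual identification of $H_m(\Conf{m}{D_n};\Line_m)$ with the weight-$b$ subspace of $\bigotimes_i\hat V^{\overline F}_{a_i}$ killed by $E^{(1)}$ and $E^{(r)}$; \Cref{propositionhomologicalred}, via Martel's basis theorem (\Cref{theoremmartel}), shows the natural map $H_m\to H_m^{BM}$ corresponds under these identifications to $\mathrm{red}=\otimes_i\mathrm{red}_{a_i}$; and \Cref{imageofred}, via the quantum-group/skein dictionary of \Cref{theoremquantumgroupconstruction}, identifies $\mathrm{im}(\mathrm{red})\otimes\Q$ with $\Nusl(\Dc)$. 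Each of these steps is a diagram of $\Mod{D_n}$-representations, which is what makes the resulting isomorphism equivariant. None of this is replaced by your reduction, and the ``compatibilities'' you defer are not where the work lies; the work lies in proving \Cref{theoremgeometricconstruction} in the first place, which is exactly the theorem you were asked to prove in its homological guise.
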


%-------------------------------------------------------------------------------------------------------------------------

\subsubsection{The Hermitian structure}\label{subsubhermitianstructure}

For $M$ a manifold of real dimension $2m$, and $\Line$ a Hermitian local system over $\Q(\zeta_r)$ with form $h$,
we have an intersection form on $H_c^m(M;\Line)$ given by:
\[\begin{array}{rcl}
    H_c^m(M;\Line)\times H_c^m(M;\Line) &\lra & \Q(\zeta_r) \\
    (\alpha, \beta)                  &\longmapsto & \int_M h(\alpha\wedge\beta).
\end{array}\]

This form is sequilinear, skew-symmetric if $m$ is even and skew-antisymmetric if $m$ is odd. Then the quotient of $H_c^m(M;\Line)$
by the kernel of the form is exactly:
\begin{equation*}
    \Hmid(M;\Line)=\myim{H_c^m(M;\Line)\lra H^m(M;\Line)}.
\end{equation*}
Hence $\Hmid(M;\Line)$ has a perfect sequilinear form.

We will show that the intersection form of the geometric construction recovers the Hermitian structure of the
$\SO$ modular functor.

\begin{theorem}\label{theoremintersectionform}
    Let $r\geq 3$ be an odd integer, $n\geq 2$ and $b,a_1,\dotsc,a_n\in\{0,1,\dotsc,r-2\}$,
    such that $m=\frac{a_1+\dotsb+a_n-b}{2}$ is an integer. Let $\Dc=(D^n,b,\underline{a})$.

    Let $\Line=\Nuab(b;a_1,\dotsc,a_n;(-2)^{\bullet m})\otimes\epsilon$ over $\sMrpo{n+m}{r}/S_m$
    and $p:\sMrpo{n+m}{r}/S_m\ra \sMrpo{n}{r}$ as in \Cref{theoremgeometricconstruction}.
    Then the Hermitian structure on $\Line$ induces a perfect sesquilinear form $s_{\Dc}$ on the sheaf:
    \begin{equation*}
        \sheafHmid = \mathrm{im}\left(R^mp_!\Line\lra R^mp_*\Line\right).
    \end{equation*}
    Now let $h$ be the Hermitian structure on $\Nusl$.
    Then under the isomorphism $\sheafHmid\simeq \Nusl(b;a_1,\dotsc,a_n)$, we have the equality:
    \begin{equation*}
        s_{\Dc}=(\zeta_r^2-\zeta_r^{-2})^mh_{\Dc}.
    \end{equation*}
\end{theorem}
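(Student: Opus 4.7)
The plan rests on the observation that both $s_{\Dc}$ and $h_{\Dc}$ are flat, sesquilinear, and compatible with the modular functor gluing decomposition, so their ratio is essentially pinned down by an inductive reduction to a base case. Concretely, under the identification $\sheafHmid \simeq \Nusl(b;a_1,\dotsc,a_n)$ from \Cref{theoremgeometricconstruction}, both forms are $\PMod{\oD_n}$-invariant. When $\Nusl(b;\ua)$ is irreducible (e.g. $r$ prime, by \cite{koberdaIrreducibilityQuantumRepresentations2018}), Schur's lemma already yields $s_{\Dc}=c_{\Dc}\cdot h_{\Dc}$ for a scalar; in general I would obtain the same conclusion by combining local flatness with the gluing compatibility below.

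First, I would establish that $s_{\Dc}$ is compatible with the gluing isomorphism \Cref{equationgluingintro}. On the $h_{\Dc}$ side this is part of the modular functor axioms. On the geometric side it amounts to a Künneth decomposition for the configuration space $\Conf{m}{C}$ when $C$ degenerates to a nodal curve: the fiber of $p$ over the boundary divisor $q(\Mrpo{n_1}{r}\times\Mrpo{n_2+1}{r})$ splits, up to strata of smaller dimension, into a disjoint union of products $\Conf{m_1}{C_1}\times\Conf{m_2}{C_2}$ summed over the choice of $m_1+m_2=m$, and over the admissible intermediate color $c$ encoded in the local system $\Line$. The intersection pairing on middle (Borel–Moore) homology is Künneth-multiplicative, and the restriction of $\Line_{\Dc}$ to each stratum is the external tensor product of the corresponding local systems on the two factors. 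Hence both forms decompose as orthogonal sums of tensor products under $q^*$, and the scalar $c_{\Dc}$ is multiplicative: $c_{\Dc}=c_{\Dc'}\cdot c_{\Dc''}$, which matches the target identity $(\zeta_r^2-\zeta_r^{-2})^m = (\zeta_r^2-\zeta_r^{-2})^{m_1}(\zeta_r^2-\zeta_r^{-2})^{m_2}$.

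Second, by iterating the gluing I reduce to the base case of a pair-of-pants disk $\Dc=(D^2,b,a_1,a_2)$, where $\Nusl(b;a_1,a_2)$ is at most one-dimensional by \Cref{existenceSOmodularfunctors}(2), and both forms are scalars on an explicit generator. On this building block I would exhibit a specific Martel basis cycle in $\Conf{m}{C}$ with $m=\frac{a_1+a_2-b}{2}$ coming from the Fox–Neuwirth–Fuks cell decomposition of \Cref{subsubcellularcomplex}, pair it against its Borel–Moore dual cell, and compare the resulting quantum-integer product with the corresponding norm of the Skein generator in \cite{blanchetTopologicalQuantumField1995}. Each of the $m$ extra $(-2)$-weight punctures contributes exactly one factor of $(\zeta_r^2-\zeta_r^{-2})$ through the monodromy $q^{\pm 2}=\zeta_r^{\pm 4}$ of $\Line_{\Dc}$ around those punctures, producing the overall factor $(\zeta_r^2-\zeta_r^{-2})^m$ and, after taking the sesquilinear conjugate, the stated identity $s_{\Dc}=(\zeta_r^2-\zeta_r^{-2})^m h_{\Dc}$.

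The main obstacle will be twofold: (i) establishing the Künneth/multiplicativity step for $s_{\Dc}$ rigorously, since this requires a careful analysis of how Poincaré–Lefschetz duality for $\Hmid$ interacts with the nodal degeneration of $\Conf{m}{C}$ and with the sign local system $\epsilon$; and (ii) executing the base-case scalar computation without discrepancies between the homological conventions of \cite{martelHomologicalModelUq2022}, the Skein/quantum group conventions with $q^{1/2}=-\zeta_r$, and the normalization of the canonical isomorphism $\Nu(S_0^2,\lambda,\lambda^\dagger)\simeq\Q(\zeta_r)$ used in \Cref{rootinghermitianstructure}.
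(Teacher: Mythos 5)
Your plan diverges from the paper's in a way that looks promising at first but runs into a concrete obstacle at the base case. The paper does \emph{not} reduce to the pair-of-pants $(D^2,b,a_1,a_2)$. Instead it reduces the general $\Dc=(D^n,b,a_1,\dotsc,a_n)$ to the Temperley--Lieb case $\Dc'=(D^{a_1+\dotsb+a_n},b,1,\dotsc,1)$ via gluing (increasing $n$ and decreasing the colors to $1$), works over $\Q(q)$ with $q$ generic (where $S(\Dc')\otimes\Q(q)$ is an irreducible $TL_n$-module, so Schur pins the ratio of the two forms), computes the ratio by a single intersection between a fork cycle and one of Bigelow's ``noodle'' cycles, and only then specializes $q\mapsto\zeta_r^2$. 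Your route is to push the gluing in the other direction (down to the pair-of-pants), compute the scalar there, and propagate it by Künneth multiplicativity of the intersection form. The multiplicativity step is sound --- it is exactly what \Cref{theoremgluinghomological} and \Cref{subsubgluinghermitianintegral} establish, and the orthogonality of the $c$-summands handles the Schur-type ambiguity even without irreducibility of the whole representation --- so that part of your plan is a legitimate alternative.

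The gap is in the base-case computation as you've set it up. You propose to ``exhibit a specific Martel basis cycle in $\Conf{m}{C}$... pair it against its Borel--Moore dual cell, and compare.'' But the Martel bases $A(\underline{m})$ and $B(\underline{m})$ live in the \emph{semi-relative} spaces $H_m^{BM}(\Conf{m}{\oD_n,\partial_-\oD_n};\Line_m)$ and $H_m(\Conf{m}{\oD_n,\partial_-\oD_n};\Line_m)$, and the explicit pairing $(B(\underline{m}),A(\underline{m'}))=q^{f(\underline{m})}\delta_{\underline{m},\underline{m'}}$ computed in the paper is the relative-duality pairing of \Cref{equationrelativedualitypairing}, i.e. the homological incarnation of the Verma-module form of \Cref{bilinearformquantum}. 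That is \emph{not} $s_{\Dc}$. The form $s_{\Dc}$ is the intersection pairing on $\Hmid(\Conf{m}{D_n};\Line_{\Dc})$, which pairs absolute homology against Borel--Moore homology of the \emph{open} configuration space $\Conf{m}{D_n}$; passing from the semi-relative Martel basis to absolute cycles involves the non-trivial comparison maps of \Cref{theoremBMhomology} and \Cref{corollaryabsolutehomology}, and no Fox--Neuwirth cell represents an absolute homology class directly. The reason the paper's TL-case reduction works is precisely that Bigelow's noodle classes $\Cn(w)$ give an explicit geometric basis of $H_m(\Conf{m}{D_n};\Lq_m)$ with a computable intersection against the fork classes $C(w)$ (\Cref{propositionrelationCntoC}, \Cref{propositiongenericform}); this device is specific to labels $a_i=1$, where the cycles are products of immersed circles around pairs of punctures. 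For a pair-of-pants with arbitrary colors $a_1,a_2$ you have no analogous explicit cycles, so ``pair a Martel cell against its dual'' does not carry out the computation you need. To make your route rigorous you would either have to invent an absolute-homology analogue of the noodle classes for higher colors, or fall back to the TL reduction anyway --- at which point your argument collapses into the paper's.

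A secondary point: you work at $q=\zeta_r^2$ from the start and invoke Schur's lemma with a fallback (``combining local flatness with the gluing compatibility'') that is left vague. The paper avoids this entirely by establishing \Cref{propositiongenericform} over $\Q(q)$ with generic $q$, where irreducibility of the relevant TL module is automatic, and then specializing via $A_b\to\Z[\zeta_r]$ in \Cref{corollaryformsforTL}. Since your pair-of-pants base case is rank $1$, the Schur step there is trivially fine, but passing through generic $q$ would still be cleaner for keeping track of normalizations.
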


The idea that the Jones polynomial can be extracted from the intersection form
was first studied by Bigelow in \cite{bigelowHomologicalDefinitionJones2002}.
In a way, our result extends his result to the whole $\SO$ TQFT.
Note that our proof relies on the same "noodle" homology classes used by Bigelow,
but is somewhat different: we first identify the representation at generic $q$
and then use unicity of the form to reduce the proof to a single simple
intersection computation. See \Cref{subsubBigelow} for details.

We will also show that the intersection form polarizes the Hodge structures.

\begin{definition}[Polarization]\label{definitionpolarization}
Let $(E,\nabla)\ra X$ be an integral variation of Hodge structures defined on $\OK$ and of weight $m$.
A polarization for $(E,\nabla)$ is a sesquilinear for $Q$ on $E$ defined on $\OK$ such that:
\begin{enumerate}
    \item it is flat ie $\nabla Q=0$;
    \item it is hermitian if $m$ is even and skew-hermitian if $m$ is odd;
    \item for each choice of embedding $K\hookrightarrow\C$, the Hodge decomposition is orthogonal for $Q$
    and for each $p$, $q$, $i^{p-q}Q$ is positive definite on $H^{p,q}$.
\end{enumerate}
We will call a polarized integral variation of Hodge structures, or PIVHS for short a triplet $(E,\nabla,Q)\ra X$
such that $Q$ is a polarization of the IVHS $(E,\nabla)\ra X$.
\end{definition}

\begin{remark}
Usually, when discussing polarizations one asks that $Q$ is bilinear, not sesquilinear.
That is possible when one considers real structures. We can not do this here.
If we denote by $Q'$ a bilinear polarization on a Hodge structure with real structure in the usual sense,
then here we are considering the sesquilinear form $Q(u,v)=Q'(u,\overline{v})$.
\end{remark}

\begin{theorem}\label{intersectionformpolarizes}
    With the notations of \Cref{theoremintersectionform},
    $(-1)^{\frac{m(m-1)}{2}}s_{\Dc}$ is a polarization for the Hodge structure on $\Nusl(b;a_1,\dotsc,a_n)$.
    In particular, if $\Nusl(b;a_1,\dotsc,a_n)=\bigoplus_{p+q=m}H^{p,q}$ is the Hodge decomposition,
    then it is orthogonal for $h_{\Dc}$
    and $(-1)^{\frac{m(m-1)}{2}}i^{p-q}(\zeta_r^2-\zeta_r^{-2})^mh_{\Dc}$ is positive definite on $H^{p,q}$.
\end{theorem}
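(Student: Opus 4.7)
The plan is to reduce the polarization statement to the classical Hodge--Riemann bilinear relations for smooth quasi-projective varieties, exploiting the fact, noted just after \Cref{theoremgeometricconstruction}, that $\Line$ is a rank-$1$ local system of finite monodromy on the smooth variety $\sMrpo{n+m}{r}/S_m$. First I would choose a finite étale Galois cover $\widetilde\pi: \widetilde X \to \sMrpo{n+m}{r}/S_m$, with Galois group $G$, on which $\Line$ is trivialized. Then $\Line$ appears as the $\chi$-isotypic summand of $\widetilde\pi_*\underline{\Q(\zeta_r)}$ for the character $\chi: G \to \Q(\zeta_r)^\times$ encoding the monodromy of $\Line$; this decomposition is precisely the mechanism by which the integral variation of Hodge structures on $\sheafHmid$ is produced in \Cref{corollaryHodge}, following the standard recipe for bundles of geometric origin.

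Setting $q = p\circ\widetilde\pi: \widetilde X \to \sMrpo{n}{r}$, the intermediate cohomology
\begin{equation*}
    \mathrm{im}\bigl(R^m q_!\,\underline{\Q(\zeta_r)} \lra R^m q_*\,\underline{\Q(\zeta_r)}\bigr)
\end{equation*}
is, fiberwise, the weight-$m$ intermediate cohomology of a smooth quasi-projective variety of complex dimension $m$. The classical result I would invoke is that this intermediate cohomology carries a pure polarized Hodge structure of weight $m$, polarized by $(-1)^{\frac{m(m-1)}{2}}$ times its Poincaré intersection pairing (Deligne's polarization obtained from a smooth compactification, or equivalently Zucker's polarization on $\mathrm{IH}^m$). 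Since the $G$-action is topological and preserves this pairing, projecting onto the $\chi$-isotypic component yields a polarized variation of Hodge structures on $\sheafHmid \simeq \Nusl(b;a_1,\dotsc,a_n)$, polarized by the restriction of the intersection pairing—which becomes precisely $s_{\Dc}$ after the sesquilinearization that pairs the $\chi$- and $\chi^{-1}=\bar\chi$-isotypic components. Substituting the identification $s_{\Dc} = (\zeta_r^2-\zeta_r^{-2})^m h_{\Dc}$ from \Cref{theoremintersectionform} then gives the stated formula in terms of the Skein hermitian form.

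The main obstacle, as I see it, is the sign bookkeeping: one must verify that passing from the bilinear intersection pairing on $H^m_c(\widetilde X;\Q)$ to the sesquilinear form on the $\chi$-isotypic summand really reproduces $(-1)^{\frac{m(m-1)}{2}} s_{\Dc}$, and that the prefactor $i^{p-q}$ demanded by the definition of a polarization lines up with the Hodge--Riemann signs of the classical theorem. A secondary, more conceptual, point to check is that the Hodge structure produced by this isotypic-summand procedure coincides with, rather than being merely abstractly isomorphic to, the one fixed by \Cref{corollaryHodge}; this should follow because the latter is defined precisely through such a summand decomposition, but it deserves to be spelled out carefully in the companion paper where the Hodge structure is constructed.
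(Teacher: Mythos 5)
The key technical step in the paper's argument is \emph{not} present in your proposal, and the ``classical result'' you cite in its place is not actually true in the generality you claim.

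Your proposal asserts that for a smooth quasi-projective variety $U$ of complex dimension $m$, the intermediate cohomology $\Hmid^m(U)=\mathrm{im}\bigl(H^m_c(U)\to H^m(U)\bigr)$ is polarized by $(-1)^{m(m-1)/2}$ times the Poincar\'e intersection pairing. Taken at face value this fails: if $U$ is itself compact, then $\Hmid^m(U)=H^m(U)$, and the Hodge--Riemann bilinear relations polarize only the \emph{primitive} part of $H^m(U)$, with the non-primitive part having the opposite sign (consider $U=\mathbb{P}^2$, where $H^2$ is one-dimensional, of type $(1,1)$, with positive self-intersection, so $(-1)\cdot s$ is negative). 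A non-compact counterexample of the same type is $U=\mathbb{A}^1\times\mathbb{P}^1$, where $\Hmid^2(U)$ is one-dimensional, pure of type $(1,1)$, and the intersection pairing has the wrong sign. The identification $\Hmid^m(U)\simeq\mathrm{IH}^m(\overline{U})$ that you invoke also does not hold for an arbitrary smooth compactification; one only has maps $H^m_c(U)\to\mathrm{IH}^m(\overline{U})\to H^m(U)$, and the passage from the polarization of $\mathrm{IH}^m$ to a polarization of $\Hmid^m(U)$ \emph{by the intersection form with this sign} is precisely the issue.

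What the paper actually does is to identify $\Nusl(b;\ua)$ with the image of $R^mp_!\Line\xrightarrow{\alpha}R^m\pb_*\Line$, where $\pb:\Mb_m\to\justM$ is the partial compactification of the configuration-space fibration obtained by passing to the preimage of the smooth locus in $\Mrpo{m+n}{r}/S_m$. The point of \Cref{pbsmooth} is that this $\pb$ is proper \emph{and smooth}, so $R^m\pb_*\Line$ is a bona fide IVHS on which \Cref{primitivepolarization} applies. The crucial input that your proposal is missing is then the argument that $\mathrm{im}(\alpha)$ lands in the \emph{primitive} part $PR^m\pb_*\Line$: this uses Keel's theorem that $H^2(\M{m+n+1};\Q)$ is spanned by boundary divisor classes, so a Kähler class $[\omega]$ on $\Mrpo{m+n}{r}/S_m$ is rationally a sum of boundary classes, and hence $u\wedge[\omega]=0$ for any compactly supported class $u$ on the open fiber. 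Without this primitivity step (or a substitute for it, e.g.\ exhibiting an ample boundary divisor in a specific compactification of the fibers), the sign $(-1)^{m(m-1)/2}$ in the statement cannot be justified. Your Galois-cover reduction is fine as far as it goes—it is essentially the mechanism used in the proof of \Cref{corollaryHodge}—but it does not address the primitivity, which is the heart of the matter.
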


%-------------------------------------------------------------------------------------------------------------------------

\subsubsection{(Rooted) Integral structures}

Let $M$ be a manifold of real dimension $2m$, and $\Line$ a $\Q(\zeta_r)$-local system on $M$
with a $\Z[\zeta_r]$-integral structure $\Line^{\mathcal{O}}$.
Then the $\Z[\zeta_r]$-module:
\begin{equation*}
    \overline{H}_c^m(M;\Line^{\mathcal{O}})=H_c^m(M;\Line^{\mathcal{O}})/\mathrm{torsion}
\end{equation*}
is a lattice in $H_c^m(M;\Line)$. Similarly, the $\Z[\zeta_r]$-module:
\begin{equation*}
    \overline{H}^m(M;\Line^{\mathcal{O}})=H^m(M;\Line^{\mathcal{O}})/\mathrm{torsion}
\end{equation*}
is a lattice in $H^m(M;\Line)$.
These induce two Poincaré dual lattices in $\Hmid(M;\Line)$:
\begin{align*}
    \Hmid_1(M;\Line^{\mathcal{O}})&=\myim{ \overline{H}_c^m(M;\Line^{\mathcal{O}})\lra H^m(M;\Line)}/\mathrm{torsion} \\
    \Hmid_2(M;\Line^{\mathcal{O}})&=\overline{H}^m(M;\Line^{\mathcal{O}})\cap\Hmid(M;\Line).
\end{align*}
Notice that $\Hmid_1(M;\Line^{\mathcal{O}})\subset \Hmid_2(M;\Line^{\mathcal{O}})$.

\begin{theorem}\label{theoremintegralstructure}
    Let $r\geq 3$ be an odd integer, $n\geq 2$ and $b,a_1,\dotsc,a_n\in\{0,1,\dotsc,r-2\}$,
    such that $m=\frac{a_1+\dotsb+a_n-b}{2}$ is an integer. Set $\Dc=(D^n,b,\underline{a})$.

    Let $\Line=\Nuab(b;a_1,\dotsc,a_n;(-2)^{\bullet m})\otimes\epsilon$ over $\sMrpo{n+m}{r}/S_m$
    and $p:\sMrpo{n+m}{r}/S_m\ra \sMrpo{n}{r}$ as in \Cref{theoremgeometricconstruction}.
    Then the integral structure $\Line^{\mathcal{O}}$ on $\Line$ induces two sheaves of $\Z[\zeta_r]$-modules:
    \begin{align*}
        \sheafHmid_1^{\mathcal{O}} &= \mathrm{im}\left(R^mp_!\Line^{\mathcal{O}}\lra R^mp_*\Line\right)/\text{torsion}\\
        \sheafHmid_2^{\mathcal{O}} &= \mathrm{im}\left(R^mp_!\Line\lra R^mp_*\Line\right) \cap R^mp_*\Line^{\mathcal{O}}
    \end{align*}
    that are lattices in $\sheafHmid$, with $\sheafHmid_1^{\mathcal{O}}\subset \sheafHmid_2^{\mathcal{O}}$.
    If $r$ is prime, under the isomorphism $\sheafHmid\simeq \Nusl(b;a_1,\dotsc,a_n)$, we have the equalities:
    \begin{align*}
        \sheafHmid_1^{\mathcal{O}}&=(\zeta_r^2-\zeta_r^{-2})^{-m}\NuslO(b;a_1,\dotsc,a_n) \\
        \sheafHmid_2^{\mathcal{O}}&=\NuslO(b;a_1,\dotsc,a_n).
    \end{align*}
\end{theorem}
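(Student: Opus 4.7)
The plan is to first establish the lattice structure and a Poincaré duality relation between $\sheafHmid_1^{\mathcal{O}}$ and $\sheafHmid_2^{\mathcal{O}}$, and then identify $\sheafHmid_2^{\mathcal{O}}$ with $\NuslO$ by explicit comparison of bases in the prime case.

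That $\sheafHmid_1^{\mathcal{O}}$ and $\sheafHmid_2^{\mathcal{O}}$ are lattices with $\sheafHmid_1^{\mathcal{O}} \subset \sheafHmid_2^{\mathcal{O}}$ should be essentially formal: the natural map $R^mp_!\Line^{\mathcal{O}} \to R^mp_*\Line^{\mathcal{O}}$ provides the inclusion after passing to images modulo torsion, and the topological fibration structure of $p$ over the smooth locus yields fiberwise finite generation. The main leverage then comes from the sesquilinear intersection pairing $s_{\Dc}$ on $\sheafHmid$ induced by the hermitian structure on $\Line^{\mathcal{O}}$. Because $s_{\Dc}$ is the integral form arising from Poincaré duality between compactly supported and ordinary cohomology of the fiberwise configuration spaces with coefficients in a $\Z[\zeta_r]$-local system, it restricts to a perfect $\Z[\zeta_r]$-valued pairing between $\sheafHmid_1^{\mathcal{O}}$ and $\sheafHmid_2^{\mathcal{O}}$, identifying each as the $s_{\Dc}$-dual of the other inside $\sheafHmid$. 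Combined with the identity $s_{\Dc}=(\zeta_r^2-\zeta_r^{-2})^m h_{\Dc}$ from \Cref{theoremintersectionform} and the self-duality of $\NuslO$ under $h_{\Dc}$ (immediate from the compatibility clause in the definition of an integral structure), this reduces the theorem to the single claim $\sheafHmid_2^{\mathcal{O}} = \NuslO$ when $r$ is prime.

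To establish this remaining identification, the strategy is to use the cellular model of configuration spaces developed in \Cref{subsubcellularcomplex} together with Martel's homological models. These produce explicit Borel--Moore cycles indexed by admissible colorings that form a $\Z[\zeta_r]$-basis of $R^mp_*\Line^{\mathcal{O}}$ fiberwise; those that also admit a compact-support lift are precisely the ones landing in $\sheafHmid_2^{\mathcal{O}}$. On the Skein side, $\NuslO$ admits for prime $r$ a tree-basis indexed by the same admissible colorings. The isomorphism $\sheafHmid \simeq \Nusl$ of \Cref{theoremgeometricconstruction} should send one basis to the other without introducing denominators, since Martel's construction works over $\Z[\zeta_r]$ directly and is intertwined with the quantum group construction of \Cref{subquantumconstruction}, which is itself related to the Skein construction over $\Z[\zeta_r]$ when $r$ is prime.

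The hard part will be rigorously ruling out denominators and torsion when transporting between the three models (geometric, quantum, Skein) at a non-generic $r$-th root of unity. Controlling the possible $r$-torsion in $R^mp_!\Line^{\mathcal{O}}$ and $R^mp_*\Line^{\mathcal{O}}$ is what ultimately forces the restriction to $r$ prime: exactly in this case is the Skein integral structure even defined, $(\zeta_r^2-\zeta_r^{-2})$ generates a single prime ideal above $r$ in $\Z[\zeta_r]$, and irreducibility of $\Nusl$ provides extra rigidity to pin down the lattice up to scaling, so that the explicit comparison on basis elements fixes the scalar to be trivial.
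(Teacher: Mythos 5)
Your duality reduction is correct and is a valid reorganisation of the argument: the intersection pairing $s_{\Dc}$ does identify $\sheafHmid_1^{\mathcal{O}}$ and $\sheafHmid_2^{\mathcal{O}}$ as mutually $\Z[\zeta_r]$-dual lattices (a universal-coefficient / Poincaré-duality check over the Dedekind domain $\Z[\zeta_r]$, which the paper asserts when it calls them ``Poincaré dual lattices''), and together with $s_{\Dc}=(q-q^{-1})^m h_{\Dc}$ from \Cref{theoremintersectionform} and self-duality of $\NuslO$ under $h_{\Dc}$, each of the two claimed equalities does follow from the other. Note one structural point: in the paper \Cref{theoremintersectionform} is proved \emph{after} \Cref{theoremintegralstructure}, so your order inverts the paper's, but this is not circular (the proof of \Cref{theoremintersectionform} in \Cref{subsubproofintersection} only invokes \Cref{theoremgeometricconstruction}, not the integral structure). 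The paper instead identifies $\sheafHmid_1^{\mathcal{O}}$ directly (the duality between the two lattices is built into \Cref{propositionimagequantumgroups} on the Verma-module side, via the pairing of \Cref{bilinearformquantum}).

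Where your sketch falls short is the remaining identification, which you essentially restate rather than prove. The paper does not compare ``Borel--Moore cycles indexed by admissible colorings'' to a Skein tree-basis; the $A$- and $B$-cycles are indexed by compositions of $m$, not by admissible colourings, and the two indexings are related only through the quantum-group intermediary. The concrete mechanism is the $\Z[\zeta_r]$-linear commutative diagram of \Cref{subsubmainproofs} built from \Cref{theoremBMhomology}, \Cref{corollaryabsolutehomology} and \Cref{propositionhomologicalred}, which identifies the map $H_m(\Conf{m}{D_n};\Line_m^{\mathcal{O}})\to H_m^{BM}(\Conf{m}{D_n};\Line_m^{\mathcal{O}})$ with the map $\mathrm{red}$ between Verma $\hom$-spaces; the image of $\mathrm{red}$ is then computed by \Cref{imageofred}, which rests on the explicit section in the proof of \Cref{propositionimagequantumgroups} (division by $[k_i]!$, requiring $r$ prime), the formula for $\mathrm{red}_\alpha$ in \Cref{imageFtoEquantum}, and the norm computation behind \Cref{theoremquantumgroupconstruction}. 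Saying the isomorphism ``should send one basis to the other without introducing denominators'' is precisely the content to be established, and without the Verma-module chain there is no way to see where the powers of $(q-q^{-1})$ and the primality hypothesis actually enter. Also, the claim that the classes landing in $\sheafHmid_2^{\mathcal{O}}$ are ``those admitting a compact-support lift'' describes $\sheafHmid_1^{\mathcal{O}}$; for $\sheafHmid_2^{\mathcal{O}}$ the lift is only required over $\Q(\zeta_r)$, with integrality imposed on the target side.
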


\begin{remark}
    Notice that when $r$ is prime $\sheafHmid_1^{\mathcal{O}}=(\zeta_r^2-\zeta_r^{-2})^{-m}\sheafHmid_2^{\mathcal{O}}$ for all $\Dc$.
    When $r$ is not prime, this relation still holds in the cases where $\Dc$ is of the form $(D^n,b,1,\dotsc,1)$,
    as can be seen by studying the formula for the map $\mathrm{red}_\alpha$ of \Cref{imageFtoEquantum} in the case $\alpha=1$.
    However, the precise relation between the $\sheafHmid_1^{\mathcal{O}}$ and $\sheafHmid_2^{\mathcal{O}}$ is not clear to the author
    for general $\Dc$ when $r$ is not prime.
\end{remark}

\subsection{The gluing property in the geometric context}

%-------------------------------------------------------------------------------------------------------------------------

\subsubsection{With configuration spaces}\label{subsubgluinghomological}

We first describe the geometric counterpart to the gluing morphisms with configuration spaces as in \Cref{theoremhomologicalconstruction}.

Let $r\geq 3$ be an odd integer, $n\geq 2$ and $b,a_1,\dotsc,a_n\in\{0,1,\dotsc,r-2\}$,
such that $m=\frac{a_1+\dotsb+a_n-b}{2}$ is an integer.
Let $\Dc=(D^n,b,a_1,\dotsc,a_n)$ be the associated colored disk.

Let $\gamma$ be a simple closed curve in $\mathrm{int}(D^n)$.
Define $\Dc_{c}'$ and $\Dc_{c}''$ for each $0\leq c\leq r-2$ as in \Cref{subsubrootingtopological}.

Let $C=D^n\setminus\partial D^n$, $C'=D'\setminus\partial D'$ and $C''=D''\setminus\partial D''$.

Let $C_\gamma = C\setminus \gamma$.
One has $C_\gamma\simeq C'\sqcup C''$. We then have an inclusion:
\begin{equation*}
    \bigsqcup_{m_1+m_2=m}\Conf{m_1}{C'}\times\Conf{m_2}{C''}=\Conf{m}{C_\gamma}\subset \Conf{m}{C}.
\end{equation*}

By the gluing property of \Cref{gluingunrooted} for the abelian modular functor,
the local system $\Line_{\Dc}$ restricted to $\Conf{m}{C_\gamma}$ splits into:
\begin{equation*}
    \bigsqcup_{m_1+m_2=m} \Line_{\Dc_{c}'}\otimes \Line_{\Dc_{c}''}
\end{equation*}
where $c$ is such that $m_1=\frac{a_1+\dotsb+a_{n_1}-c}{2}$ or equivalently $m_2=\frac{a_{n_1+1}+\dotsb+a_{n}+c-b}{2}$.

The cellular complex of \Cref{subsubcellularcomplex} shows that $\Conf{m_1}{C'}$
has no cohomology in degrees $>m_1$. Similarly for $\Conf{m_2}{C''}$ in degrees $>m_2$.
From this we get the Künneth formula:
\begin{multline*}
    H_m(\Conf{m}{C_\gamma};\Line_{\Dc})\\
    =\bigoplus_{m_1+m_2=m}H_{m_1}(\Conf{m_1}{C'};\Line_{\Dc_{c}'})
    \otimes H_{m_2}(\Conf{m_2}{C''};\Line_{\Dc_{c}''}).
\end{multline*}

Now, the inclusion map:
\begin{equation}\label{inclusionmapgluingproperty}
    H_m(\Conf{m}{C_\gamma};\Line_{\Dc})\lra H_m(\Conf{m}{C};\Line_{\Dc})
\end{equation}
induces the gluing map as follows.

\begin{theorem}\label{theoremgluinghomological}
    The diagram of $\PMod{D'}\times\PMod{D''}$-representations:
    \[\begin{tikzcd}
        {\bigoplus\limits_{m_1+m_2=m}H_{m_1}(\Conf{m_1}{C'};\Line_{\Dc_{c}'})     \otimes H_{m_2}(\Conf{m_2}{C''};\Line_{\Dc_{c}''})} & {H_m(\Conf{m}{C};\Line_{\Dc})} \\
        {\bigoplus\limits_{m_1+m_2=m}\Hmid(\Conf{m_1}{C'};\Line_{\Dc_{c}'})     \otimes \Hmid(\Conf{m_2}{C''};\Line_{\Dc_{c}''})} & {\Hmid(\Conf{m}{C};\Line_{\Dc})} \\
        {\bigoplus_{c} \Nusl(\Dc_{c}')\otimes \Nusl(\Dc_{c}'')} & {\Nusl(\Dc)}
        \arrow["{\mathbf{(G)}}", tail reversed, from=3-1, to=3-2]
        \arrow[two heads, from=1-1, to=2-1]
        \arrow[two heads, from=1-2, to=2-2]
        \arrow[from=1-1, to=1-2]
        \arrow[tail reversed, from=2-1, to=3-1]
        \arrow[tail reversed, from=2-2, to=3-2]
    \end{tikzcd}\]
    commutes, where two-sided arrows are isomorphisms and
    $\mathbf{(G)}$ is the gluing map.
\end{theorem}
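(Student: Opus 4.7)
The plan is to use Martel's homological model to translate the geometric decomposition on configuration spaces into the standard tensor-product decomposition of Verma modules, which in turn matches the TQFT gluing by the quantum group construction of \Cref{subquantumconstruction}.

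First, I would verify that the middle horizontal arrow in the diagram is well-defined, i.e.\ that the inclusion-induced map on ordinary homology, composed with the right-hand surjection onto $\Hmid$, descends through the left-hand Künneth-based surjection from $\bigoplus H_{m_1} \otimes H_{m_2}$ onto $\bigoplus \Hmid \otimes \Hmid$. The key tool here is the Borel--Moore cellular model of \Cref{subsubcellularcomplex}, which presents $\Conf{m}{C}$ via a cell complex whose top-dimensional cells lying in the open subset $\Conf{m}{C_\gamma}$ factor as products of cells for $\Conf{m_1}{C'}$ and $\Conf{m_2}{C''}$. This compatibility lets one lift any Borel--Moore chain witnessing the vanishing of a factor in $\Hmid(\Conf{m_i}{C^{(i)}}; \Line_{\Dc_c^{(i)}})$ to a Borel--Moore chain witnessing the vanishing of the corresponding tensor in $\Hmid(\Conf{m}{C}; \Line_{\Dc})$, giving the required factorization.

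Second, I would identify this descended map on $\Hmid$ with the TQFT gluing morphism. By Martel's theorem (\Cref{subsubbases}), the isomorphism $\Hmid(\Conf{m}{C}; \Line_{\Dc}) \simeq \Nusl(\Dc)$ of \Cref{theoremhomologicalconstruction} factors through an embedding into the weight-$b$ component of the Verma module tensor product $V_{a_1} \otimes \cdots \otimes V_{a_n}$, and the geometric splitting $\Conf{m}{C_\gamma} = \bigsqcup_{m_1+m_2=m} \Conf{m_1}{C'} \times \Conf{m_2}{C''}$ corresponds precisely to the standard weight-space decomposition
\[
(V_{a_1} \otimes \cdots \otimes V_{a_n})[b] = \bigoplus_c (V_{a_1} \otimes \cdots \otimes V_{a_{n_1}})[c] \otimes (V_{a_{n_1+1}} \otimes \cdots \otimes V_{a_n})[b-c],
\]
indexed by the Verma highest-weight color $c$ attached to $\gamma$. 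The quantum group construction of $\Nusl$ in \Cref{subquantumconstruction} realizes the TQFT gluing $\mathbf{(G)}$ as precisely this tensor-product decomposition, descended to the subquotient identified with $\Hmid$; the middle and bottom rows of the diagram therefore agree.

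The main obstacle will be the careful bookkeeping of normalizations so that the identification is an equality, not just equality up to a scalar. The sign twist $\epsilon$ in $\Line$, the rooting conventions of \Cref{subrooting}, the framing conventions of \Cref{subsubframingluing}, and the dualization $\mu \leftrightarrow \mu^\dagger$ built into axiom \textbf{(G)} must all be matched; half-twist factors of the form $q^{-a_ia_j/2}$ from the abelian modular functor that arise when comparing different orderings of punctures on the two sides of $\gamma$ must cancel exactly, which forces a consistent choice of basis on each half in agreement with Martel's conventions and with the rooting of $\Dc'$, $\Dc''$ set up in \Cref{subsubrootingtopological}.
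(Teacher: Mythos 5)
Your overall strategy — translate through Martel's homological model into Verma-module tensor products and match the quantum-group gluing — is the same as the paper's. But the heart of your step 2 misidentifies what the gluing map \emph{is} on the algebraic side, and as a result it skips the actual calculation that the proof requires.

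You write the gluing as the trivial weight-space decomposition
\begin{equation*}
    \left(V_{a_1}\otimes\dotsb\otimes V_{a_n}\right)[b]
    \simeq \bigoplus_c \left(V_{a_1}\otimes\dotsb\otimes V_{a_{n_1}}\right)[c]\otimes\left(V_{a_{n_1+1}}\otimes\dotsb\otimes V_{a_n}\right)[b-c],
\end{equation*}
but this is just an identity of vector spaces and has nothing to do with the map $H_m(\Conf{m}{C_\gamma};\Line_\Dc)\to H_m(\Conf{m}{C};\Line_\Dc)$ in the top row of the diagram. That map is genuinely not an isomorphism; it becomes one only after passing to $\Hmid$. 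Under Martel's isomorphisms it is the \emph{composition} map
\begin{equation*}
    \mathrm{comp}:\left(u\otimes(e_k\otimes v)\right)\longmapsto \left((\overline{F}^{(1)})^ku\right)\otimes v,
\end{equation*}
where $u$ lies in $\bigl(\bigotimes_{i\le n_1}\hat{V}^{\overline{F}}_{a_i}\bigr)_c\cap\ker E\cap\ker E^{(r)}$ and $e_k\otimes v$ lies in $\bigl(\hat{V}^{\overline{F}}_c\otimes\bigotimes_{i>n_1}\hat{V}^{\overline{F}}_{a_i}\bigr)_b$. This is the insertion of a highest-weight vector into the intermediate Verma module $\hat{V}^{\overline{F}}_c$, which is exactly the composition $f_\mu\otimes g_\mu\mapsto (g_\mu\otimes\id)\circ f_\mu$ appearing in the quantum-group gluing of \Cref{subquantumconstruction}. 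Your weight-space decomposition does not produce this map, so the proposal never shows that the geometric push-forward realizes the TQFT gluing.

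The genuinely nontrivial input the paper supplies is a homological identity that you do not supply a substitute for: writing $y=B(k,s_1,\dotsc,s_{n_2})$ in the $B$-class basis of $H_{m_2}(\Conf{m_2}{\oD_{n_2+1},\partial_-\oD_{n_2+1}};\Line_{\Dc''_c})$ and $y'=B(s_1,\dotsc,s_{n_2})$, one verifies by an explicit picture that
\begin{equation*}
    x\cup y = \left((\overline{F}^{(1)})^k x\right)\times y'
\end{equation*}
where $\cup$ is the push-forward through $\Conf{m}{C_\gamma}\subset\Conf{m}{C}$ and $\times$ is the monoidal gluing of the two half-disks. This is what identifies $\cup$ with $\mathrm{comp}$; without it, the diagram of the theorem is not established. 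Also, in step 1 you appeal to the cellular model of \Cref{subsubcellularcomplex}, but that model computes Borel--Moore homology while the top row of the diagram concerns ordinary homology, where the relevant basis is the $B$-classes, not the Borel--Moore cells; the descent to $\Hmid$ instead follows from the compatibility of $\cup$ and $\mathrm{comp}$ with the maps of \Cref{propositionhomologicalred} once the top square is established.Your overall strategy — translate through Martel's homological model into Verma-module tensor products and match against the quantum-group description of the gluing — is the same as the paper's. But the heart of your step 2 misidentifies what the gluing map \emph{is} on the algebraic side, and as a result it skips the actual computation that the proof requires.

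You write the gluing as the weight-space decomposition
\begin{equation*}
    \left(V_{a_1}\otimes\dotsb\otimes V_{a_n}\right)[b]
    \simeq \bigoplus_c \left(V_{a_1}\otimes\dotsb\otimes V_{a_{n_1}}\right)[c]\otimes\left(V_{a_{n_1+1}}\otimes\dotsb\otimes V_{a_n}\right)[b-c],
\end{equation*}
but this is just an identity of vector spaces and is not the map $H_m(\Conf{m}{C_\gamma};\Line_\Dc)\to H_m(\Conf{m}{C};\Line_\Dc)$ in the top row. That map is genuinely not an isomorphism; it becomes one only after projecting to $\Hmid$. Under Martel's isomorphisms it is the \emph{composition} map
\begin{equation*}
    \mathrm{comp}\colon u\otimes\left(e_k\otimes v\right)\longmapsto \left((\overline{F}^{(1)})^ku\right)\otimes v,
\end{equation*}
with $u$ a highest-weight vector in $\bigl(\bigotimes_{i\le n_1}\hat{V}^{\overline{F}}_{a_i}\bigr)_c\cap\ker E\cap\ker E^{(r)}$ and $e_k\otimes v\in\bigl(\hat{V}^{\overline{F}}_c\otimes\bigotimes_{i>n_1}\hat{V}^{\overline{F}}_{a_i}\bigr)_b$. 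This is the insertion of a highest-weight vector into the intermediate Verma module $\hat{V}^{\overline{F}}_c$, i.e.\ exactly the composition $f_\mu\otimes g_\mu\mapsto (g_\mu\otimes\id)\circ f_\mu$ that \Cref{subquantumconstruction} uses to realize $\mathbf{(G)}$ on the $\bhom$ spaces. A weight decomposition does not produce this map, so your proposal never shows that the geometric push-forward realizes the TQFT gluing.

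Concretely, the nontrivial input the paper supplies and that your proposal does not replace is the homological identity: for $y=B(k,s_1,\dotsc,s_{n_2})$ in the $B$-class basis of $H_{m_2}(\Conf{m_2}{\oD_{n_2+1},\partial_-\oD_{n_2+1}};\Line_{\Dc''_c})$ and $y'=B(s_1,\dotsc,s_{n_2})$, one checks by a direct picture argument that
\begin{equation*}
    x\cup y = \left((\overline{F}^{(1)})^k x\right)\times y',
\end{equation*}
where $\cup$ is the map through $\Conf{m}{C_\gamma}\subset\Conf{m}{C}$ and $\times$ is the monoidal gluing of half-disks. This is what identifies $\cup$ with $\mathrm{comp}$. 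Also, in step 1 you appeal to the cellular decomposition of \Cref{subsubcellularcomplex}, but that model computes Borel--Moore homology; the top row of the diagram is ordinary homology, where the relevant basis is the $B$-classes and not the Borel--Moore cells. The descent to $\Hmid$ follows instead from the compatibility of $\cup$ and $\mathrm{comp}$ with the maps of \Cref{propositionhomologicalred} once the top square is established, not from a lifting of cellular chains.
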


\begin{remark}
    The statement of the theorem contains some vanishing properties. Indeed, if $c<0$ or $c>r-2$,
    then the theorem says that $\Hmid(\Conf{m_1}{C'};\Line_{\Dc_{c}'})=0$ or
    $\Hmid(\Conf{m_2}{C''};\Line_{\Dc_{c}''})=0$.
\end{remark}

A geometric gluing morphism for $sl_2$ representations was formulated along the same lines by Silvotti
in \cite{silvottiFactorizationPropertyCohomology1998}. The two main differences is that Silvotti works
in a generic case (generic parameter $q$) whereas we work with roots of unity ($q=\zeta_r^2$),
and we work with finite-dimensional modules over $U_{\zeta_r}$ where Silvotti works with Verma modules over $sl_2$.
Hence we need to consider the quotient homology $\Hmid(\Conf{m}{C};\Line_{\Dc})$ where Silvotti
uses $H^m(\Conf{m}{C};\Line_{\Dc})$. Apart from these differences, the proof is similar and
quite straightforward once the isomorphism of \Cref{theoremgeometricconstruction} is described.
See \Cref{subsubmainproofs} below.

%-------------------------------------------------------------------------------------------------------------------------

\subsubsection{With the local systems}

Let $r\geq 3$ be an odd integer, $n\geq 2$ and $b,a_1,\dotsc,a_n\in\{0,1,\dotsc,r-2\}$,
such that $m=\frac{a_1+\dotsb+a_n-b}{2}$ is an integer.

Let $\Mp$ be the locus in $\Mrpo{n}{r}$ of curves with at most one nodal singularity,
and $\Mp_m$ be the same locus in $\Mrpo{n+m}{r}/S_m$. Denote by $p$ the forgetful map $\sMrpo{n+m}{r}/S_m\ra \sMrpo{n}{r}$
and by $p'$ the map $\Mp_m\ra\Mp$.

Set $\Line=\Nuab(b;a_1,\dotsc,a_n;(-2)^{\bullet m})\otimes\epsilon$ on $\sMrpo{n+m}{r}/S_m$
and $\Line'$ the same representation as a local system on $\Mp_m$.

Set:
\begin{equation*}
    \sheafHmid = \mathrm{im}\left(R^mp_!\Line\lra R^mp_*\Line\right).
\end{equation*}
As a local system, $\sheafHmid$ extends to $\Mp$. We denote this extension by $\sheafHmid'$.

Let $n=n_1+n_2$ with $n_1\geq 2$ and $n_2\geq 1$. Let $\B\subset \Mp$ be the image 
of $\sMrpo{n_1}{r}\times\sMrpo{n_2+1}{r}$ under the map gluing the $0$-th section of a curve in $\sMrpo{n_1}{r}$
to the $(n_2+1)$-th section of a curve in $\sMrpo{n_2+1}{r}$.
Then we have a decomposition:
\begin{equation}
    p'^{-1}(\B) = \bigsqcup_{m_1+m_2=m} \sMrpo{n_1+m_1}{r}/S_{m_1}\times\sMrpo{n_2+1+m_2}{r}/S_{m_2}.\footnote{Actually, the right hand side is a $\mu_r$-gerbe over the left hand side. But this is unimportant here.}
\end{equation}
Under this decomposition, by the gluing property of \Cref{gluinggeometric}, $\Line'$ decomposes as:
\begin{equation*}
    \Line'_{\mid\B} = \bigsqcup_{m_1+m_2=m} \Line^1_{m_1}\otimes \Line^2_{m_2}
\end{equation*}
where:
\begin{align*}
    \Line^1_{m_1}&=\Nuab(c;a_1,\dotsc,a_{n_1};(-2)^{\bullet m_1})\otimes\epsilon \\
    \Line^2_{m_2}&=\Nuab(b;a_{n_1+1},\dotsc,a_n,c;(-2)^{\bullet m_2})\otimes\epsilon.
\end{align*}
Here $c$ is such that $m_1=\frac{a_1+\dotsb+a_{n_1}-c}{2}$ or equivalently $m_2=\frac{a_{n_1+1}+\dotsb+a_{n}+c-b}{2}$.

Let $\sheafHmid^1_{m_1}$, respectively $\sheafHmid^2_{m_2}$, be the sheaf over $\sMrpo{n_1}{r}$, respectively 
$\sMrpo{n_2}{r}$, given by \Cref{equationgeometricconstruction} from $\Line^1_{m_1}$, respectively $\Line^2_{m_2}$.

Then, as before, by the Künneth formula, one has:
\begin{equation*}
    \left(R^mp'_!\Line'\right)_{\mid \B}=\bigoplus_{m_1+m_2=m} R^{m_1}p'_!\Line^1_{m_1} \otimes R^{m_2}p'_!\Line^2_{m_2}
\end{equation*}
and thus we have a surjective map:
\begin{equation*}
    \left(R^mp'_!\Line'\right)_{\mid \B}\twoheadrightarrow \bigoplus_{m_1+m_2=m} \sheafHmid^1_{m_1}\otimes \sheafHmid^2_{m_2}.
\end{equation*}

Moreover, the specialization maps of the fibers of $p'$ over the locus of singular curves in $\mathcal{M}'$ induce a morphism of sheaves:
\begin{equation}\label{equationgeometricmap}
    R^mp'_!\Line'\lra \sheafHmid'.
\end{equation}

This is essentially the map of \Cref{inclusionmapgluingproperty} in families. However, here the target is $\sheafHmid'$,
because $\sheafHmid$ extends to $\mathcal{M}'$ but $R^mp_!\Line$ may not.
See \Cref{subproofgluinghodge} for a description of specialisation maps.
Now, notice that by \Cref{theoremgeometricconstruction}, we have isomorphisms:
\begin{align*}
    \sheafHmid' &\simeq \Nusl(b;a_1,\dotsc,a_n) \\
    \sheafHmid^1_{m_1}      &\simeq \Nusl(c;a_1,\dotsc,a_{n_1}) \\
    \sheafHmid^2_{m_2}      &\simeq \Nusl(b;a_{n_1+1},\dotsc,a_n,c).
\end{align*}

The gluing axiom tells us there is an isomorphism:
\begin{equation*}
    \Nusl(b;a_1,\dotsc,a_n)_{\mid\B}\simeq \bigoplus_c \Nusl(c;a_1,\dotsc,a_{n_1})\otimes \Nusl(b;a_{n_1+1},\dotsc,a_n,c).
\end{equation*}

\begin{theorem}\label{theoremgluinggeometric}
    The diagram:
    \[\begin{tikzcd}
        & {\left(R^mp'_!\Line'_{\mid\B}\right)} \\
        {\sheafHmid'_{\mid\B}} & {\bigoplus\limits_{m_1+m_2=m} \sheafHmid^1_{m_1}\otimes \sheafHmid^2_{m_2}} \\
        {\Nusl(b;a_1,\dotsc,a_n)_{\mid\B}} & {\bigoplus\limits_c \Nusl(c;a_1,\dotsc,a_{n_1})\otimes \Nusl(b;a_{n_1+1},\dotsc,a_n,c)}
        \arrow[two heads, from=1-2, to=2-2]
        \arrow["{\Cref{equationgeometricmap}}"', from=1-2, to=2-1]
        \arrow[tail reversed, from=2-1, to=3-1]
        \arrow[tail reversed, from=2-2, to=3-2]
        \arrow[tail reversed, from=3-1, to=3-2]
    \end{tikzcd}\]
    commutes, where the two-sided arrows are isomorphisms.
    In other terms, the map of \Cref{equationgeometricmap} gives a geometric realization of the
    gluing map.
\end{theorem}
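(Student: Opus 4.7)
The plan is to reduce the sheaf-theoretic statement to the fiberwise result of \Cref{theoremgluinghomological} via a specialization argument. Both horizontal maps in the target square are sheaf maps on $\B$, so commutativity can be checked on stalks at a generic (and hence any) point $x \in \B$.

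Fix such an $x$, corresponding to a nodal curve $C_x = C_1 \cup_{\mathrm{node}} C_2$. I would choose a small analytic neighborhood $\Delta$ of $x$ in $\Mp$ such that $\Delta \cap \B = \{x\}$ and such that the universal curve over $\Delta$ is topologically trivial away from a tubular neighborhood of the node. For a nearby point $y \in \Delta \setminus \B$, let $C = C_y$ denote the smooth fiber, and let $\gamma \subset C$ be the vanishing cycle that collapses to the node as $y \to x$. Cutting $C$ along $\gamma$ yields $C_\gamma = C' \sqcup C''$, naturally identified with $C_1 \setminus \{\mathrm{node}\}$ and $C_2 \setminus \{\mathrm{node}\}$. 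The fiber $p'^{-1}(x)$ then decomposes as
\begin{equation*}
    p'^{-1}(x) = \bigsqcup_{m_1+m_2=m}\Conf{m_1}{C_1^\circ}\times\Conf{m_2}{C_2^\circ},
\end{equation*}
and collapsing $\gamma$ identifies this disjoint union with $\Conf{m}{C_\gamma}$, which in turn sits inside $\Conf{m}{C}$ as the complement of the locus where at least one configuration point lies on $\gamma$.

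The next step is to identify the specialization map of \Cref{equationgeometricmap} at $x$ with the inclusion map \Cref{inclusionmapgluingproperty}. The stalk $\left(R^mp'_!\Line'\right)_x$ is computed by the compactly supported cohomology of $p'^{-1}(x)$ with values in the restriction of $\Line'$, and, using the trivialization of the universal curve away from the node, the specialization morphism to the nearby stalk $\sheafHmid'_y \simeq \Hmid(\Conf{m}{C};\Line_\Dc)$ factors as
\begin{equation*}
    H_c^m\bigl(\Conf{m}{C_\gamma};\Line_\Dc\bigr) \lra H_c^m\bigl(\Conf{m}{C};\Line_\Dc\bigr) \twoheadrightarrow \Hmid\bigl(\Conf{m}{C};\Line_\Dc\bigr),
\end{equation*}
which, via Poincaré duality between $H_c^m$ and $H_m^{BM}$ and between $H^m$ and $H_m$, is precisely the map induced by the inclusion \Cref{inclusionmapgluingproperty}. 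Once this identification is made, \Cref{theoremgluinghomological} gives the commutativity of the lower square between the Künneth decomposition, the inclusion map, and the TQFT gluing map (modulo the identifications of \Cref{theoremgeometricconstruction}). Together with the surjectivity of $R^mp'_! \Line'_{\mid\B} \twoheadrightarrow \bigoplus \sheafHmid^1_{m_1}\otimes\sheafHmid^2_{m_2}$ coming from the Künneth formula on the singular fiber, this yields the commutativity claim on the stalk at $x$, hence on $\B$.

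The main obstacle is the identification of the specialization map with the inclusion of configuration spaces. This is where the geometry of the degeneration is used essentially: one must describe $p'$ étale-locally near $\B$ as a product of the standard node smoothing $uv = t$ (in a neighborhood of the node) with an unramified base, and then trace the construction of specialization (e.g.\ via nearby cycles, as presumably set up in \Cref{subproofgluinghodge}) to verify that the limit of a compactly supported class on $\Conf{m}{C}$ coming from $\Conf{m}{C_\gamma}$ is exactly the decomposed class on $p'^{-1}(x)$ predicted by Künneth. Everything else is formal manipulation with the identifications of \Cref{theoremgeometricconstruction} and the pointwise gluing theorem.
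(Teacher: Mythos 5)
Your strategy is essentially the same as the paper's: reduce the sheaf-level statement to the fiberwise \Cref{theoremgluinghomological} by identifying the specialization of \Cref{equationgeometricmap} at a point of $\B$ with the inclusion \Cref{inclusionmapgluingproperty}. The paper treats this reduction very tersely (it asserts "it is sufficient to show that the following diagram commutes" and then does the algebraic $B$-class computation, which really is the proof of \Cref{theoremgluinghomological}), so your more careful account of the stalk-wise reduction and of the local model $uv=t$ near a node is a faithful expansion of what the author leaves implicit rather than a different route.

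One slip, which does not affect the argument but should be fixed: you write that Poincaré duality pairs $H_c^m$ with $H_m^{BM}$ and $H^m$ with $H_m$. It is the other way around — for $\Conf{m}{C}$ of real dimension $2m$, Poincaré duality gives $H_c^m \simeq H_m$ and $H^m \simeq H_m^{BM}$, as the paper uses in the remark after \Cref{theoremgeometricconstruction}. Your factorization through $H_c^m(\Conf{m}{C_\gamma}) \to H_c^m(\Conf{m}{C}) \twoheadrightarrow \Hmid(\Conf{m}{C})$ (open extension-by-zero, then passage to $\Hmid$) is correct precisely because $H_c^m \simeq H_m$ transports it to the map on ordinary homology in \Cref{inclusionmapgluingproperty}; it is only your bookkeeping of which Poincaré duality achieves what that is swapped.
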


From this formulation, we deduce that the gluing isomorphisms preserve the Hodge structures.

\begin{theorem}\label{gluingishodge}
    With the notations of \Cref{theoremgluinggeometric}, 
    the gluing map:
    \begin{equation*}
      \Nusl(b;a_1,\dotsc,a_n)_{\mid\B}\simeq \bigoplus_c \Nusl(c;a_1,\dotsc,a_{n_1})\otimes \Nusl(b;a_{n_1+1},\dotsc,a_n,c)
    \end{equation*}
    is an isomorphism of rational variations of Hodge structures.
\end{theorem}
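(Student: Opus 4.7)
The plan is to leverage \Cref{theoremgluinggeometric}, which identifies the gluing map with a purely geometric construction, and to interpret that construction entirely inside Saito's category of mixed Hodge modules. Since the rank-one local system $\Line=\Nuab(b;\ua;(-2)^{\bullet m})\otimes\epsilon$ has finite monodromy (it factors through a finite cyclic quotient), it underlies a polarized variation of Hodge structure of weight $0$; after pulling back to a finite \'etale cover where $\Line$ trivializes, Saito's theory applies, and both $Rp'_!\Line'$ and $Rp'_*\Line'$ are (complexes of) mixed Hodge modules on $\Mp$, with the natural morphism $Rp'_!\Line'\to Rp'_*\Line'$ a morphism of MHM. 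The image sheaf $\sheafHmid$ on the smooth locus $\sMrpo{n}{r}$ therefore underlies a pure VHS of weight $m$, and this is the VHS on $\Nusl(b;\ua)$ given by \Cref{corollaryHodge}.

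Next, I would show that this VHS structure extends across the boundary divisor $\B$. The monodromy of $\sheafHmid$ around $\B$ is induced by a Dehn twist which is killed in $\Modl{r}{S_0^n}$, hence is of finite order. For a VHS with finite local monodromy, Deligne's canonical extension coincides with the naive local-system extension, and the limit mixed Hodge structure at a point of $\B$ is in fact pure of weight $m$. Consequently $\sheafHmid'_{\mid\B}$ carries a canonical polarized VHS structure inherited from the MHM $\sheafHmid'$.

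For the right-hand side, the gluing property \Cref{gluinggeometric} of the abelian modular functor gives a decomposition $\Line'_{\mid p'^{-1}(\B)}\simeq\bigoplus \Line^1_{m_1}\boxtimes\Line^2_{m_2}$ of polarized VHS, and the Künneth formula for mixed Hodge modules produces an isomorphism of MHM between $(Rp'_!\Line')_{\mid\B}$ and $\bigoplus Rp'_!\Line^1_{m_1}\boxtimes Rp'_!\Line^2_{m_2}$, and similarly for the respective images inside $Rp'_*$. Combining this Künneth identification with the specialization morphism \Cref{equationgeometricmap}, which is a morphism of MHM by functoriality of Saito's six operations, and invoking \Cref{theoremgluinggeometric} to identify the composition with the modular-functor gluing map, we see that the gluing map of \Cref{equationgluingintro} is realized by a morphism of VHS on $\B$ from $\Nusl(b;\ua)_{\mid\B}$ to $\bigoplus_c \Nusl(c;a_1,\dotsc,a_{n_1})\otimes\Nusl(b;a_{n_1+1},\dotsc,a_n,c)$. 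Since by \Cref{gluingunrooted} it is already an isomorphism of the underlying local systems, and since a morphism of polarized pure VHS of the same weight that is an isomorphism on underlying local systems is automatically an isomorphism of VHS (strictness of the Hodge filtration), we conclude.

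The main obstacle is the careful verification that the specialization map \Cref{equationgeometricmap} is genuinely the nearby-cycle / canonical-extension morphism in Saito's formalism, so that the Hodge-theoretic conclusion actually applies to it. A related subtlety is that $\sheafHmid'$ must be identified, as a MHM on $\Mp$, with the appropriate pure extension of the intermediate cohomology $\sheafHmid$ across $\B$; this should reduce to the finite monodromy of $\sheafHmid$ around $\B$ and to the strictness of $Rp'_!\Line'\to Rp'_*\Line'$ with respect to the weight filtration, but making this rigorous will likely require isolating the precise compatibility between the geometric specialization in \Cref{theoremgluinggeometric} and Saito's framework.
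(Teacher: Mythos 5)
Your high-level strategy matches the paper's: reduce the theorem to showing that the specialization map of \Cref{equationgeometricmap} is compatible with the Hodge filtrations, then conclude. Your endgame — that a morphism of pure polarized VHS of the same weight which is an isomorphism of underlying local systems is automatically a VHS isomorphism — is a valid and arguably cleaner way to finish than the paper's chase through its commutative diagram.

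However, there is a genuine gap, and you've half-identified it yourself. The crux of the entire theorem is the assertion that the map $R^m p'_! \Line'_{\mid\B}\to\sheafHmid'_{\mid\B}$ respects Hodge structures, and you dispose of it with ``which is a morphism of MHM by functoriality of Saito's six operations.'' This is precisely what is not obvious: $\sheafHmid'$ is not obtained from $R^m p'_!\Line'$ or $R^m p'_*\Line'$ by any of Saito's operations on $\Mp$. It is defined as the local-system extension over $\Mp$ of the image sheaf $\sheafHmid$ living on the smooth locus, and the arrow to it is assembled from specialization isomorphisms of the topological fibration $p$, not from a natural transformation in the derived category of MHM over $\Mp$. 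To make your argument rigorous you would have to identify $\sheafHmid'$ with a genuine MHM-theoretic object (e.g.\ the canonical Deligne extension of the weight-$m$ quotient of $R^m p_*\Line$, after base change to unipotentize or trivialize the monodromy around $\B$) and then identify \Cref{equationgeometricmap} with a composite of six-functor morphisms and a nearby-cycle/degeneration map. That identification is exactly the content of the paper's commutative diagram, whose middle vertical arrow $\nu$ is handled by a Clemens--Schmid/Steenbrink limit-Hodge-structure argument (\Cref{propositionschmid}) rather than by black-box MHM functoriality. Absent that explicit decomposition, your proof asserts the conclusion where the actual work lies.

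A secondary remark: you invoke ``Künneth for mixed Hodge modules'' to treat the restriction to $\B$, but one must also check that the exterior-product decomposition of $\Line'_{\mid p'^{-1}(\B)}$ coming from the gluing axiom for $\Nuab$ is an isomorphism of Hodge-theoretic local systems, not merely of underlying sheaves. This is fine here because all the abelian local systems have finite monodromy and weight zero (so there is nothing to check once they are realized as direct summands of pushforwards along finite covers), but stating this explicitly would tighten the argument.

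In summary: your approach via MHM could in principle be made to work, and your observation that a local-system isomorphism between pure polarized VHS of equal weight is automatically a VHS isomorphism is a genuine simplification of the endgame. But the central step — exhibiting \Cref{equationgeometricmap} as a morphism compatible with Hodge filtrations — is left unproved, and this is where the paper's proof (via smoothness of $\pb$, Landman's monodromy theorem, Deligne's canonical extension, and Steenbrink's strictness of the degeneration map $\nu$) does its real work.
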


When $r$ is prime, it is an isomorphism of integral variations of Hodge structures.
See \Cref{subsubgluinghermitianintegral} below for comments on the compatibility of lattices with gluing maps.

After the completion of this paper we learned that in the recent paper \cite{belkaleMotivicFactorisationKZ2023},
Belkale, Fakhruddin and Mukhopadhyay study geometric gluing isomorphisms of genus $0$ conformal blocks for any simple Lie algebra,
prove that they preserve Hodge
structures and give an algorithm to compute corresponding Hodge numbers in genus $0$ for $\mathfrak{sl}_n$.
See \Cref{subsectionrelations} for more comments.

%-------------------------------------------------------------------------------------------------------------------------

\subsubsection{Compatibility with Hermitian and integral structures}\label{subsubgluinghermitianintegral}

We take the notations of \Cref{subsubgluinghomological}

\begin{remark}[Gluing maps and intersection form]
    The morphism:
    \begin{equation*}
        \bigoplus\limits_{m_1+m_2=m}H_{m_1}(\Conf{m_1}{C'};\Line_{\Dc_{c}'})
        \otimes H_{m_2}(\Conf{m_2}{C''};\Line_{\Dc_{c}''})\lra H_m(\Conf{m}{C};\Line_{\Dc})
    \end{equation*}
    as in \Cref{theoremgluinghomological} is compatible with intersection forms.
    This identifies intersection forms on either side of the induced isomorphism:
    \begin{equation}\label{equationisogluinggeometric}
        \Hmid(\Conf{m}{C};\Line_{\Dc})\simeq \bigoplus\limits_{m_1+m_2=m}\Hmid(\Conf{m_1}{C'};\Line_{\Dc_{c}'})
        \otimes \Hmid(\Conf{m_2}{C''};\Line_{\Dc_{c}''}).
    \end{equation}
    So that:
    \begin{equation*}
        s_{\Dc}=\bigoplus_{0\leq c\leq r-2}s_{\Dc_{c}'}\otimes s_{\Dc_{c}''}
    \end{equation*}
    and hence, by \Cref{theoremintersectionform}:
    \begin{equation*}
        h_{\Dc}=\bigoplus_{0\leq c\leq r-2}h_{\Dc_{c}'}\otimes h_{\Dc_{c}''}.
    \end{equation*}
    This recovers geometrically the compatibility of gluing maps with the
    Hermitian structure.
\end{remark}

As for the $\Z[\zeta_r]$-lattices, the diagram of \Cref{theoremgluinghomological} and its Poincaré dual induce inclusions:
\begin{align*}
    \Hmid_1(\Conf{m}{C};\Line_{\Dc}^\mathcal{O})&\supseteq \bigoplus\limits_{m_1+m_2=m}\Hmid_1(\Conf{m_1}{C'};\Line_{\Dc_{c}'}^\mathcal{O})
    \otimes \Hmid_1(\Conf{m_2}{C''};\Line_{\Dc_{c}''}^\mathcal{O}), \\
    \Hmid_2(\Conf{m}{C};\Line_{\Dc}^\mathcal{O})&\subseteq \bigoplus\limits_{m_1+m_2=m}\Hmid_2(\Conf{m_1}{C'};\Line_{\Dc_{c}'}^\mathcal{O})
    \otimes \Hmid_2(\Conf{m_2}{C''};\Line_{\Dc_{c}''}^\mathcal{O}).
\end{align*}
If $r$ is prime, these inclusions are equalities by \Cref{theoremgluinghomological,theoremintegralstructure}. 

%-------------------------------------------------------------------------------------------------------------------------

\section{\texorpdfstring{Constructions of the $\SO$ modular functors}
{Constructions of the SO(3) modular functors}}\label{sectionmodularfunctorconstruction}

%-------------------------------------------------------------------------------------------------------------------------

\subsection{The Skein module construction}\label{subsectionskein}

%-------------------------------------------------------------------------------------------------------------------------

\begin{definition}
    Let $M$ be a $3$-manifold and $P\subset \partial M$ a set of disjoint intervals $[0,1]$ on the boundary of $M$,
    called a marking of $M$. A tangle in $(M,P)$ is an isotopy class of embedded surface $T\hookrightarrow M$
    such that each component of $T$ is the image of:
    \begin{description}
        \item[(i)] either a smooth embedding of $S^1\times [0,1]$ in $\mathrm{int}(M)$ ;
        \item[(ii)] or a smooth embedding of $([0,1]\times [0,1], \{0,1\}\times[0,1])$ in $(M,P)$ that maps $\{0,1\}\times[0,1]$
        isomorphically to two components of $P$.
    \end{description}
    The Skein module of $(M,P)$ is the quotient:
    \begin{equation*}
        S(M,P) = \left(\bigoplus_T \Z[q^{\frac{1}{2}},q^{-\frac{1}{2}}]\cdot [T]\right) / R
    \end{equation*}
    where the sum is over every tangle in $(M,P)$ and the subspace $R$ is generated by the Skein relations in
    \Cref{skein_relations}.
    These relations are to be understood as follows:
    \begin{description}
        \item[(i)] If $T_\times$, $T_=$ and $T_{||}$ are tangles that differ only in a ball
    $B^3\subset \mathrm{int}(M)$ and in this ball are as in \Cref{skein_relations}, then $T_\times= q^{\frac{1}{2}}T_{||} + q^{-\frac{1}{2}}T_=$ ;
        \item[(ii)] If $T_\circ$ and $T_\varnothing$ are tangles that differ only in a ball
        $B^3\subset \mathrm{int}(M)$ and in this ball are as in \Cref{skein_relations}, then $T_\circ = -(q +q^{-1}) T_\varnothing$.
    \end{description}
\end{definition}

\begin{figure}
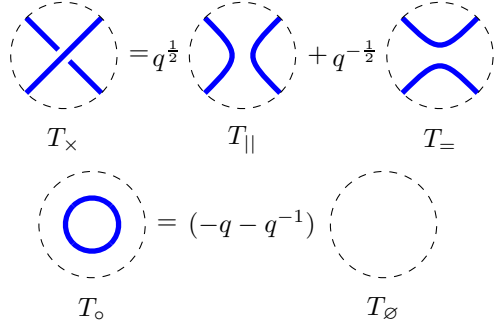

    \ctikzfig{skein_relations}
    \caption{Skein relations.}
    \label[figure]{skein_relations}
\end{figure}

\begin{remark}
    The Skein module of $(B,\varnothing)$ is generated by the empty tangle,
    ie $S(B,\varnothing) = \Z[q^{\frac{1}{2}},q^{-\frac{1}{2}}]\cdot [\varnothing]$.
\end{remark}

\begin{definition}
    Let $n\geq 0$. The Temperley-Lieb algebra $TL_n$ is the Skein module of $([0,1]^3,P)$ where $P$ is made of $2n$ marks,
    $n$ of which are evenly spaced in $\{1\}\times [0,1]\times \bigl\{\frac{1}{2}\bigr\}$ on the top face of the cube $[0,1]^3$
    and the $n$ others are evenly spaced in $\{0\}\times [0,1]\times \bigl\{\frac{1}{2}\bigr\}$ on the bottom face of the cube $[0,1]^3$,
    as in \Cref{Temperley_Lieb}.

    It is an algebra for the product given by the stacking of cubes.

    For $n,m\geq 0$, the gluing of cubes on a side face induces a map of algebras:
    \begin{equation*}
        \times: T_n\otimes T_m \lra T_{n+m}.
    \end{equation*}
\end{definition}

\begin{figure}
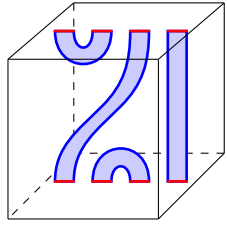

    \ctikzfig{Temperley_Lieb}
    \caption{The marked cube $([0,1]^3,P)$ for the Temperley Lieb algebra $T_4$ and a tangle in it.
    Only the boundary of the tangle is represented.}
    \label[figure]{Temperley_Lieb}
\end{figure}

\begin{notation}
    We will use the notations $[n]=(q^{n}-q^{-n})/(q-q^{-1})$ for $n\in\Z$, $[n]!=[n][n-1]\dotsb [1]$ for $n\in\N$, and
    $\begin{bmatrix} m\\n \end{bmatrix}=\frac{[m][m-1]\dotsb[m-n+1]}{[n]!}$ for $m\in\Z$ and $n\in\N$. All of these are elements of $\Z[q,q^{-1}]$.
    Moreover, if we specialize $q^{\frac{1}{2}}$ to $-\zeta_r$ where $\zeta_r$ is a root of unity of \textit{prime} order $r$,
    then for all $n$ not $0$ modulo $r$, $[n]$ is invertible in $\Z[\zeta_r]$.

    We will denote by $A_k$ the ring $\Z[q^{\frac{1}{2}},q^{-\frac{1}{2}},[1]^{-1},\dotsc,[k]^{-1}]$.
\end{notation}

\begin{definition}
    Let $n\geq 0$, for $k\in\{1,\dotsc,n-1\}$, let $e_k\in TL_n\otimes A_k$ be the element described on \Cref{e_k}.
    We define elements $f_n\in TL_n$ by induction:
    \begin{description}
        \item[(i)] $f_0$ is the class of the empty tangle ;
        \item[(ii)] $f_{n+1} =  f_n\times \id + \frac{[n]}{[n+1]}(f_n\times 1)e_n(f_n\times 1)$.
    \end{description}
\end{definition}

\begin{figure}
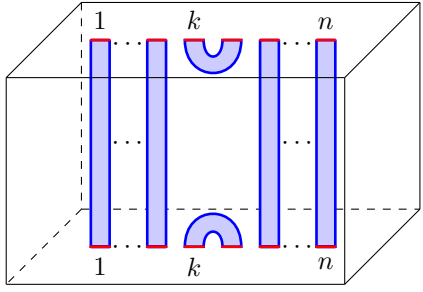

    \ctikzfig{e_k}
    \caption{The element $e_k$ of $T_n$.}
    \label[figure]{e_k}
\end{figure}

\begin{proposition}
    Let $n\geq 0$. Then the $e_k$ generate $TL_n$ as an algebra.

    The element $f_n\in TL_n\otimes A_n$ is idempotent and a projector of rank $1$. Moreover for each $k$, $e_kf_n=0$.
\end{proposition}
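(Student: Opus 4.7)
The plan is to prove generation by the $e_k$ first, then to establish the three properties of $f_n$ simultaneously by induction on $n$. For generation: the Skein relations reduce every tangle in $([0,1]^3, P)$ to a $\Z[q^{1/2}, q^{-1/2}]$-linear combination of planar non-crossing matchings of the $2n$ marked points. Each such non-crossing matching is either the identity or has an outermost cup-cap pair realized by some $e_k$; inducting on the number of non-trivial caps expresses it as a product of $e_k$'s. Hence the $e_k$, together with the unit, generate $TL_n$ as an algebra.

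For the three claims about $f_n$, I would simultaneously induct and prove: (a) $f_n^2 = f_n$; (b) $e_k f_n = 0 = f_n e_k$ for $1\le k\le n-1$; and (c) $f_n$ is a minimal idempotent, so that $f_n (TL_n\otimes A_n) f_n = A_n\cdot f_n$. The base cases $n=0,1$ are immediate. The crux of the induction step is the auxiliary lemma
\begin{equation*}
e_n (f_n\times 1) e_n \;=\; -\frac{[n+1]}{[n]}\, e_n \qquad \text{in } TL_{n+1}\otimes A_{n+1},
\end{equation*}
obtained by unfolding one copy of $f_n$ via its recursion and using the induction hypothesis (b) to annihilate all but the leading term, with the scalar produced by the closed-loop relation $T_\circ = -[2]T_\varnothing$ together with the quantum integer identity $[n+1]=[2][n]-[n-1]$. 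Granted the lemma, expanding $f_{n+1}^2$ via the defining recursion makes the four resulting terms collapse, through $f_n^2 = f_n$ and the lemma, back into $f_{n+1}$, giving (a). For (b) at the next step: if $k<n$ then $e_k(f_n\times 1) = (e_k f_n)\times 1 = 0$ kills both summands of $f_{n+1}$, and for $k=n$ the lemma makes the two summands of $e_n f_{n+1}$ cancel exactly. The right-annihilation $f_{n+1}e_k = 0$ follows by applying the top-to-bottom reflection anti-involution of $TL_{n+1}$, which fixes each $e_k$ and fixes $f_{n+1}$ because the defining recursion is self-adjoint. Finally, (c) is formal from (b) together with generation by the $e_k$: any $x\in TL_n\otimes A_n$ is a linear combination of the identity and words containing at least one $e_k$, the latter being killed on one side by $f_n$, so $f_n x f_n$ is always a scalar multiple of $f_n$.

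The main obstacle is the auxiliary lemma $e_n(f_n\times 1) e_n = -\tfrac{[n+1]}{[n]}e_n$. This is really the statement that the ``partial trace" of $f_n$ on its last strand equals $-\tfrac{[n+1]}{[n]}\, f_{n-1}$, and it is here that the denominators forcing us into the localization $A_n$ originate; it is proved by a secondary induction closely mirroring the recursion defining $f_n$, with careful bookkeeping of quantum integer identities. Everything else is a formal consequence of this lemma combined with generation by the $e_k$.
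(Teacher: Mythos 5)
The paper does not give its own proof of this proposition; it simply refers to \cite[3.]{blanchetTopologicalQuantumField1995} and \cite[8.9]{ohtsuki2001quantum}. Your argument is a genuine direct proof and follows the standard Jones--Wenzl approach, so the overall strategy is sound: reduce to planar matchings for the generation claim, and then run a simultaneous induction on idempotence, annihilation by the $e_k$, and minimality, hinging on the partial trace identity.

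One slip worth correcting: your auxiliary lemma as displayed, $e_n(f_n\times 1)e_n = -\tfrac{[n+1]}{[n]}e_n$, is not literally correct for $n\geq 3$. The partial closure of $f_n$ on its last strand is $-\tfrac{[n+1]}{[n]}f_{n-1}$ (which you do say in the following sentence), so the correct identity is
\begin{equation*}
e_n(f_n\times 1)e_n \;=\; -\frac{[n+1]}{[n]}\,(f_{n-1}\times 1\times 1)\,e_n,
\end{equation*}
with the extra $f_{n-1}$ factor on the side. Your two formulations are inconsistent as written. Fortunately this does not break the induction: in every use of the lemma inside the proofs of $f_{n+1}^2=f_{n+1}$ and $e_nf_{n+1}=0$, the expression $e_n(f_n\times1)e_n$ appears multiplied on both sides by copies of $f_n\times 1$ (or there is a factor of $f_n\times 1$ left adjacent after simplification), and the absorption $(f_{n-1}\times 1)f_n = f_n = f_n(f_{n-1}\times 1)$ then eliminates the extra factor, recovering exactly the cancellations you describe. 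It would be cleaner to state the lemma in the $f_{n-1}$ form throughout and to record the absorption identity explicitly before using it. The rest, including the reflection anti-involution argument for right-annihilation and the formal derivation of minimality from generation plus $e_kf_n=0$, is correct.
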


See \cite[3.]{blanchetTopologicalQuantumField1995} and \cite[8.9]{ohtsuki2001quantum} for a proof.

\begin{definition}\label{definitionSD}
    Let $\Dc=(D^n,\nu,\underline{\lambda})$ be a colored disk.
    Let $(B,P)$ be a cube $[0,1]^3$ with $\nu+\sum_i\lambda_i$ markings, $\nu$ of which are evenly spaced
    in $\{0\}\times [0,1]\times \bigl\{\frac{1}{2}\bigr\}$ on the bottom face
    and all the others are evenly spaced in $\{1\}\times [0,1]\times \bigl\{\frac{1}{2}\bigr\}$ on the top face.

    Let us partition the markings on the top of $B$ according to the $\lambda_i$, as in \Cref{colored_skein_module}.
    Then the Skein module $S(B,P)$ is acted upon by $TL_{\nu}\otimes \bigotimes_i TL_{\lambda_i}$.

    We define $S(\Dc)$ to be the direct summand of $S(B,P)\otimes A_{r-1}$ corresponding to the action of the idempotent 
    $f_{\nu}\otimes\bigotimes_i f_{\lambda_i}$.

    Let $S_{\zeta_r}(\Dc)$ be $S(\Dc)\otimes_{\Z[q^{\frac{1}{2}},q^{-\frac{1}{2}}]}\Q[\zeta_r]$ where $\Q[\zeta_r]$ the cyclotomic
    field of $r$-th roots of unity. Here we set $q^{\frac{1}{2}}$ to $-\zeta_r$.
\end{definition}

\begin{figure}
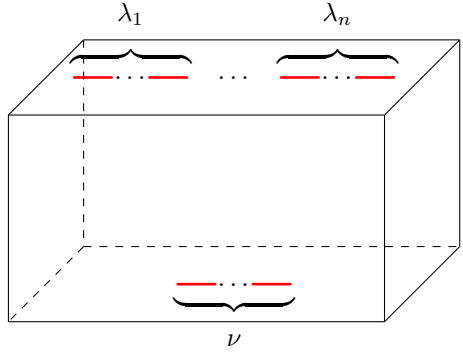

    \ctikzfig{colored_skein_module}
    \caption{The marked cube associated to $\Dc=(D^n,\nu,\underline{\lambda})$.}
    \label[figure]{colored_skein_module}
\end{figure}

\begin{remark}\label{remarklattice}
    The module $S_{\zeta_r}(\Dc)$ is well defined as we have a map $A_{r-1}\ra\Q(\zeta_r)$.
    This would not be the case if we used $A_k$, $k\geq r$.
    Notice also that if $r$ is prime, we have a map $A_{r-1}\ra\Z[\zeta_r]$.
    We can thus define in this case a lattice $S^\mathcal{O}_{\zeta_r}(\Dc)\subset S_{\zeta_r}(\Dc)$.
\end{remark}

\begin{proposition}[Mapping class group action]\label{braidgroupaction}
    Let $\Dc=(D^n,\nu,\underline{\lambda})$ be a colored disk. Denote by $B_1,\dotsc, B_n$ the inner boundary components and
    $\lambda_1,\dotsc,\lambda_n$ their respective colors.
    
    Then there are natural group maps:
    \begin{equation*}
        UPB_n\lra UPB_{\sum_{1\leq i\leq n}\lambda_i}\lra \left(TL_{\sum_{1\leq i\leq n}\lambda_i}\right)^\times
    \end{equation*}
    where the first map splits the $i$-th strand of a framed braid into $\lambda_i$ strands,
    and the second map associates to each framed braid the tangle it represents.

    As $TL_{\sum_{1\leq i\leq n}\lambda_i}$ acts on $S_{\zeta_r}(\Dc)$,
    we have an action of $ UPB_n\simeq \Mod{D^n}$ on $S_{\zeta_r}(\Dc)$.
\end{proposition}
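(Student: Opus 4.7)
The plan is to construct the two group homomorphisms separately and then combine them into the required action on $S_{\zeta_r}(\Dc)$.

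First, I would construct the cabling map $c:UPB_n\to UPB_{\sum_i\lambda_i}$. A pure framed braid on $n$ strands carries, on each strand, a normal framing. To cable the $i$-th strand by $\lambda_i$, push $\lambda_i$ parallel copies of that strand into a thin tubular neighbourhood trivialised by the framing, keeping the prescribed ordering. Purity ensures that each cable returns to its original block of endpoints, so the result is again a pure framed braid on $\sum_i\lambda_i$ strands. Use of the framing (rather than an arbitrary parametrisation of a tubular neighbourhood) is exactly what makes the construction well defined on isotopy classes: any framed isotopy of the original braid induces a framed isotopy of its cable. Multiplicativity of $c$ is then clear, since cabling commutes with vertical stacking.

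Next, I would construct the tangle map $t:UPB_N\to (TL_N)^\times$. A pure framed braid in $[0,1]^3$ with $N$ endpoints on the top face and $N$ on the bottom is a particular kind of tangle, hence defines a class in $TL_N$; stacking of braids matches the algebra product, so $t$ is a homomorphism. To see the image lies in the unit group, I would check the generators using \Cref{skein_relations}: a positive crossing $\sigma_i$ satisfies $t(\sigma_i)=q^{1/2}\cdot\id + q^{-1/2}\cdot e_i$ and $t(\sigma_i^{-1})=q^{-1/2}\cdot\id + q^{1/2}\cdot e_i$, while $\sigma_i\sigma_i^{-1}$ is isotopic to the identity tangle, so these elements are inverse in $TL_N$. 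The framing twist on a single strand is sent to the unit $-q^{\pm 3/2}$, so $t$ lands in $(TL_N)^\times$.

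Finally, I would deduce the action. The algebra $TL_{\sum_i\lambda_i}$ acts on $S(B,P)$ by stacking cubes on the face carrying the $\sum_i\lambda_i$ marks; the direct summand $S_{\zeta_r}(\Dc)$ is cut out by the idempotent $f_\nu\otimes\bigotimes_i f_{\lambda_i}$ acting on the opposite marks by $f_\nu$ and on the grouped top marks by $\bigotimes_i f_{\lambda_i}$. Cabled braids in the image of $c$ permute the $\lambda_i$ strands only within each block (or more precisely, as a block), so as elements of $TL_{\sum_i\lambda_i}$ they commute with every $f_{\lambda_i}$, and they obviously commute with $f_\nu$ (which is on the other side). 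Hence each $t\circ c(\beta)$ preserves the summand $S_{\zeta_r}(\Dc)$, providing the $UPB_n$-action via $\Mod{D^n}\simeq UPB_n$.

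The main obstacle is verifying well-definedness of $c$ on isotopy classes: framed Reidemeister moves on the original braid must induce equivalent cabled braids. This is precisely where the framed version $UPB_n$ (in place of the plain $PB_n$) is necessary, since the Reidemeister I move for unframed strands would otherwise yield non-isotopic cablings. The rest of the argument is standard skein-theoretic manipulation.
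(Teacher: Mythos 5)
The paper states this proposition without proof, treating the cabling construction and the resulting Temperley--Lieb action as standard Skein-theoretic material. Your construction of the cabling map $c$ and the tangle map $t$ is correct, as is the observation that $t(\sigma_i)$ is invertible in $TL_N$ (with $N=\sum_i\lambda_i$): the product $(q^{1/2}\id+q^{-1/2}e_i)(q^{-1/2}\id+q^{1/2}e_i)=\id$ follows from the circle relation $e_i^2=-(q+q^{-1})e_i$, and the framing twist is a scalar unit. The one imprecise step is the justification that $t(c(\beta))$ commutes with $\bigotimes_if_{\lambda_i}$ in $TL_N$: the phrase ``permute the strands as a block'' is misleading, since a pure braid does not permute strands at all and within each cable the $\lambda_i$ copies are framed parallels with no internal permutation. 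The correct geometric reason is that the $i$-th cable consists of $\lambda_i$ parallel strands filling a tubular neighbourhood of the original $i$-th strand, so any tangle in $TL_{\lambda_i}$ placed at the top endpoints of cable $i$ can be shrunk into that tube and slid down to the bottom endpoints by an isotopy; this is exactly the statement that $t(c(\beta))$ and $\bigotimes_if_{\lambda_i}$ commute in $TL_N$. With that clarification, your conclusion that the image of $UPB_n$ preserves the summand $S(\Dc)$, and hence acts on $S_{\zeta_r}(\Dc)$, is complete and correct.
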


\begin{proposition}[Bilinear form]\label{hermitianform}
    Let $\Dc=(D^n,\nu,\underline{\lambda})$ be a colored disk.
    Then symmetry about the top face of the cube and stacking
    induce a bilinear form:
    \begin{equation*}
        S_{\zeta_r}(\Dc)\otimes \overline{S_{\zeta_r}(\Dc)}\lra f_{\nu}TL_{\nu}.
    \end{equation*}
    As $f_{\nu}TL_{\nu}\otimes A_{r-1}\simeq A_{r-1}$, this is a sesquilinear form:
    \begin{equation*}
        h_D:\;S_{\zeta_r}(\Dc)\otimes\overline{S_{\zeta_r}(\Dc)}\lra \Q(\zeta_r).
    \end{equation*}
    Then $h_D$ is $\Mod{D^n}$-invariant.
\end{proposition}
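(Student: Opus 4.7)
The plan is to verify invariance on a generating set of $\Mod{D^n} \simeq UPB_n$, namely the framed pure braid generators, and then extend by the group structure. For such a generator $g$, \Cref{braidgroupaction} describes its action on $x \in S_{\zeta_r}(\Dc)$ as left-multiplication by the associated tangle $T_g \in TL_{\sum_i \lambda_i}$ (a unit of that algebra); geometrically, $T_g$ is stacked on top of $x$ inside the cube $B$. Invariance amounts to showing $h_D(T_g \cdot x, T_g \cdot y) = h_D(x, y)$.

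Recall that $h_D(x, y)$ is computed by placing the tangle $x$ on the bottom half of $B$, the reflection across the top face of $y$ on the top half, stacking them along the matching of boundary points, and projecting the result in $f_\nu TL_\nu$ to $f_\nu TL_\nu \otimes_{A_{r-1}} \Q(\zeta_r) \simeq \Q(\zeta_r)$. The sesquilinear nature of $h_D$ corresponds to the fact that reflecting a tangle across a horizontal plane interchanges positive and negative crossings, which in the skein expansion swaps $q^{\frac{1}{2}}$ and $q^{-\frac{1}{2}}$ (equivalently, applies $\zeta_r \mapsto \zeta_r^{-1}$).

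The key observation is that for any framed braid $g$, the reflection $R(T_g)$ equals $T_{g^{-1}} = T_g^{-1}$ in $TL_{\sum_i \lambda_i}$: a positive crossing reflects to a negative crossing (its inverse in the braid group), and the braid-to-tangle map is a group homomorphism. Applying this, the stacking underlying $h_D(T_g \cdot x, T_g \cdot y)$ is, from bottom to top, $x,\, T_g,\, R(T_g),\, R(y)$, where the reflection reverses the order within $T_g \cdot y$. The middle block $T_g \cdot R(T_g) = T_g T_g^{-1} = \mathrm{id}$ collapses, so the stacked tangle reduces to $x$ followed by $R(y)$, recovering $h_D(x, y)$.

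One must also check that the projection to the summand $S(\Dc)$ via the Jones--Wenzl idempotents $f_\nu$ and $f_{\lambda_i}$ is compatible with both the braid action (which follows from \Cref{braidgroupaction}: the strand-splitting keeps each chunk of $\lambda_i$ strands together, hence commutes with the corresponding $f_{\lambda_i}$) and with reflection (the standard self-duality $R(f_n) = f_n$, since the defining properties $e_k f_n = 0$ and idempotency are preserved by reflection, $R(e_k) = e_k$, so $R(f_n)$ also satisfies them and must equal $f_n$ by uniqueness). The main technical point is this compatibility with the Jones--Wenzl projection together with the careful bookkeeping of the coefficient conjugation under reflection; once these are in place, the geometric cancellation above concludes the proof.
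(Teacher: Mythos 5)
The paper states this proposition without a proof, so there is no in-text argument to compare against; your proof is correct and is the standard one in the Kauffman-bracket skein literature. You have identified exactly the right ingredients: horizontal reflection $R$ is an antilinear anti-involution of the Temperley--Lieb category (it swaps positive and negative crossings, hence swaps $q^{\frac{1}{2}}$ and $q^{-\frac{1}{2}}$, which is what makes $h_\Dc$ sesquilinear rather than bilinear), it fixes each $e_k$ and therefore each Jones--Wenzl idempotent $f_m$, and it carries the tangle of a framed braid $g$ to the tangle of $g^{-1}$, so $R(T_g)=T_g^{-1}$. Together with the observation that the cabled braid keeps each $\lambda_i$-cable intact, so the projectors $f_{\lambda_i}$ can be slid through and the action of $UPB_n$ preserves the idempotent summand $S_{\zeta_r}(\Dc)$, the middle block $T_g\cdot R(T_g)=\mathrm{id}$ collapses in the stacking $x$, $T_g$, $R(T_g)$, $R(y)$, giving invariance on a generating set (including the framing twists $tw_i$, by the same reflection argument applied to the full cable twist) and hence on all of $\Mod{D^n}$.
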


\begin{definition}[Modular functor]\label{modularfunctorskeindefinition}
    Let $\Dc=(D^n,\nu,\underline{\lambda})$ be a colored disk. We define $\Nu_{\zeta_r}(\Dc)$ to be the quotient of $S_{\zeta_r}(\Dc)$
    by the kernel of $h_D$:
    \begin{equation*}
        \Nu_{\zeta_r}(\Dc) = S_{\zeta_r}(\Dc)/\ker h_D.
    \end{equation*}
    The action of $\Mod{D^n}$ on $S_{\zeta_r}(\Dc)$ induces a representation:
    $$\rho_D: \Mod{D^n} \lra \GL{\Nu_{\zeta_r}(\Dc)}.$$
\end{definition}

\begin{remark}\label{remarkintegral}
    If $r$ is prime, from \Cref{remarklattice}, we get a $\Mod{D^n}$-invariant lattice $\Nu^\mathcal{O}_{\zeta_r}(\Dc)\subset \Nu_{\zeta_r}(\Dc)$.
    From explicit computations, it can be shown that the Hermitian form $h_D$ is perfect on $\Nu^\mathcal{O}_{\zeta_r}(\Dc)$.
    These calculations of the norms on a basis are done, for example, in \cite[3.11,3.12]{marcheIntroductionQuantumRepresentations2021}.
\end{remark}

\begin{proposition}\label{dimensionproperties}
    The modules $\Nu_{\zeta_r}(\Dc)$ satisfy \Cref{colorconditions} and \Cref{existenceSOmodularfunctors}.
\end{proposition}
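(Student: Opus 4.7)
The plan is to verify the axioms of \Cref{definitionmodularfunctor} (in the rooted form of \Cref{definitionrooting}) and the admissibility conditions of \Cref{colorconditions} by reducing computations to the Skein module of a pair of pants via the gluing property, and then checking each condition directly using the Jones--Wenzl idempotents.

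First I would dispense with the easy axioms and the functoriality. Functoriality under morphisms of colored disks, including permutations of identically-colored boundary components, follows from \Cref{braidgroupaction} together with the invariance of the $f_\nu \otimes \bigotimes_i f_{\lambda_i}$-component under relabeling. Property \textbf{(3)} of \Cref{existenceSOmodularfunctors} is a parity statement: any tangle $T \subset (B,P)$ must connect the $\nu + \sum_i \lambda_i$ boundary marks in pairs, so if this sum is odd then there are no tangles at all and $S_{\zeta_r}(\Dc) = 0$. Axioms \textbf{(1)}--\textbf{(2)} of \Cref{definitionrooting} are direct: $S(D^0, 0, \varnothing)$ is generated by the empty tangle and $f_0 = [\varnothing]$, so it is one-dimensional; and $S(D^1, \lambda, \mu)$ is nonzero only when $\lambda = \mu$, where the idempotent $f_\lambda$ cuts out a one-dimensional summand spanned by the identity tangle, giving the canonical identification with $\Q(\zeta_r)$. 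The value $t_\lambda = q^{\lambda(\lambda+2)/2}$ comes from applying a single positive Dehn twist along the core of the cylinder $D^1$ to $f_\lambda$; the framing anomaly of the Jones--Wenzl idempotent is exactly $(-q)^{\lambda(\lambda+2)/2}$, and keeping track of the convention $q^{1/2} = -\zeta_r$ gives the formula of \Cref{existenceSOmodularfunctors}.

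Second, I would prove the gluing isomorphism \Cref{gluingrooted}. Given $\Dc = (D, \nu, \underline{\lambda})$ and a simple closed curve $\gamma \subset \mathrm{int}(D)$, isotope any tangle so it meets $\gamma \times [0,1]$ in a collection of vertical strands. Grouping by the number $\mu$ of strands crossing $\gamma$ and inserting the Jones--Wenzl idempotent $f_\mu$ (which is possible after tensoring with $A_{r-1}$) yields a direct-sum decomposition
\begin{equation*}
    S_{\zeta_r}(\Dc) \simeq \bigoplus_{\mu \in \Lambda} S_{\zeta_r}(\Dc'_\mu) \otimes_{\Q(\zeta_r)} S_{\zeta_r}(\Dc''_\mu)
\end{equation*}
for $0 \le \mu \le r-2$, the upper bound coming from the fact that $f_{r-1}$ is annihilated when $q^{1/2} = -\zeta_r$ since $[r] = 0$. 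This decomposition is compatible with the bilinear form of \Cref{hermitianform}, hence descends to the quotient $\Nu_{\zeta_r}$ and furnishes the gluing isomorphism, which is clearly $\Mod{D'}\times\Mod{D''}$-equivariant.

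Third, I would compute the dimensions of $\Nu_{\zeta_r}$ on the pair of pants $(D^2, c, (a,b))$ and derive \Cref{colorconditions}. The Skein module $S(D^2, c, (a,b)) \otimes A_{r-1}$ has a basis indexed by the ways to connect $a+b+c$ marks in pairs respecting the partition, and projecting by $f_a \otimes f_b \otimes f_c$ kills every diagram with a turn-back within a single Jones--Wenzl. What remains is parametrized by a single integer $k$ with $k + (a-k) = a$, $(b-k) + k = b$ and $(a-k)+(b-k) = c$, which forces $k = \frac{a+b-c}{2}$. This gives dimension at most $1$, and yields dimension exactly $1$ iff $a+b+c$ is even and the triangle inequalities $|a-b| \le c \le a+b$ hold. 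The upper-bound condition $a+b+c \le 2r-4$ then comes from the non-degeneracy of $h_D$ on this class: the norm of the resulting diagram is a product of quantum integers which vanishes modulo $r$ precisely when $a+b+c \ge 2r-2$, in which case the class lies in $\ker h_D$ and is killed in the passage $S_{\zeta_r} \twoheadrightarrow \Nu_{\zeta_r}$. This establishes both \Cref{colorconditions} and axiom \textbf{(2)} of \Cref{existenceSOmodularfunctors}.

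The main technical obstacle is this last step, namely controlling $\ker h_D$ on the pair of pants and showing that the kernel is exactly the span of the non-admissible configurations. This is where the specialization $q^{1/2} = -\zeta_r$ really enters and where one needs the explicit norm formula for theta-curves (the computation in \cite[3.11, 3.12]{marcheIntroductionQuantumRepresentations2021} cited in \Cref{remarkintegral}); all the other parts of the proof reduce by gluing to this single quantitative input.
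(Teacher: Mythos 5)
The paper itself does not prove this proposition: it simply cites \cite[2.5 and 3.7]{marcheIntroductionQuantumRepresentations2021}. Your proposal supplies the standard Skein-theoretic argument that the citation delegates to, so the approach is the same in substance. A few remarks on the details.

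First, you are proving somewhat more than the proposition asks for. \Cref{dimensionproperties} only asserts that the modules satisfy the dimension/twist/parity statements \textbf{(1)}--\textbf{(3)} of \Cref{existenceSOmodularfunctors} together with \Cref{colorconditions}; the gluing axiom is handled separately in \Cref{skeingluingaxiom}. Your parity argument (step~1), the pair-of-pants dimension count (step~3), and the $\theta$-coefficient vanishing (step~4, via \cite[3.11,3.12]{marcheIntroductionQuantumRepresentations2021}) already cover everything the proposition needs; the gluing step is not required here.

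Second, your intermediate formula for the twist coefficient, $(-q)^{\lambda(\lambda+2)/2}$, does not quite parse: when $\lambda$ is odd, $\lambda(\lambda+2)/2$ is a half-integer. The Kauffman-bracket twist eigenvalue on $f_\lambda$, with $A=q^{1/2}$, is $(-1)^\lambda A^{\lambda(\lambda+2)} = (-1)^\lambda q^{\lambda(\lambda+2)/2}$; since $\lambda(\lambda+3)$ is always even, specializing $q^{1/2}=-\zeta_r$ indeed yields $t_\lambda = q^{\lambda(\lambda+2)/2}$ as stated in the paper. The conclusion is right, but the intermediate expression should be written $(-1)^\lambda q^{\lambda(\lambda+2)/2}$.

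Third, the claim that inserting $f_\mu$ along $\gamma$ gives a direct-sum decomposition of $S_{\zeta_r}(\Dc)$ itself (before quotienting by $\ker h_D$) is stronger than what is true and stronger than what you need. When many strands cross $\gamma$, the resolution $\mathrm{id}_{V_1^{\otimes k}}=\sum_\mu e_\mu$ over $\mu\le r-2$ only becomes exact after killing the negligible ideal; $f_{r-1}$ is defined over $A_{r-1}$ but is merely negligible, and higher $f_\mu$ do not exist there. The correct statement, as in \Cref{skeingluingaxiom}, is that the gluing map is an isomorphism onto $\Nu_{\zeta_r}(\Dc)$, not onto $S_{\zeta_r}(\Dc)$. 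As noted, this does not affect the proof of \Cref{dimensionproperties} since the gluing decomposition is not needed for it.

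Modulo these two imprecisions, your identification of the key quantitative input --- the non-degeneracy of $h_{\Dc}$ on the pair of pants being governed by the $\theta$-coefficient, which vanishes exactly when $a+b+c\ge 2r-2$ --- is precisely the content of the computation cited by the paper.
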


For a proof, see \cite[2.5 and 3.7]{marcheIntroductionQuantumRepresentations2021}.

We now describe the gluing property of $\Nu_{\zeta_r}$.

Let $\Dc=(D^n,\nu,\underline{\lambda})$ be a colored disk together with a simple closed curve $\gamma$ in $\mathrm{int}(D^n)$.
Cut $D^n$ along $\gamma$ to obtain two disks $D'$ and $D''$. For every color $\mu$, we denote by $\Dc_{\mu}'$, respectively $\Dc_{\mu}''$
the disk $D'$ (respectively $D''$) colored by $\nu,\underline{\lambda}$ at the boundary components it has in common with $D^n$
and by $\mu$ at the boundary component corresponding to $\gamma$.

Let $(B,P)$ (respectively $(B',P'_{\mu})$, $(B'',P''_{\mu})$) be the marked balls associated to $\Dc$
(respectively $\Dc_{\mu}'$, $\Dc_{\mu}''$) as in \Cref{definitionSD}. Then stacking at the boundaries corresponding to $\mu$
using $\gamma$ as in \Cref{gluing_skein_modules} induces a map:
\begin{equation*}
    S_{\zeta_r}(\Dc_{\mu}')\otimes S_{\zeta_r}(\Dc_{\mu}'') \lra S_{\zeta_r}(\Dc).
\end{equation*}
This map is compatible with the sesquilinear forms and the action of $\Mod{D'}\times\Mod{D''}$.
Thus, it induces a $\Mod{D'}\times\Mod{D''}$-equivariant map:
\begin{equation*}
    \Nu_{\zeta_r}(\Dc_{\mu}')\otimes\Nu_{\zeta_r}(\Dc_{\mu}'') \lra \Nu_{\zeta_r}(\Dc).
\end{equation*}

\begin{figure}
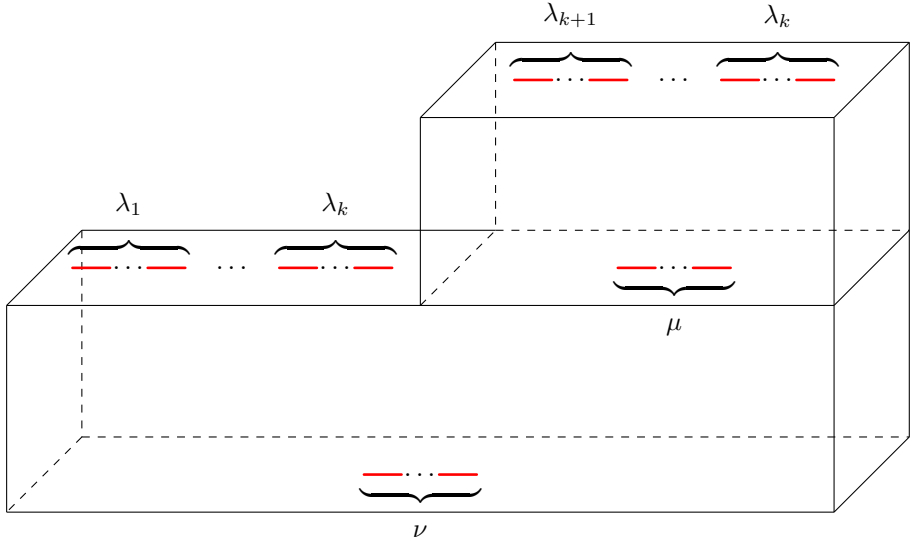

    \ctikzfig{gluing_skein_modules}
    \caption{The stacking of the two cubes $(B',P'_{\mu})$ and $(B'',P''_{\mu})$) along $\gamma$ is homeomorphic to $(B,P)$.}
    \label[figure]{gluing_skein_modules}
\end{figure}

We have the following result.

\begin{proposition}\label{skeingluingaxiom}
    The map:
    \begin{equation*}
        \bigoplus_{0\leq \mu\leq r-2}\Nu_{\zeta_r}(\Dc_{\mu}')\otimes\Nu_{\zeta_r}(\Dc_{\mu}'')\lra \Nu_{\zeta_r}(\Dc).
    \end{equation*}
    is a $\Mod{D'}\times\Mod{D''}$-equivariant isomorphism compatible with Hermitian forms.
    Moreover, if $r$ is prime, then this map induces an isomorphism at the level of lattices:
    \begin{equation*}
        \bigoplus_{0\leq \mu\leq r-2}\Nu_{\zeta_r}^\mathcal{O}(\Dc_{\mu}')\otimes\Nu^\mathcal{O}_{\zeta_r}(\Dc_{\mu}'')\simeq \Nu^\mathcal{O}_{\zeta_r}(\Dc).
    \end{equation*}
\end{proposition}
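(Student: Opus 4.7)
The plan is to build the gluing map at the level of Skein modules, then descend to $\Nu_{\zeta_r}$ using the non-degeneracy of the Hermitian form, and finally invoke a dimension count to upgrade the resulting isometric embedding into an isomorphism.

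\smallskip

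First, I would construct the map at the level of Skein modules. Given tangles representing elements in $S_{\zeta_r}(\Dc_{\mu}')$ and $S_{\zeta_r}(\Dc_{\mu}'')$, the stacking operation along $\gamma$ depicted in \Cref{gluing_skein_modules} produces a tangle representing an element of $S_{\zeta_r}(\Dc)$. Crucially, because both factors already contain the projector $f_\mu$ at the $\mu$-labeled boundary, stacking two copies of $f_\mu$ yields $f_\mu$, so the image lands in the summand cut out by $f_\nu\otimes\bigotimes_i f_{\lambda_i}$. Equivariance under $\Mod{D'}\times\Mod{D''}$ is immediate since these mapping class groups act by tangle composition on the boundaries not involved in the gluing, and stacking is associative.

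\smallskip

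The next step, and the main technical heart of the argument, is the compatibility with the Hermitian forms. The form $h_D$ of \Cref{hermitianform} is computed by reflecting one tangle and stacking it against the other along all outer boundaries, then pushing the result into $f_\nu TL_\nu \simeq \Q(\zeta_r)$. When both inputs come from the gluing map, the resulting closed diagram contains, along $\gamma$, two copies of $f_\mu$ stacked against a reflected tangle — which collapses (using $f_\mu^2=f_\mu$ and the standard partial-trace identities for Jones--Wenzl projectors) into the product of $h_{\Dc_{\mu}'}$ and $h_{\Dc_{\mu}''}$, up to the normalization coming from the identification of $f_\mu TL_\mu$ with $\Q(\zeta_r)$. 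The key combinatorial identity needed is that, inside $TL_{\nu+2\mu}$ (or in the appropriate Skein-module setting), the idempotents $f_\mu$ resolve the identity in the channel across $\gamma$, weighted so that cross-terms for distinct $\mu$ vanish. This identity is essentially the fusion rule for Jones--Wenzl projectors and is precisely what makes the direct sum over $\mu$ on the left-hand side an isometry.

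\smallskip

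Once compatibility with the non-degenerate forms is established, injectivity of the descended map
\[
\bigoplus_{0\leq \mu\leq r-2}\Nu_{\zeta_r}(\Dc_{\mu}')\otimes\Nu_{\zeta_r}(\Dc_{\mu}'')\lra \Nu_{\zeta_r}(\Dc)
\]
is automatic: an isometric embedding between spaces with non-degenerate sesquilinear forms is injective. To conclude it is an isomorphism, I would invoke \Cref{dimensionproperties}, which says that the dimensions of $\Nu_{\zeta_r}(\cdot)$ satisfy the same constraints as those of the axiomatic $\SO$ modular functor of \Cref{existenceSOmodularfunctors}; in particular they satisfy the fusion dimension identity $\dim\Nu_{\zeta_r}(\Dc) = \sum_\mu \dim\Nu_{\zeta_r}(\Dc_{\mu}')\dim\Nu_{\zeta_r}(\Dc_{\mu}'')$, forcing the injection to be surjective.

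\smallskip

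Finally, for the integral statement when $r$ is prime: by \Cref{remarkintegral}, the lattices $\Nu^{\mathcal{O}}_{\zeta_r}(\Dc)$ are defined using the ring $A_{r-1}$, in which $[1],\dots,[r-1]$ are units, and map into $\Z[\zeta_r]$. All ingredients in the construction of the gluing map, and in the Jones--Wenzl partial-trace identities used above, live in this ring; hence the gluing map and its inverse are both defined over $\Z[\zeta_r]$, yielding an isomorphism of lattices. The main obstacle I expect is the Hermitian-form compatibility: it reduces to a concrete Temperley--Lieb identity which one must spell out carefully to fix the precise normalization constant between $h_{\Dc}$ and $h_{\Dc_\mu'}\otimes h_{\Dc_\mu''}$, but once this identity is in place the rest of the argument is formal.
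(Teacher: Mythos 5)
The paper does not actually prove this Proposition; it cites \cite[1.14]{blanchetTopologicalQuantumField1995} for the isomorphism and \cite[4.11]{blanchetTopologicalQuantumField1995} for the integrality. You are therefore offering a self-contained argument where the paper deflects to the literature, which is a legitimate but genuinely different route. Your overall strategy (stacking at the Skein level, descent via the form, injectivity from non-degeneracy, dimension count for surjectivity) is the natural unpacking of the BHMV proof and is sound in outline. Two points need fixing, however.

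First, the integrality claim as you phrase it does not follow. That the gluing map is defined over $\Z[\zeta_r]$ (via $A_{r-1}\to\Z[\zeta_r]$ when $r$ is prime) is clear, but this says nothing about the inverse: a $\Z[\zeta_r]$-linear injection of free $\Z[\zeta_r]$-modules that becomes an isomorphism after $\otimes\,\Q(\zeta_r)$ need not be surjective. The correct argument is to use compatibility with the Hermitian forms \emph{at the lattice level} together with the fact (\Cref{remarkintegral}) that $h_{\Dc}$ restricts to a perfect $\Z[\zeta_r]$-valued pairing on $\Nu^{\mathcal{O}}_{\zeta_r}(\Dc)$: a form-preserving map between lattices carrying perfect Hermitian forms, which is an isomorphism over the fraction field, is automatically an isomorphism of lattices (the adjoint supplies the inverse, and perfectness forces it to land in the lattice). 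Alternatively one can argue as the paper suggests, by producing explicit lattice bases on both sides, as in \cite[4.11]{blanchetTopologicalQuantumField1995}.

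Second, the appeal to \Cref{dimensionproperties} and \Cref{existenceSOmodularfunctors} for the fusion dimension identity risks circularity, since \Cref{existenceSOmodularfunctors} (the existence of the modular functor, with its gluing axiom) is precisely what this Proposition is being used to establish. The appeal can be rescued if you cite only the dimension formula from \cite[3.7]{marcheIntroductionQuantumRepresentations2021}, which is obtained by a direct rank computation on the flat-tangle basis and does not presuppose the gluing isomorphism; that formula then satisfies the fusion identity by elementary counting of admissible labelings of trivalent trees. You should make this explicit rather than invoking the existence theorem. Finally, note that you flag the form compatibility step (the Temperley--Lieb identity) as a gap yourself; that identity (orthogonality of different $\mu$-channels via the Jones--Wenzl idempotent, and the collapse of the closed $f_\mu$ diagram to the correct normalization) is exactly where the substance of the BHMV proof lives and cannot be left implicit.
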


This result is the genus $0$ case of \cite[1.14]{blanchetTopologicalQuantumField1995}.
The statement about integrality results from the fact that the explicit bases in \cite[4.11]{blanchetTopologicalQuantumField1995}
are in the lattice.
We have thus outlined the construction of a the modular functor $\Nusl$.

%-------------------------------------------------------------------------------------------------------------------------

\subsection{The special case of the Temperley-Lieb representations}

The functor $\Nu_{\zeta_r}$ contains as special cases some finite irreducible representations of the Temperley-Lieb
algebra at $q=\zeta_r$.

\begin{proposition}\label{TLrepresentations}
    Let $n\geq 0$ and $\nu\leq r-2$. Let $\Dc=(D^n,\nu,(1,\dotsc,1))$. Then the Temperley-Lieb algebra $TL_n$
    acts on $S_{\zeta_r}(\Dc)$ by stacking of cubes. This action preserves $h_D$, and $TL_n$ acts on $\Nu_{\zeta_r}(\Dc)$.

    Then $\Nu_{\zeta_r}(\Dc)$ is the irreducible representation of $TL_n\otimes \Q(\zeta_r)$
    corresponding to the Young diagram $(\frac{n+\nu}{2},\frac{n-\nu}{2})$.
\end{proposition}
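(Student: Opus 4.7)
The plan is to verify the three assertions in order. First, since every $\lambda_i = 1$, the projector $\bigotimes_i f_{\lambda_i}$ is trivial, so $S_{\zeta_r}(\Dc)$ equals the image of $f_\nu$ applied on the bottom face of the marked cube $(B,P)$ of \Cref{definitionSD}. The Temperley-Lieb algebra $TL_n$ acts on $S(B,P) \otimes A_{r-1}$ by stacking on the top face, and since the stacking occurs in a spatially disjoint part of the cube from the bottom projection, it commutes with $f_\nu$, and therefore preserves $S_{\zeta_r}(\Dc)$. For the second assertion, the form $h_D$ is defined by reflecting one of the two inputs across the top face and stacking vertically; $TL_n$ carries the involution $a \mapsto a^*$ given by the same vertical reflection, and by construction $h_D(a \cdot x, y) = h_D(x, a^* \cdot y)$. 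In particular $\ker h_D$ is $TL_n$-stable, so the action descends to $\Nu_{\zeta_r}(\Dc)$.

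For the main content, I would identify $S_{\zeta_r}(\Dc)$ explicitly as the standard Temperley-Lieb cell module $W_{n,\nu}$ of defect $\nu$. The skein module $S(B,P)$ is freely spanned by isotopy classes of planar crossingless matchings of the $n+\nu$ boundary points modulo the loop relation, and applying $f_\nu$ on the bottom face kills any matching that pairs two bottom points (since $e_k f_\nu = 0$), so that the surviving diagrams are exactly those in which the $\nu$ bottom points are connected through the cube to $\nu$ distinct top points. This is precisely $W_{n,\nu}$. The form $h_D$ restricted to $W_{n,\nu}$ then coincides, up to normalization, with the classical Graham-Lehrer Gram form: stack one diagram on the mirror image of another, evaluate closed loops to $-(q+q^{-1})$, and keep the coefficient of the trivial through-pattern. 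Under the hypothesis $\nu \leq r-2$, the radical of this Gram form is the unique maximal submodule of $W_{n,\nu}$, and the simple quotient is the irreducible $TL_n \otimes \Q(\zeta_r)$-module labelled by the two-row Young diagram $\left(\frac{n+\nu}{2}, \frac{n-\nu}{2}\right)$, whose dimension $\binom{n}{(n-\nu)/2} - \binom{n}{(n-\nu)/2 - 1}$ matches the number of standard Young tableaux of that shape.

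The main obstacle is the identification of the simple quotient with the two-row Young diagram representation at a root of unity. One option is to cite directly the Graham-Lehrer structure theorem for Temperley-Lieb cell modules, which applies precisely under the assumption $\nu \leq r-2$. A more self-contained route uses quantum Schur-Weyl duality: the quantum group construction of \Cref{subquantumconstruction} identifies $\Nusl(\Dc)$ with the multiplicity space of the simple $U_{\zeta_r}(\sltwo)$-module $V_\nu$ inside $V_1^{\otimes n}$, and classical Schur-Weyl assigns the two-row Young diagram representation of $TL_n$ to this multiplicity space, compatibly with the skein-theoretic $TL_n$-action by construction of the equivalence. The condition $\nu \leq r-2$ is essential in both approaches: it guarantees the simplicity of $V_\nu$ over $U_{\zeta_r}(\sltwo)$, and the well-definedness of $f_\nu$ over $\Z[\zeta_r]$ since the denominators $[k]$ for $k \leq \nu$ are units.
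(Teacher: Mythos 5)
The paper offers no proof of this proposition; it merely points to Westbury's survey of Temperley--Lieb representation theory. Your argument is therefore not a paraphrase of the paper's approach but a genuine filling-in, and it is substantially correct.

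Your first route---identifying $S_{\zeta_r}(\Dc)$ with the cell module $W_{n,\nu}$ by observing that $f_\nu$ annihilates exactly the crossingless matchings with a bottom cup (via $e_k f_\nu = 0$), then matching $h_D$ with the Graham--Lehrer Gram form and invoking the cellular theory to pass to the simple head---is the clean structural proof. The spatial-disjointness argument for commutation with $f_\nu$ and the adjointness $h_D(a\cdot x,y)=h_D(x,a^*\cdot y)$ giving $TL_n$-stability of $\ker h_D$ are both correct. One slip worth correcting: you assert that the simple quotient has dimension $\binom{n}{(n-\nu)/2} - \binom{n}{(n-\nu)/2-1}$, the number of standard Young tableaux of shape $\bigl(\frac{n+\nu}{2},\frac{n-\nu}{2}\bigr)$. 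That formula gives the dimension of the cell module $W_{n,\nu}$, not of its simple head $D_{n,\nu}$; at a root of unity $D_{n,\nu}$ is typically a proper quotient of smaller dimension. The Young diagram labels the isomorphism class of $D_{n,\nu}$, but you should not use the dimension count to confirm the labelling---the Graham--Lehrer structure theorem (valid precisely for $\nu\leq r-2$, which guarantees nondegeneracy of the Gram form on $W_{n,\nu}$) is what pins down the label.

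Your alternative route via \Cref{theoremquantumgroupconstruction} and quantum Schur--Weyl is logically available (there is no circularity: the proof of \Cref{theoremquantumgroupconstruction} uses \Cref{dimensionproperties}, not this proposition), but it is not really more self-contained, since it leans on the full operator-invariant machinery. The cell-module route is the one that stands on its own.
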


See \cite{westburyRepresentationTheoryTemperleyLieb1995} for representations of Temperley-Lieb algebras.
These representations will be useful in \Cref{subsubBigelow,subsubproofintersection} to identify the Hermitian form in the geometric models.

\begin{proposition}
    Let $n$, $\mu$ and $\Dc$ be as in \Cref{TLrepresentations}. Then a basis of $S_{\zeta_r}(\Dc)$ is given by flat tangles, ie. tangles in
    $[0,1]^2\times \bigl\{\frac{1}{2}\bigr\}\subset [0,1]^3$. These tangles are in bijection with length $n$ balanced sequences
    of $"\bm{\cdot}"$, $"\bm{(}"$ and $"\bm{)}"$ with $\nu$ characters $"\bm{\cdot}"$ such that none is inside a matching pair of parentheses.
    For such a sequence $w$, we will denote $e_w$ the corresponding basis element.

    In particular, flat tangles give a generating set of $\Nu_{\zeta_r}(\Dc)$.
\end{proposition}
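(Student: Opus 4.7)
The plan has three steps. First, I would recall Kauffman's theorem that $S(B,P)$, the Skein module of the cube $B$ with $\nu$ bottom and $n$ top marked points, has a basis indexed by crossingless matchings — flat tangles — of the $n+\nu$ boundary points. Indeed, the first Skein relation resolves every crossing as a combination of two planar resolutions, and the loop relation eliminates closed circles, reducing any tangle to a $\Z[q^{\pm 1/2}]$-linear combination of flat tangles; linear independence of distinct flat tangles is standard.

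Second, I would identify which flat tangles survive the projection by $f_\nu$, stacked at the bottom $\nu$ strands. Using the Jones-Wenzl property $e_k f_\nu = 0$, every flat tangle $T$ containing a bottom-bottom arc is annihilated: by planarity of the matching there is an innermost bottom cup joining two adjacent bottom points $k, k+1$, and a short local computation (sliding the cup near the bottom face and extracting a matching cap factor) shows that $T \circ f_\nu$ is a scalar multiple of a composition of the form $T' \circ e_k \circ f_\nu = 0$. Hence only flat tangles with no bottom-bottom arcs — those in which every bottom point is matched to a top point — contribute to $S_{\zeta_r}(\Dc)$.

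Third, I would establish linear independence of this spanning set and the bijection with balanced parenthesis sequences. For linear independence, I would pair two such tangles $T_1, T_2$ using the reflection involution of \Cref{hermitianform}: the pairing $\langle T_1, T_2\rangle \in f_\nu TL_\nu f_\nu \otimes A_{r-1} \simeq A_{r-1}$ can be arranged so that the Gram matrix is unitriangular with nonzero scalar diagonal, in a suitable ordering refining the nested-arc structure of the top matching. For the bijection, read the $n$ top points of a flat tangle from left to right and label each as $\bm{\cdot}$, $\bm{(}$, or $\bm{)}$ according as it connects to a bottom point, opens a top-top arc, or closes a top-top arc. Planarity of the top arcs forces the parentheses to be balanced, and a strand from a top dot to a bottom point is non-crossing exactly when the dot lies outside every matching pair of parentheses — giving the claimed bijection. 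The final assertion about $\Nu_{\zeta_r}(\Dc)$ then follows since it is a quotient of $S_{\zeta_r}(\Dc)$.

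The principal obstacle is the linear independence step: a slicker alternative is to identify $f_\nu S(B,P)$ with the standard cell (link-state) module of $TL_n$ indexed by $\nu$, whose dimension equals the number of such parenthesis sequences by a two-row hook-length count, matching the cardinality of the spanning set from step two. This identification is developed, for instance, in \cite{marcheIntroductionQuantumRepresentations2021}, and would bypass the direct pairing computation entirely.
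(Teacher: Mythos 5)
Your overall route matches the paper's: the paper proves this by citing \cite[2.3]{marcheIntroductionQuantumRepresentations2021} for the basis claim and points to a figure for the bijection, which is precisely the Kauffman-basis-plus-Jones--Wenzl-projection argument you spell out. Steps 1 and 2 are correct, and your bijection with parenthesis sequences is correct.

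The one incorrect step is the primary linear independence argument in Step 3. The Gram matrix of the reflection pairing on flat tangles without bottom arcs is \emph{not} unitriangular in any ordering. Already for $n=4$, $\nu=0$, the two flat tangles $T_1$ with arcs $\{1,4\},\{2,3\}$ (sequence ``$(())$'') and $T_2$ with arcs $\{1,2\},\{3,4\}$ (sequence ``$()()$'') pair as $\langle T_i,T_i\rangle=[2]^2$ and $\langle T_1,T_2\rangle=-[2]\neq 0$, so there is no triangularizing order. Moreover, at a root of unity the Gram matrix may well be singular --- this is precisely why the paper passes to the quotient $\Nu_{\zeta_r}(\Dc)=S_{\zeta_r}(\Dc)/\ker h_D$ --- so nondegeneracy of the form is not available as a substitute either.

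The fix is simpler and lives inside the ambient module $S(B,P)\otimes A_{r-1}$ with the Kauffman basis you already invoked in Step 1. Since $f_\nu$ equals the identity plus a linear combination of non-identity Temperley--Lieb diagrams, each of which has at least one bottom-bottom arc, projecting a flat tangle $T$ without bottom arcs yields $T$ plus a combination (with circle factors) of flat tangles that \emph{do} have bottom arcs. The transition matrix to the Kauffman basis is therefore block-unitriangular with respect to the partition of flat tangles into those with and without bottom arcs, giving linear independence over $A_{r-1}$ and hence over $\Q(\zeta_r)$. Combined with your Step 2, which shows the flat tangles with bottom arcs vanish after projection, this gives the basis claim. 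Your fallback via the two-row dimension count of the cell module is also valid and is what the cited source does.
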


See \cite[2.3]{marcheIntroductionQuantumRepresentations2021} for a proof of the first fact.
The second one is illustrated in \Cref{flat_basis}.

\begin{figure}
    \ctikzfig{flat_basis}
    \caption{A tangle in the flat basis of $S_{\zeta_r}(D^8,2,(1,\dotsc,1))$ and the associated expression in
    $"\bm{\cdot}"$, $"\bm{(}"$ and $"\bm{)}"$.}
    \label[figure]{flat_basis}
\end{figure}

%-------------------------------------------------------------------------------------------------------------------------

\subsection{The Quantum group construction}\label{subquantumconstruction}

In this section, we describe the modular functors $\Nu_{\zeta_r}$ with representations of the quantum group associated to $sl_2$.
The main reference for this section is \cite[9.3 and 11]{chariGuideQuantumGroups1995}.

\begin{definition}{\cite[9.1.1]{chariGuideQuantumGroups1995}}
    The quantum group $U_{\Q(q)}=U_{\Q(q)}(sl_2)$ is the Hopf algebra over $\Q(q)$ generated by $K$, $K^{-1}$, $E$ and $F$ subject to the relations:
    \begin{equation*}
        KK^{-1}=K^{-1}K=1
    \end{equation*}
    \begin{equation*}
        KEK^{-1} = q^2E,\; KFK^{-1} = q^{-2}F
    \end{equation*}
    \begin{equation*}
        [E,F] = \frac{K-K^{-1}}{q-q^{-1}}.
    \end{equation*}
    The counit $\epsilon$, antipode $S$ and comultiplication $\Delta$ are defined by:
    \[\begin{array}{c}
        \epsilon(K)=1,\;\epsilon(E)=\epsilon(F)=0,\;\Delta(K^{\pm 1})= K^{\pm 1}\otimes K^{\pm 1} \\
        \Delta(E) = E\otimes K + 1\otimes E,\; \Delta(F) = F\otimes 1 + K^{-1}\otimes F \\
        S(K) = K^{-1},\; S(E) = -EK^{-1},\; S(F) = -KF.
    \end{array}\]
\end{definition}

We now define the restricted integral form of $U_q$.

\begin{definition}{\cite[9.3.1]{chariGuideQuantumGroups1995}}
    For $l\geq 0$, define the divided powers of $E$ and $F$ as:
    \begin{equation*}
        E^{(l)} =\frac{1}{[l]!}E^l,\; F^{(l)} =\frac{1}{[l]!}F^l.
    \end{equation*}
    Let $U_{\Z[q^{\pm1}]}=U_{\Z[q^{\pm1}]}(sl_2)$ be the sub Hopf algebra of $U_{\Q(q)}$ over $\Z[q,q^{-1}]$
    generated by $K$, $K^{-1}$ and the divided powers $E^{(l)}$ and $F^{(l)}$ for $l\geq 0$.
\end{definition}

\begin{proposition}{\cite[9.3.A]{chariGuideQuantumGroups1995}}
    For $l\in \Z$, the following elements of $U_{\Q(q)}$ are in $U_{\Z[q^{\pm1}]}$:
    \begin{equation*}
        \begin{bmatrix} K\\l \end{bmatrix}  = \prod_{s=1}^l\frac{Kq^{1-s}-K^{-1}q^{s-1}}{q^s-q^{-s}}.
    \end{equation*}
    Moreover $U_{\Z[q^{\pm1}]}$ is generated as an algebra by $K$, $K^{-1}$, $E^{(l)}$, $F^{(l)}$
    and $\begin{bmatrix} K\\l \end{bmatrix}$ for $l\geq 0$.
\end{proposition}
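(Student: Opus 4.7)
The plan is to first establish that each $\begin{bmatrix} K \\ l \end{bmatrix}$ lies in $U_{\Z[q^{\pm 1}]}$ by induction on $|l|$, and then deduce the generation statement from an integral PBW-type basis. Since $\begin{bmatrix} K \\ 0 \end{bmatrix} = 1$ is trivially integral, the base case is $l = 1$, which follows from the defining commutator $[E, F] = (K - K^{-1})/(q - q^{-1}) = \begin{bmatrix} K \\ 1 \end{bmatrix}$; the case of negative $l$ reduces to positive $l$ after multiplying by suitable powers of $K^{\pm 1}$ and using the symmetry of the product formula.

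For the inductive step, I would invoke the Lusztig commutation formula between divided powers,
\begin{equation*}
E^{(l)} F^{(l)} = \sum_{t=0}^{l} F^{(l-t)} \begin{bmatrix} K;\, 2t - 2l \\ t \end{bmatrix} E^{(l-t)},
\end{equation*}
where the shifted $q$-binomials $\begin{bmatrix} K;\, c \\ t \end{bmatrix}$ are related to $\begin{bmatrix} K \\ t \end{bmatrix}$ by multiplication by $K^{\pm 1}$ and integral $q$-polynomial manipulations. Solving for the $t = l$ term expresses $\begin{bmatrix} K \\ l \end{bmatrix}$ as an integral combination of $E^{(l)} F^{(l)}$, lower-order products $F^{(l-t)} E^{(l-t)}$ for $t < l$, and the elements $\begin{bmatrix} K \\ l' \end{bmatrix}$ with $l' < l$, all of which lie in $U_{\Z[q^{\pm 1}]}$ by the inductive hypothesis.

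For the generation statement, I would appeal to the triangular decomposition $U_{\Q(q)} \simeq U^- \otimes U^0 \otimes U^+$ of the generic quantum group and upgrade it to an integral PBW-type basis
\begin{equation*}
\Bigl\{\, F^{(a)} \cdot K^{\delta} \begin{bmatrix} K \\ l \end{bmatrix} \cdot E^{(b)} \;\Bigm|\; a, b, l \geq 0,\; \delta \in \{0, 1\} \,\Bigr\}
\end{equation*}
of $U_{\Z[q^{\pm 1}]}$. One shows that (i) every monomial in the generators $K^{\pm 1}, E^{(l)}, F^{(l)}, \begin{bmatrix} K \\ l \end{bmatrix}$ can be straightened into this normal form with coefficients in $\Z[q^{\pm 1}]$, and (ii) the resulting elements are linearly independent, the latter verified by their action on a tensor product of Verma modules with generic highest weights where the relevant $q$-Vandermonde type determinants are nonzero.

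The main obstacle is the integrality of the straightening coefficients in step (i). Reordering products of divided powers and Cartan elements into PBW order produces rational functions in $q$ a priori; the crucial point is that the quantum Pascal-type identities for $\begin{bmatrix} K \\ l \end{bmatrix}$, together with the factorization of $[n]!$ into the denominators of the divided powers, ensure that every denominator cancels. This is precisely the combinatorial heart of Lusztig's construction of the restricted integral form; once it is in place, both assertions of the proposition follow formally.
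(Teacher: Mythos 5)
The paper cites this result from \cite[9.3.A]{chariGuideQuantumGroups1995} and supplies no proof of its own, so there is no in-paper argument to compare against; what follows is an evaluation of your sketch on its own terms.

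For the integrality of $\begin{bmatrix}K\\l\end{bmatrix}$, your route is the standard Lusztig-style one: the commutation formula for $E^{(l)}F^{(l)}$ has $\begin{bmatrix}K\\l\end{bmatrix}$ as its leading ($t=l$) term, and solving for it expresses it through products of divided powers and shifted $q$-binomials of lower order. The step you gloss over is that you need integrality of $\begin{bmatrix}K;c\\t\end{bmatrix}$ for all shifts $c\in\Z$ and all $t<l$, not just $c=0$; this requires a second induction on $|c|$ via a $q$-Pascal-type recursion, not merely ``multiplication by $K^{\pm1}$ and integral manipulations.'' Still, that is a well-known lemma and your outline correctly locates where the combinatorial work happens. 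Also note that the cases $l\le 0$ are vacuous (the product defining $\begin{bmatrix}K\\l\end{bmatrix}$ is empty, so it equals $1$), so there is no need to invoke any symmetry or $K^{\pm1}$-twisting for negative $l$.

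The generation assertion, however, does not require anything like an integral PBW theorem. The paper defines $U_{\Z[q^{\pm1}]}$ as the subalgebra of $U_{\Q(q)}$ generated by $K^{\pm1}$, $E^{(l)}$ and $F^{(l)}$. Therefore the subalgebra generated by $K^{\pm1}$, $E^{(l)}$, $F^{(l)}$ and $\begin{bmatrix}K\\l\end{bmatrix}$ contains $U_{\Z[q^{\pm1}]}$ a priori, and by the first part of the proposition it is also contained in $U_{\Z[q^{\pm1}]}$, so the two agree. That is the entire proof. Your appeal to a triangular decomposition, integral straightening, and linear independence on generic Verma modules is the proof of the much stronger statement that $U_{\Z[q^{\pm1}]}$ is a free $\Z[q^{\pm1}]$-module with an explicit PBW basis; that is a genuine theorem in its own right, but it is not what the second sentence of the proposition asks for, and importing it makes the argument far heavier than necessary.
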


For the relations amongst these generators of $U_{\Z[q^{\pm1}]}$, see \cite[9.3.4]{chariGuideQuantumGroups1995}.

We are now able to give a definition of Lusztig's quantum group for $sl_2$.

\begin{definition}[Restricted quantum group]
    Let $r\geq 1$ be odd. Then define the Lusztig restricted quantum group at $\zeta_r$ as:
    $$U_{\zeta_r} = (U_{\Z[q^{\pm1}]}\otimes_{\Z[q^{\pm1}]}\Z[\zeta_r])/(K^r-1)$$
    where $\Z[\zeta_r]$ is the ring of integers of the cyclotomic field of $r$-th roots.
    Here we set $q$ to $\zeta_r^2$.
\end{definition}

\begin{remark}
    In $U_{\zeta_r}$, for $0\leq a\leq r-1$, one has $E^{(a+rb)}=\frac{1}{[a]!}E^a\frac{(E^{(r)})^b}{b!}$
    and $F^{(a+rb)}=\frac{1}{[a]!}F^a\frac{(F^{(r)})^b}{b!}$. Similar formulas show that for $l\geq 0$, $\begin{bmatrix} K\\l \end{bmatrix}$
    is in the algebra generated by $K$, $K^{-1}$ and $\begin{bmatrix} K\\r \end{bmatrix}$.
    
    Hence $U_{\zeta_r}$ is generated by $K$, $K^{-1}$, $E$, $F$, $E^{(r)}$, $F^{(r)}$
    and $\begin{bmatrix} K\\r \end{bmatrix}$.
\end{remark}

Let us now describe a bit of representation theory of $U_{\zeta_r}$. 

\begin{proposition}{\cite[11.2.3]{chariGuideQuantumGroups1995}}\label{weightspaces}
    Let $W$ be a torsion-free finite-dimensional representation of $U_{\zeta_r}$. For $\lambda\in \Z$, we define the weight space:
    \begin{equation*}
        W_\lambda  = \{w\in W\;\mid\; Kw=q^\lambda w\text { and }\begin{bmatrix} K\\r \end{bmatrix}w=\begin{bmatrix} \lambda\\r \end{bmatrix}w \}.
    \end{equation*}
    Then $W$ is the direct sum of its weight spaces:
    \begin{equation*}
        W = \bigoplus_\lambda W_\lambda.
    \end{equation*}
\end{proposition}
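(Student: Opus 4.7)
The plan is to simultaneously diagonalize the commuting Cartan-type operators $K$ and $\qbin{r}{K}$ on $W$, following the general strategy of \cite[11.2.3]{chariGuideQuantumGroups1995} adapted to $\sltwo$. Both operators lie in the commutative subalgebra of $U_{\zeta_r}$ generated by $K^{\pm 1}$ and the divided binomials in $K$, so they commute on $W$.

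First, I would work over $\Q(\zeta_r)$ and diagonalize $K$ there. The relation $K^r = 1$ in $U_{\zeta_r}$ combined with the fact that $q = \zeta_r^2$ is a primitive $r$-th root of unity (since $r$ is odd) ensures that $X^r - 1$ splits into distinct linear factors over $\Q(\zeta_r)$. Thus $W \otimes \Q(\zeta_r) = \bigoplus_{j=0}^{r-1} W_\Q^{(j)}$ with $K$ acting as $q^j$ on the $j$-th summand. On each finite-dimensional summand, $\qbin{r}{K}$ commutes with $K$ and must act through some polynomial equation; a direct computation from the defining formula shows its eigenvalues are precisely of the form $\qbin{r}{\lambda}$ for integer lifts $\lambda \equiv j \pmod r$. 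Since $\qbin{r}{\lambda + r} \neq \qbin{r}{\lambda}$, these eigenvalues are distinct integers, so $\qbin{r}{K}$ diagonalizes on each $W_\Q^{(j)}$, yielding the joint weight decomposition of $W \otimes \Q(\zeta_r)$ into the spaces $W_{\lambda,\Q}$.

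The main obstacle is the integral descent to $W$ itself: the Lagrange-interpolation idempotents for the $K$-decomposition alone would carry denominators $\prod_{\mu \neq \lambda}(q^\lambda - q^\mu)$, which are \emph{not} units in $\Z[\zeta_r]$, so one cannot directly split $W$ by $K$-eigenvalue integrally. The resolution, which motivates the very inclusion of the divided binomials as generators of Lusztig's restricted form $U_{\zeta_r}$, is that $\qbin{r}{K}$ separates integer representatives of a single residue class modulo $r$ by distinct integers. This permits the construction of projectors $e_\lambda \in \Z[\zeta_r][K, \qbin{r}{K}]$ onto the joint eigenspaces via a Chinese Remainder argument on the joint minimal polynomial of the pair $(K, \qbin{r}{K})$ acting on $W$. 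Torsion-freeness of $W$ then guarantees that $W_\lambda := e_\lambda W$ coincides with $W \cap W_{\lambda,\Q}$, giving the desired decomposition $W = \bigoplus_\lambda W_\lambda$ with only finitely many nonzero summands by finite-dimensionality.
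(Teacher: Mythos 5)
The paper does not prove this proposition; it is cited directly from \cite[11.2.3]{chariGuideQuantumGroups1995}, so there is no internal argument to compare against. Evaluating your proof on its own terms, there is a genuine gap at the pivotal step.

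The assertion that ``a direct computation from the defining formula shows [the] eigenvalues [of $\Kr$] are precisely of the form $\qbin{r}{\lambda}$'' has no computation behind it and cannot be justified by considering the Cartan subalgebra alone. The defining product formula for $\Kr$ carries $[r]!$ in its denominator, which vanishes at $q = \zeta_r^2$, so there is no formula expressing $\Kr$ in terms of $K$ over $\Z[\zeta_r]$ to compute with. More fundamentally, $\Kr$ is transcendental over $\Z[\zeta_r][K^{\pm1}]/(K^r-1)$ inside the Cartan part $U^0_{\zeta_r}$: the integral characters $K\mapsto q^\lambda$, $\Kr\mapsto\qbin{r}{\lambda}$ already take infinitely many distinct $\Kr$-values for a single $K$-eigenvalue, ruling out any polynomial relation. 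Hence a module over the commutative subalgebra $U^0_{\zeta_r}$ can perfectly well have a $K$-eigenvector on which $\Kr$ acts by an arbitrary scalar. The fact that only integral pairs $(q^\lambda, \qbin{r}{\lambda})$ occur on a finite-dimensional $U_{\zeta_r}$-module is the representation-theoretic heart of the proposition: one must use that $E$, $E^{(r)}$, $F$, $F^{(r)}$ shift the joint eigenvalue in a controlled way, extract a highest-weight vector via finite-dimensionality, and deduce integrality of its weight from the highest-weight relations (the same circle of identities from \cite[9.3.4]{chariGuideQuantumGroups1995} that the paper exploits in the proof of \Cref{vermaproperty}). Your argument never invokes the nilpotent generators, so it cannot establish this. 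The surrounding steps --- diagonalizing $K$ over $\Q(\zeta_r)$, the commutation observation, and especially the remark that the $K$-projectors alone carry the non-unit denominator $r$ whereas combining with $\Kr$ allows integral joint projectors by Chinese Remainder --- are sound, but they all presuppose exactly the integrality that is missing.
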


From the defining relations, one notices that $EW_\lambda\subset W_{\lambda+2}$, $E^{(r)}W_\lambda\subset W_{\lambda+2r}$,
$FW_\lambda\subset W_{\lambda-2}$ and $F^{(r)}W_\lambda\subset W_{\lambda-2r}$.

\begin{definition}[$\mathcal{R}$-matrix]\label{definitionRmatrix}
    Let $W$ be a torsion-free finite dimensional representation of $U_{\zeta_r}$. Define the operator $H$ on $W$ by:
    \begin{equation*}
        Hw=\lambda w\text{ for }w\in W_\lambda.
    \end{equation*}
    The operator $v$ on $W$ is defined by:
    \begin{equation*}
        v = (-1)^{\mathrm{weight}}q^{-H^2/2}\sum_{n\geq 0}q^{n(3n+1)/2}(q-q^{-1})^nF^{(n)}K^{-n-1}E^n.
    \end{equation*}
    Now let $W_1$, $W_2$ be two torsion-free finite dimensional representations of $U_{\zeta_r}$. Define $\mathcal{R}$ to be the operator:
    \begin{equation*}
        \mathcal{R} = q^{H\otimes H/2}\sum_{n\geq 0}q^{\frac{n(n-1)}{2}}(q-q^{-1})^nE^n\otimes F^{(n)}.
    \end{equation*}
\end{definition}

We will see below that in the $UPB_n$-action $v$ corresponds to a twist while $\mathcal{R}$ corresponds to an elementary braid.
Note that $v$ and $\mathcal{R}$ commute to the action of $U_{\zeta_r}$ \cite[(4.23) and 4.14]{ohtsuki2001quantum}.
Let us now define the representations of interest.

\begin{definition}\label{definitionfinitedimensionalmodules}
    For $\alpha\geq 0$, let $V_\alpha=\Z[\zeta_r]e_0\oplus\dotsb\oplus\Z[\zeta_r]e_\alpha$ be the representation of $U_{\zeta_r}$
    with the action given by:
    \begin{equation*}
        Ee_i=[r-i+1]e_{i-1},\; Fe_i=[i+1]e_{i+1},\; Ke_i=q^{\alpha-2i}e_i,\;
        \begin{bmatrix} K\\r \end{bmatrix}e_i=\begin{bmatrix} \alpha-2i\\r \end{bmatrix}e_i
    \end{equation*}
    \begin{equation*}
        E^{(r)}=((r-i)_1+1)e_{i-r},\; F^{(r)}e_i=(i_1+1)e_{i+r}.
    \end{equation*}
    Here for $a\in\Z$, $a_1$ is such that $a=a_0+ra_1$ with $0\leq a_0\leq r-1$. Also, we use the convention $e_i=0$ for $i<0$ or $i>\alpha$.

    For $\alpha<0$, we set $V_\alpha=\{0\}$.
\end{definition}

\begin{proposition}{\cite[11.2.7]{chariGuideQuantumGroups1995}}
    For $0\leq \alpha\leq r-1$, $V_\alpha$ is irreducible.
\end{proposition}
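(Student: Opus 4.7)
The plan is to use the weight space decomposition from \Cref{weightspaces} together with the fact that on $V_\alpha$ the weight spaces are one-dimensional. First I would check that each basis vector $e_i$ (for $0\leq i\leq\alpha$) spans a distinct joint weight space: reading off the action, $e_i$ has weight $\alpha-2i$, and since $0\leq\alpha\leq r-1$ the integers $\alpha-2i$ for $0\leq i\leq\alpha$ are $\alpha+1$ pairwise distinct values. So \Cref{weightspaces} gives $V_\alpha\otimes\Q(\zeta_r)=\bigoplus_i \Q(\zeta_r)\,e_i$ as a decomposition into one-dimensional weight spaces.

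Next, given a nonzero $U_{\zeta_r}$-submodule $W\subset V_\alpha\otimes\Q(\zeta_r)$, since $W$ is stable under $K$ and $\begin{bmatrix}K\\r\end{bmatrix}$ it inherits this weight decomposition, so $W$ is spanned by some nonempty subset $\{e_i : i\in S\}$ with $S\subseteq\{0,\dots,\alpha\}$. The main step is to show $S=\{0,\dots,\alpha\}$, which reduces to the assertion that the scalars appearing in $Fe_i=[i+1]e_{i+1}$ and $Ee_i=[\alpha-i+1]e_{i-1}$ are nonzero in $\Q(\zeta_r)$ for the relevant $i$. These scalars are quantum integers $[k]$ with $1\leq k\leq\alpha\leq r-1$, and their nonvanishing follows from the fact that $q=\zeta_r^2$ has order exactly $r$ (using that $r$ is odd): $q^k-q^{-k}=0$ would force $q^{2k}=1$, i.e.\ $r\mid 2k$, which under $r$ odd forces $r\mid k$, impossible for $1\leq k\leq r-1$. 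Iterating $E$ then brings any $e_j$ with $j\in S$ down to a nonzero multiple of $e_0$, and iterating $F$ brings $e_j$ up to a nonzero multiple of $e_\alpha$, hitting every intermediate $e_i$ on the way. Hence $W$ contains every $e_i$ and so equals $V_\alpha\otimes\Q(\zeta_r)$.

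A small routine check to include is that the divided powers $E^{(r)}$ and $F^{(r)}$ act by zero on $V_\alpha$ for $0\leq\alpha\leq r-1$: the formulas $E^{(r)}e_i=((r-i)_1+1)e_{i-r}$ and $F^{(r)}e_i=(i_1+1)e_{i+r}$ yield zero by the convention $e_j=0$ for $j<0$ or $j>\alpha$, since $0\leq i\leq\alpha<r$ forces both $i-r<0$ and $i+r>\alpha$. Hence stability of $W$ under all of $U_{\zeta_r}$ reduces to stability under $K$, $\begin{bmatrix}K\\r\end{bmatrix}$, $E$ and $F$, which is what the argument above uses. There is no genuine obstacle: the only substantive content is the nonvanishing of the quantum integers $[1],\dots,[r-1]$ at $q=\zeta_r^2$.
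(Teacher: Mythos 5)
The paper itself offers no proof of this proposition; it only cites \cite[11.2.7]{chariGuideQuantumGroups1995}, so there is no in-paper argument to compare against. Your proof is the standard weight-space argument and is correct, so let me confine myself to two remarks.

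First, your formula $Ee_i = [\alpha-i+1]e_{i-1}$ does not match the one printed in \Cref{definitionfinitedimensionalmodules}, which reads $Ee_i = [r-i+1]e_{i-1}$. Yours is in fact the right one: after specializing $q=\zeta_r^2$ one has $[r-i+1]=-[i-1]$, and with that coefficient $[E,F]e_i=-[2i]e_i$, which is not $[\alpha-2i]e_i$ in general, so the printed formula violates the defining relation of $U_{\zeta_r}$; moreover it would give $Ee_1=0$ and hence a proper submodule $\mathrm{span}(e_1,\dotsc,e_\alpha)$ of $V_\alpha$, contradicting the very statement being proved. You have silently corrected what appears to be a typo, and you should say so explicitly.

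Second, a small imprecision: distinctness of the integers $\alpha-2i$ is not by itself the reason the $e_i$ lie in $\alpha+1$ distinct weight spaces of \Cref{weightspaces} -- one needs the eigenvalues $q^{\alpha-2i}$ of $K$ to be pairwise distinct, i.e.\ distinctness of $\alpha-2i$ \emph{modulo} $r$. This holds because $2(i-j)$ with $0<|i-j|\leq\alpha\leq r-1$ is never a multiple of the odd integer $r$; you invoke exactly this arithmetic later for the nonvanishing of the $[k]$, so the fix is just to surface it one step earlier. With these two points addressed, the proof is complete and matches the argument in the cited source.
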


Let us first recall the following fact about framed braid groups.

\begin{proposition}
    Let $n\geq 0$. There is an isomorphism:
    \begin{equation*}
        PB_n\times\Z^n\simeq UPB_n
    \end{equation*}
    where the map $PB_n\ra UPB_n$ is the blackboard framing of framed braids and the generator of the $i$-th factor of $\Z^n$
    is mapped to the full twist $tw_i$ of the $i$-th strand.
\end{proposition}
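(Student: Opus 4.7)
The plan is to exhibit $UPB_n$ as a split central extension
\begin{equation*}
    1 \lra \Z^n \lra UPB_n \lra PB_n \lra 1,
\end{equation*}
in which the kernel is generated by the full twists $tw_1,\dotsc,tw_n$ and a splitting is provided by blackboard framing. Centrality together with the splitting then forces the extension to be trivial, which yields the desired direct product.

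First, I would identify $UPB_n$ with $\Mod{D^n}$ and $PB_n$ with $\PMod{D_n}$, and then build the above sequence from the classical capping construction: shrinking each inner boundary component of $D^n$ to a puncture gives a surjection $\Mod{D^n}\twoheadrightarrow\PMod{D_n}$ whose kernel is the free abelian group generated by the Dehn twists along the $n$ boundary-parallel curves (see for instance Farb--Margalit \S3.6.2 for the capping exact sequence in this setting). Under the identifications of \Cref{mcgsection}, these boundary Dehn twists are exactly the full twists $tw_i$, so the kernel is $\Z^n$ with the prescribed generators.

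Next I would verify centrality. Each $tw_i$ is the Dehn twist $T_{\gamma_i}$ about a curve $\gamma_i$ isotopic to the $i$-th inner boundary component. Any $[f]\in\Mod{D^n}$ has a representative fixing $\partial D^n$ pointwise; in particular $f(\gamma_i)$ is still a boundary-parallel curve around $B_i$ and is thus isotopic to $\gamma_i$. Hence $f T_{\gamma_i} f^{-1}=T_{f(\gamma_i)}=T_{\gamma_i}$ in $\Mod{D^n}$, so each $tw_i$ is central. They also commute with one another since one can choose their supports as disjoint collar neighborhoods of the $B_i$'s.

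Finally, blackboard framing defines a homomorphism $PB_n\ra UPB_n$ sending a pure braid $\beta$ (viewed as an isotopy of an ordered $n$-tuple of points in the open disk) to the framed braid in which each strand carries the constant normal vector pointing out of the plane; this is a genuine group map because endpoints have matching framings, and composing with the framing-forgetting map $UPB_n\ra PB_n$ recovers the identity. The sequence therefore splits, and a split central extension is a direct product, giving $PB_n\times\Z^n\simeq UPB_n$ with the stated identification of generators. The only step that requires any real work is the identification of the kernel of the capping map as a free abelian group of rank exactly $n$ generated by the $tw_i$'s; the centrality and splitting are then formal.
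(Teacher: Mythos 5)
Your proof is correct. The paper states this proposition as a classical fact without giving a proof, so there is no argument in the text to compare against; your route is the standard one. Specifically: the capping exact sequence (Farb--Margalit \S3.6) identifies $\ker\left(\Mod{D^n}\to\PMod{\oD_n}\right)$ as the free abelian group of rank $n$ generated by the Dehn twists $T_{\gamma_i}$ about boundary-parallel curves, which under $\Mod{D^n}\simeq UPB_n$ are exactly the full twists $tw_i$; the centrality argument $f T_{\gamma_i} f^{-1}=T_{f(\gamma_i)}=T_{\gamma_i}$ is correct since $f$ fixes each inner boundary component pointwise and hence preserves the isotopy class of each $\gamma_i$; blackboard framing supplies the splitting; and the general lemma that a split central extension is a direct product (the map $(k,q)\mapsto k\cdot s(q)$ is a homomorphism by centrality and a bijection by the splitting) finishes the argument. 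All steps are sound.
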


We will denote by $P$ the swap operator $V\otimes W\ra W\otimes V$, $v\otimes w\mapsto w\otimes v$.

\begin{proposition}\label{braidgroupactionquantumgroup}
    Let $0\leq \alpha_1,\dotsc,\alpha_n\leq r-2$. Then for $1\leq i\leq n-1$ and $\sigma\in \mathfrak{S}_n$, let $PR_i$ be
    the application of $P\circ\mathcal{R}$ on the $i$-th and $(i+1)$-th factors of
    $V_{\alpha_{\sigma(1)}}\otimes\dotsb\otimes V_{\alpha_{\sigma(n)}}$.
    For $1\leq i\leq n$ and $\sigma\in \mathfrak{S}_n$, let $v_i$ be the action of $v$ on the $i$-th factor of 
    $V_{\alpha_{\sigma(1)}}\otimes\dotsb\otimes V_{\alpha_{\sigma(n)}}$.
    Then the assignments $\sigma_i\mapsto PR_i$ and $tw_i\mapsto v_i$
    induces a representation of the framed pure braid group which commutes to the $U_{\zeta_r}$-action:
    \begin{equation*}
        \rho:UPB_n\lra \GLn{U_{\zeta_r}}{V_{\alpha_{1}}\otimes\dotsb\otimes V_{\alpha_{n}}}.
    \end{equation*}
\end{proposition}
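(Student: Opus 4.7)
The plan is to reduce the statement to the standard fact that $(U_{\zeta_r},\mathcal{R},v)$ is a ribbon Hopf algebra. Concretely, I would organize the verification around three algebraic identities, each of which is already present (explicitly or implicitly) in Chari--Pressley \cite{chariGuideQuantumGroups1995} and Ohtsuki \cite{ohtsuki2001quantum} for Lusztig's quantum $\mathfrak{sl}_2$ at a root of unity, and then match them to relations in $UPB_n$ under the isomorphism $UPB_n\simeq PB_n\times\Z^n$.

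First, I would record the three algebraic inputs: (i) the Yang--Baxter equation $\mathcal{R}_{12}\mathcal{R}_{13}\mathcal{R}_{23}=\mathcal{R}_{23}\mathcal{R}_{13}\mathcal{R}_{12}$ on any triple tensor product of torsion-free finite-dimensional $U_{\zeta_r}$-modules; (ii) the intertwining property $\mathcal{R}\Delta(x)=\Delta^{\mathrm{op}}(x)\mathcal{R}$ for all $x\in U_{\zeta_r}$, so that $P\mathcal{R}:V_\alpha\otimes V_\beta\to V_\beta\otimes V_\alpha$ is $U_{\zeta_r}$-linear; (iii) the ribbon identities $v\in Z(U_{\zeta_r})$, $S(v)=v$, and $\Delta(v)=(v\otimes v)(\mathcal{R}_{21}\mathcal{R})^{-1}$. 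Each of (i)--(iii) holds for the generic $U_{\Q(q)}$ by \cite[Ch.~4]{ohtsuki2001quantum} and then descends to $U_{\Z[q^{\pm1}]}$ (up to rewriting the sums using the divided powers $F^{(n)}$) and specializes to $U_{\zeta_r}$ because $\mathcal{R}$ and $v$ as defined in \Cref{definitionRmatrix} are already integral.

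Next I would check that these imply the defining relations of $UPB_n\simeq PB_n\times\Z^n$. The braid relations $(PR_i)(PR_{i+1})(PR_i)=(PR_{i+1})(PR_i)(PR_{i+1})$ follow from (i) after the standard bookkeeping that swaps $\mathcal{R}_{ij}$ with $\mathcal{R}_{\sigma(i)\sigma(j)}$ when composing with $P$; far-commutation for $|i-j|\ge2$ is immediate. For the framing part, since $v$ is central, each $v_i$ commutes with every $PR_j$ that does not act on the $i$-th tensor factor, and with $PR_{i}$ and $PR_{i-1}$ the identity (iii) together with $v\in Z(U_{\zeta_r})$ shows that $v_iv_{i+1}\mathcal{R}_{21}\mathcal{R}=\Delta(v)^{-1}$ on the relevant pair of factors, which is exactly what is needed to carry the framing of the $i$-th strand along a braid generator. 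When one restricts to the pure subgroup $PB_n$, the strand labels are preserved, so the $v_i$'s commute pairwise with every element of $\rho(PB_n)$ and the product structure $PB_n\times\Z^n$ is realized faithfully. Finally, commutation with $U_{\zeta_r}$ is (ii) for $\mathcal{R}$ and centrality of $v$.

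The only nontrivial point is (iii): the ribbon identity must be shown to hold at the root of unity. I would argue this by working inside a completion of $U_{\Q(q)}\otimes U_{\Q(q)}$ where the formula for $v$ and the quasi-triangularity identity $(\Delta\otimes\id)(\mathcal{R})=\mathcal{R}_{13}\mathcal{R}_{23}$ both make sense, rearrange to obtain the standard ribbon identity there, and then evaluate on $V_{\alpha}\otimes V_{\beta}$ with $0\le\alpha,\beta\le r-2$, where the infinite sums defining $\mathcal{R}$ and $v$ truncate to finite ones. Everything else is then a formal consequence of the axioms of a ribbon category applied to the $UPB_n$-presentation, and the resulting representation automatically commutes with the $U_{\zeta_r}$-action.
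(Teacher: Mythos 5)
Your plan tracks the paper's proof, which is a one-line deferral to the $\mathcal{R}$-matrix story: the paper says only that the result is "essentially the fact that $\mathcal{R}$ satisfies the Yang--Baxter equation," citing Ohtsuki \cite[4.2 and App.~A.1]{ohtsuki2001quantum}. Your inputs (i) and (ii) — Yang--Baxter for $\mathcal{R}$ and the intertwining property of $P\mathcal{R}$ — are exactly the content behind that deferral, and your remark that the divided-power form of $\mathcal{R}$ already lives over $\Z[q^{\pm1}]$ and so specializes at $q=\zeta_r^2$ is the correct way to justify the transfer to the root-of-unity setting. So the braid part and the $U_{\zeta_r}$-equivariance part of your argument are fine and on the same route as the paper.

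The framing part, however, is both overcomplicated and slightly wrong as written. You invoke the full ribbon identity $\Delta(v)=(v\otimes v)(\mathcal{R}_{21}\mathcal{R})^{-1}$ and then write $v_iv_{i+1}\mathcal{R}_{21}\mathcal{R}=\Delta(v)^{-1}$, which does not follow from that identity (the exponent of $\Delta(v)$ is wrong, and in fact the correct rearrangement is $v_iv_{i+1}=\Delta(v)\,\mathcal{R}_{21}\mathcal{R}$). More to the point, the ribbon identity is not needed for \Cref{braidgroupactionquantumgroup}. Two cheaper observations suffice. First, since $v$ is central in $U_{\zeta_r}$ and each factor $V_{\alpha_i}$ with $0\le\alpha_i\le r-2$ is irreducible, $v$ acts on $V_{\alpha_i}$ by a scalar; hence each $v_i$ is a scalar on the full tensor product, so the $v_i$'s pairwise commute and commute with $\rho(PB_n)$, giving the $\Z^n$ factor for free, and they are trivially $U_{\zeta_r}$-module maps. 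Second, even if one wants an argument not relying on irreducibility, centrality of $v$ already gives $(v\otimes 1)\mathcal{R}=\mathcal{R}(v\otimes 1)$ directly from the explicit form of $\mathcal{R}$ in \Cref{definitionRmatrix} (every term in $\mathcal{R}$ is built from $E$, $F^{(n)}$, $K$, all of which commute with $v$); combined with $Pv_iP^{-1}=v_{i+1}$ on adjacent factors, this gives $PR_j\circ v_i=v_{\pi_j(i)}\circ PR_j$ where $\pi_j$ is the transposition, and specializing to pure braids gives the required commutation. Either of these is shorter and avoids the sign slip; the ribbon identity would only be needed if you wanted the full framed braid group $UB_n$ rather than its pure subgroup.
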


The proof is essentially the fact that $\mathcal{R}$ satisfies the Yang-Baxter equation.
This is checked, for example, in \cite[4.2 and Appendix A.1]{ohtsuki2001quantum}.

\begin{definition}
    Let $W$ be a torsion-free finite dimensional representation of $U_{\zeta_r}$ and $f$ be an endomorphism of this representation.
    Its quantum trace $\qtr{f}$ is defined as $\qtr{f}=\tr{Kf}=\tr{fK}$.
    Now let $W_1$, $W_2$ be two torsion-free finite dimensional representations of $U_{\zeta_r}$.
    A $U_{\zeta_r}$-morphism $g:W_1\ra W_2$ is said to be negligible if for all $U_{\zeta_r}$-morphism $h:W_2\ra W_1$,
    $\qtr{f\circ g}=0$. We will denote $\bhom{W_1}{W_2}$ the set of $U_{\zeta_r}$-morphisms from $W_1$ to $W_2$ modulo
    the negligible ones.
\end{definition}

We will use the following more practical definition of $\bhom{V_\mu}{W}$.

\begin{proposition}\label{qtracedefinitionisevaluationdefinition}
    For $0\leq\mu\leq r-2$, $\qtr{\id_{V_\mu}}=[\mu+1]\neq 0$.
    So, for $W$ a torsion-free finite-dimension representations of $U_{\zeta_r}$, we have a bilinear form:
    \begin{equation*}
        ev:\myhom{V_\mu}{W}\otimes \myhom{W}{V_\mu}
        \lra \Z[\zeta_r]
    \end{equation*}
    defined by $ev(f\otimes g)\id_{V_\mu}=g\circ f$. Then $\bhom{V_\mu}{W}$
    is the quotient of $\myhom{V_\mu}{W}$ by the left-kernel of $ev$.
\end{proposition}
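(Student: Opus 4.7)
The plan is to verify the three assertions in order: the explicit value of $\qtr{\id_{V_\mu}}$, the well-definedness of $ev$, and the identification of the left-kernel with the negligible morphisms.

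First, I would compute $\qtr{\id_{V_\mu}} = \tr{K_{\mid V_\mu}}$ directly from \Cref{definitionfinitedimensionalmodules}: the weight basis $e_0,\dotsc,e_\mu$ has $Ke_i = q^{\mu-2i}e_i$, hence the trace is $\sum_{i=0}^{\mu} q^{\mu-2i} = q^\mu + q^{\mu-2} + \dotsb + q^{-\mu} = [\mu+1]$. To see this is nonzero in $\Z[\zeta_r]$, I would use that $q = \zeta_r^2$ is a primitive $r$-th root of unity (since $r$ is odd), so $[n] = (q^n-q^{-n})/(q-q^{-1})$ vanishes iff $r\mid n$; since $1\leq \mu+1\leq r-1$, we get $[\mu+1]\neq 0$.

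Next, to make sense of $ev$, I need that $g\circ f\in \End_{U_{\zeta_r}}(V_\mu)$ is a scalar multiple of $\id_{V_\mu}$. Since $V_\mu$ is irreducible (the cited \cite[11.2.7]{chariGuideQuantumGroups1995}), Schur's lemma over $\Q(\zeta_r)$ gives $\End_{U_{\zeta_r}}(V_\mu\otimes \Q(\zeta_r)) = \Q(\zeta_r)\cdot\id_{V_\mu}$; combined with the fact that an endomorphism of the $\Z[\zeta_r]$-lattice $V_\mu$ lands in $\Z[\zeta_r]\cdot\id_{V_\mu}$ (read off on a weight vector such as $e_0$), the formula $ev(f\otimes g)\id_{V_\mu}=g\circ f$ uniquely defines an element of $\Z[\zeta_r]$ and the assignment is bilinear.

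Finally, for the last claim, unwind the definition: $f\in\myhom{V_\mu}{W}$ is negligible iff $\qtr{g\circ f}=0$ for every $g\in\myhom{W}{V_\mu}$. Using the previous step, $g\circ f = ev(f\otimes g)\,\id_{V_\mu}$, so
\begin{equation*}
    \qtr{g\circ f} = ev(f\otimes g)\cdot \qtr{\id_{V_\mu}} = ev(f\otimes g)\cdot [\mu+1].
\end{equation*}
Since $[\mu+1]$ is a nonzero (hence non-zero-divisor) element of the integral domain $\Z[\zeta_r]$, this vanishes for all $g$ iff $ev(f\otimes g)=0$ for all $g$, i.e.\ iff $f$ lies in the left-kernel of $ev$. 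Taking the quotient then identifies $\bhom{V_\mu}{W}$ with $\myhom{V_\mu}{W}/\ker_L(ev)$. The only real subtlety is the Schur-type argument for the integral form; everything else is a direct computation.
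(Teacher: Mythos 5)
Your proof is correct and follows the same route as the paper: the paper's proof is literally the one identity $[\mu+1]\,ev(f\otimes g)=\qtr{g\circ f}$, which is exactly the key equation you derive, and you merely fill in the routine details (the trace computation, non-vanishing of $[\mu+1]$, and well-definedness of $ev$ via Schur on the top weight vector) that the paper leaves implicit.
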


\begin{proof}
    One has $[\mu+1]ev(f\otimes g)=\qtr{g\circ f}$.
\end{proof}

\begin{remark}
    Another way to phrase \Cref{qtracedefinitionisevaluationdefinition}
    is that $\bhom{V_\mu}{W}$ is the image of the map:
    \[\begin{array}{rcl}
        \hom(V_\mu,W)&\lra    & \hom(W,V_\mu)^\vee \\
        f &\mapsto& (g\mapsto c\text{ such that } g\circ f=c\cdot \id).
    \end{array}\]
\end{remark}

We can now give the quantum group description of the modular functors $\Nu_{\zeta_r}$.

Let $\Dc=(D^n,\nu,\underline{\lambda})$ be a colored disk. We describe how operator invariants of tangles lead to a morphism:
\begin{equation}\label{skeintoquantummap}
    \mathrm{Op}_{\Dc}:S_r(\Dc)\lra \myhom{V_\mu}{V_{\lambda_1}\otimes\dotsb\otimes V_{\lambda_n}}\otimes \Q.
\end{equation}

The Kauffman bracket operator invariant of \cite[chp. 3]{ohtsuki2001quantum} can be lifted from $\C$ to $\Q[\zeta_r]$
by specifying $A=q=\zeta_r$ in the formulas for $R$ and $n$. The matrix $R$ given by Ohtsuki is then equal to the action of
$P\circ \mathcal{R}$ on $V_1\otimes V_1$ from \Cref{braidgroupactionquantumgroup} (see \cite[(4.42)]{ohtsuki2001quantum}).

This operator invariant associates to any tangle $T$ in $S_r(\Dc)$ a map:
\begin{equation*}
    \Phi(T): \myhom{V_1^{\otimes\mu}\otimes \Q}{V_1^{\otimes{\lambda_1}}\otimes \Q\otimes\dotsb\otimes V_1^{\otimes{\lambda_n}}\otimes \Q}.
\end{equation*}

Now, for each $n\geq 0$, we have maps:
% https://q.uiver.app/#q=WzAsMyxbMCwwLCJWX24iXSxbMiwwLCJWXzFee1xcb3RpbWVzIG59Il0sWzQsMCwiVl9uIl0sWzAsMSwiaV9uIiwwLHsic3R5bGUiOnsidGFpbCI6eyJuYW1lIjoiaG9vayIsInNpZGUiOiJ0b3AifX19XSxbMSwyLCJwX24iLDAseyJzdHlsZSI6eyJoZWFkIjp7Im5hbWUiOiJlcGkifX19XV0=
\[\begin{tikzcd}
	{V_n} && {V_1^{\otimes n}} && {V_n}
	\arrow["{i_n}", hook, from=1-1, to=1-3]
	\arrow["{p_n}", two heads, from=1-3, to=1-5]
\end{tikzcd}\]
where $i_n$ is characterized by $i_n(e_0)=e_0^{\otimes n}$ and $p_n$ by $e_0^\vee(p_n(\otimes_ke_{i_k}))=\delta_{0i_1}\dotsb\delta_{0i_n}$
(see \Cref{vermaproperty} below for how this defines $i_n$ and $p_n$).

Then the map \Cref{skeintoquantummap} is given by:
\begin{equation*}
    \mathrm{Op}_{\Dc}(T) = \otimes_kp_{\lambda_k}\circ \Phi(T)\circ i_\nu.
\end{equation*}

\begin{theorem}\label{theoremquantumgroupconstruction}
    Let $\Dc=(D^n,\nu,\underline{\lambda})$ be a colored disk.
    Then the map $\mathrm{Op}_{\Dc}$ induces an isomorphism:
    \begin{equation*}
        \Nu_{\zeta_r}(\Dc)\simeq \bhom{V_\mu}{V_{\lambda_1}\otimes\dotsb\otimes V_{\lambda_n}}\otimes\Q.
    \end{equation*}
    This is an isomorphism of $\Mod{D^n}\simeq UPB_n$-modules, where it acts on the right hand side
    by post-composition via $\rho$ of \Cref{braidgroupactionquantumgroup}.

    Moreover, if $r$ is prime, we have an isomorphism between the $\Z[\zeta_r]$-lattices:
    \begin{equation*}
        \Nu_{\zeta_r}^\mathcal{O}(\Dc)\simeq \bhom{V_\mu}{V_{\lambda_1}\otimes\dotsb\otimes V_{\lambda_n}}.
    \end{equation*}
\end{theorem}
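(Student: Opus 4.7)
The plan is to build $\mathrm{Op}_{\Dc}$ at the level of $S_{\zeta_r}(\Dc)$, show it descends to $\Nu_{\zeta_r}(\Dc)$ on the source and to $\bhom{\cdot}{\cdot}$ on the target by identifying $\ker h_D$ with the submodule of negligible morphisms, and conclude by a dimension count together with manifest braid-group equivariance. Integrality at prime $r$ will then follow because all ingredients are defined over $\Z[\zeta_r]$.

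First, the Kauffman bracket operator invariant $\Phi$ assigns a $U_{\zeta_r}$-linear map between tensor products of $V_1$ to every tangle in $([0,1]^3,P)$. It respects the Skein relations of \Cref{skein_relations} by direct computation from \Cref{definitionRmatrix} and \Cref{definitionfinitedimensionalmodules} (as in \cite[chp.~3]{ohtsuki2001quantum}), since $P\mathcal{R}$ restricted to $V_1\otimes V_1$ decomposes as $q^{1/2}\id+q^{-1/2}\cup\cap$ and $V_1$ has quantum dimension $-(q+q^{-1})$. The Jones-Wenzl element $f_n\in TL_n\otimes A_n$ is the central projector of $V_1^{\otimes n}$ onto the summand $V_n$ (realized via $i_n,p_n$), so precomposing with $i_\nu$ and postcomposing with $\bigotimes_k p_{\lambda_k}$ automatically kills the complement of $f_\nu\otimes\bigotimes_i f_{\lambda_i}$; hence $\mathrm{Op}_{\Dc}$ is well defined on $S_{\zeta_r}(\Dc)$ and takes values in $\myhom{V_\nu}{V_{\lambda_1}\otimes\dotsb\otimes V_{\lambda_n}}\otimes\Q$.

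Second, the form $h_D$ is built by reflecting a diagram across the top face and stacking; under $\mathrm{Op}_{\Dc}$ this closure becomes a quantum trace. Concretely, for $x,y\in S_{\zeta_r}(\Dc)$ one verifies the identity
\begin{equation*}
    [\nu+1]\cdot h_D(x,y)=\qtr{\mathrm{Op}_{\Dc}(y)^{\dagger}\circ\mathrm{Op}_{\Dc}(x)},
\end{equation*}
where $(-)^{\dagger}$ is the anti-involution on hom-spaces induced by top-face reflection. Combined with \Cref{qtracedefinitionisevaluationdefinition}, this says $x\in\ker h_D$ precisely when $\mathrm{Op}_{\Dc}(x)$ lies in the left-kernel of the evaluation pairing, i.e., is negligible. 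Hence $\mathrm{Op}_{\Dc}$ descends to an injective map $\Nu_{\zeta_r}(\Dc)\to\bhom{V_\nu}{V_{\lambda_1}\otimes\dotsb\otimes V_{\lambda_n}}\otimes\Q$.

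For surjectivity, \Cref{colorconditions} and \Cref{dimensionproperties} give that $\dim\Nu_{\zeta_r}(\Dc)$ equals the number of admissible labelings of the dual trivalent tree, which coincides with $\dim\bhom{V_\nu}{V_{\lambda_1}\otimes\dotsb\otimes V_{\lambda_n}}\otimes\Q$ via the Verlinde fusion rules in the semisimple quotient of $U_{\zeta_r}$-mod (precisely the quotient by negligibles). Braid-group equivariance is built in: stacking an elementary crossing on the skein side corresponds under $\Phi$ to composition with $P\mathcal{R}$ on the appropriate factors, and the framing generator to $v$, exactly matching the representation $\rho$ of \Cref{braidgroupactionquantumgroup} after splitting strands according to $\underline{\lambda}$. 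For prime $r$, $A_{r-1}$ embeds in $\Z[\zeta_r]$ and $i_n,p_n,\mathcal{R},v$ are defined integrally, so $\mathrm{Op}_{\Dc}$ restricts to a map of $\Z[\zeta_r]$-lattices; since $h_D$ is a perfect pairing on the integral lattice (\Cref{remarkintegral}), this restricted map is an isomorphism. The main obstacle is the second step: stacking-versus-trace is a standard template, but one must pin down normalizations, signs, and the correspondence between the sesquilinear conjugation in $h_D$ and the anti-involution $(-)^{\dagger}$ so that $\ker h_D$ maps \emph{exactly} onto the negligibles, no more and no less.
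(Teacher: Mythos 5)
Your overall architecture matches the paper's: establish injectivity of $\mathrm{Op}_{\Dc}$ from $\Nu_{\zeta_r}(\Dc)$ into $\bhom{V_\nu}{W}\otimes\Q$ by identifying $\ker h_D$ with negligibles via the compatibility $[\nu+1]h_D(x,y)=\qtr{\mathrm{Op}_{\Dc}(y)^\dagger\circ\mathrm{Op}_{\Dc}(x)}$ and \Cref{qtracedefinitionisevaluationdefinition}; then argue surjectivity by a dimension count; equivariance is built in through the $R$-matrix. However you diverge from the paper at the two places where actual work has to happen, and both divergences leave gaps.

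For surjectivity, the paper reduces to $n=2$ by gluing: the skein-side factorization of \Cref{skeingluingaxiom} matches the composition factorization of the quantum-group hom spaces, and for $n=2$ the dimensions are $0$ or $1$ on both sides by \Cref{dimensionproperties} and \cite[11.3.16]{chariGuideQuantumGroups1995}. You instead appeal directly to ``Verlinde fusion rules in the semisimple quotient of $U_{\zeta_r}$-mod'' for general $n$. That statement is true but is a nontrivial theorem not established anywhere in this paper, and it is arguably of the same depth as the theorem you are proving; the gluing reduction is precisely what lets the paper avoid it, replacing a global fact by the much easier $n=2$ computation. You should either cite a reference for the semisimplified-category Verlinde dimensions, or reduce via gluing as the paper does.

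For the lattice statement, the paper again uses the integral gluing isomorphism to reduce to $n=2$, and there explicitly computes $(-1)^\nu[\nu+1]h_D(e,e)$ from \cite[3.12]{marcheIntroductionQuantumRepresentations2021} to see it is a unit. Your argument is ``the ingredients $i_n,p_n,\mathcal{R},v$ are integral, so $\mathrm{Op}_{\Dc}$ preserves lattices; $h_D$ is perfect on $\Nu^\mathcal{O}_{\zeta_r}(\Dc)$ by \Cref{remarkintegral}; hence it is an isomorphism of lattices.'' As written this does not follow: perfectness of $h_D$ on the \emph{source} lattice says nothing directly about surjectivity onto the \emph{target} lattice. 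The argument can be repaired by the standard observation that if $ev:\bhom{V_\nu}{W}\times\bhom{W}{V_\nu}\to\Z[\zeta_r]$ restricts to a perfect pairing on the pair of sublattices $\mathrm{Op}_\Dc(\Nu^\mathcal{O}_{\zeta_r}(\Dc))$ and $\mathrm{Op}_{\overline{\Dc}}(\overline{\Nu^\mathcal{O}_{\zeta_r}(\Dc)})$, then those sublattices are the whole lattices (any $x$ in the ambient lattice pairs with the sublattice by a functional represented inside the sublattice, and nondegeneracy over $\Q(\zeta_r)$ forces $x$ to be in the sublattice) — but you do not state this, and you should also check that $\mathrm{Op}_{\Dc}$ actually lands in $\bhom{V_\nu}{W}$ rather than merely $\bhom{V_\nu}{W}\otimes\Q$, which is not immediate from the definition (the paper defines \Cref{skeintoquantummap} with a $\otimes\Q$ on the target). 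Finally, the factor $[\nu+1]$ must be recorded as a unit in $\Z[\zeta_r]$, which uses $0\le\nu\le r-2$ and $r$ prime.
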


\begin{proof}[Outline of the proof]
    Using \Cref{qtracedefinitionisevaluationdefinition} below, and the definition of $\Nu_{\zeta_r}(\Dc)$ one sees that $\mathrm{Op}_{\Dc}(T)$ induces
    an injection:
    \begin{equation*}
        \Nu_{\zeta_r}(\Dc)\hookrightarrow\bhom{V_\mu}{V_{\lambda_1}\otimes\dotsb\otimes V_{\lambda_n}}.
    \end{equation*}
    Now, using the gluing properties on the skein modules side and the quantum representations side, one sees
    that we need only prove that it is a surjection for $n=2$. From \Cref{dimensionproperties}
    and \cite[11.3.16]{chariGuideQuantumGroups1995}, one sees that for $n=2$, both sides have the same dimension,
    which is always $0$ or $1$. This concludes the first statement.
    
    For the isomorphism at the level of lattices, one has that $h_D$ and $ev$ of \Cref{qtracedefinitionisevaluationdefinition}
    are compatible under the injections $\Nu_{\zeta_r}(\Dc)\hookrightarrow\bhom{V_\mu}{V_{\lambda_1}\otimes\dotsb\otimes V_{\lambda_n}}$ and 
    $\overline{\Nu_{\zeta_r}(\Dc)}\hookrightarrow\bhom{V_{\lambda_1}\otimes\dotsb\otimes V_{\lambda_n}}{V_\mu}$.
    Hence, to check that the generator of $e\in\Nu_{\zeta_r}^\mathcal{O}(\Dc)$ is mapped to a generator
    $\bhom{V_\mu}{V_{\lambda_1}\otimes\dotsb\otimes V_{\lambda_n}}$, one needs only check that $h_D(e,e)$ is invertible in $\Z[\zeta_r]$.

    The calculation of $(-1)^\nu[\nu+1]h_D(e,e)$ is done in \cite[3.12]{marcheIntroductionQuantumRepresentations2021}, where
    it is written as a quotient of products of quantum integers, and hence is invertible.
\end{proof}

\begin{remark}
    This shows, amongst other things, that when $r$ is prime, the map:
    \begin{equation*}
        \bhom{V_\mu}{V_{\lambda_1}\otimes\dotsb\otimes V_{\lambda_n}}\otimes \bhom{V_{\lambda_1}\otimes\dotsb\otimes V_{\lambda_n}}{V_\mu}
        \lra \Z[\zeta_r]
    \end{equation*}
    is a perfect pairing.
\end{remark}

Let us now discuss the gluing property as in \Cref{definitionrooting}. The definition of $\mathrm{Op}_{\Dc}$ in \Cref{skeintoquantummap}
via tangle operators yields the following Proposition.

\begin{proposition}
    Let $\Dc=(D^n,\nu,\underline{\lambda})$ be a colored disk together with a simple closed curve $\gamma$ in $\mathrm{int}(D^n)$.
    Denote by $\lambda_1,\dotsc,\lambda_n$ the inner colors of $\Dc$, numbered so that $\lambda_1,\dotsc,\lambda_k$ correspond
    to the colors of the boundary components inside of the region delimited by $\gamma$.
    Let $D'$ and $D''$ be the disks obtained by cutting $D^n$ along $\gamma$, $D'$ having $\gamma$ as outer boundary.
    For every color $\mu$, we denote by $\Dc_{\mu}'$, respectively $\Dc_{\mu}''$
    the disk $D'$ (respectively $D''$) colored by $\nu,\underline{\lambda}$ at the boundary components it has in common with $D^n$
    and by $\mu$ at the boundary component corresponding to $\gamma$.
    
    Then the gluing isomorphism:
    \begin{equation*}
        \bigoplus_{0\leq \mu\leq r-2}\Nu_{\zeta_r}(\Dc_{\mu}')\otimes\Nu_{\zeta_r}(\Dc_{\mu}'')\simeq \Nu_{\zeta_r}(\Dc)
    \end{equation*}
    is given by composition as follows:
    \begin{equation*}
        \bigoplus\limits_{0\leq \mu\leq r-2}\bhom{V_\nu}{V_\mu\otimes V_{\lambda_{k+1}}\otimes\dotsb\otimes V_{\lambda_n}}
        \otimes\bhom{V_\mu}{V_{\lambda_1}\otimes\dotsb\otimes V_{\lambda_k}}\otimes \Q
    \end{equation*}
    \begin{equation*}
        \simeq \bhom{V_\nu}{V_{\lambda_1}\otimes\dotsb\otimes V_{\lambda_n}}\otimes \Q
    \end{equation*}
    \begin{equation*}
        \sum_\mu f_\mu\otimes g_\mu \mapsto  (g_\mu\otimes \id_{V_{\lambda_{k+1}}}\otimes\dotsb\otimes \id_{V_{\lambda_n}})\circ f_\mu.
    \end{equation*}
\end{proposition}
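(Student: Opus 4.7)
The plan is to verify the formula at the level of tangles using the multiplicativity of the Kauffman bracket operator invariant $\Phi$, and then descend to the quotients. Fix $0 \leq \mu \leq r-2$ and take representatives $T' \in S_{\zeta_r}(\Dc'_\mu)$ and $T'' \in S_{\zeta_r}(\Dc''_\mu)$. By definition of the skein gluing (\Cref{gluing_skein_modules}), their image is the stacked tangle $T'' \cdot T'$ in $S_{\zeta_r}(\Dc)$. The first ingredient I will invoke is that $\Phi$ is tensor-functorial with respect to stacking --- this is the fundamental property that makes the $R$-matrix assignment define a tangle invariant --- so that
$$\Phi(T'' \cdot T') = (\Phi(T') \otimes \id_{V_1^{\otimes \lambda_{k+1}}} \otimes \dotsb \otimes \id_{V_1^{\otimes \lambda_n}}) \circ \Phi(T'').$$

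The second ingredient is the identification $i_\mu \circ p_\mu = \Phi(f_\mu)$, where $\Phi(f_\mu)$ denotes the action of the Jones--Wenzl idempotent on $V_1^{\otimes \mu}$. By the characterizations of $i_\mu$ and $p_\mu$ given just before \Cref{skeintoquantummap}, $i_\mu \circ p_\mu$ is a projector onto a copy of $V_\mu \subset V_1^{\otimes \mu}$; since $V_\mu$ has multiplicity one in $V_1^{\otimes \mu}$ this copy is unique, and $\Phi(f_\mu)$ is also a rank-one projector onto a copy of $V_\mu$ (because $f_\mu$ is the Jones--Wenzl idempotent and the $TL_\mu$-action on $V_1^{\otimes \mu}$ commutes with $U_{\zeta_r}$), so the two projectors coincide. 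Since elements of $S_{\zeta_r}(\Dc'_\mu)$ and $S_{\zeta_r}(\Dc''_\mu)$ are fixed by the action of $f_\mu$ at the $\mu$-colored boundary (by definition of these summands), the stacked tangle $T'' \cdot T'$ may be taken to carry a copy of $f_\mu$ at the interface, whose action on $V_1^{\otimes \mu}$ is precisely $i_\mu \circ p_\mu$.

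Combining these with $\mathrm{Op}_{\Dc}(T) = (p_{\lambda_1} \otimes \dotsb \otimes p_{\lambda_n}) \circ \Phi(T) \circ i_\nu$ and the analogous formulas for $\mathrm{Op}_{\Dc'_\mu}$ and $\mathrm{Op}_{\Dc''_\mu}$, one obtains directly
$$\mathrm{Op}_{\Dc}(T'' \cdot T') = \bigl(\mathrm{Op}_{\Dc'_\mu}(T') \otimes \id_{V_{\lambda_{k+1}}} \otimes \dotsb \otimes \id_{V_{\lambda_n}}\bigr) \circ \mathrm{Op}_{\Dc''_\mu}(T'').$$
Descending to $\Nu_{\zeta_r}(\Dc) = S_{\zeta_r}(\Dc)/\ker h_D$ on the skein side and to $\bhom{V_\nu}{V_{\lambda_1} \otimes \dotsb \otimes V_{\lambda_n}}$ on the quantum side via \Cref{theoremquantumgroupconstruction}, then summing over $\mu$, yields the stated identification of the gluing isomorphism with composition. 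The main point requiring care is the bookkeeping of the $f_\mu$ idempotents together with the identification $i_\mu \circ p_\mu = \Phi(f_\mu)$; once these are in place, everything else is a formal consequence of the multiplicativity of $\Phi$ and the skein gluing isomorphism \Cref{skeingluingaxiom}.
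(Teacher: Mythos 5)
Your proof is correct and takes essentially the same route as the paper, which simply asserts that the result follows from the fact that operator invariants of tangles send gluing of tangles to composition of maps; you fill in the details that the paper leaves implicit, in particular the role of the Jones--Wenzl idempotent at the interface. One small point worth sharpening: the claim that $i_\mu\circ p_\mu$ and the action of $f_\mu$ on $V_1^{\otimes\mu}$ coincide needs slightly more than ``both are $U_{\zeta_r}$-equivariant projectors onto a copy of $V_\mu$'' — two idempotents with the same image can still differ in their kernels — but here the kernel is forced too, since $V_\mu$ appears with multiplicity one as a composition factor of $V_1^{\otimes\mu}$, so any $U_{\zeta_r}$-complement of the image $V_\mu$ is unique and the two idempotents therefore agree.
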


The statement is a direct consequence of the definition of operator invariants of tangles, where gluing tangles corresponds to composition
of maps.

We introduce two sub-algebras of $U_{\zeta_r}$ that will appear in the geometric construction.

\begin{definition}
    For $l\geq 0$, let $\overline{F}^{(l)}=(q-q^{-1})^lF^{(l)}$.
    Now let $U_{\zeta_r}^{\overline{F}}$ be the sub-algebra of $U_{\zeta_r}$ generated by $K$, $K^{-1}$, $\begin{bmatrix} K\\r \end{bmatrix}$,
     $\overline{F}^{(1)}$ and $E^{(l)}$ for $l\geq 0$,
    and $U_{\zeta_r}^{E}$ be the sub-algebra of $U_{\zeta_r}$ generated by $K$, $K^{-1}$, $\begin{bmatrix} K\\r \end{bmatrix}$,
    $E$ and $\overline{F}^{(l)}$ for $l\geq 0$.

    Let $U_{\zeta_r}^{\mathrm{fin}}$ be the sub-algebra of $U_{\zeta_r}$ generated by $K$, $K^{-1}$, $E$ and $\overline{F}^{(1)}$.
\end{definition}

\begin{proposition}
    $U_{\zeta_r}^{E}$, $U_{\zeta_r}^{\overline{F}}$ and $U_{\zeta_r}^{\mathrm{fin}}$ are sub-Hopf-algebras of $U_{\zeta_r}$.
\end{proposition}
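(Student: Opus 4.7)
The plan is to verify, for each of the three subalgebras, that the comultiplication $\Delta$, counit $\epsilon$ and antipode $S$ send the generators into the subalgebra (or into its tensor square in the case of $\Delta$); since $\Delta$ and $\epsilon$ are algebra homomorphisms and $S$ an algebra anti-homomorphism, this is sufficient. The verification splits naturally into three blocks: the trivial part involving $K^{\pm 1}$ and $\epsilon$, the divided-power generators, and the Lusztig element $\begin{bmatrix}K\\r\end{bmatrix}$.

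The trivial part is immediate: $\Delta(K^{\pm 1})=K^{\pm 1}\otimes K^{\pm 1}$, $S(K^{\pm 1})=K^{\mp 1}$, $\epsilon(K^{\pm 1})=1$, and $\epsilon$ vanishes on $E$, $\overline{F}^{(1)}$, all $E^{(n)}$, $\overline{F}^{(n)}$ for $n\geq 1$, and on $\begin{bmatrix}K\\k\end{bmatrix}$ for $k\geq 1$ (the $s=1$ factor in the defining product evaluates to $(1-1)/(q-q^{-1})=0$). This already handles $U_{\zeta_r}^{\mathrm{fin}}$, using in addition the one-line formulas $\Delta(E)=E\otimes K+1\otimes E$, $\Delta(\overline{F}^{(1)})=\overline{F}^{(1)}\otimes 1+K^{-1}\otimes\overline{F}^{(1)}$, $S(E)=-EK^{-1}$ and $S(\overline{F}^{(1)})=-K\overline{F}^{(1)}$, all of which stay inside $U_{\zeta_r}^{\mathrm{fin}}$.

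For the divided powers, a $q$-binomial computation applied to $\Delta(E)$ and $\Delta(F)$ gives
\begin{equation*}
\Delta(E^{(n)}) = \sum_{k=0}^{n} q^{k(n-k)}E^{(n-k)}K^{k}\otimes E^{(k)},\quad \Delta(F^{(n)}) = \sum_{k=0}^{n} q^{-k(n-k)}F^{(n-k)}\otimes K^{-(n-k)}F^{(k)},
\end{equation*}
together with $S(E^{(n)})=(-1)^{n}q^{n(n-1)}E^{(n)}K^{-n}$ and $S(F^{(n)})=(-1)^{n}q^{-n(n-1)}K^{n}F^{(n)}$. The crucial structural point is that the factor $(q-q^{-1})^{n}=(q-q^{-1})^{n-k}(q-q^{-1})^{k}$ distributes across the tensor product, so the same formulas with $\overline{F}^{(n)}$ in place of $F^{(n)}$ involve only the rescaled elements $\overline{F}^{(k)}$, not the non-integral $F^{(k)}$. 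This is precisely what closes $U_{\zeta_r}^{E}$ under $\Delta$ and $S$ on its $\overline{F}^{(l)}$ generators, and $U_{\zeta_r}^{\overline{F}}$ on its $E^{(l)}$ generators.

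The only remaining case is the behaviour of $\begin{bmatrix}K\\r\end{bmatrix}$, present in both $U_{\zeta_r}^{\overline{F}}$ and $U_{\zeta_r}^{E}$. I would invoke the standard Lusztig comultiplication formula (Chari-Pressley, 9.3.4), which expands $\Delta\bigl(\begin{bmatrix}K\\r\end{bmatrix}\bigr)$ as a sum of terms of the shape $q^{\alpha}K^{l}\begin{bmatrix}K\\k\end{bmatrix}\otimes K^{-k}\begin{bmatrix}K\\l\end{bmatrix}$ with $k+l=r$, and an analogous expansion for $S$. For $k<r$ the denominators $[s]$ with $s\leq k$ in the defining product of $\begin{bmatrix}K\\k\end{bmatrix}$ are all invertible in $\mathbb{Z}[\zeta_r]$, so $\begin{bmatrix}K\\k\end{bmatrix}$ is in fact a Laurent polynomial in $K^{\pm 1}$ with coefficients in $\mathbb{Z}[\zeta_r]$; only the $k=r$ or $l=r$ term brings in the generator $\begin{bmatrix}K\\r\end{bmatrix}$ itself. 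Hence every summand lies in the required subalgebra tensor square, and the same argument applied to $S$ concludes the proof. The main subtlety of the whole argument is precisely this last step: one must check that no $[r]$-denominator ever sneaks into an intermediate $\begin{bmatrix}K\\k\end{bmatrix}$, which is guaranteed by the bound $k\leq r-1$ whenever the expression is not already $\begin{bmatrix}K\\r\end{bmatrix}$ itself.
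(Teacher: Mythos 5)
Your proposal follows the same route as the paper's own (one-line) proof: compute the Hopf structure maps on the generators and check closure. The key structural point — that $(q-q^{-1})^{n}=(q-q^{-1})^{n-k}(q-q^{-1})^{k}$ distributes across the $q$-binomial expansion of $\Delta(F^{(n)})$, so $\Delta(\overline{F}^{(n)})$ stays in the span of $\overline{F}^{(n-k)}\otimes K^{?}\overline{F}^{(k)}$ rather than calling on the non-integral $F^{(k)}$ — is precisely what makes $U_{\zeta_r}^{E}$ and $U_{\zeta_r}^{\overline{F}}$ Hopf, and you isolate it correctly. The treatment of $U_{\zeta_r}^{\mathrm{fin}}$ and of $\epsilon$, $S$ is fine.

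There is one local slip in the last paragraph. You assert that for $k<r$ the factors $[s]$ with $s\leq k$ are invertible in $\Z[\zeta_r]$, and conclude that $\begin{bmatrix}K\\k\end{bmatrix}$ is a $\Z[\zeta_r]$-Laurent polynomial in $K^{\pm 1}$. That invertibility holds only when $r$ is prime; the proposition is stated for any odd $r\geq 3$, and for composite $r$ (e.g. $r=9$, $s=3$) the quantum integer $[s]$ is a non-unit in $\Z[\zeta_r]$. The conclusion you want — that every $\begin{bmatrix}K\\k\end{bmatrix}$, $k<r$, appearing in the expansion of $\Delta\bigl(\begin{bmatrix}K\\r\end{bmatrix}\bigr)$ lies in the relevant subalgebra — is nevertheless true, but for a different reason: by the remark the paper makes immediately after defining $U_{\zeta_r}$, every $\begin{bmatrix}K\\l\end{bmatrix}$, $l\geq 0$, already lies in the algebra generated by $K^{\pm 1}$ and $\begin{bmatrix}K\\r\end{bmatrix}$, which is contained in both $U_{\zeta_r}^{E}$ and $U_{\zeta_r}^{\overline{F}}$. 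Replacing your invertibility argument by this observation removes the hidden dependence on $r$ being prime and closes the gap; everything else in the proposal stands.
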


The proof consists in explicit computations of the comultiplication $\Delta$ on the generators.

\begin{remark}
    Notice that the operator $\mathcal{R}$ of \Cref{definitionRmatrix} satisfies:
    \begin{align*}
        \mathcal{R} &= q^{H\otimes H/2}\sum_{n\geq 0}q^{\frac{n(n-1)}{2}}E^n\otimes \overline{F}^{(n)}\\
                    &= q^{H\otimes H/2}\sum_{n\geq 0}q^{\frac{n(n-1)}{2}}E^{(n)}\otimes (\overline{F}^{(1)})^{n}.
    \end{align*}
    Hence it is well defined for $U_{\zeta_r}^{E}$ and $U_{\zeta_r}^{\overline{F}}$ modules as well. The same goes for the operator $v$.
\end{remark}

\begin{definition}
    Let $\alpha\in\Z$. Define $\hat{V}^{E}_\alpha=\Z[\zeta_r]e_0\oplus\dotsb\oplus \Z[\zeta_r]e_n \oplus\dotsb$ to be the
    $U_{\zeta_r}^{E}$-module with action given by:
    \begin{equation*}
        Ke_n=q^{\alpha-2n}e_n,\;\begin{bmatrix} K\\r \end{bmatrix}e_n=\begin{bmatrix} \alpha-2n\\r \end{bmatrix}e_n,
    \end{equation*}
    \begin{equation*}
        Ee_n=e_{n-1},\; \overline{F}^{(l)}e_n=\begin{bmatrix} n+l\\l \end{bmatrix}\prod_{k=0}^{l-1}(q^{\alpha-k-n}-q^{-\alpha+k+n})e_{n+l}.
    \end{equation*}
    Dually, define $\hat{V}^{\overline{F}}_\alpha=\Z[\zeta_r]e_0\oplus\dotsb\oplus \Z[\zeta_r]e_n \oplus\dotsb$ to be the
    $U_{\zeta_r}^{\overline{F}}$-module with action given by:
    \begin{equation*}
        Ke_n=q^{\alpha-2n}e_n,\;\begin{bmatrix} K\\r \end{bmatrix}e_n=\begin{bmatrix} \alpha-2n\\r \end{bmatrix}e_n,
    \end{equation*}
    \begin{equation*}
        \overline{F}^{(1)}e_n=e_{n+1},\; E^{(l)}e_n=\begin{bmatrix} n\\l \end{bmatrix}\prod_{k=1}^{l}(q^{\alpha+k-n}-q^{-\alpha-k+n})e_{n-l}.
    \end{equation*}
    The modules $\hat{V}^{\overline{F}}_\alpha$ and $\hat{V}^{E}_\alpha$ are called Verma modules.
\end{definition}

The proof is again some simple explicit computations.

In the geometric construction, the map $\mathrm{red}_\alpha:\hat{V}^{\overline{F}}_\alpha\ra \hat{V}^{E}_\alpha$
described below will appear.

\begin{proposition}\label{imageFtoEquantum}
    Let $\alpha\in\Z$, then the map:
    \begin{equation*}
        \mathrm{red}_\alpha:\hat{V}^{\overline{F}}_\alpha\lra \hat{V}^{E}_\alpha,\;e_i\mapsto (\overline{F}^{(1)})^ie_0=
        (q-q^{-1})^i[i]!\prod_{k=0}^{i-1}[\alpha-k]e_i
    \end{equation*}
    is a $U_{\zeta_r}^{\mathrm{fin}}$-map. If $\alpha=\alpha_0+r\alpha_1$ with $0\leq \alpha_0\leq r-1$,
    then the image of $\mathrm{red}_\alpha$ is isomorphic to $V_{\alpha_0}$
    as a $U_{\zeta_r}^{\mathrm{fin}}$-module. This isomorphism is realized via the following diagram:
    % https://q.uiver.app/#q=WzAsNCxbMSwwLCJcXGhhdHtWfV57XFxvdmVybGluZXtGfX1fXFxhbHBoYSJdLFszLDAsIlxcaGF0e1Z9XntFfV9cXGFscGhhIl0sWzIsMSwiVl97XFxhbHBoYV8wfSJdLFswLDFdLFswLDIsIlxccGlfXFxhbHBoYSIsMix7InN0eWxlIjp7ImhlYWQiOnsibmFtZSI6ImVwaSJ9fX1dLFsyLDEsIlxcaW90YV9cXGFscGhhIiwyLHsic3R5bGUiOnsidGFpbCI6eyJuYW1lIjoiaG9vayIsInNpZGUiOiJ0b3AifX19XSxbMCwxLCJcXG1hdGhybXtyZWR9X1xcYWxwaGEiXV0=
    \[\begin{tikzcd}
        & {\hat{V}^{\overline{F}}_\alpha} && {\hat{V}^{E}_\alpha} \\
        {} && {V_{\alpha_0}}
        \arrow["{\pi_\alpha}"', two heads, from=1-2, to=2-3]
        \arrow["{\iota_\alpha}"', hook, from=2-3, to=1-4]
        \arrow["{\mathrm{red}_\alpha}", from=1-2, to=1-4]
    \end{tikzcd}\]
    where:
    \begin{equation*}
        \pi_\alpha:e_i\mapsto (q-q^{-1})^i[i]!e_i\text{ and }\iota_\alpha:e_i\mapsto \prod_{k=0}^{i-1}[\alpha-k]e_i.
    \end{equation*}
    If $0\leq\alpha\leq r-1$, $\pi_\alpha$ is a $U_{\zeta_r}^{\overline{F}}$-map and $\iota_\alpha$ is a $U_{\zeta_r}^{E}$-map.
\end{proposition}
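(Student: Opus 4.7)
The strategy is to verify each assertion by direct computation in the weight bases of $\hat{V}^{\overline{F}}_\alpha$, $\hat{V}^{E}_\alpha$, and $V_{\alpha_0}$ using the explicit formulas from \Cref{definitionfinitedimensionalmodules} and the preceding definition of the Verma modules.

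First I would show that $\mathrm{red}_\alpha$ is a $U_{\zeta_r}^{\mathrm{fin}}$-map by testing the generators $K^{\pm 1}$, $E$, $\overline{F}^{(1)}$. The $K$-equivariance is immediate because $e_i$ has weight $\alpha - 2i$ in both Verma modules. Since $\overline{F}^{(1)} e_i = e_{i+1}$ in $\hat{V}^{\overline{F}}_\alpha$, $\overline{F}^{(1)}$-equivariance follows tautologically from the very definition $\mathrm{red}_\alpha(e_i) = (\overline{F}^{(1)})^i e_0$. For $E$, combining $E e_i = (q-q^{-1})[i][\alpha - i + 1] e_{i-1}$ in $\hat{V}^{\overline{F}}_\alpha$ with $E e_j = e_{j-1}$ in $\hat{V}^{E}_\alpha$ reduces the intertwining to the identity $c_i = (q-q^{-1})[i][\alpha-i+1]\,c_{i-1}$, where $c_i = (q-q^{-1})^i[i]!\prod_{k=0}^{i-1}[\alpha-k]$, which is immediate.

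The factorization $\mathrm{red}_\alpha = \iota_\alpha \circ \pi_\alpha$ is then the trivial splitting of $c_i$ into $(q-q^{-1})^i[i]!$ and $\prod_{k=0}^{i-1}[\alpha-k]$. To identify $\myim{\mathrm{red}_\alpha}$ with $V_{\alpha_0}$, I would use two vanishings, valid since $q^r = \zeta_r^{2r} = 1$: first, $[r] = 0$, so $[i]! = 0$ for $i \geq r$; second, $[\alpha - \alpha_0] = [r\alpha_1] = 0$, so the product $\prod_{k=0}^{i-1}[\alpha - k]$ vanishes for $\alpha_0 < i < r$. Together these force $c_i = 0$ for all $i > \alpha_0$, so the image is the $\Z[\zeta_r]$-span of $\{c_i e_i\}_{0 \leq i \leq \alpha_0}$. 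Because $[m+r] = [m]$ for all $m$, the $U_{\zeta_r}^{\mathrm{fin}}$-action on this span agrees, under $c_i e_i \leftrightarrow \iota_\alpha(e_i)$, with the action on $V_{\alpha_0}$; this is the desired $U_{\zeta_r}^{\mathrm{fin}}$-isomorphism.

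Finally, for the special case $0 \leq \alpha \leq r-1$ one has $\alpha = \alpha_0$, so the quantum binomial $\begin{bmatrix}\alpha - 2i\\r\end{bmatrix}$ literally coincides on both sides, giving compatibility with $\begin{bmatrix}K\\r\end{bmatrix}$. To check that $\pi_\alpha$ is $U_{\zeta_r}^{\overline{F}}$-linear it remains to handle the divided powers $E^{(l)}$. For $l < r$, using $E^{(l)} = E^l/[l]!$ on $V_\alpha$ and the explicit formula for $E^{(l)}$ on $\hat{V}^{\overline{F}}_\alpha$, the intertwining reduces to a short manipulation with quantum factorials. For $l \geq r$, the product $\prod_{k=1}^{l}[\alpha + k - n]$ ranges over $l$ consecutive residues modulo $r$ and therefore contains a zero factor, while $E^{(l)}$ vanishes on $V_\alpha$ (because $E^{(r)} = 0$ there and $E^{(l)}$ factors through $(E^{(r)})^{\lfloor l/r \rfloor}$); both sides vanish. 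The proof that $\iota_\alpha$ is $U_{\zeta_r}^{E}$-linear is entirely dual. The only subtle bookkeeping is the case $l \geq r$ in Step 4, but once the vanishing on both sides is spotted, nothing deeper is required.
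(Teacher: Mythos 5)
Your proof is correct and takes essentially the same approach as the paper's: identify the image from the vanishing pattern of $c_i=(q-q^{-1})^i[i]!\prod_{k=0}^{i-1}[\alpha-k]$ (zero exactly for $i>\alpha_0$, invertible otherwise), and handle the remaining equivariance claims by explicit computation in the weight bases. The paper's own proof is very terse, disposing of everything after the image identification with ``the other claims are deduced by explicit computations of the actions,'' and your write-up simply fills in those computations (the intertwining identity $c_i=(q-q^{-1})[i][\alpha-i+1]c_{i-1}$ for $E$, the use of $[m+r]=[m]$ to match the $U_{\zeta_r}^{\mathrm{fin}}$-action with $V_{\alpha_0}$, and the $l<r$ versus $l\geq r$ split for the divided powers $E^{(l)}$).
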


\begin{proof}
    The quantum integer $[n]$ is $0$ if and only if $r\mid n$, when it is not $0$, it is invertible in $\Z[\zeta_r]$.
    Hence $(q-q^{-1})^i[i]!\prod_{k=0}^{i-1}[\alpha-k]$ is $0$ exactly for $i>\alpha_0$.
    And for $i\leq \alpha_0$, it is invertible. This identifies the image. The other claims are deduced by explicit computations
    of the actions.
\end{proof}

\begin{proposition}[Verma modules]\label{vermaproperty}
    Let $W$ be a torsion-free finite-dimensional $U_{\zeta_r}$-module and $0\leq\alpha\leq r-1$. Then we have $\Z[\zeta_r]$-linear bijections:
    \[\begin{array}{rcl}
        \hom_{U_{\zeta_r}}(V_\alpha,W) &\simeq  & W_\alpha\cap \ker E\cap \ker E^{(r)} \\
        f                              &\mapsto & f(e_0) \\
        \hom_{U_{\zeta_r}}(W,V_\alpha) &\simeq  & (W_\alpha/(\mathrm{im}\: F^{(1)}+\mathrm{im}\: F^{(r)}))^\vee\\
        g                              &\mapsto & (w\mapsto e_0^\vee(g(w)))
    \end{array}\]
    where $(e_i^\vee)_i$ is the dual basis of $(e_i)_i$ in $V_\alpha$.
    
    Let $W^{\overline{F}}$ be any $U_{\zeta_r}^{\overline{F}}$-module and $\alpha\in\Z$. Then we have a $\Z[\zeta_r]$-linear bijection:
    \[\begin{array}{rcl}
        \hom_{U_{\zeta_r}^{\overline{F}}}(\hat{V}^{\overline{F}}_\alpha,W^{\overline{F}})&\simeq 
        & W^{\overline{F}}_\alpha\cap \ker E\cap \ker E^{(r)}\\
        f                                                                 &\mapsto & f(e_0).
    \end{array}\]
    Let $W^{E}$ be any $U_{\zeta_r}^{E}$-module and $\alpha\in\Z$. Then we have a $\Z[\zeta_r]$-linear bijection:
    \[\begin{array}{rcl}
        \hom_{U_{\zeta_r}^{E}}(W^{E},\hat{V}^{E}_\alpha) &\simeq  &
        (W^{E}_\alpha/(\mathrm{im}\: \overline{F}^{(1)}+\mathrm{im}\: \overline{F}^{(r)}))^\vee \\
        g                                                &\mapsto & (w\mapsto e_0^\vee(g(w)))
    \end{array}\]
    where $(e_i^\vee)_i$ is the dual basis of $(e_i)_i$ in $\hat{V}^{E}_\alpha$.
\end{proposition}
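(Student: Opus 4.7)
The strategy is to prove parts (2) and (3) first as universal properties of the Verma-type modules $\hat{V}^{\overline{F}}_\alpha$ and $\hat{V}^{E}_\alpha$ over the subalgebras $U_{\zeta_r}^{\overline{F}}$ and $U_{\zeta_r}^{E}$, and then deduce part (1) about the simple module $V_\alpha$ from them using the factorization $\mathrm{red}_\alpha=\iota_\alpha\circ\pi_\alpha$ of \Cref{imageFtoEquantum}.

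For part (2), I use that $\hat{V}^{\overline{F}}_\alpha$ has basis $\{e_n=(\overline{F}^{(1)})^ne_0\}_{n\geq 0}$ and is freely generated as a $U_{\zeta_r}^{\overline{F}}$-module by $e_0$ subject only to $e_0$ being of weight $\alpha$ and annihilated by $E$ and $E^{(r)}$ (equivalently, by every $E^{(l)}$ with $l\geq 1$, since in $U_{\zeta_r}^{\overline{F}}$ every higher divided power reduces to a polynomial in $E$ and $E^{(r)}$ with invertible coefficients). Injectivity of $f\mapsto f(e_0)$ is immediate from cyclicity, and the image plainly lies in $W^{\overline{F}}_\alpha\cap\ker E\cap\ker E^{(r)}$. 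For surjectivity, given such $w$ I set $f(e_n)=(\overline{F}^{(1)})^nw$ and check compatibility generator by generator: compatibility with $K^{\pm 1}$ and $\begin{bmatrix}K\\r\end{bmatrix}$ is a weight computation, compatibility with $\overline{F}^{(1)}$ is tautological, and compatibility with each $E^{(l)}$ reduces to the quantum commutator identity
\begin{equation*}
    E^{(l)}(\overline{F}^{(1)})^nw=\qbin{l}{n}\prod_{k=1}^{l}(q^{\alpha+k-n}-q^{-\alpha-k+n})\,(\overline{F}^{(1)})^{n-l}w,
\end{equation*}
which is precisely the defining formula for the $E^{(l)}$-action on $\hat{V}^{\overline{F}}_\alpha$ and follows from $E^{(l)}w=0$. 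Part (3) is proved dually: $\hat{V}^{E}_\alpha$ is cocyclically generated by $e_0^\vee$, and any $g\in\hom_{U_{\zeta_r}^{E}}(W^E,\hat{V}^E_\alpha)$ is determined by $\phi=e_0^\vee\circ g$, via $e_n^\vee(g(w))=\phi(E^nw)$. The functional $\phi$ factors through $W^E_\alpha$ by weight, and vanishes on $\mathrm{im}\,\overline{F}^{(1)}+\mathrm{im}\,\overline{F}^{(r)}$ because $\overline{F}^{(l)}$ lowers the weight so $e_0^\vee$ necessarily vanishes on $\overline{F}^{(l)}g(\cdot)$; conversely, any such $\phi$ reconstructs a $U_{\zeta_r}^E$-map by the same commutator check.

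For part (1), if $f\in\hom_{U_{\zeta_r}}(V_\alpha,W)$, then restricting to $U_{\zeta_r}^{\overline{F}}$ and composing with $\pi_\alpha$ gives $\tilde f:\hat{V}^{\overline{F}}_\alpha\to W$, to which part (2) assigns $f(e_0)\in W_\alpha\cap\ker E\cap\ker E^{(r)}$. Conversely, given such $w$, part (2) produces $\tilde f$, and I must show that $\tilde f$ descends along $\pi_\alpha$ to a $U_{\zeta_r}$-morphism $V_\alpha\to W$. The second bijection in part (1) (about $\hom(W,V_\alpha)$) is handled analogously, using $\iota_\alpha$ and part (3): the statement that the functional $w\mapsto e_0^\vee(g(w))$ factors through $W_\alpha/(\mathrm{im}\,F^{(1)}+\mathrm{im}\,F^{(r)})$ translates directly to the quotient-of-$\hat{V}^{E}_\alpha$ picture.

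The main obstacle is the descent of $\tilde f$ along $\pi_\alpha$, which amounts to verifying $F^{(\alpha+1)}w=0$ and $F^{(r)}w=0$ in $W$; these identities are the heart of the simplicity of $V_\alpha$ and genuinely use both torsion-freeness and finite-dimensionality of $W$. The commutation $[E,F^{(n)}]=F^{(n-1)}(Kq^{1-n}-K^{-1}q^{n-1})/(q-q^{-1})$ applied to $w$ gives $EF^{(\alpha+1)}w=[\alpha+1-(\alpha+1)]F^{(\alpha)}w=0$, and the analogous computation with $E^{(r)}$ shows $E^{(r)}F^{(\alpha+1)}w=0$; hence if $F^{(\alpha+1)}w$ were nonzero it would be a new highest weight vector of weight $-\alpha-2$, which by iteration produces an infinite family of highest weight vectors of distinct weights inside $W$, contradicting $\dim_{\Q(\zeta_r)}W<\infty$. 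The vanishing $F^{(r)}w=0$ is obtained by an entirely parallel argument. Alternatively, one may invoke complete reducibility of torsion-free finite-dimensional $U_{\zeta_r}$-modules over $\Q(\zeta_r)$ into the simples $V_\beta$ to reduce everything to the evident case $W=V_\beta$, then descend the bijection to $\Z[\zeta_r]$ by torsion-freeness.
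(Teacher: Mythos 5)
Your overall architecture --- establish the Verma universal properties over $U_{\zeta_r}^{\overline{F}}$ and $U_{\zeta_r}^{E}$ first, then deduce the statement about $V_\alpha$ by descending along $\pi_\alpha$ and $\iota_\alpha$ --- is the paper's approach, and your observation that the descent requires verifying \emph{both} $F^{(\alpha+1)}w=0$ \emph{and} $F^{(r)}w=0$ is a correct and welcome clarification (the paper's phrase ``we need only prove that $F^{(\alpha+1)}w=0$'' glosses over the $F^{(r)}$ relation, which does not follow formally from $F^{(\alpha+1)}w=0$ when $\alpha<r-1$ but is handled by a parallel argument).

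However, the key vanishing step has a gap. You deduce that $v=F^{(\alpha+1)}w$ is a highest weight vector (killed by $E$ and $E^{(r)}$) of weight $-\alpha-2$, and then assert this ``by iteration produces an infinite family of highest weight vectors of distinct weights.'' That iteration is not defined: since $-\alpha-2<0$, there is no obvious divided power $F^{(?)}$ to apply to $v$ to produce a \emph{new highest weight} vector, and the vectors $F^{(k)}v$ are in general not highest weight. What is actually needed --- and what the paper supplies --- is the coefficient computation
\begin{equation*}
    E^{(rl)}F^{(rl)}v=\begin{bmatrix}K\\ rl\end{bmatrix}v
\end{equation*}
together with the quantum Lucas identity showing $\begin{bmatrix}-\alpha-1\\ rl\end{bmatrix}=(-1)^{rl}$, which forces $F^{(rl)}v\neq 0$ for all $l\geq 0$; these elements have distinct weights, contradicting finite-dimensionality. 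Without some such explicit nonvanishing of a lowering coefficient, the conclusion does not follow. Moreover, your proposed alternative route --- invoking ``complete reducibility of torsion-free finite-dimensional $U_{\zeta_r}$-modules over $\Q(\zeta_r)$ into the simples $V_\beta$'' --- is false. At a root of unity the category of finite-dimensional $U_{\zeta_r}$-modules is not semisimple (for instance $V_1\otimes V_{r-1}$ is an indecomposable tilting module of dimension $2r$ and is not a direct sum of the $V_\beta$), so this reduction is unavailable.
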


\begin{proof}
    Let us begin with the statement about $W^{\overline{F}}$. Let $w\in W^{\overline{F}}_\alpha\cap \ker E\cap \ker E^{(r)}$.
    Define a map $\hat{V}^{\overline{F}}_\alpha\ra W^{\overline{F}}$ by sending $e_n=\overline{F}^{(n)}e_0$ to $\overline{F}^{(n)}w$.
    An explicit computation shows that it is compatible to the $U_{\zeta_r}^{\overline{F}}$-action. Hence we have a map from
    $\hat{V}^{\overline{F}}_\alpha\ra W^{\overline{F}}$ to $\hom_{U_{\zeta_r}^{\overline{F}}}(\hat{V}^{\overline{F}}_\alpha,W^{\overline{F}})$
    that is inverse to the one given above.

    The statement about $W^{E}$ is dual. For $\phi$ in $(W^{E}_\alpha/(\mathrm{im}\: \overline{F}^{(1)}+\mathrm{im}\: \overline{F}^{(r)}))^\vee$,
    define a map $W^{E}\ra\hat{V}^{E}_\alpha$ by sending $w\in W^{E}_\beta$ to $\phi(E^{\alpha-\beta}w)e_{\alpha-\beta}$
    if $\beta\leq\alpha$ and $0$ otherwise. Again, this is a map of $U_{\zeta_r}^{E}$-module. So we have a linear map from
    $(W^{E}_\alpha/(\mathrm{im}\: \overline{F}^{(1)}+\mathrm{im}\: \overline{F}^{(r)}))^\vee$ to $\hom_{U_{\zeta_r}^{E}}(W^{E},\hat{V}^{E}_\alpha)$,
    inverse to the one of the Proposition.

    Let us now turn to the statements about $W$. We will prove the first one,
    the second one is proved similarly. We will need the following two formulas.
    \begin{equation}\label{rel1}
        [E,F^{(n+1)}]=F^{(n)}(q^{-n}K-q^nK^{-1})
    \end{equation}
    \begin{equation}\label{rel2}
        \text{If }Eu=0\text{ and }E^{(r)}u=0\text{ then } E^{(l)}F^{(l)}u=\begin{bmatrix}K\\ l\end{bmatrix}u.
    \end{equation}
    These are both special cases of the relations in \cite[9.3.4]{chariGuideQuantumGroups1995}.
 
    Let $w\in W_\alpha\cap \ker E\cap \ker E^{(r)}$, then, as before, we have a map
    $V_\alpha\ra W$ by sending $e_n=F^{(n)}e_0$ to $F^{(n)}w$ for $0\leq n\leq \alpha$.
    One can see that to show that this is a map of $U_{\zeta_r}$-modules, we need only prove that $F^{(\alpha+1)}w=0$.

    Set $v=F^{(\alpha+1)}w$. Let us first show that $Ev=0$, using \Cref{rel1}:
    \begin{equation*}
        Ev=EF^{(\alpha+1)}w=[E,F^{(\alpha+1)}]w=F^{(\alpha)}(q^{-\alpha}K-q^nK^{-\alpha})w=0.
    \end{equation*}
    Now, if $\alpha\leq r-2$, it is clear that $E^{(r)}v=0$. If $\alpha=r-1$, we use \Cref{rel2}:
    \begin{equation*}
        E^{(r)}v=E^{(r)}F^{(r)}w=\begin{bmatrix}K\\ r\end{bmatrix}w=\begin{bmatrix}r-1\\ r\end{bmatrix}w=0.
    \end{equation*}
    Now, let $l\geq 0$. We have, by \Cref{rel2}:
    \begin{equation*}
        E^{(rl)}F^{(rl)}v=\begin{bmatrix}K\\ rl\end{bmatrix}v=\begin{bmatrix}-\alpha-1\\ rl\end{bmatrix}v.
    \end{equation*}
    But, one has:
    \begin{equation*}
        \begin{bmatrix}-\alpha-1\\ rl\end{bmatrix}=(-1)^{rl}\begin{bmatrix}\alpha+rl\\ rl\end{bmatrix}=
        (-1)^{rl}\begin{bmatrix}\alpha\\ 0\end{bmatrix}\begin{pmatrix}l\\ l\end{pmatrix}=(-1)^{rl}.
    \end{equation*}
    Hence if $v\neq 0$, then $F^{(rl)}v\neq 0$ for all $l$. But $W$ is finite dimensional, so $v=0$.
\end{proof}

\begin{corollary}\label{corollaryverma}
    Let $W$ be a torsion-free finite-dimensional $U_{\zeta_r}$-module and $0\leq\alpha\leq r-1$,
    then the maps $\pi_\alpha:\hat{V}^{\overline{F}}_\alpha\ra V_\alpha$ and $\pi_\alpha:V_\alpha\ra \hat{V}^E_\alpha$
    induce isomorphisms:
    \[\begin{array}{lcl}
        \hom_{U_{\zeta_r}}(V_\alpha,W)&\simeq & \hom_{U_{\zeta_r}^{\overline{F}}}(\hat{V}^{\overline{F}}_\alpha,W) \\
        \hom_{U_{\zeta_r}}(W,V_\alpha)&\simeq & \hom_{U_{\zeta_r}^{E}}(W,\hat{V}^{E}_\alpha).
    \end{array}\]
\end{corollary}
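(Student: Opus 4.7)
The plan is to deduce both isomorphisms directly from Proposition \ref{vermaproperty} by checking that the maps $\pi_\alpha$ and $\iota_\alpha$ intertwine the weight-space descriptions of the two $\hom$-spaces appearing in each line of the corollary. First, I would apply Proposition \ref{vermaproperty} to each side. The first line identifies both $\hom_{U_{\zeta_r}}(V_\alpha,W)$ and $\hom_{U_{\zeta_r}^{\overline{F}}}(\hat{V}^{\overline{F}}_\alpha,W)$ with exactly the same subspace $W_\alpha\cap \ker E\cap \ker E^{(r)}$, via the rule $f\mapsto f(e_0)$. Since $\pi_\alpha(e_0)=e_0$, the composition $f\mapsto f\circ\pi_\alpha$ corresponds on this subspace to the identity, and is therefore a bijection (injectivity is automatic from surjectivity of $\pi_\alpha$).

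For the second line, Proposition \ref{vermaproperty} identifies the two sides with $(W_\alpha/(\mathrm{im}\: F^{(1)}+\mathrm{im}\: F^{(r)}))^\vee$ and $(W_\alpha/(\mathrm{im}\: \overline{F}^{(1)}+\mathrm{im}\: \overline{F}^{(r)}))^\vee$ respectively. The key small observation is that since $\overline{F}^{(l)}=(q-q^{-1})^lF^{(l)}$ and $(q-q^{-1})=\zeta_r^2-\zeta_r^{-2}$ is a non-zero element of the integral domain $\Z[\zeta_r]$, a $\Z[\zeta_r]$-linear functional $\phi$ on $W_\alpha$ vanishes on $\mathrm{im}\: F^{(l)}$ if and only if it vanishes on $\mathrm{im}\: \overline{F}^{(l)}$ (one direction is trivial; for the other, $(q-q^{-1})^l\phi(x)=0$ forces $\phi(x)=0$ by torsion-freeness of $\Z[\zeta_r]$). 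Hence the two dual spaces are literally equal as submodules of $W_\alpha^\vee$.

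It then remains to verify that $g\mapsto \iota_\alpha\circ g$ induces the identity on this common dual under the identifications. Since both bijections of Proposition \ref{vermaproperty} use the rule $g\mapsto(w\mapsto e_0^\vee(g(w)))$ and $\iota_\alpha(e_0)=e_0$, we have $e_0^\vee(\iota_\alpha(g(w)))=e_0^\vee(g(w))$, so the two functionals agree. Injectivity of $g\mapsto\iota_\alpha\circ g$ is immediate from injectivity of $\iota_\alpha$; surjectivity follows since any $h\in\hom_{U_{\zeta_r}^E}(W,\hat{V}^E_\alpha)$ yields the functional $\phi(w)=e_0^\vee(h(w))$, lifted to some $g\in\hom_{U_{\zeta_r}}(W,V_\alpha)$ by Proposition \ref{vermaproperty}, and a quick check using $E^k h(w)=h(E^kw)$ (together with the analogous identity for $\iota_\alpha\circ g$) shows $h$ and $\iota_\alpha\circ g$ agree weight-space by weight-space. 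There is no real obstacle here; the only mildly delicate point is the comparison of the two quotients in the second line, which relies on $(q-q^{-1})$ being a non-zero divisor in $\Z[\zeta_r]$.
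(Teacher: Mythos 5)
Your proof is correct and follows essentially the same route as the paper: both isomorphisms are deduced from Proposition \ref{vermaproperty} by identifying the two $\hom$-spaces with the same (sub/quotient of a) weight space, the first line because both equal $W_\alpha\cap\ker E\cap\ker E^{(r)}$, the second because the two quotients differ only by multiplication by powers of the non-zero-divisor $q-q^{-1}$ and hence have the same dual. The paper phrases the key observation for the second line as ``the two quotients agree modulo torsion'' rather than in terms of $q-q^{-1}$ being a non-zero divisor, but these are the same fact; you are somewhat more explicit than the paper in checking that $\pi_\alpha$ and $\iota_\alpha$ actually realize the isomorphisms under the identifications, which is a harmless and slightly clearer elaboration.
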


\begin{proof}
    For the first statement, we see by \Cref{vermaproperty} that both sides equal $W_\alpha\cap \ker E\cap \ker E^{(r)}$.
    As for the second statement, the left hand side is $(W_\alpha/(\mathrm{im}\: F^{(1)}+\mathrm{im}\: F^{(r)}))^\vee$ while the
    right hand side is $(W_\alpha/(\mathrm{im}\: \overline{F}^{(1)}+\mathrm{im}\: \overline{F}^{(r)}))^\vee$.
    As $\overline{F}^{(1)}=(q-q^{-1})F^{(1)}$ and $\overline{F}^{(r)}=(q-q^{-1})^rF^{(r)}$,
    $W_\alpha/(\mathrm{im}\: F^{(1)}+\mathrm{im}\: F^{(r)})$ and $W_\alpha/(\mathrm{im}\: \overline{F}^{(1)}+\mathrm{im}\: \overline{F}^{(r)})$
    are the same modulo torsion. Hence they have the same dual.
\end{proof}

\begin{proposition}\label{propositionimagequantumgroups}
    Let $1\leq \alpha_1,\dotsc,\alpha_n\leq r-1$ and $0\leq\beta\leq r-2$. Then the two maps:
    \[\begin{array}{rcl}
        \hom_{U_{\zeta_r}}(V_\beta,\bigotimes_iV_{\alpha_i})&\lra & \hom_{U_{\zeta_r}}(\bigotimes_iV_{\alpha_i},V_\beta)^\vee \\
        f &\mapsto& (g\mapsto c\text{ such that } g\circ f=c\cdot \id) \\
        \hom_{U_{\zeta_r}^{\overline{F}}}(\hat{V}^{\overline{F}}_\beta,\bigotimes_iV^{\overline{F}}_{\alpha_i})
        &\lra & \hom_{U_{\zeta_r}^{\overline{F}}}(\bigotimes_iV^{E}_{\alpha_i},\hat{V}^{E}_\beta)^\vee \\
        f &\mapsto& (g\mapsto c\text{ such that } g\circ f=c\cdot \id)
    \end{array}\]
    have isomorphic images.
    More precisely, consider the commutative diagram:
    % https://q.uiver.app/#q=WzAsNixbMCwyLCJcXGhvbV97VV97XFx6ZXRhX3J9fShWX1xcYmV0YSxcXGJpZ290aW1lc19pVl97XFxhbHBoYV9pfSkiXSxbMSwyLCJcXGhvbV97VV97XFx6ZXRhX3J9fShcXGJpZ290aW1lc19pVl97XFxhbHBoYV9pfSxWX1xcYmV0YSleXFx2ZWUiXSxbMCwxLCJcXGhvbV97VV97XFx6ZXRhX3J9XntcXG92ZXJsaW5le0Z9fX0oXFxoYXR7Vn1ee1xcb3ZlcmxpbmV7Rn19X1xcYmV0YSxcXGJpZ290aW1lc19pVl97XFxhbHBoYV9pfSkiXSxbMSwxLCJcXGhvbV97VV97XFx6ZXRhX3J9XntFfX0oXFxiaWdvdGltZXNfaVZfe1xcYWxwaGFfaX0sXFxoYXR7Vn1ee0V9X1xcYmV0YSleXFx2ZWUiXSxbMCwwLCJcXGhvbV97VV97XFx6ZXRhX3J9XntcXG92ZXJsaW5le0Z9fX0oXFxoYXR7Vn1ee1xcb3ZlcmxpbmV7Rn19X1xcYmV0YSxcXGJpZ290aW1lc19pXFxoYXR7Vn1ee1xcb3ZlcmxpbmV7Rn19X3tcXGFscGhhX2l9KSJdLFsxLDAsIlxcaG9tX3tVX3tcXHpldGFfcn1ee0V9fShcXGJpZ290aW1lc19pXFxoYXR7Vn1ee0V9X3tcXGFscGhhX2l9LFxcaGF0e1Z9XntFfV9cXGJldGEpXlxcdmVlIl0sWzAsMSwiXFxtYXRocm17cmVkfSJdLFswLDIsIiIsMix7InN0eWxlIjp7InRhaWwiOnsibmFtZSI6ImFycm93aGVhZCJ9fX1dLFszLDEsIiIsMix7InN0eWxlIjp7InRhaWwiOnsibmFtZSI6ImFycm93aGVhZCJ9fX1dLFsyLDNdLFs0LDVdLFs0LDJdLFs1LDNdXQ==
    \[\begin{tikzcd}
        {\hom_{U_{\zeta_r}^{\overline{F}}}(\hat{V}^{\overline{F}}_\beta,\bigotimes_i\hat{V}^{\overline{F}}_{\alpha_i})} & {\hom_{U_{\zeta_r}^{E}}(\bigotimes_i\hat{V}^{E}_{\alpha_i},\hat{V}^{E}_\beta)^\vee} \\
        {\hom_{U_{\zeta_r}^{\overline{F}}}(\hat{V}^{\overline{F}}_\beta,\bigotimes_iV_{\alpha_i})} & {\hom_{U_{\zeta_r}^{E}}(\bigotimes_iV_{\alpha_i},\hat{V}^{E}_\beta)^\vee} \\
        {\hom_{U_{\zeta_r}}(V_\beta,\bigotimes_iV_{\alpha_i})} & {\hom_{U_{\zeta_r}}(\bigotimes_iV_{\alpha_i},V_\beta)^\vee}
        \arrow["{\mathrm{red}}", from=3-1, to=3-2]
        \arrow[tail reversed, from=3-1, to=2-1]
        \arrow[tail reversed, from=2-2, to=3-2]
        \arrow[from=2-1, to=2-2]
        \arrow[from=1-1, to=1-2]
        \arrow[from=1-1, to=2-1]
        \arrow[from=1-2, to=2-2]
    \end{tikzcd}\]
    where the two-sided arrows are isomorphisms by \Cref{corollaryverma}.
    We have induced maps:
    % https://q.uiver.app/#q=WzAsMyxbMCwwLCJcXGhvbV97VV97XFx6ZXRhX3J9XntcXG92ZXJsaW5le0Z9fX0oXFxoYXR7Vn1ee1xcb3ZlcmxpbmV7Rn19X1xcYmV0YSxcXGJpZ290aW1lc19pXFxoYXR7Vn1ee1xcb3ZlcmxpbmV7Rn19X3tcXGFscGhhX2l9KSJdLFsyLDAsIlxcaG9tX3tVX3tcXHpldGFfcn1ee0V9fShcXGJpZ290aW1lc19pXFxoYXR7Vn1ee0V9X3tcXGFscGhhX2l9LFxcaGF0e1Z9XntFfV9cXGJldGEpXlxcdmVlIl0sWzEsMCwiKHEtcV57LTF9KV5tXFxjZG90IFxcbWF0aHJte2ltfShcXG1hdGhybXtyZWR9KSJdLFswLDIsIlxccGkiLDAseyJzdHlsZSI6eyJoZWFkIjp7Im5hbWUiOiJlcGkifX19XSxbMiwxLCJcXGlvdGEiLDAseyJzdHlsZSI6eyJ0YWlsIjp7Im5hbWUiOiJob29rIiwic2lkZSI6InRvcCJ9fX1dXQ==
    \[\begin{tikzcd}
        {\hom_{U_{\zeta_r}^{\overline{F}}}(\hat{V}^{\overline{F}}_\beta,\bigotimes_i\hat{V}^{\overline{F}}_{\alpha_i})} & {(q-q^{-1})^m\cdot \mathrm{im}(\mathrm{red})} & {\hom_{U_{\zeta_r}^{E}}(\bigotimes_i\hat{V}^{E}_{\alpha_i},\hat{V}^{E}_\beta)^\vee}
        \arrow["\pi", two heads, from=1-1, to=1-2]
        \arrow["\iota", hook, from=1-2, to=1-3]
    \end{tikzcd}\]
    where $m$ is $(\alpha_1+\dotsb+\alpha_n-\beta)/2$.
    Then $\pi$ is surjective and $\iota$ is injective.
\end{proposition}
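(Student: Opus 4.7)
My plan is to exploit the commutativity of the diagram to reduce everything to analyzing the top-to-middle vertical arrows, using the concrete description of Hom-spaces out of or into Verma modules given by \Cref{vermaproperty}. By \Cref{corollaryverma} the middle-to-bottom vertical arrows are isomorphisms, so under these identifications the middle-row map becomes the bottom-row map $\mathrm{red}$ and in particular has image identified with $\mathrm{im}(\mathrm{red})$. The two claims about $\pi$ and $\iota$ then reduce to showing that the left top-to-middle arrow is surjective and the right top-to-middle arrow is injective on the appropriate subspace, with the scalar $(q-q^{-1})^m$ arising from the explicit formulas for $\pi_\alpha$ and $\iota_\alpha$ in \Cref{imageFtoEquantum}.

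For the surjectivity of $\pi$, I would use \Cref{vermaproperty} to identify $\hom_{U_{\zeta_r}^{\overline F}}(\hat V^{\overline F}_\beta, M) \simeq M_\beta \cap \ker E \cap \ker E^{(r)}$ for any $U_{\zeta_r}^{\overline F}$-module $M$. The left top-to-middle arrow, post-composition with $\pi_W := \bigotimes_i \pi_{\alpha_i}$, then becomes the map on highest-weight vectors induced by $\pi_W$. Writing $w \in (\bigotimes V_{\alpha_i})_\beta \cap \ker E \cap \ker E^{(r)}$ in the standard basis as $\sum_I c_I\, e_{i_1}\otimes \dotsb\otimes e_{i_n}$ (so $\sum_k i_k = m$ and each $i_k \leq \alpha_k$), the formula $\pi_{\alpha_k}(e_i) = (q-q^{-1})^i [i]!\,e_i$ for $i \leq \alpha_k$ (and zero otherwise) produces an explicit preimage $\tilde w \in \bigotimes \hat V^{\overline F}_{\alpha_i}$ supported on the small indices, with $\pi_W(\tilde w) = w$. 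The scalar $(q-q^{-1})^m$ factors uniformly out of every such monomial, which is precisely the factor in $(q-q^{-1})^m \cdot \mathrm{im}(\mathrm{red})$. Verifying that $\tilde w$ also satisfies $E\tilde w = 0 = E^{(r)}\tilde w$ amounts to comparing the $U^{\overline F}$-action on this small submodule with the $U_{\zeta_r}$-action on $\bigotimes V_{\alpha_i}$, which $\pi_W$ intertwines up to nonzero scalars.

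For the injectivity of $\iota$, I would apply the dual description $\hom_{U_{\zeta_r}^E}(M, \hat V^E_\beta)^\vee \simeq M_\beta / (\mathrm{im}\,\overline F^{(1)} + \mathrm{im}\,\overline F^{(r)})$ from \Cref{vermaproperty}. The right top-to-middle arrow is dual to pre-composition with $\iota_W := \bigotimes_i \iota_{\alpha_i}$, which at the level of quotients is induced by the inclusion of the submodule of $\bigotimes \hat V^E_{\alpha_i}$ spanned by monomials with indices $\leq \alpha_k$. Using the explicit formula $\iota_{\alpha_k}(e_i) = \prod_{k=0}^{i-1}[\alpha_k-k]\,e_i$ (with all factors nonzero and invertible in the range $0 \leq i \leq \alpha_k \leq r-1$), this submodule maps isomorphically onto its image in the $\mathrm{im}\,\overline F$-quotient. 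The dual arrow is therefore injective when restricted to the subspace corresponding to $\mathrm{im}(\mathrm{red})$.

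The main obstacle will be verifying that the kernel condition $\ker E \cap \ker E^{(r)}$ on the $\hat V^{\overline F}$ side, and the quotient by $\mathrm{im}\,\overline F^{(1)} + \mathrm{im}\,\overline F^{(r)}$ on the $\hat V^E$ side, behave compatibly under restriction to the small submodules of indices $\leq \alpha_k$. Because $U_{\zeta_r}^{\overline F}$ and $U_{\zeta_r}^E$ use different integral forms of $F$, the clean factorization $(q-q^{-1})^m$ only emerges after systematically invoking the relation $\mathrm{red}_\alpha = \iota_\alpha \circ \pi_\alpha$ from \Cref{imageFtoEquantum} in each tensor factor and carefully tracking the combinatorial bookkeeping of quantum factorials; the factor $m = (\alpha_1 + \dotsb + \alpha_n - \beta)/2$ is exactly the total weight drop, so the total power of $(q-q^{-1})$ contributed by the $\pi_{\alpha_k}$'s across all tensor slots is precisely $m$.
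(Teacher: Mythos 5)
Your proposal is essentially correct and follows the same broad strategy as the paper (reduce via \Cref{vermaproperty} to weight spaces and kernel/quotient conditions, then exhibit an explicit section of $\pi_W$ restricted to the ``small'' submodule of indices $\leq\alpha_k$). There are two places, however, where the paper's proof is tighter.

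First, the paper avoids treating $\pi$ and $\iota$ by two separate hands-on arguments: it invokes \Cref{bilinearformquantum} to observe that $\pi_{\underline\alpha}$ and $\iota_{\underline\alpha}$ are \emph{adjoint} for the pairings $\langle\cdot,\cdot\rangle$ and $(\cdot,\cdot)$, and that $W^{\overline F}\cap\ker E\cap\ker E^{(r)}$ and $(W^E/(\mathrm{im}\,\overline F^{(1)}+\mathrm{im}\,\overline F^{(r)}))/\mathrm{torsion}$ are exactly dual for $(\cdot,\cdot)$. This immediately turns the injectivity of $\iota$ into a formal consequence of the surjectivity of $\pi$, so only one construction is needed. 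Your plan of arguing injectivity of $\iota$ ``dually'' is in the right spirit, but you then propose to re-run the argument on the $\hat V^E$ side, duplicating the work; the cleaner move is to invoke the duality once and be done.

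Second, your justification that the preimage $\tilde w$ lies in $\ker E\cap\ker E^{(r)}$ is a bit loose: you say $\pi_W$ ``intertwines the actions up to nonzero scalars,'' but $\pi_W$ is a genuine $U_{\zeta_r}^{\overline F}$-module map, so it intertwines exactly. The precise argument is what you gesture at but don't quite say: the small submodule (indices $\leq\alpha_k$) is closed under the $E^{(l)}$ action, $\pi_W$ restricted to it is injective (since the scalars $(q-q^{-1})^{i}[i]!$ are nonzero for $i\leq r-1$), so $E^{(l)}\tilde w=0$ follows from $\pi_W(E^{(l)}\tilde w)=E^{(l)}w=0$. Equivalently, as the paper phrases it, the explicit section of $\pi_{\underline\alpha}$ given by $\otimes_i((q-q^{-1})^{k_i}e_{k_i})\mapsto\otimes_i\frac{1}{[k_i]!}e_{k_i}$ does not commute with the full $U_{\zeta_r}^{\overline F}$-action but does commute with each $E^{(l)}$, hence restricts to a section of $\pi^E_{\underline\alpha}$ on the kernels. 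This is the one idea your write-up doesn't make crisp, and it is precisely the point where the argument could go wrong if the small submodule were not stable under $E^{(l)}$.

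Finally, note the integrality caveat which the paper records explicitly: the section of $\pi_{\underline\alpha}$ involves $1/[k_i]!$ and hence is only defined over $\Q(\zeta_r)$ when $r$ is not prime; your claim ``$\pi_W(\tilde w)=w$'' needs the same qualification.
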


Before proving the Proposition, we introduce a duality between $\hat{V}^{\overline{F}}_\beta$ and $\hat{V}^{E}_\beta$ 
described by the following Proposition.

\begin{proposition}\label{bilinearformquantum}
    Let $1\leq \alpha_1,\dotsc,\alpha_n\leq r-1$. Define the bilinear form:
    \begin{equation*}
        (\cdot,\cdot):\bigotimes_i\hat{V}^{\overline{F}}_{\alpha_i}\times \bigotimes_i\hat{V}^{E}_{\alpha_i}
        \lra \Z[\zeta_r],\;\left(\otimes_ie_{k_i},\otimes_ie_{l_i}\right)\mapsto q^{f(\underline{k})}\delta_{k_1l_1}\dotsb\delta_{k_nl_n}
    \end{equation*}
    with:
    \begin{equation*}
        f(\underline{k})=\sum_{i\neq j} \alpha_ik_i - 2\sum_{i<j}k_ik_j.
    \end{equation*}
    Then for this form, $K$ and $\begin{bmatrix} K\\r \end{bmatrix}$ are auto-adjoint, $\overline{F}^{(1)}$ is left-adjoint to $E$ and
    for $l\geq 0$, $E^{(l)}$ is left-adjoint to $\overline{F}^{(l)}$.
    
    This form induces a symmetric bilinear form:
    \begin{equation*}
        \langle\cdot,\cdot\rangle:\bigotimes_i V_{\alpha_i}\times \bigotimes_i V_{\alpha_i}
        \lra \Z[\zeta_r]
    \end{equation*}
    by $\langle \pi_{\underline{\alpha}}(a),b\rangle= (a,\iota_{\underline{\alpha}}(b))$, 
    where $\pi_{\underline{\alpha}}=\otimes_i\pi_{\alpha_i}$ and $\iota_{\underline{\alpha}}=\otimes_i\iota_{\alpha_i}$. Equivalently:
    \begin{equation*}
        \langle \otimes_ie_{k_i},\otimes_ie_{l_i}\rangle=f(\underline{k})\prod_i(q-q^{-1})^{-k_i}\begin{bmatrix}\alpha_i\\k_i\end{bmatrix}\delta_{k_il_i}.
    \end{equation*}
    Then again $K$ and $\begin{bmatrix} K\\r \end{bmatrix}$ are auto-adjoint and $E^{(l)}$ and $\overline{F}^{(l)}$ are adjoint
    $\langle\cdot,\cdot\rangle$.

    The maps $\pi_\alpha$ and $\iota_\alpha$ are adjoint for the forms $\langle\cdot,\cdot\rangle$ and $(\cdot,\cdot)$.
\end{proposition}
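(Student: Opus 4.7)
The plan is essentially a sequence of explicit computations on basis vectors. I would first establish the adjointness claims on the tensor products of Verma modules, and then transport them through $\pi_{\underline{\alpha}}$ and $\iota_{\underline{\alpha}}$ to the finite-dimensional modules $V_{\alpha_i}$.

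On the Verma side, the operators $K$ and $\Kr$ act diagonally on the weight grading while the pairing is supported on the diagonal $\underline{k}=\underline{l}$, so their auto-adjointness is immediate. For the key identity $(\overline{F}^{(1)} a, b) = (a, E b)$, I would expand both sides on basis vectors $a = \bigotimes_i e_{k_i}$, $b = \bigotimes_i e_{l_i}$ using the iterated Hopf coproducts $\Delta^{(n-1)}(E) = \sum_i 1^{\otimes(i-1)} \otimes E \otimes K^{\otimes(n-i)}$ and $\Delta^{(n-1)}(\overline{F}^{(1)}) = \sum_i (K^{-1})^{\otimes(i-1)} \otimes \overline{F}^{(1)} \otimes 1^{\otimes(n-i)}$. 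Matching the coefficient of each pair obtained by incrementing a single coordinate reduces the identity to a discrete partial-difference system for $f$, namely that shifting $k_i \mapsto k_i+1$ increments $f$ by $\sum_{j\neq i}(\alpha_j - 2k_j)$; after fixing the normalization $f(\underline{0}) = 0$, the stated closed form is the unique solution.

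For higher divided powers, the identity $(E^{(l)}a, b) = (a, \overline{F}^{(l)}b)$ with $l < r$ follows from iterating the $l=1$ case and renormalizing by $[l]!$. The case $l = r$, where $[r]! = 0$ so this trick fails, is the main obstacle: I would verify it directly on basis vectors using the explicit action formulas of \Cref{definitionfinitedimensionalmodules} combined with the divided-power coproduct expansion from \cite{chariGuideQuantumGroups1995}.

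For the descent to $\bigotimes_i V_{\alpha_i}$, I would check that $(a, \iota_{\underline{\alpha}}(b))$ depends only on $\pi_{\underline{\alpha}}(a)$: if $\pi_{\underline{\alpha}}(e_{\underline{k}}) = 0$ then some $k_i > \alpha_i$, but the diagonal support forces $\underline{l} = \underline{k}$, which then kills $\iota_{\underline{\alpha}}(e_{\underline{l}})$. Hence $\langle\cdot,\cdot\rangle$ is well-defined, and the stated explicit formula drops out of the definitions of $\pi_\alpha$ and $\iota_\alpha$. Symmetry is automatic since the form is supported on the diagonal. Adjointness for $\langle\cdot,\cdot\rangle$ then transports from the Verma case via the intertwining properties of $\pi_\alpha$ and $\iota_\alpha$ (recall from \Cref{imageFtoEquantum} that $\pi_\alpha$ is $U^{\overline{F}}_{\zeta_r}$-linear and $\iota_\alpha$ is $U^E_{\zeta_r}$-linear in the admissible range): for any adjoint pair $(X, X^*) \in U^{\overline{F}}_{\zeta_r} \times U^E_{\zeta_r}$ one has $\langle X\pi(a), b\rangle = \langle \pi(Xa), b\rangle = (Xa, \iota(b)) = (a, X^*\iota(b)) = (a, \iota(X^*b)) = \langle \pi(a), X^*b\rangle$. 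Finally, the $\pi_\alpha$--$\iota_\alpha$ adjointness is tautological from the defining relation $\langle \pi_{\underline{\alpha}}(a), b\rangle = (a, \iota_{\underline{\alpha}}(b))$.
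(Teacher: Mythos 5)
Your plan is correct and uses the same core technique as the paper's proof: expand both sides of each adjointness identity on monomial tensors $\otimes_i e_{k_i}$ using the iterated Hopf coproduct and match coefficients. The paper's actual argument is terser — it notes the $K$, $\Kr$ case immediately, reduces the $\overline{F}^{(1)}$–$E$ adjointness to $n=2$ and checks the single-factor identities $(\overline{F}^{(1)}\otimes 1)^* = E\otimes K$ and $(K^{-1}\otimes\overline{F}^{(1)})^* = 1\otimes E$, then says the $E^{(l)}$–$\overline{F}^{(l)}$ case is similar and leaves the descent to $\bigotimes_i V_{\alpha_i}$ to the reader. Your reformulation as a discrete difference equation for $f$ (incrementing $k_i$ adds $\sum_{j\neq i}(\alpha_j - 2k_j)$) is a slightly more systematic way to organize the same calculation and is correct. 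Two small observations. First, iterating the $l=1$ relation to get $E^{(l)}$ adjoint to $\overline{F}^{(l)}$ requires establishing $(Ea,b)=(a,\overline{F}^{(1)}b)$ as a separate base case (it is not formally equivalent to $(\overline{F}^{(1)}a,b)=(a,Eb)$ since the form is not symmetric between the two Verma sides), though it yields the same difference equation for $f$, so this is easily patched. Second, when you "verify directly" the $l=r$ case, the relevant action formulas are those for $\hat{V}^{\overline{F}}_\alpha$ and $\hat{V}^E_\alpha$ (given just before \Cref{imageFtoEquantum}), not \Cref{definitionfinitedimensionalmodules}, and the cleanest route there is the divided-power coproduct $\Delta(E^{(n)}) = \sum_{k+l=n} q^{kl}E^{(k)}\otimes K^k E^{(l)}$ from \cite[9.3.4]{chariGuideQuantumGroups1995}, which in fact handles all $l$ uniformly and would let you skip the case split. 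Your treatment of well-definedness of $\langle\cdot,\cdot\rangle$, the explicit formula, symmetry, and the $\pi_\alpha$–$\iota_\alpha$ adjointness is correct and is content the paper does not spell out.
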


\begin{proof}
    The statement about $K$ and $\begin{bmatrix} K\\r \end{bmatrix}$ is immediate since the form respects the weight gradings.
    The other statements are calculations that are easily reduced to the case $n=2$. For $n=2$, we show that 
    $\overline{F}^{(1)}$ is left-adjoint to $E$. The other statement is similar.
    The actions are given by:
    \begin{equation*}
        \Delta(E) = E\otimes K + 1\otimes E,\; \Delta(\overline{F}^{(1)}) = \overline{F}^{(1)}\otimes 1 + K^{-1}\otimes \overline{F}^{(1)}.
    \end{equation*}
    We see that $\overline{F}^{(1)}\otimes 1$ is left-adjoint to $E\otimes K$. Indeed, we have:
    \begin{align*}
        ((\overline{F}^{(1)}\otimes 1)(e_{k_1}\otimes e_{k_2}),e_{k_1+1}\otimes e_{k_2})&=(e_{k_1+1}\otimes e_{k_2},e_{k_1+1}\otimes e_{k_2}) \\
        (e_{k_1}\otimes e_{k_2},(E\otimes K)(e_{k_1+1}\otimes e_{k_2}))&=q^{\alpha_2-2k_2}(e_{k_1}\otimes e_{k_2},e_{k_1}\otimes e_{k_2}) \\
        (e_{k_1+1}\otimes e_{k_2},e_{k_1+1}\otimes e_{k_2})&=q^{\alpha_2-2k_2}(e_{k_1}\otimes e_{k_2},e_{k_1}\otimes e_{k_2}).
    \end{align*}
    Similarly, $1\otimes E$ is left-adjoint to $K^{-1}\otimes \overline{F}^{(1)}$. Hence $\overline{F}^{(1)}$ is left-adjoint to $E$.
\end{proof}

\begin{proof}[Proof of \Cref{propositionimagequantumgroups}]
    We will use the following notations:
    \begin{align*}
        W^{\overline{F}}&=\bigotimes_i\hat{V}^{\overline{F}}_{\alpha_i}, & W^{E}&=\bigotimes_i\hat{V}^{E}_{\alpha_i}, \\
        W &=\bigotimes_iV_{\alpha_i},  & W' &=\bigoplus\limits_\lambda(q-q^{-1})^{m(\lambda)}W_\lambda
    \end{align*}
    with $m(\lambda)=\frac{\alpha_1+\dotsb+\alpha_n-\lambda}{2}$ and $W_\lambda$ are the vectors of weight $\lambda$.
    Notice that when $m(\lambda)$ is not an integer, $W_\lambda=0$. Also, $W'$ is a sub-$\UF$-module of $W$.

    The maps $\pi_{\underline{\alpha}}$ and $\iota_{\underline{\alpha}}$ are adjoint for $\langle\cdot,\cdot\rangle$ and $(\cdot,\cdot)$.
    Moreover, as $E^{(l)}$ is left-adjoint to $\overline{F}^{(l)}$ for $(\cdot,\cdot)$,
    $W^{\overline{F}}\cap \ker E\cap \ker E^{(r)}$ is dual to
    $(W^{E}/(\mathrm{im}\: \overline{F}^{(1)}+\mathrm{im}\: \overline{F}^{(r)}))/\mathrm{torsion}$.
    Similarly for $\langle\cdot,\cdot\rangle$,
    $W\cap \ker E\cap \ker E^{(r)}$ and
    $(W/(\mathrm{im}\: \overline{F}^{(1)}+\mathrm{im}\: \overline{F}^{(r)}))/\mathrm{torsion}$
    are dual.

    Hence $\pi_{\underline{\alpha}}$ and $\iota_{\underline{\alpha}}$ are dual and we need only prove that $\pi$ is surjective.
    Let us prove that the map
    \begin{equation*}
        \pi^E_{\underline{\alpha}}:W^{\overline{F}}\cap \ker E\cap \ker E^{(r)}\lra W'\cap \ker E\cap \ker E^{(r)}
    \end{equation*}
    is surjective. To see this, note that $\pi_{\underline{\alpha}}$ has the following section:
    \begin{align*}
        W'                                                           &\lra               W^{\overline{F}} \\
        \otimes_i\left((q-q^{-1})^{k_i}\cdot e_{k_i}\right)          &\longmapsto        \otimes_i\frac{1}{[k_i]!}e_{k_i}.
    \end{align*}
    If $r$ is not prime, this is only a section after taking the tensor product with $\Q$.
    This section does not commute with the action of the whole of $W^{\overline{F}}$, but it does commute to the action
    of $E^{(l)}$ for $l\geq 0$. Hence it induces a section of $\pi^E_{\underline{\alpha}}$. So $\pi^E_{\underline{\alpha}}$
    is surjective.
\end{proof}

\begin{corollary}\label{imageofred}
    Let $1\leq \alpha_1,\dotsc,\alpha_n\leq r-1$ and $0\leq\beta\leq r-2$. Let $\Dc=(D^n,\beta,\underline{\alpha})$ be the corresponding
    colored disk. Then:
    \begin{equation*}
        \Nu_{\zeta_r}(\Dc) \simeq \mathrm{im}\left(\hom_{U_{\zeta_r}^{\overline{F}}}(\hat{V}^{\overline{F}}_\beta,\bigotimes_i
        \hat{V}^{\overline{F}}_{\alpha_i})
        \lra  \hom_{U_{\zeta_r}^{E}}(\bigotimes_i\hat{V}^{E}_{\alpha_i},\hat{V}^{E}_\beta)^\vee\right)\otimes \Q.
    \end{equation*}
    Equivalently:
    \begin{equation*}
        \Nu_{\zeta_r}(\Dc) \simeq \mathrm{im}\left(W^{\overline{F}}_\alpha\cap \ker E\cap \ker E^{(r)}\lra
        (W^{E}_\alpha/(\mathrm{im}\: \overline{F}^{(1)}+\mathrm{im}\: \overline{F}^{(r)}))/\mathrm{torsion}\right)\otimes \Q.
    \end{equation*}

    Moreover, if $r$ is prime, then we have:
    \begin{align*}
        \Nu_{\zeta_r}^\mathcal{O}(\Dc) &\simeq (q-q^{-1})^m\cdot\mathrm{im}\left(\hom_{U_{\zeta_r}^{\overline{F}}}(\hat{V}^{\overline{F}}_\beta,\bigotimes_i
        \hat{V}^{\overline{F}}_{\alpha_i})
        \lra  \hom_{U_{\zeta_r}^{E}}(\bigotimes_i\hat{V}^{E}_{\alpha_i},\hat{V}^{E}_\beta)^\vee\right) \\
        & \simeq (q-q^{-1})^m\cdot\mathrm{im}\left(W^{\overline{F}}_\alpha\cap \ker E\cap \ker E^{(r)}\lra
        (W^{E}_\alpha/(\mathrm{im}\: \overline{F}^{(1)}+\mathrm{im}\: \overline{F}^{(r)}))/\mathrm{torsion}\right)
    \end{align*}
    where $m$ is $(\alpha_1+\dotsb+\alpha_n-\beta)/2$.
\end{corollary}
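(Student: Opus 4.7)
The plan is to chain together the three preceding results (Theorem \ref{theoremquantumgroupconstruction}, Proposition \ref{qtracedefinitionisevaluationdefinition}, and Proposition \ref{propositionimagequantumgroups}), with Proposition \ref{vermaproperty} providing the translation between the Hom-space and weight-space formulations.

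First, I would invoke Theorem \ref{theoremquantumgroupconstruction} to write
\begin{equation*}
    \Nu_{\zeta_r}(\Dc) \simeq \bhom{V_\beta}{\bigotimes_i V_{\alpha_i}}\otimes\Q
\end{equation*}
as $\Mod{D^n}$-modules. Then, by Proposition \ref{qtracedefinitionisevaluationdefinition}, the right-hand side is precisely the image of the evaluation map
\begin{equation*}
    \hom_{U_{\zeta_r}}\!\bigl(V_\beta,\bigotimes_i V_{\alpha_i}\bigr)\lra \hom_{U_{\zeta_r}}\!\bigl(\bigotimes_i V_{\alpha_i},V_\beta\bigr)^\vee,
\end{equation*}
tensored with $\Q$. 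This is the map labeled $\mathrm{red}$ in the diagram of Proposition \ref{propositionimagequantumgroups}.

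Next, I would apply Proposition \ref{propositionimagequantumgroups} directly: it identifies (modulo torsion) the image of $\mathrm{red}$ with the image of the Verma-module map
\begin{equation*}
    \hom_{U_{\zeta_r}^{\overline{F}}}\!\bigl(\hat{V}^{\overline{F}}_\beta,\bigotimes_i\hat{V}^{\overline{F}}_{\alpha_i}\bigr)\lra \hom_{U_{\zeta_r}^{E}}\!\bigl(\bigotimes_i\hat{V}^{E}_{\alpha_i},\hat{V}^{E}_\beta\bigr)^\vee,
\end{equation*}
up to the explicit factor $(q-q^{-1})^m$ tracked by the $\pi$ and $\iota$ maps there. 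After tensoring with $\Q$ this factor becomes invertible, yielding the first isomorphism of the corollary. The equivalent formulation in terms of weight spaces then follows from Proposition \ref{vermaproperty}, which gives canonical identifications of the two Hom-spaces above with $W^{\overline{F}}_\beta\cap\ker E\cap\ker E^{(r)}$ and $(W^{E}_\beta/(\myim{\overline{F}^{(1)}}+\myim{\overline{F}^{(r)}}))^\vee$ respectively (here I write $W^{\overline{F}}$, $W^E$ for the relevant tensor products).

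For the integral statement when $r$ is prime, the only extra point is to keep the $(q-q^{-1})^m$ factor. By Theorem \ref{theoremquantumgroupconstruction}, $\Nu_{\zeta_r}^\mathcal{O}(\Dc)$ coincides (as a $\Z[\zeta_r]$-lattice) with $\bhom{V_\beta}{\bigotimes_i V_{\alpha_i}}$, which by Proposition \ref{qtracedefinitionisevaluationdefinition} is exactly $\myim{\mathrm{red}}$. Proposition \ref{propositionimagequantumgroups} then says $\myim{\mathrm{red}}=(q-q^{-1})^{-m}\cdot\myim{\pi}$ as sublattices, giving the stated formula. The main (mild) obstacle is simply careful bookkeeping of the normalization factor $(q-q^{-1})^m$ across the identifications; everything else is essentially a direct concatenation of the results established just above.
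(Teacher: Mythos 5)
Your proof is correct and follows exactly the chain of results the paper implicitly relies on (the Corollary carries no explicit proof): Theorem \ref{theoremquantumgroupconstruction}, Proposition \ref{qtracedefinitionisevaluationdefinition}, Proposition \ref{propositionimagequantumgroups}, and Proposition \ref{vermaproperty} for the two equivalent formulations. One point worth flagging: your bookkeeping yields $\NuslO(\Dc)=(q-q^{-1})^{-m}\cdot\myim{\pi}$ (equivalently, $\myim{\pi}=(q-q^{-1})^m\NuslO(\Dc)$), which is the opposite sign of the exponent to the Corollary's printed $(q-q^{-1})^m$; however, your version is the one consistent with how \Cref{imageofred} is actually invoked at the end of \Cref{subsubmainproofs} (where the paper writes ``$\mathrm{im}(\mathrm{red})$ is $(q-q^{-1})^m\NuslO(\Dc)$''), so the printed Corollary appears to carry a sign typo and your computation is the correct one. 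You also do not comment on the boundary case $\alpha_i=r-1$, for which $\Dc$ is not an admissible colored disk; the paper treats this separately in \Cref{remarkvanishings}, where both sides are shown to vanish, so the statement still holds — it is worth a one-line remark but does not affect the argument.
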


%-------------------------------------------------------------------------------------------------------------------------

\section{\texorpdfstring{The geometric construction of the $\SO$ modular functors}
{The geometric constructions of the SO(3) modular categories}}\label{sectiongeometricproofs}

%-------------------------------------------------------------------------------------------------------------------------

\subsection{On homologies and Gauss-Manin Connections}

%-------------------------------------------------------------------------------------------------------------------------

\subsubsection{Borel-Moore homology}\label{subsubBorelMoore}

We briefly recap some of the few properties of Borel-Moore homology that we will use.
For a definition of Borel-Moore homology, see \cite[IX]{iversenCohomologySheaves1986}.

We will be only concerned with the homology of differentiable manifolds with corners (ie. locally of the form 
$(\R_+)^n\times\R^m$). Hence all spaces considered from now on will be of this type.

\begin{proposition}
    If $X\ra Y$ is a proper map, then we have an induced map $H_*^{BM}(X)\ra H_*^{BM}(Y)$
    between Borel-Moore homologies.
\end{proposition}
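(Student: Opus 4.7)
The plan is to use the concrete description of Borel--Moore homology as the homology of the complex $C_*^{\mathrm{lf}}(X)$ of locally finite singular chains, that is formal sums $\sum_\sigma n_\sigma \sigma$ of singular simplices subject to the condition that every compact subset of $X$ meets the image of only finitely many $\sigma$ with $n_\sigma \neq 0$. On a differentiable manifold with corners this computes the same groups as the sheaf-theoretic definition $H^{-i}(X,\omega_X)$ from Iversen.

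Given a proper map $f: X \to Y$, I would define a pushforward on chains in the naive way, sending a singular simplex $\sigma: \Delta^k \to X$ to $f\circ\sigma: \Delta^k \to Y$ and extending $\Z$-linearly. The one point that needs checking is that this pushforward preserves local finiteness. For any compact $K \subset Y$, properness of $f$ gives that $f^{-1}(K)$ is compact in $X$. By the local finiteness of the original chain, only finitely many simplices $\sigma$ with $n_\sigma \neq 0$ have image meeting $f^{-1}(K)$, hence only finitely many $f\circ\sigma$ meet $K$.

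Since the boundary operator commutes with pushforward exactly as in the ordinary singular chain complex, this gives a chain map $f_*: C_*^{\mathrm{lf}}(X) \to C_*^{\mathrm{lf}}(Y)$, which induces the sought-after map $H_*^{BM}(X) \to H_*^{BM}(Y)$. From the sheaf-theoretic point of view this is the same map as the one obtained from the adjunction morphism $Rf_! \omega_X \to \omega_Y$, using that $Rf_! = Rf_*$ when $f$ is proper, and then applying $R\Gamma(Y,-)$ and taking cohomology in degree $-i$.

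There is no genuine obstacle here; the statement is routine functoriality and the only non-formal ingredient is the single use of properness to turn local finiteness on $X$ into local finiteness on $Y$.
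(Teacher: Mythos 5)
Correct. The paper itself gives no proof for this proposition, simply citing Iversen's book for the definition and basic properties of Borel--Moore homology; your argument via locally finite singular chains is the standard one, and your key observation -- that properness converts local finiteness on $X$ into local finiteness on $Y$ by making preimages of compacts compact -- is exactly the non-trivial point.
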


\begin{proposition}\label{BMpairs}
    Let $F\subset X$ be a closed subset and $U=X\setminus F$. Then there is a canonical isomorphism:
    \begin{equation*}
        H_*^{BM}(X,F)\simeq H^{BM}_*(U).
    \end{equation*}
\end{proposition}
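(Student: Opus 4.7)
The plan is to derive the isomorphism directly from the sheaf-theoretic formulation of Borel-Moore homology used in Iversen's text. Recall that $H^{BM}_n(Y) = \mathbb{H}^{-n}(Y; \omega_Y)$, where $\omega_Y$ is the dualizing complex. For the open inclusion $j: U \hookrightarrow X$ one has $\omega_U = j^{*}\omega_X$, and for the closed inclusion $i: F \hookrightarrow X$ one has $\omega_F = i^{!}\omega_X$.

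First I would invoke the standard distinguished triangle on $X$ coming from the recollement attached to the stratification $X = F \sqcup U$:
\begin{equation*}
    i_{*} i^{!} \omega_X \lra \omega_X \lra Rj_{*} j^{*} \omega_X \lra +1.
\end{equation*}
Applying $R\Gamma(X;-)$ and using $R\Gamma(X; Rj_{*}\omega_U) = R\Gamma(U;\omega_U)$ and $R\Gamma(X; i_{*}\omega_F) = R\Gamma(F;\omega_F)$, this produces a long exact sequence
\begin{equation*}
    \dotsb \lra H^{BM}_n(F) \lra H^{BM}_n(X) \lra H^{BM}_n(U) \lra H^{BM}_{n-1}(F) \lra \dotsb.
\end{equation*}

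Second, the relative Borel-Moore homology $H^{BM}_*(X,F)$ fits by definition into a long exact sequence
\begin{equation*}
    \dotsb \lra H^{BM}_n(F) \lra H^{BM}_n(X) \lra H^{BM}_n(X,F) \lra H^{BM}_{n-1}(F) \lra \dotsb,
\end{equation*}
with the maps $H^{BM}_n(F) \to H^{BM}_n(X)$ agreeing with those of the first sequence by construction. The five lemma applied to the identity maps on $H^{BM}_*(F)$ and $H^{BM}_*(X)$ then yields a canonical isomorphism $H^{BM}_*(X,F) \simeq H^{BM}_*(U)$. Even more cleanly, one can simply take the cone $Rj_{*}j^{*}\omega_X$ as defining the relative Borel-Moore complex of the pair $(X,F)$, in which case the identification $R\Gamma(X; Rj_{*}j^{*}\omega_X) = R\Gamma(U;\omega_U)$ makes the isomorphism tautological.

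This is a standard fact (a form of excision for Borel-Moore homology), so there is no substantive obstacle; the only care needed is to pin down a common definition of $H^{BM}_*(X,F)$ on both sides so that the comparison map is canonical.
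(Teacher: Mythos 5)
Your proof is correct and uses the standard sheaf-theoretic approach via dualizing complexes and the recollement triangle $i_{*}i^{!}\to \id\to Rj_{*}j^{*}$, which is exactly the framework of the reference the paper cites (Iversen, \emph{Cohomology of Sheaves}); the paper itself states this proposition as a standard fact without proof. Your closing observation --- that one can \emph{define} the relative Borel-Moore complex as $R\Gamma(X;Rj_{*}j^{*}\omega_X)$, making the isomorphism tautological --- is the cleanest way to frame it and is the usual convention.
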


\begin{proposition}\label{symplexBMhomology}
    Let $m\geq 0$. Consider the unordered configuration space:
    \begin{equation*}
        \Conf{m}{(0,1)}=\{0<x_1<\dotsb<x_m<1\}.
    \end{equation*}
    Then it is homeomorphic to an open $m$-simplex and:
    \begin{equation*}
        H_k^{BM}(\Conf{m}{(0,1)}) = \begin{cases}
            \Z &\text{ if }k=m \\
            0 &\text{ otherwise.}
        \end{cases}
    \end{equation*}
\end{proposition}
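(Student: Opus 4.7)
The statement really has two claims bundled together: first that $\Conf{m}{(0,1)}$ is homeomorphic to an open $m$-simplex, then the Borel--Moore computation. Neither is deep, so the plan is simply to record both cleanly.

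For the homeomorphism, I would observe that $\Conf{m}{(0,1)}$ is, essentially by definition, the interior of the closed $m$-simplex
\begin{equation*}
    \Delta^m = \{(x_1, \dotsc, x_m) \in \R^m \mid 0 \le x_1 \le \dotsb \le x_m \le 1\}.
\end{equation*}
Being a bounded open convex subset of $\R^m$, it is homeomorphic to an open $m$-ball and hence to the interior of the standard $m$-simplex.

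For the Borel--Moore homology I would apply \Cref{BMpairs} with $X = \Delta^m$ compact and $F = \partial \Delta^m$ its topological boundary (the locus where some inequality becomes an equality), so that $X \setminus F = \Conf{m}{(0,1)}$. Since $X$ is compact, Borel--Moore homology of $X$ and of the pair $(X, F)$ agrees with ordinary (singular) homology, and the proposition gives
\begin{equation*}
    H_k^{BM}(\Conf{m}{(0,1)}) \simeq H_k(\Delta^m, \partial \Delta^m).
\end{equation*}
The long exact sequence of the pair, combined with contractibility of $\Delta^m$ and $\partial \Delta^m \simeq S^{m-1}$, then yields $\Z$ in degree $m$ and $0$ otherwise.

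There is no genuine obstacle here; the one thing worth being careful about is the compatibility of \Cref{BMpairs} with the pair notation in the compact case (so that the right-hand side is just singular relative homology), but this is immediate from the standard identification $H_*^{BM}(X) = H_*(X)$ for $X$ compact. The result can equivalently be phrased as $H_*^{BM}(\R^m) = \Z[m]$, which one also recovers via Poincaré duality on the orientable manifold $\Conf{m}{(0,1)} \simeq \R^m$.
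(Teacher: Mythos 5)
Your proof is correct. The paper states this proposition without proof, listing it among "a few properties of Borel--Moore homology" recalled from \cite[IX]{iversenCohomologySheaves1986}, so there is no in-paper argument to compare against; your argument via \Cref{BMpairs} applied to the compact pair $(\Delta^m,\partial\Delta^m)$, followed by the long exact sequence, is the standard way to fill this in, and your closing remark that it is equivalent to $H^{BM}_*(\R^m)=\Z[m]$ (or, by Poincar\'e duality, to $H^0_c(\R^m)^\vee$ etc.) is also accurate.
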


%-------------------------------------------------------------------------------------------------------------------------

\subsubsection{Homology with coefficients in a local system}\label{fundamentalclasslocalsystem}

Let $X$ be a connected manifold with corners and $\Line$ a $\C$-local system of rank $1$ on $X$.
Let $f:Z\ra X$ be a map such that $f^*\Line$ is trivial. Assume $Z$ is connected. Then we have a map:
\begin{equation*}
    H_*(Z;\Z)\subset H_*(Z;\C)\simeq H_*(Z;f^*\Line)\lra H_*(X;\Line).
\end{equation*}
depending on the choice of an isomorphism $H_*(Z;\C)\simeq H_*(Z;f^*\Line)$.
Such an isomorphism is given, for example, by the choice of a point in $z$ and an identification $(f^*\Line)_z\simeq \C$.

Let $x\in X$ be a base point, together with an identification $\Line_x\simeq \C$. 
Then a point $z\in Z$ together with a path $\gamma$ from $x$ to $f(z)$ yields such an isomorphism $H_*(Z;\C)\simeq H_*(Z;f^*\Line)$.
Hence, for $c\in H_*(Z;\Z)$ we can define a class:
\begin{equation*}
    f_*(c,z,\gamma)\in H_*(X;\Line).
\end{equation*}
We will not mention $z$ in the sequel, as in all our use cases, we will have $\{z\}=f^{-1}(\gamma(1))$.

This works equally well with Borel-Moore homologies if $f$ is proper.

%-------------------------------------------------------------------------------------------------------------------------

\subsubsection{Gauss-Manin connection for homology with coefficients in a local system}\label{subsublocalsystemGaussManin}

Let $E\lra B$ be a topological fibration and $\Line$ a local system on $E$. Let $b\in B$ be some base point.
We will denote $E_b$ the fiber over $b$.
Here, we describe the action of $\pi_1(B,b)$ on $H_*(E_b;\Line)$ corresponding to the Gauss-Manin connection.

Let $\gamma:[0,1]\ra B$ be a loop at $b$. Then as $\gamma^*E$ is trivial over $[0,1]$, a choice of trivialization
gives a homeomorphism $\phi:E_b\simeq E_b$ via the identifications $(\gamma^*E)_0=E_b$
and $(\gamma^*E)_1=E_b$. Now, the local system $\Line$ on $\gamma^*E$
is isomorphic to $p^*\Line_{\mid E_b}$ for $p:\gamma^*E\simeq (\gamma^*E)_0\times [0,1]\ra (\gamma^*E)_0=E_b$.
Hence we have an isomorphism $\psi:\phi_*(\Line_{\mid E_b})\simeq \Line_{\mid E_b}$.

Now the action of $\gamma$ on $H_*(E_b;\Line)$ is given by the composition:
% https://q.uiver.app/#q=WzAsMyxbMCwwLCJIXyooRV9iO1xcTGluZSkiXSxbMSwwLCJIXyooRV9iO1xccGhpXyooXFxMaW5lKSkiXSxbMiwwLCJIXyooRV9iO1xcTGluZSkiXSxbMCwxLCJIXyooXFxwaGkpIl0sWzEsMiwiSF8qKEVfYjtcXHBzaSkiXV0=
\[\begin{tikzcd}
	{H_*(E_b;\Line)} & {H_*(E_b;\phi_*(\Line))} & {H_*(E_b;\Line)}
	\arrow["{H_*(\phi)}", from=1-1, to=1-2]
	\arrow["{H_*(E_b;\psi)}", from=1-2, to=1-3]
\end{tikzcd}\]

%-------------------------------------------------------------------------------------------------------------------------

\subsection{Homologies of configuration spaces}

We will be concerned with configuration spaces over the punctured compact disk and punctured open disk.
For convenience, we will represent disks as squares.

\begin{definition}
    For $M$ a manifold, and $m\geq 0$, we define the unordered configuration space of $m$ points on $M$ by:
    \begin{equation*}
        \Conf{m}{M}=\{\{z_1,\dotsc,z_m\}\subset M\;\mid\; z_i\neq z_j\text{ for }i\neq j\}.
    \end{equation*}
    If $F$ is a subset of $M$, we define $\Conf{m}{M,F}$ to be the pair $(A,B\subset A)$ where $A=\Conf{m}{M}$
    and:
    \begin{equation*}
        B = \{\{z_1,\dotsc,z_m\}\in \Conf{m}{M}\;\mid\; \exists i,\; z_i\in F\}.
    \end{equation*}
\end{definition}

\begin{definition}
    As in \Cref{definitiondisks}, we will denote by $D_n$ the open disk with $n$ points removed.
    We shall use the following model:
    \begin{equation*}
        D_n=(0,1)^2\setminus \{\left(\frac{1}{n+1},\frac{1}{2}\right),\dotsc,\left(\frac{n}{n+1},\frac{1}{2}\right)\}
    \end{equation*}
    and the notation $p_k=\frac{k}{n+1}$.
    We shall use a similar model for $\oD_n$:
    \begin{equation*}
        \oD_n=[0,1]^2\setminus \{\left(\frac{1}{n+1},\frac{1}{2}\right),\dotsc,\left(\frac{n}{n+1},\frac{1}{2}\right)\}.
    \end{equation*}
    We will use the notations $\partial_-\oD_n$ for $[0,1]\times\{0\}\subset \overline{D}_n$
    and $\partial_+\oD_n$ for $\{0,1\}\times [0,1]\cup [0,1]\times \{1\}$, as in \Cref{disk}.
\end{definition}

\begin{figure}
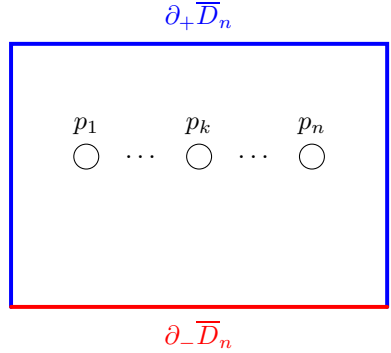

    \ctikzfig{disk}
    \caption{The disk $\oD_n$ and the two parts of its boundary.}
    \label[figure]{disk}
\end{figure}

Now, let $I=(0,1)\subset D_n$ be a closed submanifold. Then, for $m\geq 1$, we have a closed submanifold:
\begin{equation*}
    \Conf{m}{I}\subset \Conf{m}{D_n}.
\end{equation*}
Fix a local system $\Line$ of rank $1$ on $\Conf{m}{D_n}$, and a base point $x=\{x_1,\dotsc,x_m\}$ in $\Conf{m}{D_n}$.
Then for any path $\gamma:[0,1]\lra \Conf{m}{D_n}$ from $x$ to $\Conf{m}{I}$, the Borel-Moore fundamental class of 
$\Conf{m}{I}$ described in \Cref{symplexBMhomology} induces, by \Cref{fundamentalclasslocalsystem}, a class:
\begin{equation*}
    [I^m,\gamma]\in H_m^{BM}(\Conf{m}{D_n};\Line).
\end{equation*}
Note that a path $\gamma$ is simply $m$ paths $\gamma_1,\dotsc,\gamma_m$ where $\gamma_i$ goes from $x_i$ to $I$
and $\gamma_i(t)\neq\gamma_j(t)$ for all $t$ and $i\neq j$.

We can generalize the classes $[I^m,\gamma]$ slightly as follows. Let $I_1,\dotsc,I_k\subset D_n$ be $k$ disjoint
closed submanifolds, each diffeomorphic to $(0,1)$. Let $m\geq 1$ and $m=m_1+\dotsb+m_k$ be a partition.
Then we have a closed submanifold:
\begin{equation*}
    \Conf{m_1}{I_1}\times\dotsb\times\Conf{m_k}{I_k}\subset \Conf{m}{D_n}.
\end{equation*}
Now, as above, for any path $\gamma$ from $x$ to $\Conf{m_1}{I_1}\times\dotsb\times\Conf{m_k}{I_k}$, we have a class:
\begin{equation*}
    [I_1^{m_1},\dotsc,I_k^{m_k},\gamma]\in H_m^{BM}(\Conf{m}{D_n};\Line).
\end{equation*}

A variation on this is the following. If $(I_1,\partial I_1),\dotsc,(I_k,\partial I_k)\subset (\oD_n,\partial_-\oD_n)$ are $k$ disjoint
closed submanifolds with boundary, each diffeomorphic to $([0,1],\{0,1\})$ or $((0,1],\{1\})$, then we have a submanifold:
\begin{equation*}
    \Conf{m_1}{I_1,\partial I_1}\times\dotsb\times\Conf{m_k}{I_k,\partial I_k}\subset \Conf{m}{\oD_n,\partial_-\oD_n}.
\end{equation*}
Now, as above, for any path $\gamma$ from $x$ to $\Conf{m_1}{I_1,\partial I_1}\times\dotsb\times\Conf{m_k}{I_k,\partial I_k}$, we have a class:
\begin{equation*}
    [I_1^{m_1},\dotsc,I_k^{m_k},\gamma]\in H_m^{BM}(\Conf{m}{\oD_n,\partial_-\oD_n};\Line).
\end{equation*}

For a drawing of such a class $[I_1^{m_1},\dotsc,I_k^{m_k},\gamma]$, see \Cref{cycle_example}.

\begin{figure}
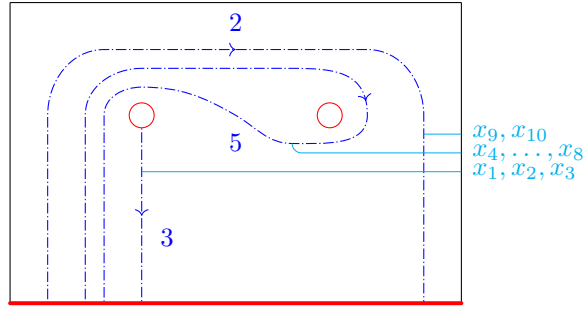

    \ctikzfig{cycle_example}
    \caption{A cycle $[I_1^3,I_2^5,I_3^2,\gamma]$ in  $H_{10}^{BM}(\Conf{10}{\oD_2,\partial_-\oD_2};\Line)$.
    The path $\gamma$ is represented by $3$ paths, each corresponding respectively to $3$, $5$ and $2$ parallel
    copies. Together these copies make a path in $\Conf{10}{\oD_2}$ from $x=\{x_1,\dotsc,x_{10}\}$
    to $\Conf{3}{I_1}\times\Conf{5}{I_2}\times\Conf{2}{I_2}$.}
    \label[figure]{cycle_example}
\end{figure}

%-------------------------------------------------------------------------------------------------------------------------

\subsubsection{\texorpdfstring{An $U_{\zeta_r}^{\overline{F}}$-action on some homologies of configuration spaces}
{An U_\zeta_r^F-action on some homologies of configuration spaces}}\label{subsubUaction}

Most results in this section are from \cite{martelHomologicalModelUq2022}, but we use different normalizations out of necessity.
See Jules Martel's paper for details.

Fix an odd integer $r\geq 3$ and $n\geq 1$.
Let $a_1,\dotsc,a_n\in\{0,1,\dotsc,r-2\}$ be some colors. In the following, for $m\geq 0$, we will define $b\in\Z$ by
$m=\frac{a_1+\dotsb+a_n-b}{2}$.

For $m\geq 0$, set $\Line_m=\Line_\Dc^\mathcal{O}$ with $\Dc=(D^n,b,a_1,\dotsc,a_n)$ the $\Z[\zeta_r]$-local system on $\Conf{m}{\oD_n}$ as in \Cref{notationLine}.
We take this moment to explicit the local system $\Line_m$. Fix a base point $x=\{x_1,\dotsc,x_m\}$ in $\Conf{m}{D_n}$ as in \Cref{local_system}.
The fundamental group of $\Conf{m}{\oD_n}$ is the
braid group on $\oD_n$. It is generated by $\sigma_i$ for $1\leq i\leq m-1$ and $\tau_k$ for $1\leq k\leq n$,
where $\sigma_i$ swaps the $x_i$ and $x_{i+1}$ fixing all the other points, and $\tau_k$ takes $x_1$ around the puncture $p_k$,
fixing all the other points. See \Cref{local_system} for details.

\begin{figure}
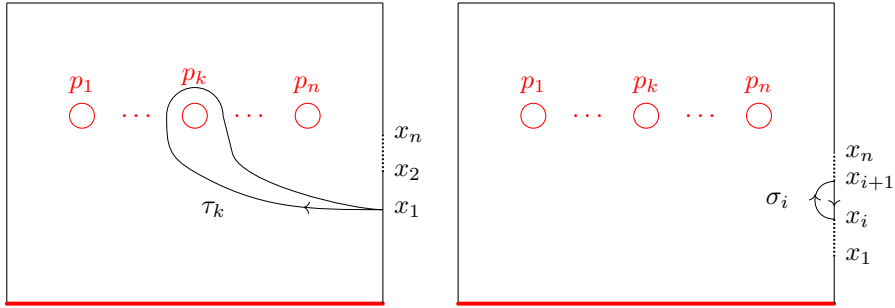

    \ctikzfig{local_system}
    \caption{The elements $\tau_k$ and $\sigma_i$ of $\pi_1(\Conf{m}{D_n},x)$.}
    \label[figure]{local_system}
\end{figure}

Now the representation $\rho_m$ associated to $\Line_m$ is:
\[\begin{array}{rcl}
    \pi_1(\Conf{m}{D_n},x) &\lra        & \Z[\zeta_r]^\times \\
    \sigma_i               &\longmapsto & -q^{-2} \\
    \tau_k                 &\longmapsto & q^{2a_k}.
\end{array}\]
Here $q^{\frac{1}{2}}=-\zeta_r$ as usual.

Let us begin by describing a space on which $\UE$ acts.

\begin{definition}{{\cite[6.1.3]{martelHomologicalModelUq2022}}}
    We define the $\WE{\ua}{m}$ by:
    \begin{equation*}
        \WE{\ua}{m} = H_m^{BM}(\Conf{m}{\oD_n,\partial_-\oD_n};\Line_m).
    \end{equation*}
    And set:
    \begin{equation*}
        \WEt{\ua} = \bigoplus_{m\geq 0} \WE{\ua}{m}.
    \end{equation*}
\end{definition}

We now recall the $\UE$-action on $\WEt{\ua}$ of \cite[6.1]{martelHomologicalModelUq2022}.
We make $K$ and $\Kr$ act by:
\begin{equation}\label{KactionE}
    Kc_m=q^bc_m\text{ and }\Kr c_m = \qbin{r}{b}c_m\text{ for any }c_m\in \WE{\ua}{m}.
\end{equation}

The action of $E$ is given by a differential in homology. Following \cite[2.3.2]{derenziHomologicalConstructionQuantum2023},
consider the following the subspaces of $\Conf{m}{\oD_n}$:
\begin{align*}
    Y_m &= \{\{z_1,\dotsc,z_m\}\;\mid\; \exists i,\:z_i\in \partial_-\oD_n\}, \\
    Z_m &= \{\{z_1,\dotsc,z_m\}\;\mid\; \exists i\neq j,\:z_i,z_j\in \partial_-\oD_n\}.
\end{align*}
Then the long exact sequence of the triplet $(\Conf{m}{\oD_n},Y_m,Z_m)$ in Borel-Moore homology gives a map:
\begin{equation*}
    \partial_m:H_m^{BM}(\Conf{m}{\oD_n},Y_m;\Line_m)\lra H_{m-1}^{BM}(Y_m,Z_m;\Line_{m\mid Y_m}).
\end{equation*}
Notice that by \Cref{BMpairs}, $H_{m-1}^{BM}(Y_m,Z_m)=H_{m-1}^{BM}(Y_m\setminus Z_m)$. But $Y_m\setminus Z_m$ is diffeomorphic to
$[0,1]\times\left(\Conf{m-1}{\oD_n}\setminus Y_{m-1}\right)$ and the restriction of local system $\Line_m$ is just $\Line_{m-1}$.

Hence, we have a map:
\begin{equation*}
    \partial_m:H_m^{BM}(\Conf{m}{\oD_n,\partial_-\oD_n};\Line_m)\lra H_{m-1}^{BM}(\Conf{m-1}{\oD_n,\partial_-\oD_n};\Line_{m-1}).
\end{equation*}

We make $E$ act by:
\begin{equation}\label{EactionE}
    Ec_m=q^{\frac{1}{2}\sum_ia_i}\partial_mc_m\text{ for any }c_m\in \WE{\ua}{m}.
\end{equation}

Consider the band $B=[0,1]^2$ and $\partial_-B=[0,1]\times \{0,1\}$. Then by gluing $(B,\partial_-B)$ to $(\oD_n,\partial_-\oD_n)$
along $\{1\}\times [0,1]\subset B$ and $\partial_+\oD_n\subset \oD_n$, we again get $\oD_n$ (see \Cref{gluing_B_D}).

\begin{figure}
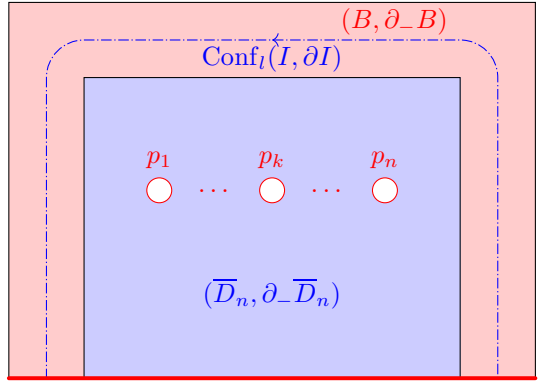

    \ctikzfig{gluing_B_D}
    \caption{The gluing of $(B,\partial_-B)$ to $(\oD_n,\partial_-\oD_n)$ and the 
    submanifold $\Conf{l}{I,\partial I}$ of $(B,\partial_-B)$.}
    \label[figure]{gluing_B_D}
\end{figure}

Thus for $m,l\geq 0$, we have a subspace:
\begin{equation*}
    \Conf{l}{B,\partial_-B}\times \Conf{m}{\oD_n,\partial_-\oD_n}\subset \Conf{m+l}{\oD_n,\partial_-\oD_n}
\end{equation*}
which induces a map on Borel-Moore homologies:
\begin{multline}\label{equationcupFaction}
    \cup:H_l^{BM}(\Conf{l}{B,\partial_-B};\Z)\otimes H_m^{BM}(\Conf{m}{\oD_n,\partial_-\oD_n};\Line_m) \\
    \lra H_{m+l}^{BM}(\Conf{m+l}{\oD_n,\partial_-\oD_n};\Line_{m+l}).
\end{multline}

Now, consider the class $[B]_l$ in $H_l^{BM}(\Conf{l}{B,\partial_-B};\Z)$ corresponding to the submanifold $\Conf{l}{I,\partial I}$
with $I=\{\frac{1}{2}\}\times [0,1]$.
We make $\Fb{l}$ act by:
\begin{equation}\label{FactionE}
    \Fb{l}c_m=q^{\frac{l(1-l)}{2}+\frac{1}{2}l\sum_ia_i}[B]_l\cup c_m\text{ for any }c_m\in \WE{\ua}{m}.
\end{equation}

\begin{theorem}[{\cite[Theorem 1]{martelHomologicalModelUq2022}}]
    The actions described in \Cref{KactionE,EactionE,FactionE} make $\WEt{\ua}$ a $\UE$-module.
\end{theorem}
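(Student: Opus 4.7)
My plan is to verify the defining relations of $\UE$ on the actions of $K$, $\Kr$, $E$, and $\Fb{l}$ case by case, following the strategy of \cite{martelHomologicalModelUq2022}. The only additional thing to check beyond Martel's theorem is that the specific normalization prefactors $q^{\frac{1}{2}\sum_i a_i}$ in \Cref{EactionE} and $q^{\frac{l(1-l)}{2}+\frac{l}{2}\sum_i a_i}$ in \Cref{FactionE} give a genuine $\UE$-module structure (they differ from Martel's convention by a consistent rescaling).

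\textbf{Weight relations.} The $m$-grading on $\WEt{\ua}$ translates to a $K$-weight grading via $b = \sum_i a_i - 2m$. Since the differential $\partial_m$ drops $m$ by one and the cup product with $[B]_l$ raises $m$ by $l$, I immediately get $KEK^{-1} = q^2 E$ and $K\Fb{l}K^{-1} = q^{-2l}\Fb{l}$. The commutation relations involving $\Kr$ reduce to the identity $\qbin{r}{\lambda+2}=\qbin{r}{\lambda}$ on weight spaces after accounting for the weight shift, which is automatic.

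\textbf{Divided-power relations for $\Fb{l}$.} I would prove $\Fb{l}\Fb{l'} = \qbin{l+l'}{l}\Fb{l+l'}$ by geometrically stacking two bands $B'$ and $B''$ on top of each other to form a single band $B$. The class $[B']_l\cup [B'']_{l'}\cup c_m$ differs from $[B]_{l+l'}\cup c_m$ by a shuffle sum whose coefficients are determined by the local-system monodromy $-q^{-2}$ on strand crossings; summing over the $\binom{l+l'}{l}$ shuffles yields exactly the $q$-binomial, with the prefactors $q^{\frac{l(1-l)}{2}+\frac{l}{2}\sum a_i}$ chosen precisely to absorb the remaining $q$-twist.

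\textbf{The commutation $[E,\Fb{l}]$.} This is the core of the proof and the most delicate step. The idea is to apply the boundary operator $\partial_{m+l}$ to $[B]_l\cup c_m$ and decompose it via the long exact sequence into two pieces: the contribution where the boundary lies on the endpoints of $c_m$ (yielding $\Fb{l}Ec$) and the contribution where one of the $l$ arcs of $[B]_l$ degenerates onto $\partial_-\oD_n$ (yielding the $K$/$K^{-1}$ terms). For the second piece, I would compute the monodromy of $\Line_m$ acquired when sliding the newly freed arc endpoint across the $n$ punctures and the $m$ existing configuration points, and show that the resulting sum produces the expected $\prod_{k=0}^{l-1}(q^{?}K - q^{?}K^{-1})$ expression times $\qbin{m+l}{l}$ or similar—matching the relation in $\UE$ derived from \cite[9.3.4]{chariGuideQuantumGroups1995}.

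The main obstacle will be Part 3: correctly tracking all monodromy factors ($q^{2a_k}$ around each puncture, $-q^{-2}$ per strand crossing) and verifying that the chosen prefactors make them collapse into the precise standard form of the quantum commutator. Since this is exactly what Martel does (in a slightly different normalization), I would structure my proof as a check that the rescaling $c_m \mapsto q^{(\text{correction in }m)}c_m$ intertwines the two sets of actions, reducing the relations in $\UE$ to \cite[Theorem 1]{martelHomologicalModelUq2022}.
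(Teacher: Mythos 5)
The paper supplies no proof of this statement: it cites Martel's Theorem 1 directly, with the preceding remark that the normalizations used here differ from his. Your high-level strategy --- reduce the verification to Martel's theorem by matching normalizations --- is therefore exactly what the paper implicitly expects, and your outline of a direct verification (weight relations, divided-power relation for $\Fb{l}$, and the commutator $[E,\Fb{l}]$ via boundary decomposition) correctly identifies the structural components of what Martel proves.

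However, the reduction step you propose --- conjugating by a grading automorphism $c_m\mapsto q^{g(m)}c_m$ --- cannot account for the prefactors of \Cref{EactionE,FactionE} on its own. Matching the prefactor $q^{\frac{1}{2}\sum_i a_i}$ on $E$ forces $g(m)=\frac{m}{2}\sum_i a_i + c$; with that choice, the induced prefactor on $\Fb{l}$ is $q^{g(m)-g(m+l)}=q^{-\frac{l}{2}\sum_i a_i}$, whereas \Cref{FactionE} requires $q^{\frac{l(1-l)}{2}+\frac{1}{2}l\sum_i a_i}$. The sign on the linear-in-$l$ term is wrong, and the quadratic piece $\frac{l(1-l)}{2}$ is not reproducible by any function of $m$ alone, since such a conjugation can only contribute $g(m)-g(m+l)$, which is linear in $l$ whenever $g$ is. The quadratic factor is a braiding correction (of the type that appears when resolving $l$ parallel strands of $[B]_l$ or when passing between $F^{(l)}$ and $\overline{F}^{(l)}=(q-q^{-1})^lF^{(l)}$), so the discrepancy with Martel is not a rescaling of the module but a change in the definition of the operators themselves. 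To complete your reduction you would need to record Martel's exact prefactors and the parallel push-off convention he uses for $[B]_l$, then verify that the combined change is one that preserves the $\UE$-relations; the grading automorphism alone is not sufficient.
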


We now turn to the Poincaré dual space on which $\UF$ acts.

\begin{definition}
    We define the $\WF{\ua}{m}$ by:
    \begin{equation*}
        \WF{\ua}{m} = H_m(\Conf{m}{\oD_n,\partial_-\oD_n};\Line_m).
    \end{equation*}
    And set:
    \begin{equation*}
        \WFt{\ua} = \bigoplus_{m\geq 0} \WF{\ua}{m}.
    \end{equation*}
\end{definition}

The space $H_m(\Conf{m}{\oD_n,\partial_-\oD_n};\Line_m)$ is Poincaré dual to $H^{BM}_m(\Conf{m}{\oD_n,\partial_-\oD_n};\Line_m)$
in the following sense. By Poincaré duality with respect to parts of the boundary, we have a perfect pairing:
\begin{equation}\label{equationintersectionform}
    H_m(\Conf{m}{\oD_n,\partial_-\oD_n};\Line_m)\times H^{BM}_m(\Conf{m}{\oD_n,\partial_+\oD_n};\overline{\Line_m})\lra \Z[\zeta_r]
\end{equation}
where $\overline{\Line_m}$ is the conjugate bundle.
Now, there is an orientation reversing diffeomorphism $\mathrm{flip}:(\oD_n,\partial_-\oD_n)\ra(\oD_n,\partial_+\oD_n)$,
given by the composition of the flip along $[0,1]\times\bigl\{\frac{1}{2}\bigr\}$ and a stretch near the boundary.
This diffeomorphism sends $\Line_m$ to $\overline{\Line_m}$. Hence we have a perfect bilinear pairing:
\begin{equation}\label{equationrelativedualitypairing}
    (\cdot,\cdot):H_m(\Conf{m}{\oD_n,\partial_-\oD_n};\Line_m)\times H^{BM}_m(\Conf{m}{\oD_n,\partial_-\oD_n};\Line_m)\lra \Z[\zeta_r].
\end{equation}

Let us now describe an action of $\UF$ on $\WFt{\ua}$. We make $K$ and $\Kr$ act by:
\begin{equation}\label{KactionF}
    Kc_m=q^bc_m\text{ and }\Kr c_m = \qbin{r}{b}c_m\text{ for any }c_m\in \WF{\ua}{m}.
\end{equation}

The action of $E$ is given by a restriction in homology. Consider the following the subspaces of $\Conf{m}{\oD_n}$:
\begin{align*}
    Y^i_m &= \{z=\{z_1,\dotsc,z_m\}\;\mid\;  |z\cap\partial_-\oD_n|\geq i\}, \\
    Y_m^+ &= \{z=\{z_1,\dotsc,z_m\}\;\mid\;  |z\cap\partial_+\oD_n|\geq 1\}.
\end{align*}
Note that $\Conf{m}{\oD_n,\partial_+\oD_n}=(\Conf{m}{\oD_n},Y_m^+)$.
Consider the restriction map: 
\begin{equation*}
    H_c^*(\Conf{m}{\oD_n},Y_m^+;\Line_m)\ra H_c^*(Y^i_m,Y^i_m\cap Y_m^+;\Line_{m\mid Y^i_m}).
\end{equation*}
As $\partial Y^i_m=Y^{i+1}_m\cup(Y^i_m\cap Y_m^+)$, its Poincaré dual in degree $m$ is:
\begin{equation*}
    \partial_m^i:H_m(\Conf{m}{\oD_n},Y^1_m;\Line_m)\lra H_{m-i}(Y^i_m,Y^{i+1}_m;\Line_{m\mid Y^i_m}).
\end{equation*}
Note that we used Poincaré duality for homologies relatively to a decomposition the boundary
(see, for example, \cite[VI.9.Problem 3]{bredonTopologyGeometry1993}).
Now, one sees that the pairs $(Y^i_m,Y^{i+1}_m)$ and $\Conf{m-i}{\oD_n,\partial_-\oD_n}$ have the same homologies.
Indeed, we need only check that the Poincaré dual pairs have same cohomology with compact support.
Now $(Y^i_m,Y^{i+1}_m)$ is Poincaré dual in $Y^i_m$ to $(Y^i_m,Y^i_m\cap Y_m^+)$, and
$H^*_c(Y^i_m,Y^i_m\cap Y_m^+)=H^*_c(Y^i_m\setminus(Y^i_m\cap Y_m^+))$. Moreover,
$Y^i_m\setminus(Y^i_m\cap Y_m^+)$ is homeomorphic to $\Conf{m-i}{\oD_n\setminus(\partial_+\oD_n)}\times\Conf{i}{(0,1)}$.
So that $H^*_c(Y^i_m,Y^i_m\cap Y_m^+)\simeq H^{*-i}_c(\Conf{m-i}{\oD_n,\partial_+\oD_n})$.
Now the pair $\Conf{m-i}{\oD_n,\partial_+\oD_n}$ is Poincaré dual in $\Conf{m-i}{\oD_n}$ to 
$\Conf{m-i}{\oD_n,\partial_-\oD_n}$. Hence $H_*(Y^i_m,Y^i_{m+1})\simeq H_{*}(\Conf{m-i}{\oD_n,\partial_-\oD_n})$.

We thus have a map:
\begin{equation*}
    \partial_m^i:H_m(\Conf{m}{\oD_n,\partial_-\oD_n};\Line_m)\lra H_{m-i}(\Conf{m-i}{\oD_n,\partial_-\oD_n};\Line_{m-i}).
\end{equation*}

We make $\Eb{l}$ act by:
\begin{equation}\label{EactionF}
    \Eb{l}c_m=q^{\frac{l(1-l)}{2}+\frac{1}{2}l\sum_ia_i}\partial_m^lc_m\text{ for any }c_m\in \WF{\ua}{m}.
\end{equation}

As above for the $\UE$-action, for $m\geq 0$, we have a subspace:
\begin{equation*}
    (B,\partial_-B)\times \Conf{m}{\oD_n,\partial_-\oD_n}\subset \Conf{m+1}{\oD_n,\partial_-\oD_n}
\end{equation*}
which induces a map on homologies:
\begin{multline*}
    \cup:H_1(B,\partial_-B;\Z)\otimes H_m(\Conf{m}{\oD_n,\partial_-\oD_n};\Line_m) \\
    \lra H_{m+1}(\Conf{m+1}{\oD_n,\partial_-\oD_n};\Line_{m+1}).
\end{multline*}

Now, consider the class $[B]_1$ in $H_1(\Conf{l}{B,\partial_-B};\Z)$ corresponding to the submanifold $(I,\partial I)$
with $I=\bigl\{\frac{1}{2}\bigr\}\times [0,1]$.
We make $\Fb{1}$ act by:
\begin{equation}\label{FactionF}
    \Fb{1}c_m=q^{\frac{1}{2}\sum_ia_i}[B]_1\cup c_m\text{ for any }c_m\in \WF{\ua}{m}.
\end{equation}

\begin{theorem}\label{theoremhomologicalUactionduality}
    Consider the bilinear form of \Cref{equationrelativedualitypairing}:
    \begin{equation*}
        (\cdot,\cdot): \WFt{\ua}\times \WEt{\ua}\lra \Z[\zeta_r].
    \end{equation*}
    Then for this form $K^{-1}$ is left-adjoint to $K$, $\Kr$ is auto-adjoint, $\Eb{l}$ is left-adjoint to $\Fb{l}$ and 
    $\Fb{1}$ is left-adjoint to $E$.
\end{theorem}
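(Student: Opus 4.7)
The plan is to verify each of the four adjointness claims using the Poincaré--Lefschetz duality pairing of \Cref{equationrelativedualitypairing}, which decouples as a sum of pairings $\WF{\ua}{m} \times \WE{\ua}{m} \to \Z[\zeta_r]$ at each fixed $m$ and vanishes on non-matching degrees. This degree-matching is crucial because all the operators in question either preserve or change $m$ by exactly $l$, and the adjointness identities relate an operator going $m \to m+l$ on one side with its partner going $m+l \to m$ on the other.

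For the Cartan generators $K$ and $\Kr$, both act diagonally on $\WF{\ua}{m}$ and $\WE{\ua}{m}$ by scalars depending only on $m$ (cf. \Cref{KactionE,KactionF}). The adjointness statements for these generators thus reduce to scalar identities on each graded piece, which I will verify directly from the defining formulas.

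The main content is the adjointness between the $E$- and $F$-type generators. On both sides, the $F$-type generator acts by a cup/gluing product with the vertical band class $[B]_l$ (\Cref{FactionE,FactionF}), while the $E$-type generator acts via the boundary/restriction map coming from the long exact sequence of the stratification $Y_m \supset Z_m$ or $Y^i_m \supset Y^{i+1}_m$ (\Cref{EactionE,EactionF}). The geometric fact I will exploit is the standard Poincaré--Lefschetz duality for manifolds with decomposed boundary: cap product with a relative Borel--Moore cycle localized near $\partial_+\oD_n$ (which is precisely where the band $B$ is glued) is dual to the restriction-to-$\partial_-\oD_n$ map. Applied to $\Conf{m+l}{\oD_n}$ built from $\Conf{l}{B, \partial_- B} \times \Conf{m}{\oD_n}$, this should yield the identities $(\Fb{l} c_m, c'_{m+l}) = (c_m, \Eb{l} c'_{m+l})$ and $(\Fb{1} c_m, c'_{m+l}) = (c_m, E c'_{m+l})$ up to the explicit $q$-prefactors in their definitions. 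The twisted-coefficient version follows because $\Line_m$ restricts as the trivial local system on $\Conf{l}{B, \partial_- B}$, consistently with the monodromies \textit{via} $-q^{-2}$ (crossing) and $q^{2 a_k}$ (around punctures).

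The main obstacle is tracking orientations, signs, and the passage through the flip diffeomorphism used in \Cref{equationintersectionform}, carefully enough to see that cap-with-$[B]_l$ and the restriction $\partial^l$ are dual up to \emph{precisely} the $q$-prefactors $q^{\frac{l(1-l)}{2}+\frac{1}{2} l \sum_i a_i}$ and $q^{\frac{1}{2}\sum_i a_i}$ appearing in \Cref{EactionE,FactionE,EactionF,FactionF} rather than some residual factor; these prefactors are engineered so that all discrepancies cancel. Once this geometric compatibility is in hand, the algebraic adjointness claims are formal, and one concludes by reducing each identity to the $l=1$ (resp.\ $l=r$) case and invoking the corresponding duality between the single band attachment and the single-strand restriction.
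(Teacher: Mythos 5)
Your overall plan --- Poincar\'e--Lefschetz duality between the cup product with the band class near $\partial_+\oD_n$ and the restriction toward $\partial_-\oD_n$, compared via the flip diffeomorphism --- matches the paper's proof, so the guiding idea is correct. However, your final sentence, where you propose to conclude ``by reducing each identity to the $l=1$ (resp.\ $l=r$) case,'' does not work over $\Z[\zeta_r]$: the divided power $\Fb{l}$ is not recoverable from $\Fb{1}$ and $\Fb{r}$ without dividing by $[a]!$ and $b!$ (where $l=a+rb$), and these are not units --- indeed $[l]!=0$ for $l\geq r$. The adjointness $(\Fb{l}c,c')=(c,\Eb{l}c')$ must therefore be established directly for each $l$. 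Fortunately this is exactly what the geometric duality gives, so the reduction is unnecessary; but as written, your sketch presents it as a load-bearing step, which it cannot be.

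Separately, the geometric compatibility you single out as the ``main obstacle'' is made precise in the paper not by a direct appeal to a general boundary-decomposed Lefschetz duality, but by an explicit factorization: after conjugating by $\mathrm{flip}$, the inclusion of $\Conf{l}{I}\times\Conf{m}{\oD_n}$ into $\Conf{m+l}{\oD_n}$ factors, up to proper homotopy, through the stratum $Y^l_{m+l}$. This produces a commuting triangle in Borel--Moore homology whose Poincar\'e dual literally identifies $\partial_m^l$ with the adjoint of $\cdot\cup\mathrm{flip}([B]_l)$, and the $q$-prefactors then cancel because they are the same on both sides by definition. Without exhibiting that factorization through $Y^l_{m+l}$, the claim that cup-with-$[B]_l$ and $\partial_m^l$ are Poincar\'e dual remains a plausibility argument rather than a proof; supplying it is the one piece of geometry your sketch is missing.
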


\begin{proof}
    The statement about $K$ and $\Kr$ is obvious. We outline a proof that $\Eb{l}$ is left-adjoint to $\Fb{l}$.
    The proof that $\Fb{1}$ is left-adjoint to $E$ is essentially the same.

    Choose $m\geq 0$. We need to consider the conjugate of $\Fb{l}$ by the map $\mathrm{flip}$.
    Applying $\mathrm{flip}$ to the gluing of $B$ and $\oD_n$ considered above, we have a submanifold:
    \begin{equation*}
        \Conf{l}{I}\times \Conf{m}{\oD_n}\subset\Conf{l}{B}\times \Conf{m}{\oD_n}\subset \Conf{m+l}{\oD_n}
    \end{equation*}
    as in \Cref{gluing_B_D_reverse}. One sees that the inclusion of $\Conf{l}{I}\times \Conf{m}{\oD_n}$ factors
    up to proper homotopy through $Y^l_{m+l}$.
    Hence we get the following commutative diagram:
    % https://q.uiver.app/#q=WzAsMyxbMSwwLCJIX21ee0JNfShcXENvbmZ7bX17XFxvRF9uLFxccGFydGlhbF8rXFxvRF9ufTtcXExpbmVfbSkiXSxbMSwyLCJIX3ttK2x9XntCTX0oXFxDb25me20rbH17XFxvRF9uLFxccGFydGlhbF8rXFxvRF9ufTtcXExpbmVfe20rbH0pIl0sWzAsMSwiSF97bStsfV57Qk19KFlfe20rbH1ebCxZX3ttK2x9XmxcXGNhcCBZX3ttK2x9Xis7XFxMaW5lX3ttK2x9KSJdLFswLDEsIlxcY2RvdFxcY3VwW0JdX2wiXSxbMCwyLCIiLDAseyJjdXJ2ZSI6Mn1dLFsyLDEsIiIsMCx7ImN1cnZlIjoyfV1d
    \[\begin{tikzcd}
        & {H_m^{BM}(\Conf{m}{\oD_n,\partial_+\oD_n};\Line_m)} \\
        {H_{m+l}^{BM}(Y_{m+l}^l,Y_{m+l}^l\cap Y_{m+l}^+;\Line_{m+l})} \\
        & {H_{m+l}^{BM}(\Conf{m+l}{\oD_n,\partial_+\oD_n};\Line_{m+l})}
        \arrow["{\cdot\cup\mathrm{flip}([B]_l)}", from=1-2, to=3-2]
        \arrow[from=1-2, to=2-1]
        \arrow[from=2-1, to=3-2]
    \end{tikzcd}\]
    
    \begin{figure}
        \ctikzfig{gluing_B_D_reverse}
        \caption{The submanifold $\Conf{l}{I}\times \Conf{m}{\oD_n}$ of $\Conf{m+l}{\oD_n}$.}
        \label[figure]{gluing_B_D_reverse}
    \end{figure}
    
    Now, the Poincaré dual diagram is:
    % https://q.uiver.app/#q=WzAsMyxbMSwwLCJIX20oXFxDb25me219e1xcb0RfbixcXHBhcnRpYWxfLVxcb0Rfbn07XFxMaW5lX20pIl0sWzEsMiwiSF97bStsfShcXENvbmZ7bStsfXtcXG9EX24sXFxwYXJ0aWFsXy1cXG9EX259O1xcTGluZV97bStsfSkiXSxbMCwxLCJIX3ttK2x9XntCTX0oWV97bStsfV5sLFlfe20rbH1ee2wrMX07XFxMaW5lX3ttK2x9KSJdLFsxLDAsIlxccGFydGlhbF9tXmwiLDJdLFsyLDAsIiIsMix7ImN1cnZlIjotMn1dLFsxLDIsIiIsMix7ImN1cnZlIjotMn1dXQ==
    \[\begin{tikzcd}
        & {H_m(\Conf{m}{\oD_n,\partial_-\oD_n};\Line_m)} \\
        {H_{m+l}(Y_{m+l}^l,Y_{m+l}^{l+1};\Line_{m+l})} \\
        & {H_{m+l}(\Conf{m+l}{\oD_n,\partial_-\oD_n};\Line_{m+l})}
        \arrow["{\partial_m^l}"', from=3-2, to=1-2]
        \arrow[from=2-1, to=1-2]
        \arrow[from=3-2, to=2-1]
    \end{tikzcd}\]
    As the maps $\cdot\cup\mathrm{flip}([B]_l)$ and $\partial_m^l$ are Poincaré dual, they are adjoint for 
    the intersection forms of \Cref{equationintersectionform}.
    But up to the same renormalization factor, $\cdot\cup\mathrm{flip}([B]_l)$ is the conjugate by $\mathrm{flip}$ of $\Fb{l}$,
    and $\partial_m^l$ is $\Eb{l}$. Hence $\Eb{l}$ is left-dual to $\Fb{l}$ for $(\cdot,\cdot)$.
\end{proof}

\begin{corollary}
    The actions described in \Cref{KactionF,EactionF,FactionF} make $\WFt{\ua}$ a $\UF$-module.
\end{corollary}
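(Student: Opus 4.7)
The strategy is to deduce the $\UF$-module structure on $\WFt{\ua}$ from the established $\UE$-module structure on $\WEt{\ua}$ (from Martel's theorem stated above) by pushing the relations through the perfect pairing $(\cdot,\cdot)$ of \Cref{equationrelativedualitypairing}, using the adjunctions of \Cref{theoremhomologicalUactionduality}.

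The theorem identifies, for each generator $X$ of $\UF$ acting on $\WFt{\ua}$, an element $\omega(X) \in \UE$ acting on $\WEt{\ua}$ as its left-adjoint: $\omega(K^{\pm 1}) = K^{\mp 1}$, $\omega(\Kr) = \Kr$, $\omega(\Fb{1}) = E$, and $\omega(\Eb{l}) = \Fb{l}$. Since $(AB)^* = B^*A^*$ for adjoints under any bilinear pairing, this assignment extends by induction on word length to an anti-multiplicative map sending any monomial in the $\UF$-generators to a monomial in the $\UE$-generators with the order of factors reversed. Thus for every $x \in \UF$ (expressed as a polynomial in the generators) and every $c \in \WFt{\ua}$, $d \in \WEt{\ua}$, one has $(xc, d) = (c, \omega(x) d)$.

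To conclude, I would check that each defining relation $R = 0$ of $\UF$ maps under $\omega$ to a consequence of the defining relations of $\UE$: up to the normalization $\Eb{l} = (q-q^{-1})^l E^{(l)}$ and $\Fb{l} = (q-q^{-1})^l F^{(l)}$, the substitution $K\leftrightarrow K^{-1}$, $\Kr\leftrightarrow\Kr$, $E^{(l)}\leftrightarrow F^{(l)}$ is exactly the Chevalley anti-involution of $U_q(\sltwo)$, which is standardly known to preserve Lusztig's integral form. Hence $\omega(R) = 0$ is a genuine relation in $\UE$ and therefore acts as zero on $\WEt{\ua}$ by Martel's theorem. By perfectness of the pairing, $(Rc, d) = (c, \omega(R) d) = 0$ for all $d$ forces $Rc = 0$, which yields the desired $\UF$-module axioms.

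The main obstacle is tracking the $(q-q^{-1})^l$ normalization factors when translating relations between the barred generators $\Eb{l}, \Fb{l}$ and the divided powers $E^{(l)}, F^{(l)}$; since $(q - q^{-1}) = \zeta_r^2 - \zeta_r^{-2}$ is not invertible in $\Z[\zeta_r]$ for $r$ prime, this bookkeeping must be done carefully, but after extending scalars to $\Q(\zeta_r)$ (or equivalently inverting $q-q^{-1}$) the individual relation checks reduce to direct calculations in $U_{\zeta_r}(\sltwo)$ and are elementary; the integral statement then follows because both sides of each relation already live in the integral form.
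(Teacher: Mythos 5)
Your overall strategy --- transferring the $\UE$-module structure on $\WEt{\ua}$ across the perfect pairing $(\cdot,\cdot)$ using the adjunctions of \Cref{theoremhomologicalUactionduality} and the anti-multiplicativity of taking adjoints --- is the right one, and is clearly the mechanism by which the paper means for this corollary to follow without further argument. The flaw is in your identification of the anti-homomorphism $\omega\colon\UF\to\UE$. The substitution $K\leftrightarrow K^{-1}$, $\Kr\mapsto\Kr$, $E^{(l)}\leftrightarrow F^{(l)}$ is the Chevalley \emph{involution}, an algebra \emph{automorphism}; read anti-multiplicatively, as your argument requires, it fails the commutator relation. Concretely, apply your $\omega$ anti-multiplicatively to $E^{(1)}\Fb{1}-\Fb{1}E^{(1)}=K-K^{-1}$ (which holds in $\UF$ since $\Fb{1}=(q-q^{-1})F$): the left side becomes $E\Fb{1}-\Fb{1}E=K-K^{-1}$ in $\UE$, but $\omega(K-K^{-1})=K^{-1}-K=-(K-K^{-1})$, a contradiction. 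So the map you propose is not well-defined, and the subsequent relation check you defer would in fact fail.

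The discrepancy traces back to the statement in \Cref{theoremhomologicalUactionduality} that "$K^{-1}$ is left-adjoint to $K$," which appears to be a misprint: the $K$-action on both $\WF{\ua}{m}$ and $\WE{\ua}{m}$ is by the \emph{same} scalar $q^{b}$ with $b=\sum_i a_i-2m$, and the pairing respects the degree $m$, so $K$ is auto-adjoint --- exactly as in the parallel algebraic statement of \Cref{bilinearformquantum}, to which the homological pairing is shown to reduce on the $A$- and $B$-bases. With the corrected assignment $\omega(K)=K$, $\omega(\Kr)=\Kr$, $\omega(\Eb{l})=\Fb{l}$, $\omega(\Fb{1})=E$, the map $\omega$ is the Cartan anti-involution ($E^{(l)}\leftrightarrow F^{(l)}$, $K\mapsto K$) composed with the weight-dependent rescaling $Y\mapsto(q-q^{-1})^{-\mathrm{wt}(Y)/2}Y$; this is a genuine anti-homomorphism sending generators of $\UF$ to generators of $\UE$, and with it your argument goes through verbatim. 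The integrality closing step is also fine: $\WFt{\ua}$ is $\Z[\zeta_r]$-free by the $B$-class basis theorem, so a relation vanishing after $\otimes\,\Q(\zeta_r)$ vanishes integrally.
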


%-------------------------------------------------------------------------------------------------------------------------

\subsubsection{Bases for semi-relative homologies}\label{subsubbases}

Most results in this section are taken from \cite{martelHomologicalModelUq2022}, but we use different normalizations out of necessity.
We keep the notations of the previous subsubsection.
In this subsubsection, we denote by $I_k$ the segment $(p_k,(\frac{k}{n+1},0)]\subset\oD_n$ going vertically from $\partial_-\oD_n$ to $p_k$.

\begin{definition}[{\cite[3.9]{martelHomologicalModelUq2022}}]
    Let $m\geq 0$ and $m_1+\dotsb+m_n=m$ a partition. Denote by $A(\underline{m})=A(m_1,\dotsc,m_n)$ the homology class:
    \begin{equation*}
        q^{\sum_{i<j}a_im_j-\frac{1}{2}\sum_{i,j}a_im_j}[I_1^{m_1},\dotsc,I_n^{m_n},\gamma]\in H_m^{BM}(\Conf{m}{\oD_n,\partial_-\oD_n};\Line_m)
    \end{equation*}
    where $\gamma$ is as in \Cref{A_classes}.
\end{definition}

\begin{figure}
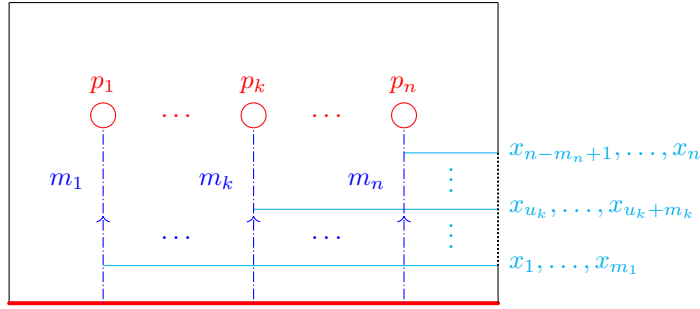

    \ctikzfig{A_classes}
    \caption{The cycle $[I_1^{m_1},\dotsc,I_n^{m_n},\gamma]$ defining $A(m_1,\dotsc,m_n)$. Here $u_k$ is $m_1+\dotsb+m_{k-1}$.}
    \label[figure]{A_classes}
\end{figure}

\begin{theorem}[{\cite[Theorems 2 and 3]{martelHomologicalModelUq2022}}]\label{theoremmartel}
    Consider the map $g_{\underline{a}}$ defined by:
    \begin{align*}
        \VE{a_1}\otimes\dotsb\otimes\VE{a_n}&\lra \WEt{\underline{a}}=\bigoplus_mH_m^{BM}(\Conf{m}{\oD_n,\partial_-\oD_n};\Line_m)\\
        e_{m_1}\otimes\dotsb\otimes e_{m_n} &\longmapsto A(m_1,\dotsc,m_n).
    \end{align*}
    Then we have that:
    \begin{description}
        \item[(1)] $g_{\ua}$ is an isomorphism. In particular for $m\geq 0$,
        the $A(m_1,\dotsc,m_n)$ for $m_1+\dotsb+m_n=m$ form a basis of
        $H_m^{BM}(\Conf{m}{\oD_n,\partial_-\oD_n};\Line_m)$;
        \item[(2)] $g_{\ua}$ is a $\UE$-map, where the action on $\WEt{\underline{a}}$ is that of
        \Cref{subsubUaction};
        \item[(3)] $g_{\ua}$ commutes to the action of $PB_n=\PMod{\oD_n}$, where the action on
        the source is given by the operator $\mathcal{R}$ as in \Cref{braidgroupactionquantumgroup}
        and the action on the target is the Gauss-Manin connection as in \Cref{subsublocalsystemGaussManin}.
    \end{description}
\end{theorem}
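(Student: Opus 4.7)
The plan is to establish the three claims using a cell decomposition of the relative configuration space $\Conf{m}{\oD_n,\partial_-\oD_n}$ adapted to the segments $I_k$. For (1), note that $\oD_n$ deformation retracts, keeping punctures fixed, onto the ``comb'' $\partial_-\oD_n \cup I_1 \cup \cdots \cup I_n$. The induced retraction at the level of configuration spaces (or equivalently a Fox--Neuwirth--Fuks-type stratification of its preimage) produces a finite cell structure on $\Conf{m}{\oD_n,\partial_-\oD_n}$ with one top-dimensional open cell per composition $m = m_1 + \cdots + m_n$, corresponding to placing $m_k$ linearly ordered points on $I_k$; lower strata either cross $\partial_-\oD_n$ or collapse points. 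Checking that $A(m_1,\dotsc,m_n)$ represents, up to the invertible $q$-power normalization, the Borel--Moore fundamental class of the cell with parameters $(m_1,\dotsc,m_n)$ yields a basis of $\WE{\ua}{m}$ of cardinality $\binom{m+n-1}{n-1}$, matching the weight-$b$ subspace of $\bigotimes_i \VE{a_i}$ where $b = \sum_i a_i - 2m$.

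For (2), compatibility with $K$ and $\Kr$ is immediate from weight counting: by construction $A(\underline{m})$ lies in the $q^b$ eigenspace with $b = \sum_i(a_i - 2m_i)$, matching the coproduct action. For $E$, I would compute $\partial_m A(\underline{m})$ directly from the long exact sequence defining the differential. The boundary stratum meets the closure of each cell along exactly $n$ codimension-one pieces, one for each choice of segment $I_k$ whose lowest point falls to $\partial_-\oD_n$. Parallel-transporting each piece back to the reference configuration of $A(m_1,\dotsc,m_k-1,\dotsc,m_n)$ collects explicit monodromy factors of $\Line_m$, which combined with the $q$-prefactor in the definition of $A(\underline{m})$ should reproduce the action of $\Delta^{n-1}(E) = \sum_j 1^{\otimes(j-1)}\otimes E\otimes K^{\otimes(n-j)}$ on $e_{m_1}\otimes\cdots\otimes e_{m_n}$. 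For $\Fb{l}$, capping with $[B]_l$ inserts $l$ vertical arcs into the left band of $\oD_n$; expressing the result in the $A$-basis requires sliding these $l$ arcs onto the segments $I_k$, and the Leibniz-type expansion of these slides, weighted by the local system, reproduces $\Delta^{n-1}(\Fb{l})$ in the divided-power basis.

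For (3), I would reduce commutation with the entire pure braid group to commutation with a generating set, e.g. the squares of half-twists $H_{k,k+1}$ exchanging adjacent punctures $p_k, p_{k+1}$. A direct isotopy argument shows that $H_{k,k+1}$ carries the $m_{k+1}$ points on $I_{k+1}$ above and across the $m_k$ points on $I_k$; re-expanding the resulting cycle in the basis $\{A(\underline{m}')\}$ tallies the crossings via local-system monodromy and the orderings on the segments, and the answer should match, term by term, the matrix of $P\mathcal{R}$ on $\VE{a_k}\otimes\VE{a_{k+1}}$ from \Cref{definitionRmatrix,braidgroupactionquantumgroup}. As a shortcut, since $E$ and $\Fb{l}$ commute with $\mathcal{R}$ on the quantum-group side and, by (2), with the braid action on the homology side, one reduces to checking the identity on the highest-weight vector $e_0\otimes\cdots\otimes e_0$, where it amounts to a single $q^{H\otimes H/2}$ factor identified by weight considerations.

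The main obstacle is the careful $q$-bookkeeping in (2) and (3). The twist $q^{\sum_{i<j}a_im_j - \frac{1}{2}\sum_{i,j}a_im_j}$ built into $A(\underline{m})$ and the prefactors $q^{\frac{1}{2}\sum_i a_i}$ in \Cref{EactionE,FactionE} are engineered precisely so that the coproduct identities hold on the nose; verifying them requires tracking every monodromy contribution and every sign coming from reordering points along the segments. Once $\partial_m A(\underline{m})$ and the half-twist formula are computed, the full $\UE$- and $PB_n$-equivariance follow from the defining relations of $\UE$ and of $PB_n$.
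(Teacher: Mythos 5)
This statement is not proved in the paper: it is imported verbatim (up to normalization) from Martel's work, as the citation \cite[Theorems 2 and 3]{martelHomologicalModelUq2022} indicates, and the only argument the paper supplies is the short remark explaining why the change in the braid normalization (the $q^{-a_ia_{i+1}/2}$ correction and the choice of identification $\psi$ vs.\ $\psi'$) does not affect the conclusion. So there is no ``paper's own proof'' to match; what you have written is a reconstruction of Martel's argument.

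As such a reconstruction, the overall strategy is reasonable and close in spirit to Martel's: a Fox--Neuwirth type stratification adapted to the segments $I_k$ for (1), explicit computation of the boundary operator and the capping operator for (2), and reduction to the generator $e_0\otimes\cdots\otimes e_0$ via $\UE$-equivariance for (3). The shortcut in (3) is logically sound, given that the braid action commutes with $\UE$ on both sides and the tensor product of Verma modules is cyclic over the divided powers of $\overline{F}$. However, there are two genuine gaps. First, in (1) you only argue that the $A$-classes give the top-dimensional cells; this alone shows they \emph{span} $H_m^{BM}$ as a quotient of a chain group, not that they form a \emph{free} basis. Establishing freeness requires either a vanishing argument for the differential $C_{m+1}\to C_m$ in the semi-relative complex, or (as the paper does for the dual statement about $h_{\ua}$) an intersection pairing against the dual $B$-classes showing $(B(\underline m), A(\underline m'))=q^{f(\underline m)}\delta_{\underline m,\underline m'}$, which gives linear independence. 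Your sketch conflates ``lower strata cross $\partial_-\oD_n$'' with the actual nonvanishing of higher cells in the Fox--Neuwirth stratification, and for Borel--Moore homology the claimed retraction onto the comb must be proper, which you do not address. Second, in (2) the crux is the actual verification that the boundary and capping operators reproduce $\Delta^{n-1}(E)$ and $\Delta^{n-1}(\Fb{l})$, including all sign and $q$-power bookkeeping; you state that the prefactors ``should reproduce'' the coproduct, but this is exactly where the work lies, and it cannot be reduced away.
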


\begin{remark}
    Here, the $PB_n=\PMod{\oD_n}$-action is slightly different from that of \cite{martelHomologicalModelUq2022}.
    In Martel's paper, the action of the braid $\sigma_i$ on $\VE{a_1}\otimes\dotsb\otimes\VE{a_n}$ is corrected by
    $q^{-\frac{a_ia_{i+1}}{2}}$ and the action on the homological side is given by an identification
    $\psi':\phi_*\Line\simeq\Line$ different from $\psi$ of \Cref{subsublocalsystemGaussManin}.
    These two differences compensate each other.
\end{remark}

Let us now turn to the dual basis. For $m_i\geq 0$, denote by $J_{i,1},\dotsc,J_{i,m_i}$ parallel copies of the arc from $\partial_-\oD_n$
to itself making one turn around $p_i$, as in \Cref{B_classes}. With these we define $B$ classes dual to the $A$ classes.

\begin{definition}
    Let $m\geq 0$ and $m_1+\dotsb+m_n=m$ a partition. Denote by $B(\underline{m})=B(m_1,\dotsc,m_n)$ the homology class:
    \begin{multline*}
        q^{\sum_{i\leq j}a_im_j-\frac{1}{2}\sum_{i,j}a_im_j}[J_{1,1},\dotsc,J_{1,m_1},\dotsc,J_{n,1},\dotsc,J_{n,m_n},\gamma]\\\in H_m(\Conf{m}{\oD_n,\partial_-\oD_n};\Line_m)
    \end{multline*}
    where $\gamma$ is as in \Cref{B_classes}.
\end{definition}

\begin{figure}
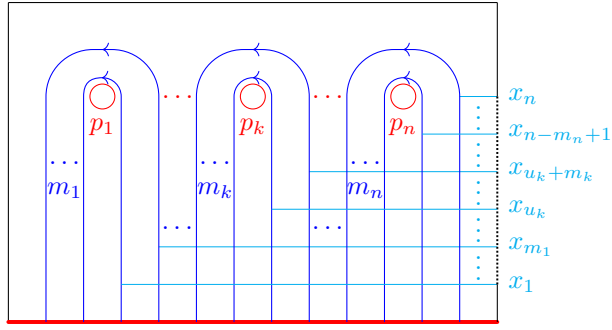

    \ctikzfig{B_classes}
    \caption{The cycle $[J_{1,1},\dotsc,J_{1,m_1},\dotsc,J_{n,1},\dotsc,J_{n,m_n},\gamma]$ defining $B(m_1,\dotsc,m_n)$.
    Here $u_k$ is $m_1+\dotsb+m_{k-1}$.}
    \label[figure]{B_classes}
\end{figure}

We get the following dual statements to the ones of \Cref{theoremmartel}.

\begin{theorem}
    Consider the map $h_{\ua}$ defined by:
    \begin{align*}
        \VF{a_1}\otimes\dotsb\otimes\VF{a_n}&\lra \WFt{\underline{a}}=\bigoplus_mH_m(\Conf{m}{\oD_n,\partial_-\oD_n};\Line_m)\\
        e_{m_1}\otimes\dotsb\otimes e_{m_n} &\longmapsto B(m_1,\dotsc,m_n).
    \end{align*}
    Then we have that:
    \begin{description}
        \item[(1)] $h_{\ua}$ is an isomorphism. In particular for $m\geq 0$,
        the $B(m_1,\dotsc,m_n)$ for $m_1+\dotsb+m_n=m$ form a basis of
        $H_m(\Conf{m}{\oD_n,\partial_-\oD_n};\Line_m)$;
        \item[(2)] $h_{\ua}$ is a $\UF$-map, where the action on $\WFt{\underline{a}}$ is that of
        \Cref{subsubUaction};
        \item[(3)] $h_{\ua}$ commutes to the action of $PB_n=\PMod{\oD_n}$, where the action on
        the source is given by the operator $\mathcal{R}$ as in \Cref{braidgroupactionquantumgroup}
        and the action on the target is the Gauss-Manin connection as in \Cref{subsublocalsystemGaussManin}.
    \end{description}
\end{theorem}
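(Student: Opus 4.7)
The plan is to deduce this dual statement directly from \Cref{theoremmartel} by exploiting the Poincaré duality pairing of \Cref{equationrelativedualitypairing}. First I would compute the geometric intersection numbers $\left(B(\underline{m}),A(\underline{m'})\right)$. The cycles $J_{i,k}$ making loops around the puncture $p_i$ and the vertical arcs $I_j$ ending on $\partial_-\oD_n$ have been arranged so that they intersect transversely and the configuration is combinatorially rigid: one checks that in the product space $\Conf{m}{\oD_n}$ the supports can only meet when $\underline{m}=\underline{m'}$, and in that case the intersection is a single configuration whose contribution to $(\cdot,\cdot)$ is a single power of $q$. Tracking the normalization factors in the definitions of $A(\underline{m})$ and $B(\underline{m})$, together with the monodromies of $\Line_m$ encoded by $q^{2a_k}$ and $-q^{-2}$, one should get exactly
\begin{equation*}
    (B(\underline{m}),A(\underline{m'}))=q^{f(\underline{m})}\prod_i\delta_{m_im'_i}
\end{equation*}
which is the bilinear pairing of \Cref{bilinearformquantum} applied to $e_{m_1}\otimes\dotsb\otimes e_{m_n}$ and $e_{m'_1}\otimes\dotsb\otimes e_{m'_n}$.

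From this matching, item \textbf{(1)} is immediate: by \Cref{theoremmartel}(1) the $A(\underline{m'})$ form a basis of $\WE{\ua}{m}$, and the pairing $(\cdot,\cdot)$ of \Cref{equationrelativedualitypairing} is perfect, so the $B(\underline{m})$ are (up to the explicit powers of $q$) the dual basis and hence form a basis of $\WF{\ua}{m}$; consequently $h_{\ua}$ is an isomorphism. For \textbf{(2)}, I would combine \Cref{theoremhomologicalUactionduality} with the algebraic duality of \Cref{bilinearformquantum}: the operators on $\WFt{\ua}$ and $\WEt{\ua}$ are mutually adjoint under $(\cdot,\cdot)$, and so are $E^{(l)}$, $\overline{F}^{(l)}$, $K$, $\Kr$ on the tensor products of Verma modules. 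Since $g_{\ua}$ is a $\UE$-map preserving the paired bases (by the computation above), its transpose with respect to the two perfect pairings is forced to be a $\UF$-map; but this transpose is exactly $h_{\ua}$, so \textbf{(2)} follows.

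For \textbf{(3)}, one uses that the Gauss-Manin connection of \Cref{subsublocalsystemGaussManin} preserves the intersection form $(\cdot,\cdot)$ because the isomorphisms $\psi:\phi_*\Line\simeq\Line$ implementing monodromy are flat and because $\mathrm{flip}$ commutes with the $\PMod{\oD_n}$-action. Hence the braid group acts by operators that are adjoint on the two sides of $(\cdot,\cdot)$. On the algebraic side, the $\mathcal{R}$-matrix action on $\bigotimes_i\VF{a_i}$ and on $\bigotimes_i\VE{a_i}$ are similarly adjoint for the form of \Cref{bilinearformquantum} (this is a standard fact about the quasi-triangular structure, easily checked from the formula in \Cref{definitionRmatrix} and the adjointness of $K$, $E$, $\overline{F}^{(l)}$). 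Transposing \Cref{theoremmartel}(3) via the two pairings gives \textbf{(3)}.

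The main obstacle, and really the only computational work, is the intersection number calculation at the start: one must carefully verify that all powers of $q$ and the sign $-q^{-2}$ coming from the local system $\Line_m$, combined with the prefactors $q^{\sum_{i<j}a_im_j-\frac12\sum a_im_j}$ and $q^{\sum_{i\leq j}a_im_j-\frac12\sum a_im_j}$, yield exactly $q^{f(\underline{m})}$ on the diagonal. The difference $\sum_i a_im_i$ between the two prefactors is precisely what accounts for the shift between the $E$-weighting and the $\overline{F}$-weighting when passing through the duality $\iota_{\alpha}\circ\pi_{\alpha}$, so the matching should come out cleanly once orientations and the ordering conventions of the cycles are fixed.
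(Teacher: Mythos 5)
Your proposal follows essentially the same route as the paper: compute the geometric intersection numbers $(B(\um),A(\underline{m'}))$ to match the bilinear form of \Cref{bilinearformquantum} and get \textbf{(1)}, then transpose \Cref{theoremmartel} through the two perfect pairings (\Cref{equationrelativedualitypairing} homologically, \Cref{bilinearformquantum} algebraically) to obtain \textbf{(2)} and \textbf{(3)}. Items \textbf{(1)} and \textbf{(2)} are fine.

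For \textbf{(3)}, however, you assert that \emph{``$\mathrm{flip}$ commutes with the $\PMod{\oD_n}$-action.''} That is false, and the failure is precisely the subtlety the argument must confront. Since $\mathrm{flip}$ is an orientation-reversing diffeomorphism of $(\oD_n,\partial_-\oD_n)$ onto $(\oD_n,\partial_+\oD_n)$, conjugation by $\mathrm{flip}$ sends the standard generator $\sigma_i$ to $\sigma_i^{-1}$. The intersection pairing of \Cref{equationintersectionform} (between $\partial_-$-relative and $\partial_+$-relative homology) \emph{is} $\PMod{\oD_n}$-invariant, but the pairing $(\cdot,\cdot)$ of \Cref{equationrelativedualitypairing} you actually use is obtained from it by pre-composing with $\mathrm{flip}$; the correct consequence is therefore $(\sigma_i x,\sigma_i^{-1}y)=(x,y)$, i.e.\ each $\sigma_i$ is \emph{auto-adjoint} for $(\cdot,\cdot)$, not that the braid action preserves the pairing. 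On the algebraic side one then needs the matching statement that $P\circ\mathcal{R}$ is auto-adjoint for the form of \Cref{bilinearformquantum}, which is not merely a ``standard quasi-triangularity fact'' read off from the adjointness of $K$, $E$, $\overline{F}^{(l)}$: it requires an explicit computation using the antipode identity $(S\otimes S)\mathcal{R}=\mathcal{R}$ (see \cite[(4.19)]{ohtsuki2001quantum}). Without both halves of this auto-adjointness statement, the transpose of \Cref{theoremmartel}(3) does not land on the desired $\PMod{\oD_n}$-equivariance of $h_{\ua}$. So the skeleton of \textbf{(3)} is right, but the claim about $\mathrm{flip}$ and the gloss on the $\mathcal{R}$-matrix each need to be replaced by the actual adjointness calculations.
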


\begin{proof}
    Consider the perfect duality pairing $(\cdot,\cdot):\WFt{\ua}\times\WEt{\ua}\ra \Z[\zeta_r]$.
    Then an intersection computation shows that:
    \begin{equation*}
        (B(\underline{m}),A(\underline{m'}))=q^{f(\um)}\delta_{\um,\underline{m'}}
    \end{equation*}
    with $f$ as in \Cref{bilinearformquantum}. \Cref{pairing} shows the geometric intersection
    of $B(\underline{m})$ with $\mathrm{flip}(A(\underline{m'}))$. One sees that it is empty if $\underline{m}\neq \underline{m'}$.
    If $\underline{m}= \underline{m'}$, consider the path $\gamma_{\um}$ in \Cref{pairing}.
    The value of the intersection is then the product of $\rho_m(\gamma_{\um})$ with $q^{\sum_{i\leq j}a_im_j-\frac{1}{2}\sum_{i,j}a_im_j}$
    and $q^{\sum_{i<j}a_im_j-\frac{1}{2}\sum_{i,j}a_im_j}$. This is computed to be $q^{f(\um)}$.
    This proves point \textbf{(1)} and identifies the homological form
    with the form of \Cref{bilinearformquantum}.

    \begin{figure}
        \ctikzfig{pairing}
        \caption{The intersection of $B(\underline{m})$ with $\mathrm{flip}(A(\underline{m'}))$ and the path $\gamma_{\um}$.}
        \label[figure]{pairing}
    \end{figure}

    Now, as mentioned in \Cref{theoremhomologicalUactionduality}, the actions of $\UF$ on $\WFt{\ua}$
    and of $\UE$ on $\WEt{\ua}$ are adjoint in the same way that the actions 
    of $\UF$ on $\VF{a_1}\otimes\dotsb\otimes\VF{a_n}$ and of $\UE$ on $\VE{a_1}\otimes\dotsb\otimes\VE{a_n}$
    are (see \Cref{bilinearformquantum}). This, together with \Cref{theoremmartel} implies point \textbf{(2)}.

    The homological intersection form of \Cref{equationintersectionform} is invariant for the action of $\PMod{\oD_n}$.
    Now, notice that conjugation by the map $\mathrm{flip}$ of \Cref{subsubUaction} is:
    \begin{align*}
        \PMod{\oD_n} &\lra \PMod{\oD_n} \\
        \sigma_i     &\longmapsto \sigma_i^{-1}.
    \end{align*}
    Hence for each $i$, $\sigma_i$ is auto-adjoint for $(\cdot,\cdot):\WFt{\ua}\times\WEt{\ua}\ra \Z[\zeta_r]$.
    This is also the case for the intersection form of \Cref{bilinearformquantum}, as the left-adjoint $(P\circ\mathcal{R})^*$ to the operator
    $P\circ\mathcal{R}$ is:
    \begin{align*}
        &\left(P\circ q^{H\otimes H/2}\sum_{n\geq 0}q^{\frac{n(n-1)}{2}}E^n\otimes \overline{F}^{(n)}\right)^* \\
        &= \left(\sum_{n\geq 0}q^{\frac{n(n-1)}{2}}\left((E^n\otimes K^n)(K^n\otimes K^{-n})(K^{-n}\otimes\overline{F}^{(n)})\right)^*\right)q^{H\otimes H/2}\circ P \\
        &= \left(\sum_{n\geq 0}q^{\frac{n(n-1)}{2}}(1\otimes \Eb{n})(K^n\otimes K^{-n})((\overline{F}^{(1)})^n\otimes 1)\right)q^{H\otimes H/2}\circ P \\
        &= P\circ \left(\sum_{n\geq 0}q^{\frac{n(n-1)}{2}}(S\otimes S)(\Eb{n}\otimes (\overline{F}^{(1)})^n)\right)q^{H\otimes H/2} \\
        &= P\circ (S\otimes S)\mathcal{R}\\
        &=P\circ\mathcal{R}\text{ by \cite[(4.19)]{ohtsuki2001quantum}.}
    \end{align*}
    Here $S$ is the antipode. We used that $\overline{F}^{(1)}\otimes 1$ is left-adjoint to $E\otimes K$
    and $1\otimes \Eb{n}$ is left-adjoint to $K^{-n}\otimes (\overline{F}^{(1)})^n$ (see \Cref{bilinearformquantum}).
    This, together with \Cref{theoremmartel} implies point \textbf{(3)}.
\end{proof}

\begin{proposition}\label{propositionhomologicalred}
    Let $m\geq 0$, there is a natural map:
    \begin{equation*}
        H_m(\Conf{m}{\oD_n,\partial_-\oD_n};\Line_m)\lra H_m^{BM}(\Conf{m}{\oD_n,\partial_-\oD_n};\Line_m).
    \end{equation*}
    Hence we have a natural homological map:
    \begin{equation*}
        \WFt{\ua}\lra \WEt{\ua}.
    \end{equation*}
    Then the following diagram of $(\Ufin,\Mod{\oD_n})$-bimodules commutes
    % https://q.uiver.app/#q=WzAsNCxbMCwxLCJcXFdGdHtcXHVhfSJdLFsxLDEsIlxcV0V0e1xcdWF9Il0sWzAsMCwiXFxWRnthXzF9XFxvdGltZXNcXGRvdHNiXFxvdGltZXNcXFZGe2Ffbn0iXSxbMSwwLCJcXFZFe2FfMX1cXG90aW1lc1xcZG90c2JcXG90aW1lc1xcVkV7YV9ufSJdLFsyLDMsIlxcbWF0aHJte3JlZH1fe1xcdWF9Il0sWzAsMV0sWzIsMCwiaF97XFx1YX0iLDJdLFszLDEsImdfe1xcdWF9Il1d
    \[\begin{tikzcd}
        {\VF{a_1}\otimes\dotsb\otimes\VF{a_n}} & {\VE{a_1}\otimes\dotsb\otimes\VE{a_n}} \\
        {\WFt{\ua}} & {\WEt{\ua}}
        \arrow["{\mathrm{red}_{\ua}}", from=1-1, to=1-2]
        \arrow[from=2-1, to=2-2]
        \arrow["{h_{\ua}}"', from=1-1, to=2-1]
        \arrow["{g_{\ua}}", from=1-2, to=2-2]
    \end{tikzcd}\]
    where $\mathrm{red}_{\ua}$ is $\otimes_i\mathrm{red}_{a_i}$ and $\mathrm{red}_{a_i}$ is the map of \Cref{imageFtoEquantum}.
\end{proposition}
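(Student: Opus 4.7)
The plan is to verify the identity of maps on the basis vectors $e_{\um} = e_{m_1} \otimes \dotsb \otimes e_{m_n}$. Using the formula from \Cref{imageFtoEquantum}, $(g_{\ua} \circ \mathrm{red}_{\ua})(e_{\um}) = c(\um) \, A(\um)$ where $c(\um) = \prod_i (q-q^{-1})^{m_i}[m_i]!\prod_{k=0}^{m_i-1}[a_i-k]$. The task is thus to show that the natural map $\nu: \WFt{\ua} \to \WEt{\ua}$ sends $B(\um) = h_{\ua}(e_{\um})$ to $c(\um)\, A(\um)$. I will proceed in three steps: (1) show that $\nu$ is $\Ufin$-equivariant; (2) show that $\nu(B(\um))$ is a scalar multiple of $A(\um)$ alone; (3) compute this scalar by induction.

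For step (1), the actions of $K^{\pm 1}$ and $\Kr$ on $\WF{\ua}{m}$ and $\WE{\ua}{m}$ are given by the same scalars $q^b$ and $\qbin{r}{b}$ by \Cref{KactionE} and \Cref{KactionF}, so $\nu$ is automatically equivariant for them. The action of $\overline{F}^{(1)}$ is defined on both sides as cup product with a cycle $[B]_1$ (\Cref{FactionE}, \Cref{FactionF}), and $\nu$ commutes with cup products by naturality of the map from singular to Borel-Moore homology. For $E$, the Borel-Moore connecting morphism $\partial_m$ of \Cref{EactionE} and the Poincaré-Lefschetz dual restriction map $\partial_m^1$ of \Cref{EactionF} are intertwined by $\nu$: this is the compatibility, under the duality associated to the decomposition $\partial\oD_n = \partial_-\oD_n \sqcup \partial_+\oD_n$, of the long exact sequence of $(\Conf{m}{\oD_n}, Y_m, Z_m)$ with its dual sequence in relative homology.

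For step (2), choose small pairwise disjoint disks $V_k$ around each puncture $p_k$, not meeting $\partial_-\oD_n$. The cycle $B(\um)$, as a class in $H_m(\Conf{m}{\oD_n}, \partial_-\oD_n; \Line_m)$, can be deformed rel $\partial_-\oD_n$ so that each loop $J_{k,j}$ shrinks into $V_k$: the portion of $J_{k,j}$ lying outside $V_k$, being homotopic rel its endpoints into $\partial_-\oD_n$, can be absorbed into the relative boundary. After this deformation, $B(\um)$ is represented by a cycle supported in $X_{\um} := \prod_k \Conf{m_k}{\bar V_k}$. By naturality, $\nu(B(\um))$ lies in the image of $H_m^{BM}(X_{\um}; \Line_m) \to H_m^{BM}(\Conf{m}{\oD_n}, \partial_-\oD_n; \Line_m)$. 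Among the basis $\{A(\um')\}_{|\um'|=|\um|}$, only $A(\um)$ factors through this image, since $A(\um')$ is supported in $X_{\um'}$ and the closed subsets $X_{\um'}$ for distinct $\um'$ of equal size meet only in lower-dimensional configurations. Hence $\nu(B(\um)) = c_{\um}\, A(\um)$ for some scalar $c_{\um} \in \Q(\zeta_r)$.

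For step (3), I proceed by induction on $\sum_i m_i$. The base case $\um = \underline 0$ gives $\nu(B(\underline 0)) = A(\underline 0)$ (both are the unit in $H_0$ of a point), so $c_{\underline 0} = 1$. For the induction step, apply $\Ufin$-equivariance of $\nu$ for $\overline{F}^{(1)}$: the identity $\nu(\overline{F}^{(1)} B(\um_0)) = \overline{F}^{(1)} \nu(B(\um_0))$, expanded via the coproduct $\Delta(\overline{F}^{(1)}) = \overline{F}^{(1)} \otimes 1 + K^{-1} \otimes \overline{F}^{(1)}$ together with the action formulas $\overline{F}^{(1)} e_{m_k} = e_{m_k+1}$ on $\hat V^{\overline F}_{a_k}$ and $\overline{F}^{(1)} e_{m_k} = [m_k+1](q-q^{-1})[a_k - m_k]\, e_{m_k+1}$ on $\hat V^E_{a_k}$, yields (after matching coefficients of $A(\um_0 + \delta_l)$, with $\delta_l$ the tuple with a single $1$ in the $l$-th position) the recurrence
\begin{equation*}
    c_{\um_0 + \delta_l} = c_{\um_0} \cdot [m_l^{(0)} + 1]\, (q-q^{-1})\,[a_l - m_l^{(0)}],
\end{equation*}
where $m_l^{(0)}$ is the $l$-th component of $\um_0$. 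Since each factor depends only on the component being incremented, the recurrence is path-independent and integrates to the desired formula $c_{\um} = c(\um)$. The main obstacle is step (2): justifying rigorously the deformation of $B(\um)$ into the stratum $X_{\um}$ in relative homology, and the claim that among the $A$ basis only $A(\um)$ factors through $H_m^{BM}(X_{\um}; \Line_m)$. This requires careful bookkeeping of the stratification of $\Conf{m}{\oD_n}$ by puncture content and of the various natural maps between the Borel-Moore homologies of the strata.
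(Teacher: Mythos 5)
Your Steps 1 and 3 follow the same overall strategy as the paper (establish that the natural map $\nu:\WFt{\ua}\to\WEt{\ua}$ is $\Ufin$-equivariant, then determine the scalar relating $\nu(B(\um))$ and $A(\um)$ by iterating $\overline{F}^{(1)}$ starting from the empty configuration). But the paper gets away without your Step~2: it first reduces the whole computation to $n=1$ by observing that $B(\um)$ is, by construction, a product of one-puncture arcs, and for $n=1$ the space $\WE{(a_1)}{m}$ is one-dimensional with basis $A(m)$, so $\nu(B(m))$ is automatically a scalar multiple of $A(m)$ and the $\overline{F}^{(1)}$-recursion alone pins it down. Your route instead works at arbitrary $n$ and therefore needs Step~2 to isolate the $A(\um)$-component before coefficient-matching in Step~3.

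Step 2, as written, does not hold up, and you are right to flag it as the main obstacle — but the issues are not merely a matter of bookkeeping. First, the deformation claim is wrong: the portion of each arc $J_{k,j}$ lying outside $V_k$ consists of two segments running from $\partial V_k$ down to $\partial_-\oD_n$; each has only one endpoint on $\partial_-\oD_n$, so it is not homotopic rel endpoints into $\partial_-\oD_n$ and cannot simply be absorbed into the relative boundary. Second, and more seriously, the assertion that $A(\um')$ is supported in $X_{\um'}$ is false: $A(\um')$ is represented on the product of configuration spaces of the half-open segments $I_k$ joining $\partial_-\oD_n$ to $p_k$, which are disjoint from the small closed disks $\bar V_k$, so these cycles are not supported near the punctures at all. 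Consequently the support argument, as given, does not establish that the image of $H_m^{BM}(X_{\um};\Line_m)$ in $\WE{\ua}{m}$ meets the $A$-basis span only in multiples of $A(\um)$, and Step 3's coefficient-matching does not go through. The clean fix is the paper's: exploit the monoidal structure of the $B$- and $A$-classes over the punctures to reduce to $n=1$, where the one-dimensionality of the target makes any support argument unnecessary.
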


\begin{proof}
    It is essentially sufficient to compute the image of $B(m_1,\dotsc,m_n)$ in the basis of $A$-classes and check that it matches $\mathrm{red}_{\ua}$.
    Due to the definition of $B$ classes, such a calculation immediately reduces to the case $n=1$.
    
    In this case we see from the definition of the quantum group actions that $\WFt{\ua}\ra \WEt{\ua}$ is a $\Ufin$ map.
    But $B(m)=(\Fb{1})^mB(0)$, and $B(0)$ is sent to $A(0)$. Hence $B(m)$ is sent to $(\Fb{1})^mA(0)$. This matches $\mathrm{red}_{a}$.
\end{proof}

%-------------------------------------------------------------------------------------------------------------------------

\subsubsection{A cellular complex for Borel-Moore homology}\label{subsubcellularcomplex}

Let $M$ be a manifold with some local system $\Line$. By a Borel-Moore cellular decomposition for $M$, we mean a finite decomposition of $M$ into disjoint open balls.
Consider such a decomposition and denote by $B_{n,1},\dotsc B_{n,i_n}$ the open balls of dimension $n$.
Then we have a cellular complex:
\begin{equation}\label{equationBMcomplex}
    0\lla \bigoplus_iH_0^{BM}(B_{0,i};\Line)\lla \dotsb\lla \bigoplus_iH_n^{BM}(B_{n,i};\Line)\lla \dotsb
\end{equation}
computing the Borel-Moore homology of $M$ with coefficients in $\Line$. To see this, filter the complex $C_*^{lf}(M;\Line)$ of
locally finite singular chains on $M$ as follows:
\begin{equation*}
    F_kC_*^{lf}(M;\Line) = C_*^{lf}(\bigcup_{n\leq k,\: i}B_{n,i};\Line).
\end{equation*}
Then the spectral sequence $E^r_{p,q}$ associated to this filtration has $E^1$ page:
\begin{equation*}
    E^1_{p,q}= H_{p+q}^{BM}(\bigcup_{n\leq p,\: i}B_{n,i},\bigcup_{n<p,\: i}B_{n,i};\Line).
\end{equation*}
By \Cref{BMpairs}:
\begin{align*}
    H_{p+q}^{BM}(\bigcup_{n\leq p,\: i}B_{n,i},\bigcup_{n<p,\: i}B_{n,i};\Line)&=H_{p+q}^{BM}(\bigcup_{i}B_{p,i};\Line) \\
                                                                         &=\begin{cases}
                                                                            \bigoplus_i H_{p}^{BM}(B_{p,i};\Line) &\text{ if }q=0 \\
                                                                            0&\text{ otherwise.}
                                                                         \end{cases}
\end{align*}
Now this spectral sequence converges to $H_*^{BM}(M;\Line)$, hence the claim.

We will now describe a procedure to obtain such Borel-Moore cellular decompositions for the configuration
spaces $\Conf{m}{D_n}$. The procedure is actually more general: from a Borel-Moore cellular decomposition of a manifold $M$,
one obtains a Borel-Moore cellular decomposition for $\Conf{m}{M}$. But we will only detail it for $D_n$.

Let us begin with a decomposition of $D_n$. $D_n$ is $(0,1)^2\setminus\{p_1,\dotsc,p_n\}$ with $p_k=\left(\frac{k}{n+1},\frac{1}{2}\right)$.
Set $b_k=(\frac{k}{n+1},0)$. Define $I_k$ to be the segment $(p_k,b_k)$ and $S$ to be $D_n\setminus(I_1\cup\dotsb\cup I_n)$.
Since $S\simeq (0,1)^2$, the subsets $I_1,\dotsc,I_n,S$ form a Borel-Moore cellular decomposition of $D_n$.

Fix $m\geq 0$. As a set, $\Conf{m}{D_n}$ has the following decomposition:
\begin{equation}\label{equationdecompositionconfDn}
    \Conf{m}{D_n}=\bigsqcup_{m_1+\dotsb+m_n+s=m}\Conf{m_1}{I_1}\times \dotsb \times\Conf{m_n}{I_n}\times \Conf{s}{S}.
\end{equation}
As mentioned in \Cref{symplexBMhomology}, each $\Conf{m_i}{I_i}$ is on open ball of dimension $m_i$,
hence $\Conf{m_1}{I_1}\times \dotsb \times\Conf{m_n}{I_n}$ is an open ball of dimension $m-s$.
However, $\Conf{s}{S}$ may not be an open ball. Hence we shall decompose it into open balls.

Now as $S\simeq (0,1)^2$, we have:
\begin{equation*}
    \Conf{s}{S}\simeq \bigsqcup_{k,s_1+\dotsb +s_k=s}\Conf{s_1}{(0,1)}\times\dotsb\times\Conf{s_k}{(0,1)}\times\Conf{k}{(0,1)}.
\end{equation*}
where the last map is given by separating the points $x_1,\dotsc,x_s$ according their second coordinates.
This decomposition is sometimes called the Fox-Neuwirth-Fuks stratification of $\R^2$ and is described in \cite{foxBraidGroups1962a}.
This gives a Borel-Moore cellular decomposition of $\Conf{s}{S}$.
See \Cref{cell_of_S} for an example. Together with \Cref{equationdecompositionconfDn},
this gives a Borel-Moore cellular decomposition of $\Conf{m}{D_n}$:

\begin{figure}
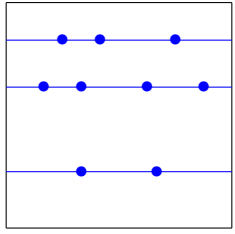

    \ctikzfig{cell_of_S}
    \caption{An element in the cell $\left(\Conf{3}{(0,1)}\times\Conf{4}{(0,1)}\times\Conf{2}{(0,1)}\right)\times\Conf{3}{(0,1)}$
    of $\Conf{9}{S}$.}
    \label[figure]{cell_of_S}
\end{figure}

\begin{proposition}
    Let $m,n\geq 0$, then $\Conf{m}{D_n}$ has a Borel-Moore cellular decomposition where cells are indexed
    by non-negative integers:
    \begin{equation*}
        (\um,\us)=(m_1,\dotsc,m_n\mid s_1,\dotsc,s_k)
    \end{equation*}
    summing to $m$. The cell $(\um\mid \us)$ is:
    \begin{equation*}
        \Conf{m_1}{I_1}\times \dotsb \times\Conf{m_n}{I_n}\times\Conf{s_1}{(0,1)}\times\dotsb\times\Conf{s_k}{(0,1)}\times\Conf{k}{(0,1)}
    \end{equation*}
    and has dimension $m+k$.
\end{proposition}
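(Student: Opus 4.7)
The plan is to combine two Borel--Moore cellular decompositions: the coarse decomposition $D_n = I_1 \sqcup \dotsb \sqcup I_n \sqcup S$ established just above, and the Fox--Neuwirth--Fuks stratification of $S \simeq (0,1)^2$ described in the paragraph preceding the proposition. First I would observe that every configuration $\{z_1,\dotsc,z_m\} \in \Conf{m}{D_n}$ partitions uniquely into the points lying on $I_1,\dotsc,I_n$ and those lying in $S$, giving the set-theoretic decomposition already recorded in the displayed equation for $\Conf{m}{D_n}$. The configuration of the $s = m - (m_1+\dotsb+m_n)$ points in $S$ is then further stratified by grouping them by their second coordinate: there are $k$ distinct heights, with $s_j$ points at the $j$-th height (ordered from bottom to top), giving a unique sequence $(s_1,\dotsc,s_k)$ with $s_1+\dotsb+s_k = s$. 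This furnishes the claimed disjoint decomposition indexed by $(\um \mid \us)$.

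Next I would verify that each stratum is an open ball of the stated dimension. By \Cref{symplexBMhomology} and the identification $I_i \simeq (0,1)$, each factor $\Conf{m_i}{I_i}$ is diffeomorphic to an open $m_i$-simplex, hence an open ball of dimension $m_i$. Similarly each $\Conf{s_j}{(0,1)}$ is an open ball of dimension $s_j$, and the final factor $\Conf{k}{(0,1)}$ parametrizing the $k$ distinct heights is an open ball of dimension $k$. A product of open balls is an open ball, so the total cell has dimension
\begin{equation*}
    m_1 + \dotsb + m_n + s_1 + \dotsb + s_k + k = m + k,
\end{equation*}
as claimed.

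Finally I would note that the decomposition is a Borel--Moore cellular decomposition in the sense introduced at the start of \Cref{subsubcellularcomplex}: it is finite (since there are only finitely many compositions of $m$ of each length), and every cell is an open ball. The main (and only real) point requiring care is the uniqueness of the grouping of the points in $S$ by height; this follows because the heights of the $s$ points in $S$ form a well-defined multiset whose collapse to distinct values determines the composition $(s_1,\dotsc,s_k)$ unambiguously. There is no subtle topological obstacle: the result is a direct product-of-strata argument combining \Cref{symplexBMhomology} with the Fox--Neuwirth--Fuks stratification of the plane.
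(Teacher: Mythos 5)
Your proof is correct and takes essentially the same approach as the paper: the proposition merely packages the two preceding paragraphs (the coarse decomposition of $D_n$ into $I_1,\dotsc,I_n,S$ followed by the Fox--Neuwirth--Fuks stratification of $\Conf{s}{S}$), and the paper gives no further proof beyond that discussion. Your dimension count and finiteness check are both fine, the only implicit point being that each $s_j$ must be $\geq 1$ (as is automatic in the Fox--Neuwirth--Fuks stratification, where $s_j$ counts points sharing a distinct height), which is what bounds $k$ and makes the decomposition finite.
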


For examples of cells, see \Cref{example_cell}.

\begin{figure}
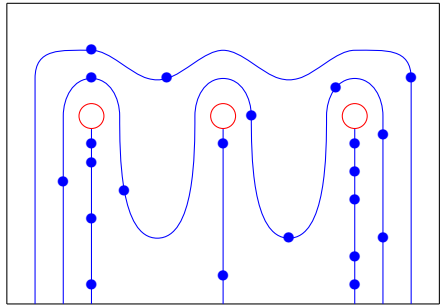

    \ctikzfig{example_cell}
    \caption{An element in the cell $(4,2,5\mid 3,8)$ of $\Conf{22}{D_3}$.}
    \label[figure]{example_cell}
\end{figure}

Let us now use this decomposition to compute the groups $H_m^{BM}(\Conf{m}{D_n};\Line_m)$.
Notice that by \Cref{BMpairs}, we have a map:
\begin{equation*}
    H_m^{BM}(\Conf{m}{\oD_n,\partial_-\oD_n};\Line_m)\ra H_m^{BM}(\Conf{m}{\oD_n,\partial\oD_n};\Line_m)\simeq H_m^{BM}(\Conf{m}{D_n};\Line_m).
\end{equation*}

\begin{theorem}\label{theoremBMhomology}
    Let $n\geq 1$ and $a_1,\dotsc,a_n\in\Lset$. Let $m\geq 0$ and $b\in\Z$ such that $m=\frac{a_1+\dotsb+a_n-b}{2}$.
    Then we have a commutative diagram:
    % https://q.uiver.app/#q=WzAsNCxbMCwwLCJIX21ee0JNfShcXENvbmZ7bX17XFxvRF9uLFxccGFydGlhbF8tXFxvRF9ufTtcXExpbmVfbV5cXG1hdGhjYWx7T30pIl0sWzEsMCwiSF9tXntCTX0oXFxDb25me219e0Rfbn07XFxMaW5lX21eXFxtYXRoY2Fse099KSJdLFsxLDEsIlxcbGVmdChcXGJpZ290aW1lc19pXFxoYXR7Vn1ee0V9X3thX2l9XFxyaWdodClfYi8oXFxtYXRocm17aW19XFw6IFxcb3ZlcmxpbmV7Rn1eeygxKX0rXFxtYXRocm17aW19XFw6IFxcb3ZlcmxpbmV7Rn1eeyhyKX0pIl0sWzAsMSwiXFxsZWZ0KFxcYmlnb3RpbWVzX2lcXGhhdHtWfV57RX1fe2FfaX1cXHJpZ2h0KV9iIl0sWzMsMiwiIiwwLHsic3R5bGUiOnsiaGVhZCI6eyJuYW1lIjoiZXBpIn19fV0sWzAsMSwiIiwwLHsic3R5bGUiOnsiaGVhZCI6eyJuYW1lIjoiZXBpIn19fV0sWzAsMywiIiwxLHsic3R5bGUiOnsidGFpbCI6eyJuYW1lIjoiYXJyb3doZWFkIn19fV0sWzEsMiwiIiwxLHsic3R5bGUiOnsidGFpbCI6eyJuYW1lIjoiYXJyb3doZWFkIn19fV1d
    \[\begin{tikzcd}
        {H_m^{BM}(\Conf{m}{\oD_n,\partial_-\oD_n};\Line_m)} & {H_m^{BM}(\Conf{m}{D_n};\Line_m)} \\
        {\left(\bigotimes_i\hat{V}^{E}_{a_i}\right)_b} & {\left(\bigotimes_i\hat{V}^{E}_{a_i}\right)_b/(\mathrm{im}\: \overline{F}^{(1)}+\mathrm{im}\: \overline{F}^{(r)})}
        \arrow[two heads, from=2-1, to=2-2]
        \arrow[two heads, from=1-1, to=1-2]
        \arrow[tail reversed, from=1-1, to=2-1]
        \arrow[tail reversed, from=1-2, to=2-2]
    \end{tikzcd}\]
    where two-sided arrows are isomorphisms. The module $\left(\bigotimes_i\hat{V}^{E}_{a_i}\right)_b$ is the weight $b$ space of $\bigotimes_i\hat{V}^{E}_{a_i}$,
    as defined in \Cref{weightspaces}.
\end{theorem}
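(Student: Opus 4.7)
The plan is to apply the Borel-Moore cellular complex of \Cref{subsubcellularcomplex} to compute $H_m^{BM}(\Conf{m}{D_n};\Line_m)$. By the dimension formula $\dim(\um\mid\us)=m+k$, the $m$-cells are exactly $(m_1,\dotsc,m_n\mid\emptyset)$ with $\sum m_i=m$, and the Borel-Moore fundamental class of $(m_1,\dotsc,m_n\mid\emptyset)=\Conf{m_1}{I_1}\times\dotsb\times\Conf{m_n}{I_n}$ twisted by $\Line_m$ matches, up to sign and phase, the image of the class $A(m_1,\dotsc,m_n)$ under the natural map $H_m^{BM}(\Conf{m}{\oD_n,\partial_-\oD_n};\Line_m)\to H_m^{BM}(\Conf{m}{D_n};\Line_m)$. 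Combined with \Cref{theoremmartel}, which identifies these $A$-classes with the basis $e_{m_1}\otimes\dotsb\otimes e_{m_n}$ of $(\bigotimes_i\VE{a_i})_b$ via $g_{\ua}$, this yields the left vertical isomorphism and the identification $C_m\simeq(\bigotimes_i\VE{a_i})_b$; the top horizontal map then becomes the projection onto the quotient by $\mathrm{im}\: d^1$.

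The heart of the proof is to show that $\mathrm{im}\: d^1 = \mathrm{im}\: \Fb{1} + \mathrm{im}\: \Fb{r}$ under this identification. The $(m+1)$-cells are $(m_1,\dotsc,m_n\mid s_1)$ with $s_1\geq 1$ and $\sum m_i+s_1=m$, and I would compute $d^1$ on each by analyzing the codimension-1 degenerations where the column collapses onto some $I_i$, carefully tracking the monodromy of $\Line_m$ as points encircle the punctures. For $s_1=1$, the two boundary contributions coming from the column point hitting $I_i$ from the left and from the right reproduce, after summing with the $K^{-1}$ weights picked up along the basepoint path, the coproduct $\Delta^{(n-1)}(\Fb{1})(e_{m_1}\otimes\dotsb\otimes e_{m_n})$ computed in $(\bigotimes_i\VE{a_i})_{b+2}$, yielding precisely $\mathrm{im}\:\Fb{1}$. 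For $s_1=r$, a more delicate analogous computation, which crucially uses the vanishing $[r]=0$ to extract a nontrivial divided-power coefficient, reconstructs $\Delta^{(n-1)}(\Fb{r})$. For $1<s_1<r$ the relation $\Fb{s_1}=(\Fb{1})^{s_1}/[s_1]!$ (valid since $[s_1]!\ne 0$) shows that these contributions already lie in $\mathrm{im}\:\Fb{1}$, and for $s_1>r$ the higher divided powers decompose as products landing in $\mathrm{im}\:\Fb{1}+\mathrm{im}\:\Fb{r}$.

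Commutativity of the diagram then follows, since the top map sends $A(\um)$ on the left to the cellular $A$-cycle on the right, whose class modulo $\mathrm{im}\: d^1$ is $e_{m_1}\otimes\dotsb\otimes e_{m_n}$ modulo $\mathrm{im}\:\Fb{1}+\mathrm{im}\:\Fb{r}$, matching the composition through the bottom edge. The main technical obstacle I anticipate is the $s_1=r$ boundary computation: a direct codimension-1 degeneration of a column of $r$ points onto the $I_i$'s requires all $r$ points to collapse simultaneously, which is a priori codimension $r$, so extracting the $\Fb{r}$ coefficient demands either a refinement of the cellular decomposition of \Cref{subsubcellularcomplex} (for instance further stratifying each column by how its $x$-coordinates partition among the sub-intervals of $S$) or a more subtle argument combining the contributions of several intermediate cells. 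This is precisely where the odd root-of-unity hypothesis $q=\zeta_r^2$ enters essentially: the vanishing $[r]=0$ breaks the relation $\Fb{r}=(\Fb{1})^r/[r]!$ and makes $\Fb{r}$ an independent generator of the image of $d^1$.
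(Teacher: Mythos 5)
You correctly identify the cellular decomposition of \Cref{subsubcellularcomplex}, observe that it has no cells below degree $m$, identify the $m$-cells with the $A$-classes, and reduce the theorem to showing that the image of $d^1$ equals $\mathrm{im}\:\Fb{1}+\mathrm{im}\:\Fb{r}$. This framework matches the paper exactly. But the central computation — that $d^1$ sends the $(m+1)$-cell $(m_1,\dotsc,m_n\mid s)$ to $\Fb{s}A(m_1,\dotsc,m_n)$ up to an invertible constant — is not established, and the route you propose would not deliver it. You attempt a direct boundary analysis of each $(m+1)$-cell, tracking monodromy as the column lands on the $I_i$; you leave $s_1=r$ unresolved, and your argument for $1<s_1<r$ is circular, since invoking $\Fb{s_1}=(\Fb{1})^{s_1}/[s_1]!$ only helps after you already know that the boundary of that cell is $\Fb{s_1}$ (and then it is not even needed: what one wants is the boundary formula for each $s$, after which all terms are in $\mathrm{im}\:\Fb{1}+\mathrm{im}\:\Fb{r}$).

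The paper sidesteps the direct boundary computation entirely by introducing an auxiliary cellular complex $C_*^+$ for $\Conf{m}{\oD_n\setminus\partial_-\oD_n}$, obtained by adding one extra $1$-cell $\mathrm{int}(\partial_+\oD_n)$; its degree-$m$ homology is $H_m^{BM}(\Conf{m}{\oD_n,\partial_-\oD_n};\Line_m)$, which Martel's theorem already identifies with $\bigotimes_i\VE{a_i}$ with its $\UE$-action. There is an obvious lift $c\mapsto c^+=(m_1,\dotsc,m_n\mid s\mid 0)$ of chains from $C_*$ to $C_*^+$, and the boundary of the lifted cell splits as $\partial(c^+)=\pm(m_1,\dotsc,m_n\mid 0\mid s)+(\partial c)^+$, the new term arising when the column slides onto $\partial_+\oD_n$. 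Since $[\partial(c^+)]=0$ in $H_m(C_*^+)$, this equates $[(\partial c)^+]$ with $\mp[(m_1,\dotsc,m_n\mid 0\mid s)]$, and the latter is visibly $\Fb{s}A(m_1,\dotsc,m_n)$ up to invertible constant, because the cell $(m_1,\dotsc,m_n\mid 0\mid s)$ is exactly the geometric picture of the $\Fb{s}$-action — a block of $s$ points near $\partial_+\oD_n$ wedged onto $A(m_1,\dotsc,m_n)$. No local monodromy bookkeeping is required, and the argument is uniform in $s$. This relative trick — leveraging the already-understood $H^{BM}_m(\Conf{m}{\oD_n,\partial_-\oD_n};\Line_m)$ as a bookkeeping device for a complex with one extra cell — is the essential idea missing from your proposal.

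Finally, your diagnosis of the difficulty is off. The degeneration of a column of $s_1$ points onto $I_i$ is codimension $1$, not $s_1$: as the column's $x$-coordinate converges, only one degree of freedom is lost, while the $y$-coordinates carry over to positions on $I_i$. What would make a direct computation painful is not codimension but the combinatorics of interleaving the arriving column points with the $m_i$ points already on $I_i$ and the comparison of the left- and right-hand approaches with their monodromy phases. The auxiliary-complex argument bypasses exactly this.
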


\begin{proof}
    The decomposition of the space $\Conf{m}{D_n}$ we consider has no $(m-1)$-cells.
    Hence we need only compute the image of the $(m+1)$-cells into the free module over the $m$-cells.
    The $m$ cells are indexed by $(m_1,\dotsc,m_n)$ summing to $m$. By definition of the cells and the $A$ classes,
    the cell corresponding to $(m_1,\dotsc,m_n)$ is the image of $A(m_1,\dotsc,m_n)$.

    Now consider a $(m+1)$-cell $(m_1,\dotsc,m_n\mid s)$. We want to show that its image in $H_m^{BM}(\Conf{m}{D_n};\Line_m)$
    is the one of $\Fb{s}A(m_1,\dotsc,m_n)$ up to a non-zero factor. For that, we use a cellular decomposition
    of $\Conf{m}{\oD_n\setminus\partial_-\oD_n}$. Notice that $\oD_n\setminus\partial_-\oD_n$ has the decomposition
    $I_1,\dotsc,I_n,\mathrm{int}(\partial_+\oD_n),S$, ie. with an extra $1$-cell compared to $D_n$.
    So, by proceeding as in the case of $\Conf{m}{D_n}$, we obtain a cellular decomposition of $\Conf{m}{\oD_n\setminus\partial_-\oD_n}$
    indexed by $(\um\mid \us\mid l)=(m_1,\dotsc,m_n\mid s_1,\dotsc,s_k\mid l)$ where $l$ is the number of points on $\mathrm{int}(\partial_+\oD_n)$
    in each element of the cell.
    We have a restriction map from the cellular complex $C_*^+$ of \Cref{equationBMcomplex} for $\Conf{m}{\oD_n\setminus\partial_-\oD_n}$ to
    the complex $C_*$ for $\Conf{m}{D_n}$, that induces the map:
    \begin{equation*}
        H_m^{BM}(\Conf{m}{\oD_n,\partial_-\oD_n};\Line_m)\ra H_m^{BM}(\Conf{m}{D_n};\Line_m)
    \end{equation*}
    when taking homology. Moreover, there is a lift $C_*\ra C^+_*$, $c\mapsto c^+$ given by $(\um,\us)\mapsto (\um,\us,0)$.
    Hence, to compute the image of a cell $c=(m_1,\dotsc,m_n\mid s)$ in $C_m$,
    we need only compute the image of its lift $c^+=(m_1,\dotsc,m_n\mid s\mid 0)$ in $C^+_m$.
    Now, one sees that:
    \begin{equation*}
        \partial (c^+) = \pm(m_1,\dotsc,m_n\mid 0\mid s) + (\partial c)^+.
    \end{equation*}
    Hence, when we look at the image of $\partial (c^+)$ in $H_m^{BM}(\Conf{m}{\oD_n,\partial_-\oD_n};\Line_m)$,
    we have $[\partial (c^+)]=0$ and hence $[(\partial c)^+]=\pm [(m_1,\dotsc,m_n\mid 0\mid s)]$.
    Now, $[(m_1,\dotsc,m_n\mid 0\mid s)]$ is $\Fb{s}A(m_1,\dotsc,m_n)$ up to an invertible constant and $[(\partial c)^+]$ is $\partial c$
    under the identification $C_m\simeq H_m^{BM}(\Conf{m}{\oD_n,\partial_-\oD_n};\Line_m)$. This concludes.
\end{proof}

\begin{remark}
    The complex associated to this decomposition for the local system $\Line_m$ is
    the weight $m$ part of the Hochschild complex of the sub-algebra of $\UE$ generated by the $\Fb{k}$ for $k\geq 1$
    with coefficients in $\VE{a_1}\otimes\dotsb\otimes\VE{a_n}$.
    For a generic value of $q$, the divided powers $\Fb{k}$ can be replaced by powers $\overline{F}^k$,
    and we recover the Hochschild complex described by Schechtman and Varchenko in \cite[3.]{schechtmanQuantumGroupsHomology1991}.
\end{remark}

\begin{corollary}\label{corollaryabsolutehomology}
    Under the hypothesis of \Cref{theoremBMhomology}, we have a commutative diagram:
    % https://q.uiver.app/#q=WzAsNCxbMSwwLCJIX20oXFxDb25me219e1xcb0RfbixcXHBhcnRpYWxfLVxcb0Rfbn07XFxMaW5lX21eXFxtYXRoY2Fse099KSJdLFsxLDEsIlxcbGVmdChcXGJpZ290aW1lc19pXFxoYXR7Vn1ee1xcb3ZlcmxpbmV7Rn19X3thX2l9XFxyaWdodClfYiJdLFswLDAsIkhfbShcXENvbmZ7bX17RF9ufTtcXExpbmVfbV5cXG1hdGhjYWx7T30pIl0sWzAsMSwiXFxsZWZ0KFxcYmlnb3RpbWVzX2lcXGhhdHtWfV57XFxvdmVybGluZXtGfX1fe2FfaX1cXHJpZ2h0KV9iXFxjYXBcXG1hdGhybXtrZXJ9XFw6IEVeeygxKX1cXGNhcFxcbWF0aHJte2tlcn1cXDogRV57KHIpfSJdLFswLDEsIiIsMSx7InN0eWxlIjp7InRhaWwiOnsibmFtZSI6ImFycm93aGVhZCJ9fX1dLFsyLDMsIiIsMCx7InN0eWxlIjp7InRhaWwiOnsibmFtZSI6ImFycm93aGVhZCJ9fX1dLFsyLDAsIiIsMCx7InN0eWxlIjp7InRhaWwiOnsibmFtZSI6Imhvb2siLCJzaWRlIjoidG9wIn19fV0sWzMsMSwiIiwwLHsic3R5bGUiOnsidGFpbCI6eyJuYW1lIjoiaG9vayIsInNpZGUiOiJ0b3AifX19XV0=
    \[\begin{tikzcd}
        {H_m(\Conf{m}{D_n};\Line_m)} & {H_m(\Conf{m}{\oD_n,\partial_-\oD_n};\Line_m)} \\
        {\left(\bigotimes_i\hat{V}^{\overline{F}}_{a_i}\right)_b\cap\mathrm{ker}\: E^{(1)}\cap\mathrm{ker}\: E^{(r)}} & {\left(\bigotimes_i\hat{V}^{\overline{F}}_{a_i}\right)_b}
        \arrow[tail reversed, from=1-2, to=2-2]
        \arrow[tail reversed, from=1-1, to=2-1]
        \arrow[hook, from=1-1, to=1-2] 
        \arrow[hook, from=2-1, to=2-2]
    \end{tikzcd}\]
    where two-sided arrows are isomorphisms.
\end{corollary}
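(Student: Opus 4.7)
The plan is to deduce the corollary from \Cref{theoremBMhomology} by Poincaré-Lefschetz duality, exploiting the perfect intersection pairing
$(\cdot,\cdot):\WFt{\ua}\times\WEt{\ua}\to\Z[\zeta_r]$
of \Cref{equationrelativedualitypairing} together with the algebraic adjointness between the $\UF$-action on $\WFt{\ua}$ and the $\UE$-action on $\WEt{\ua}$ established in \Cref{theoremhomologicalUactionduality}. The right vertical isomorphism of the target diagram is already given by the map $h_{\ua}$ of \Cref{subsubbases}, so the task is to identify the image of the natural inclusion-induced map $\iota_*:H_m(\Conf{m}{D_n};\Line_m)\to H_m(\Conf{m}{\oD_n,\partial_-\oD_n};\Line_m)$ with the subspace $\ker E\cap\ker E^{(r)}$ of the weight $b$ component, and to verify that $\iota_*$ is injective.

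The core claim is that $\iota_*$ is Poincaré-Lefschetz dual, under $(\cdot,\cdot)$, to the surjection $\pi_*:H_m^{BM}(\Conf{m}{\oD_n,\partial_-\oD_n};\Line_m)\twoheadrightarrow H_m^{BM}(\Conf{m}{D_n};\Line_m)$ of \Cref{theoremBMhomology}. Via \Cref{BMpairs}, the source of $\pi_*$ is identified with $H_m^{BM}(\Conf{m}{\oD_n}\setminus Y_m^1;\Line_m)$; both $\iota_*$ and $\pi_*$ are then induced by the open inclusion $\Conf{m}{D_n}\hookrightarrow\Conf{m}{\oD_n}\setminus Y_m^1$ (whose complement is the closed subset $Y_m^+$ of configurations touching $\partial_+\oD_n$), and the usual naturality of Poincaré duality with respect to open inclusions gives the adjointness $(\iota_*\alpha,\beta)=(\alpha,\pi_*\beta)$. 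Perfectness of $(\cdot,\cdot)$ then yields $\myim{\iota_*}=(\ker\pi_*)^{\perp}$ and the injectivity of $\iota_*$ from the surjectivity of $\pi_*$.

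To finish, $\ker\pi_*$ is by \Cref{theoremBMhomology} the submodule $(\mathrm{im}\,\overline{F}^{(1)}+\mathrm{im}\,\overline{F}^{(r)})_b$ of $(\bigotimes_i\hat{V}^E_{a_i})_b$; since \Cref{theoremhomologicalUactionduality} asserts that $E^{(l)}$ is left-adjoint to $\overline{F}^{(l)}$ under the pairing, the orthogonal of $\mathrm{im}\,\overline{F}^{(l)}$ is $\ker E^{(l)}$, so $(\ker\pi_*)^{\perp}$ is exactly $\ker E\cap\ker E^{(r)}$ in the weight $b$ component of $\bigotimes_i\hat{V}^{\overline{F}}_{a_i}$. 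The commutativity of the diagram then follows from the naturality of $h_{\ua}$. The main obstacle will be the duality identification in the second paragraph, which requires a careful Poincaré-Lefschetz argument for the non-compact pair $(\Conf{m}{\oD_n},Y_m^1)$; upgrading the rational equality of images to the integral one additionally requires checking that $\ker\pi_*$ is $\Z[\zeta_r]$-saturated in $\WEt{\ua}$, which in turn would follow from the torsion-freeness of $H_m^{BM}(\Conf{m}{D_n};\Line_m)$.
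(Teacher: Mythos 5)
Your proposal follows the same route as the paper: the corollary is the Poincaré--Lefschetz dual of \Cref{theoremBMhomology}, with the adjointness $(\iota_*\alpha,\beta)=(\alpha,\pi_*\beta)$ of the inclusion on ordinary homology and the restriction on Borel--Moore homology matched, via the perfect pairing of \Cref{equationrelativedualitypairing} and the algebraic pairing of \Cref{bilinearformquantum}, with the adjointness of $E^{(l)}$ and $\overline{F}^{(l)}$. That identifies $\myim{\iota_*}$ with $(\ker\pi_*)^\perp=\bigl(\ker E^{(1)}\cap\ker E^{(r)}\bigr)_b$, as required.

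The worry in your last paragraph is, however, misplaced: no saturation or torsion-freeness hypothesis on $\ker\pi_*$ or on $H_m^{BM}(\Conf{m}{D_n};\Line_m)$ is needed to get the equality over $\Z[\zeta_r]$. Two points resolve it. First, for any submodule $N$ of a free $\Z[\zeta_r]$-module $W^\vee$ one has $\Hom(W^\vee/N,\Z[\zeta_r])=\ker\bigl(\Hom(W^\vee,\Z[\zeta_r])\to\Hom(N,\Z[\zeta_r])\bigr)$, which under the perfect outer pairing is exactly $N^\perp$ in $W$; and since $\Z[\zeta_r]$ is a domain, $N^\perp$ is unchanged when one replaces $N$ by its saturation, so the (possible) failure of saturation is invisible. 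Second, the map $V\to\Hom(V^\vee,\Z[\zeta_r])$ through which $\iota_*=(\pi_*)^\vee$ factors is already an isomorphism integrally: the Borel--Moore cell complex of \Cref{subsubcellularcomplex} has no cells in degree $<m$, so $H_{m-1}^{BM}(\Conf{m}{D_n};\Line_m)=0$, and since $\Z[\zeta_r]$ is hereditary (a Dedekind domain) the universal coefficient sequence gives $H_m(\Conf{m}{D_n};\Line_m)\simeq H^m_c(\Conf{m}{D_n};\Line_m)\simeq\Hom\bigl(H_m^{BM}(\Conf{m}{D_n};\Line_m),\Z[\zeta_r]\bigr)$ with no $\mathrm{Ext}$ correction. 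With these two observations the integral identification $\myim{\iota_*}=(\ker\pi_*)^\perp$ and the injectivity of $\iota_*$ follow from the surjectivity of $\pi_*$ in \Cref{theoremBMhomology} by dualizing, exactly as you outline in the rational case.
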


\begin{proof}
    The statement is dual to that of \Cref{theoremBMhomology} under the homological bilinear form
    $(\cdot,\cdot):\WFt{\ua}\times\WEt{\ua}\ra \Z[\zeta_r]$ and the bilinear form of \Cref{bilinearformquantum}.
\end{proof}

%-------------------------------------------------------------------------------------------------------------------------

\subsubsection{Proof of \Cref{theoremgeometricconstruction}, \Cref{theoremintegralstructure} and \Cref{theoremgluinggeometric}}
\label{subsubmainproofs}

We can now prove \Cref{theoremgeometricconstruction}.
We prove the equivalent homological reformulation (\Cref{theoremhomologicalconstruction}).
Let $r\geq 3$ be an odd integer, $n\geq 2$ and $b,a_1,\dotsc,a_n\in\{0,1,\dotsc,r-2\}$,
such that $m=\frac{a_1+\dotsb+a_n-b}{2}$ is an integer.
Let $\Dc=(D^n,b,a_1,\dotsc,a_n)$ be the associated colored disk.
Notice that we defined $\Line_m$ to be $\Line_{\Dc}^\mathcal{O}$.
Then, by \Cref{theoremBMhomology}, \Cref{corollaryabsolutehomology} and \Cref{propositionhomologicalred},
we have a commutative diagram of representations of $\Mod{D_n}$ over $\Z[\zeta_r]$:
% https://q.uiver.app/#q=WzAsNixbMCwwLCJIX20oXFxDb25me219e0Rfbn07XFxMaW5lX20pIl0sWzAsMSwiXFxsZWZ0KFxcYmlnb3RpbWVzX2lcXGhhdHtWfV57XFxvdmVybGluZXtGfX1fe2FfaX1cXHJpZ2h0KV9iXFxjYXBcXG1hdGhybXtrZXJ9XFw6IEVeeygxKX1cXGNhcFxcbWF0aHJte2tlcn1cXDogRV57KHIpfSJdLFswLDIsIlxcaG9tX3tVX3tcXHpldGFfcn1ee1xcb3ZlcmxpbmV7Rn19fShcXGhhdHtWfV57XFxvdmVybGluZXtGfX1fYixcXGJpZ290aW1lc19pXFxoYXR7Vn1ee1xcb3ZlcmxpbmV7Rn19X3thX2l9KSJdLFsxLDAsIkhfbV57Qk19KFxcQ29uZnttfXtEX259O1xcTGluZV9tKSJdLFsxLDEsIlxcbGVmdChcXGJpZ290aW1lc19pXFxoYXR7Vn1ee0V9X3thX2l9XFxyaWdodClfYi8oXFxtYXRocm17aW19XFw6IFxcb3ZlcmxpbmV7Rn1eeygxKX0rXFxtYXRocm17aW19XFw6IFxcb3ZlcmxpbmV7Rn1eeyhyKX0pIl0sWzEsMiwiXFxob21fe1Vfe1xcemV0YV9yfV57RX19KFxcYmlnb3RpbWVzX2lcXGhhdHtWfV57RX1fe2FfaX0sXFxoYXR7Vn1ee0V9X2IpXlxcdmVlIl0sWzAsMSwiIiwwLHsic3R5bGUiOnsidGFpbCI6eyJuYW1lIjoiYXJyb3doZWFkIn19fV0sWzEsMiwiIiwwLHsic3R5bGUiOnsidGFpbCI6eyJuYW1lIjoiYXJyb3doZWFkIn19fV0sWzMsNCwiIiwwLHsic3R5bGUiOnsidGFpbCI6eyJuYW1lIjoiYXJyb3doZWFkIn19fV0sWzAsM10sWzEsNF0sWzIsNSwiXFxtYXRocm17cmVkfSJdLFs0LDUsImYiXV0=
\[\begin{tikzcd}
	{H_m(\Conf{m}{D_n};\Line_m)} & {H_m^{BM}(\Conf{m}{D_n};\Line_m)} \\
	{\left(\bigotimes_i\hat{V}^{\overline{F}}_{a_i}\right)_b\cap\mathrm{ker}\: E^{(1)}\cap\mathrm{ker}\: E^{(r)}} & {\left(\bigotimes_i\hat{V}^{E}_{a_i}\right)_b/(\mathrm{im}\: \overline{F}^{(1)}+\mathrm{im}\: \overline{F}^{(r)})} \\
	{\hom_{U_{\zeta_r}^{\overline{F}}}(\hat{V}^{\overline{F}}_b,\bigotimes_i\hat{V}^{\overline{F}}_{a_i})} & {\hom_{U_{\zeta_r}^{E}}(\bigotimes_i\hat{V}^{E}_{a_i},\hat{V}^{E}_b)^\vee}
	\arrow[tail reversed, from=1-1, to=2-1]
	\arrow[tail reversed, from=2-1, to=3-1]
	\arrow[tail reversed, from=1-2, to=2-2]
	\arrow[from=1-1, to=1-2]
	\arrow[from=2-1, to=2-2]
	\arrow["{\mathrm{red}}", from=3-1, to=3-2]
	\arrow["f", from=2-2, to=3-2]
\end{tikzcd}\]
where two-sided arrows are isomorphisms and $f$ is the quotient by torsion.
As, by \Cref{imageofred}, $\mathrm{im}(\mathrm{red})\otimes\Q$ is $\Nusl(\Dc)$ and $\mathrm{im}(\mathrm{red})$
is $(q-q^{-1})^m\NuslO(\Dc)$ when $r$ is prime, this proves \Cref{theoremhomologicalconstruction}
and \Cref{theoremintegralstructure}.

\begin{remark}\label{remarkvanishings}
    If $b<0$ in the diagram above, the image of $\mathrm{red}$ is $0$.
    Indeed, elements in the image of $\mathrm{red}$ are in particular weight $b$
    elements of $V_{a_1}\otimes \dotsb\otimes V_{a_n}$ in the kernel of $E$ and $E^{(r)}$.
    But no finite dimensional representation of $U_{\zeta_r}$ has such non-zero elements
    (see the proof of \Cref{vermaproperty}).

    If $a_i=r-1$ for some $i$ in the diagram above,
    then the image of $\mathrm{red}$ is again $0$. Indeed,
    by \Cref{propositionhomologicalred}, it is also the image of:
    \begin{equation*}
        \hom_{U_{\zeta_r}}(V_b,\bigotimes_iV_{a_i})\lra  \hom_{U_{\zeta_r}}(\bigotimes_iV_{a_i},V_b)^\vee
    \end{equation*}
    and by \Cref{qtracedefinitionisevaluationdefinition}, this is just:
    \begin{equation*}
        \bhom{V_b}{\bigotimes_iV_{a_i}}
    \end{equation*}
    But now, $\id_{V_{a_i}}=\id_{V_{r-1}}$ has quantum trace $[r]=0$.
    So $\bhom{V_b}{\bigotimes_iV_{a_i}}=0$.
\end{remark}

Let us now turn to \Cref{theoremgluinghomological}.
Let $\gamma$ be a simple closed curve in $D_n$. Without loss of generality, we assume that the punctures inside $\gamma$
are $p_1,\dotsc,p_{n_1}$ with $n_1+n_2=n$.
Define $\Dc_{c}'$ and $\Dc_{c}''$ for each $0\leq c\leq r-2$ as in \Cref{subsubrootingtopological}.
Let $C=D_n$ and $C_\gamma = C\setminus \gamma$.
One has $C_\gamma\simeq D_{n_1}\sqcup D_{n_2+1}$.

Let $m_1+m_2=m$ and let $c$ be such that $m_1=\frac{a_1+\dotsb+a_{n_1}-c}{2}$, or equivalently $m_2=\frac{c+a_{n_1+1}+\dotsb+a_{n}-b}{2}$.
Notice that if $c<0$ or $c\geq r-1$, then $\Hmid(\Conf{m_1}{D_{n_1}};\Line_{\Dc'_c})=0$ or $\Hmid(\Conf{m_2}{D_{n_2+1}};\Line_{\Dc''_c})=0$.
Hence, we may assume that $0\leq c\leq r-2$.

To show that the diagram of \Cref{theoremgluinggeometric} commutes, it is sufficient to show that the following diagram commutes:

% https://q.uiver.app/#q=WzAsNCxbMCwxLCJIX3ttXzF9KFxcQ29uZnttXzF9e0Rfe25fMX19O1xcTGluZV97XFxEY197Y30nfSkgICAgIFxcb3RpbWVzIEhfe21fMn0oXFxDb25me21fMn17XFxvRF97bl8yKzF9LFxccGFydGlhbF8tXFxvRF97bl8yKzF9fTtcXExpbmVfe1xcRGNfe2N9Jyd9KSJdLFsxLDEsIkhfbShcXENvbmZ7bX17XFxvRF9uLFxccGFydGlhbF8tXFxvRF97bn19O1xcTGluZV97XFxEY30pIl0sWzAsMCwiXFxsZWZ0KFxcbGVmdChcXGhhdHtWfV57XFxvdmVybGluZXtGfX1fe2FfMX1cXG90aW1lc1xcZG90c2JcXG90aW1lc1xcaGF0e1Z9XntcXG92ZXJsaW5le0Z9fV97YV97bl8xfX1cXHJpZ2h0KV9jXFxjYXBcXG1hdGhybXtrZXJ9XFw6IEVeeygxKX1cXGNhcFxcbWF0aHJte2tlcn1cXDogRV57KHIpfVxccmlnaHQpXFxvdGltZXNcXGxlZnQoXFxoYXR7Vn1ee1xcb3ZlcmxpbmV7Rn19X3tjfVxcb3RpbWVzXFxoYXR7Vn1ee1xcb3ZlcmxpbmV7Rn19X3thX3tuXzErMX19XFxvdGltZXNcXGRvdHNiXFxvdGltZXNcXGhhdHtWfV57XFxvdmVybGluZXtGfX1fe2Ffe259fVxccmlnaHQpX2IiXSxbMSwwLCJcXGxlZnQoXFxoYXR7Vn1ee1xcb3ZlcmxpbmV7Rn19X3thX3sxfX1cXG90aW1lc1xcZG90c2JcXG90aW1lc1xcaGF0e1Z9XntcXG92ZXJsaW5le0Z9fV97YV97bn19XFxyaWdodClfYiJdLFsyLDMsIlxcbWF0aHJte2NvbXB9Il0sWzMsMSwiIiwwLHsic3R5bGUiOnsidGFpbCI6eyJuYW1lIjoiYXJyb3doZWFkIn19fV0sWzIsMCwiIiwyLHsic3R5bGUiOnsidGFpbCI6eyJuYW1lIjoiYXJyb3doZWFkIn19fV0sWzAsMSwiXFxjdXAiLDJdXQ==
\[\begin{tikzcd}
	{\left(\left(\hat{V}^{\overline{F}}_{a_1}\otimes\dotsb\otimes\hat{V}^{\overline{F}}_{a_{n_1}}\right)_c\cap\mathrm{ker}\: E^{(1)}\cap\mathrm{ker}\: E^{(r)}\right)\otimes\left(\hat{V}^{\overline{F}}_{c}\otimes\hat{V}^{\overline{F}}_{a_{n_1+1}}\otimes\dotsb\otimes\hat{V}^{\overline{F}}_{a_{n}}\right)_b} & {\left(\hat{V}^{\overline{F}}_{a_{1}}\otimes\dotsb\otimes\hat{V}^{\overline{F}}_{a_{n}}\right)_b} \\
	{H_{m_1}(\Conf{m_1}{D_{n_1}};\Line_{\Dc_{c}'})     \otimes H_{m_2}(\Conf{m_2}{\oD_{n_2+1},\partial_-\oD_{n_2+1}};\Line_{\Dc_{c}''})} & {H_m(\Conf{m}{\oD_n,\partial_-\oD_{n}};\Line_{\Dc})}
	\arrow["{\mathrm{comp}}", from=1-1, to=1-2]
	\arrow[tail reversed, from=1-2, to=2-2]
	\arrow[tail reversed, from=1-1, to=2-1]
	\arrow["\cup"', from=2-1, to=2-2]
\end{tikzcd}\]
where two-sided arrows are isomorphisms and $\mathrm{comp}$ satisfies:
\begin{equation*}
    \mathrm{comp}(u\otimes\left(e_k\otimes v\right))=\left((\overline{F}^{(1)})^ku\right)\otimes v.
\end{equation*}

Let us now choose $x$ in $H_{m_1}(\Conf{m_1}{D_{n_1}};\Line_{\Dc_{c}'})$ and $y$ in $H_{m_2}(\Conf{m_2}{\oD_{n_2+1},\partial_-\oD_{n_2+1}};\Line_{\Dc_{c}''})$.
As the $B$-classes form a basis for $H_{m_2}(\Conf{m_2}{\oD_{n_2+1},\partial_-\oD_{n_2+1}};\Line_{\Dc_{c}''})$, we will assume that
$y$ is one, say $y=B(k,s_{1},\dotsc,s_{n_2})$ with $m_2=k+s_1+\dotsb+s_{n_2}$. The class $x\cup y$
is represented in \Cref{gluing}.

\begin{figure}
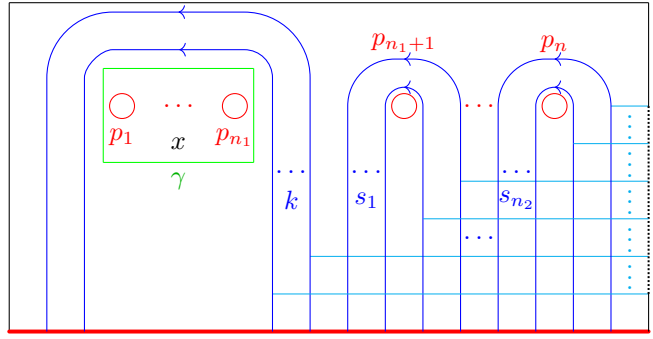

    \ctikzfig{gluing}
    \caption{The class $x\cup y$ for $y=B(k,s_{1},\dotsc,s_{n_2})$.}
    \label[figure]{gluing}
\end{figure}

Now consider the gluing map $\oD_{n_1}\cup\oD_{n_2}\simeq \oD_{n}$ as in \Cref{gluing_monoidal}.
It induces a map:
\begin{multline*}
    \times:H_{m_1}(\Conf{m_1}{\oD_{n_1}\partial_-\oD_{n_1}};\Line_{\Dc_{c}'})\otimes H_{m_2}(\Conf{m_2}{\oD_{n_2},\partial_-\oD_{n_2}};\Line_{\Dc_{c}''}) \\
    \lra H_m(\Conf{m}{\oD_n,\partial_-\oD_{n}};\Line_{\Dc}).
\end{multline*}
Set $y'=B(s_{1},\dotsc,s_{n_2})$ in $H_{m_2}(\Conf{m_2}{\oD_{n_2},\partial_-\oD_{n_2}};\Line_{\Dc_{c}''})$.
Then we have $x\cup y=((\overline{F}^{(1)})^kx)\times y'$. This matches the map $\mathrm{comp}$, and concludes the proof of \Cref{theoremgluinghomological}.

\begin{figure}
    \ctikzfig{gluing_monoidal}
    \caption{The gluing $\oD_{n_1}\cup\oD_{n_2}\simeq \oD_{n}$.}
    \label[figure]{gluing_monoidal}
\end{figure}

%-------------------------------------------------------------------------------------------------------------------------

\subsubsection{The Bigelow basis for Temperley-Lieb representations}\label{subsubBigelow}

In this section, we describe a homological basis for $\Hmid(\Conf{m}{D_n};\Line_m)$
in the case where $a_1=\dotsb=a_n=1$. In this case, as already mentioned in \Cref{TLrepresentations},
$\Nusl(\Dc)$ is a representation of the Temperley-Lieb algebra. The homological basis is inspired
by the one used by Stephen Bigelow for the Lawrence-Krammer representation in \cite{bigelowLawrenceKrammerRepresentation2003}.
We then explain how to lift this homological basis to $H_m(\Conf{m}{D_n};\Line_m)$, using Stephen Bigelow's "noodle" homology classes.

Throughout this section, we will use a generic parameter $q$, not a root of unity.
The local system $\Line_m$ can be lifted from $\Z[\zeta_r]$ to $\Z[q,q^{-1}]$
by the same formulas of \Cref{subsubUaction}.
We will denote $\Lq_m$ this lift.

From the remainder of this section, let us fix $m\geq 0$, $b\in\N$ and $n\geq 0$ such that $m=\frac{n-b}{2}$.
We will denote $\Dc=(D_n,b,1,\dotsc,1)$ the associated colored disk. The module $S(\Dc)$ has a basis
given by balanced sequences of "$\bm{\cdot}$", "$\bm{(}$" and "$\bm{)}$" of length $n$ with $m$ pairs of parentheses,
such that no "$\bm{\cdot}$" lies inside a matching pair of parentheses.

\begin{notation}
Let us denote $W$ the set of such sequences. For each $w$ in $W$, let $\pr{w}$ be the set of pairs of indices of matching
parentheses in $w$. We will call a sequence $\um\in \{0,1\}^n$ compatible with $w$ if:
\begin{description}
    \item[(1)] whenever the character at index $i$ in $w$ is "$\bm{\cdot}$", $m_i=0$;
    \item[(2)] for each $(o,c)$ in  $\pr{w}$, $\{m_o,m_c\}=\{0,1\}$\footnote{Here $o$ stands for \textit{opening} and $c$ for \textit{closing}.}.
\end{description}
We will denote by $\cp{w}$ the set of sequences $\um\in \{0,1\}^n$ compatible with $w$.
\end{notation}

\begin{definition}
    Let $w\in W$ and $\um\in\cp{w}$. Then denote by $f(w,\um)$ the quantity:
    \begin{equation*}
        -m(2+n)+\sum_iim_i + \frac{u(\um)-v(\um)}{2}
    \end{equation*}
    where $u(\um)$ is the number of pairs $(o,c)\in \pr{w}$ such that $m_o=1$ and 
    $v(\um)$ is the number of pairs $(o,c)\in \pr{w}$ such that $m_c=1$.
\end{definition}

\begin{definition}\label{definitionCclasses}
    Let $w\in W$ and $\um\in\cp{w}$. For each pair $(o,c) \in \pr{w}$, define $I_{o,c}\subset D_n$ to be the open segment
    from $p_c$ to $p_o$, going under each $I_{o',c'}$ for $(o',c') \in \pr{w}$ such that $o<o'<c'<c$. See \Cref{C_classes} for an example.
    Let $\gamma$ be the path from the basepoint $x$ in $\Conf{m}{D_n}$ to $\prod_{(o,c)}I_{(o,c)}$,
    where the $i$-th path $\gamma_i$ of $\gamma$ goes horizontally from $x_i$ to near $p_j$,
    where $j$ is the index of the $i$-th $1$ in $\um$.
    See \Cref{C_classes} for an example.
    
    Denote by $C'(w,\um)$ the cycle $[(I_{o,c})_{(o,c)\in\pr{w}},\gamma]$ in $H_m^{BM}(\Conf{m}{\oD_n,\partial_-\oD_n};\Lq_m)$.
\end{definition}

\begin{figure}
    \ctikzfig{C_classes}
    \caption{An example of $w$, $\um$ and associated class $C'(w,\um)$.}
    \label[figure]{C_classes}
\end{figure}

We now construct lifts of the classes $C'(w,\um)$ to $H_m(\Conf{m}{D_n};\Lq_m)$, using Stephen Bigelow's "noodle" classes
(as in \cite{bigelowHomologicalDefinitionJones2002}).

\begin{definition}
    Let $w\in W$. Consider the segments $I_{(o,c)}\subset D_n$ for $(o,c)\in \pr{w}$ as in \Cref{definitionCclasses}.
    For each $(o,c)$, choose in a neighborhood $I_{(o,c)}$ not meeting the other segments an immersion of 
    $S^1$ in $D_n$ as in \Cref{Cn_classes}. Denote it by $J_{(o,c)}$. Note that $\prod_{(o,c)}J_{(o,c)}\subset \Conf{m}{D_n}$
    can be lifted to the covering of $\Conf{m}{D_n}$ corresponding to $\rho$.
    Hence, for $\um\in\cp{w}$, we can define a class $\Cn'(w,\um)$ by:
    \begin{equation*}
        \Cn'(w,\um) =[\prod_{(o,c)}J_{(o,c)},\gamma]\in H_m(\Conf{m}{D_n};\Lq_m)
    \end{equation*}
    where $\gamma$ is as in \Cref{definitionCclasses} and \Cref{Cn_classes}.
\end{definition}

\begin{figure}
    \ctikzfig{Cn_classes}
    \caption{An example of $w$, $\um$ and associated class $\Cn'(w,\um)$.}
    \label[figure]{Cn_classes}
\end{figure}

\begin{proposition}
    Let $w\in W$. Then $q^{f(w,\um)}C'(w,\um)$ is independent of $\um\in\cp{w}$ and will be denoted $C(w)$.
    Similarly $q^{m+f(w,\um)}\Cn'(w,\um)$ is independent of $\um\in\cp{w}$ and will be denoted $\Cn(w)$.
\end{proposition}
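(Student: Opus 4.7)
The plan is to reduce the claim to elementary pair-flips and compute the resulting monodromy factor. I first observe that $\cp{w}$ is connected under flipping $(m_o, m_c)$ for a single pair $(o,c) \in \pr{w}$ between $(0,1)$ and $(1,0)$, since each pair can be toggled independently of the others. Hence it suffices to verify $q^{f(w,\um')} C'(w, \um') = q^{f(w,\um)} C'(w, \um)$ when $\um, \um'$ differ on exactly one pair $(o,c)$, say with $(m_o, m_c)$ going from $(0,1)$ to $(1,0)$.

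For such an elementary flip, the two cycles represent the same underlying submanifold $Z = \prod_{(o,c) \in \pr{w}} I_{(o,c)} \subset \Conf{m}{\oD_n}$, but carry different trivializations of $f^*\Lq_m$ specified by the endpoints $z_\um, z_{\um'} \in Z$ and matching paths $\gamma_\um, \gamma_{\um'}$. By the general formalism of \Cref{fundamentalclasslocalsystem}, their ratio is the monodromy of $\Lq_m$ along the loop in $\Conf{m}{D_n}$ obtained by concatenating $\gamma_{\um'}$, the continuous deformation in $Z$ sliding the $I_{(o,c)}$-coordinate from near $p_o$ to near $p_c$ along $I_{(o,c)}$, and $\gamma_\um^{-1}$. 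I would then compute this monodromy using $\sigma_i \mapsto -q^{-2}$ and $\tau_k \mapsto q^{2a_k}$ (with $a_k = 1$ throughout this section): the slide, which by construction passes under every nested arc $I_{(o',c')}$ with $o < o' < c' < c$, encircles the $c-o-1$ punctures $p_{o+1}, \ldots, p_{c-1}$ (contributing $\tau$-factors) and links once around each strand lying on these nested arcs (contributing $\sigma^2$-factors), while the reindexing of the basepoint matching between $\gamma_\um$ and $\gamma_{\um'}$ produces a cyclic shift whose sign is detected by $\epsilon$. After careful sign and orientation bookkeeping, these contributions combine to give exactly $q^{c-o-1}$, which matches $f(w, \um) - f(w, \um') = (c - o) - 1 = c-o-1$, obtained directly from the definition.

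The main obstacle is the sign bookkeeping: the cyclic shift on basepoint labels induced by the swap contributes a sign via $\epsilon$, and this must precisely cancel the contributions from strand-linkings (full twists around the other enclosed strands contribute factors of the form $(-q^{-2})^{2k}$ in pairs that must combine correctly), leaving only the pure puncture winding contribution $\prod_{k=o+1}^{c-1} q^{2a_k} = q^{c-o-1}$. Establishing this cancellation requires fixing a uniform convention for how each $\gamma_i$ is joined to $I_{(o,c)}$ at a puncture (e.g., always on the left side, consistent with the slide going below), after which the argument is a direct but delicate computation.

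For the noodle version $q^{m+f(w,\um)}\Cn'(w,\um)$, the same monodromy argument applies verbatim: each noodle $J_{(o,c)}$ is homotopic, in the complement of the other strand positions, to the arc $I_{(o,c)}$ together with a small loop around one of the endpoints $p_o$ or $p_c$, contributing an additional fixed factor per pair. The $q^m$ normalization absorbs this $\um$-independent global contribution, and the remaining $\um$-dependence is controlled by exactly the same elementary-flip computation as in the arc case.
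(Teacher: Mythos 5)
Your overall architecture is the same as the paper's: reduce to a single pair-flip $(m_o,m_c)\leftrightarrow(m_c,m_o)$ using that $\cp{w}$ is independently toggled pair by pair, interpret the change as a monodromy factor along the closed loop built from the two matching paths and the slide inside the fixed underlying cycle, and match this factor against $f(w,\um)-f(w,\um')$. Your formula $f(w,\um)-f(w,\um')=c-o-1$ (with your labeling, $m_c=1$ in $\um$ and $m'_o=1$ in $\um'$) is correct.

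The gap is in your description of what the slide monodromy actually is. You claim the loop ``encircles the $c-o-1$ punctures $p_{o+1},\dots,p_{c-1}$ (contributing $\tau$-factors) and links once around each strand lying on these nested arcs (contributing $\sigma^2$-factors).'' With $a_k=1$ we have $\tau_k\mapsto q^2$, so encircling all $c-o-1$ punctures would give $q^{2(c-o-1)}$; there are $\frac{c-o-1}{2}$ nested arcs each carrying one strand, and a full link $\sigma^2\mapsto q^{-4}$ gives $q^{-2(c-o-1)}$; the product of the two named contributions is $1$, not $q^{c-o-1}$. So the decomposition you write down is not the one that works, and the unstated ``careful sign and orientation bookkeeping'' is not a residual clean-up but in fact the whole content of the step. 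The paper's proof is more modest but precise at exactly this point: it invokes a single local relation (Figure \ref{switch_relation}) which says that pushing the sliding strand past one nested arc (together with its strand) contributes $q^2$; applied $\frac{c-o-1}{2}$ times this gives $q^{c-o-1}$ directly, with no separate $\tau$-count and $\sigma$-count to reconcile. To repair your argument you would need to identify what is actually encircled and crossed on a per-nested-arc basis, not per-puncture, and verify the $q^2$ factor for a single nested arc.

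Two smaller points. First, the statement ``$(m_o,m_c)$ going from $(0,1)$ to $(1,0)$'' is inconsistent with ``sliding the $I_{(o,c)}$-coordinate from near $p_o$ to near $p_c$''; with that flip direction the slide goes from near $p_c$ to near $p_o$. Second, the $\epsilon$-factor is already built into the monodromy $\sigma_i\mapsto -q^{-2}$ of $\Line_m=\Nuab(\dots)\otimes\epsilon$, so treating ``the sign detected by $\epsilon$'' as a separate contribution from the cyclic reindexing double-counts. For the noodle classes, the paper's proof is literally ``the same''; your homotopy picture of $J_{(o,c)}$ as an arc plus a small loop contributing a per-pair constant is a reasonable heuristic (and consistent with \Cref{propositionrelationCntoC}), but note that the $q^m$ factor plays no role in this proposition: it is a constant in $\um$, so $q^{m+f(w,\um)}\Cn'(w,\um)$ is independent of $\um$ if and only if $q^{f(w,\um)}\Cn'(w,\um)$ is, and the latter follows from the identical flip monodromy computation.
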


\begin{proof}
    Let $\um,\um'\in W$ that differ only at indices $o$ and $c$ for $(o,c)\in\pr{w}$,
    say $m_o=1$ and $m'_c=1$. Using the relation in \Cref{switch_relation}, we see that
    $C'(w,\um')=q^{2(\frac{c-o-1}{2})}C'(w,\um)=q^{c-o-1}C'(w,\um)$, as $\frac{c-o-1}{2}$ counts the number of intervals $I_{(o',c')}$
    lying above $I_{(o,c)}$. Now $f(w,\um)-f(w,\um')=1+o-c$. Hence $q^{f(w,\um)}C'(w,\um)=q^{f(w,\um')}C'(w,\um')$.
    The proof for $\Cn(w)$ is the same.
\end{proof}

\begin{figure}
    \ctikzfig{switch_relation}
    \caption{A local relation in the calculus of classes in $H_m(\Conf{m}{D_n};\Lq_m)$.}
    \label[figure]{switch_relation}
\end{figure}

\begin{proposition}\label{propositionrelationCntoC}
    Let $w\in W$, then the image of $\Cn(w)$ in $H_m^{BM}(\Conf{m}{D_n};\Lq_m)$ is $(q-q^{-1})^mC(w)$.
    For $\um\in\cp{w}$, the image of $\Cn'(w,\um)$ in $H_m^{BM}(\Conf{m}{D_n};\Lq_m)$ is $(1-q^{-2})^mC'(w,\um)$.
\end{proposition}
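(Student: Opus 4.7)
The two statements are equivalent: using $(1-q^{-2})^m = q^{-m}(q-q^{-1})^m$ together with the normalizations $\Cn(w)=q^{m+f(w,\um)}\Cn'(w,\um)$ and $C(w)=q^{f(w,\um)}C'(w,\um)$, the first identity follows from the second applied to any single $\um\in\cp{w}$. I therefore focus on showing that the image of $\Cn'(w,\um)$ in $H_m^{BM}(\Conf{m}{D_n};\Lq_m)$ equals $(1-q^{-2})^m C'(w,\um)$.

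Next I localize. The segments $I_{(o,c)}$ for $(o,c)\in\pr{w}$ are pairwise disjoint in $D_n$ (inner segments pass beneath outer ones by construction), and each $J_{(o,c)}$ lies in an arbitrarily small neighborhood of $I_{(o,c)}$. Choose pairwise disjoint open tubular neighborhoods $U_{(o,c)}\subset D_n$ with $J_{(o,c)}\cup I_{(o,c)}\subset U_{(o,c)}$. The open subset
\[
V = \bigl\{\{z_1,\dotsc,z_m\}\in\Conf{m}{D_n}\,\big|\,\text{each }U_{(o,c)}\text{ contains exactly one }z_i\bigr\}
\]
is canonically homeomorphic to $\prod_{(o,c)} U_{(o,c)}$, and both cycles $\prod J_{(o,c)}$ and $\prod I_{(o,c)}$ are supported there together with the common connecting path $\gamma$.

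The Künneth formula for rank-one coefficients then factors the comparison $H_m(V;\Lq_m|_V)\to H_m^{BM}(V;\Lq_m|_V)$ as the tensor product, over $(o,c)\in\pr{w}$, of the local maps $H_1(U_{(o,c)};\Lq)\to H_1^{BM}(U_{(o,c)};\Lq)$. Since $\gamma$ determines the same trivializations on both sides, its contribution to $\Cn'(w,\um)$ and to $C'(w,\um)$ is the same and cancels. The problem reduces to the following local identity: in a small punctured disk $U$ containing a segment $I$ and its noodle $J$, the image of $[J]\in H_1(U;\Lq)$ in $H_1^{BM}(U;\Lq)$ equals $(1-q^{-2})[I]$.

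This local identity can be established by applying the Borel-Moore cellular complex of \Cref{subsubcellularcomplex} to $U$: the class $[J]$ bounds a $2$-chain that crosses $I$ exactly once, and the defect collected on the two sides of $I$ differs by the local monodromy factor from \Cref{switch_relation}, giving the coefficient $1-q^{-2}$ in front of the unique $1$-cell $I$. Equivalently, one may use the long exact sequence of the pair $(U,U\setminus I)$ combined with Poincaré--Lefschetz duality $H_1^{BM}(U,U\setminus I;\Lq)\simeq H_0(I;\Lq|_I)$ to produce the same factor. The main obstacle is the orientation and monodromy bookkeeping: one must verify that the constant is exactly $1-q^{-2}$ (and not $q^2-1$ or its negative), and that the phases induced by $\gamma$ really do cancel between the two sides under the normalizations fixed in \Cref{subsubbases}.
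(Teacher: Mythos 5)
Your overall strategy — reduce to disjoint tubular neighborhoods $U_{(o,c)}$, invoke Künneth to split the problem into $m$ independent one-point local computations, and cancel the common $\gamma$-phases — is a sound and somewhat more structured version of what the paper does (the paper simply performs the retraction in a single picture, \Cref{noodle_to_fork}). The equivalence of the two statements in your first paragraph is also correct.

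The problem is that the entire content of the proposition lives in the step you leave undone. Everything before your last paragraph is bookkeeping that holds for \emph{any} constant in place of $1-q^{-2}$; the proposition is the assertion that the constant is exactly $1-q^{-2}$. You say so yourself: ``one must verify that the constant is exactly $1-q^{-2}$ (and not $q^2-1$ or its negative), and that the phases induced by $\gamma$ really do cancel.'' That verification is not a formality. It depends on how the immersed circle $J_{(o,c)}$ winds relative to the oriented segment $I_{(o,c)}$, on the sign convention for the boundary map in the cellular/Borel--Moore complex, and on the fact that the local system $\Lq_1$ has monodromy $q^{2}$ (not $q^{-2}$) around each puncture since $a_i=1$; the passage from $q^2$ to the appearing $q^{-2}$ must come out of the orientation and base-path conventions, and you never track it. The retraction picture (the boundary of a collar of $J$ hits $I$ twice, the two crossings differing by a single loop of monodromy) is the right heuristic, but without pinning down the direction of that loop and the orientations of the two crossings you cannot distinguish $1-q^{-2}$ from $1-q^{2}$, $q^{2}-1$, or $q^{-2}-1$. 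This is exactly the content of \Cref{noodle_to_fork}, and it is the piece your argument is missing.

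A secondary, smaller point: the Künneth reduction step is asserted a little too quickly. The map $H_m\to H_m^{BM}$ is not natural for open inclusions in a direction that lets you push a relation in $V$ into $\Conf{m}{D_n}$ for free; Borel--Moore homology is covariant for proper maps only. What saves you is that the retraction you build inside $V$ has support equal to the closure of $\prod_{(o,c)}I_{(o,c)}$, which is already closed (hence proper) in $\Conf{m}{D_n}$, so the Borel--Moore identity transports. You should say this explicitly rather than appealing to Künneth naturality, since as written the factorization claim is not correct at face value.
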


The proof is simply the computation of \Cref{noodle_to_fork}.

\begin{figure}
    \ctikzfig{noodle_to_fork}
    \caption{The proof of \Cref{propositionrelationCntoC}.}
    \label[figure]{noodle_to_fork}
\end{figure}

\begin{theorem}\label{theoremtwomodels}
    The map:
    \begin{align*}
        S(\Dc) &\lra         H_m(\Conf{m}{\oD_n};\Lq_m) \\
        e_w    &\longmapsto  \Cn(w)
    \end{align*}
    fits into the following commutative diagram:
    % https://q.uiver.app/#q=WzAsNCxbMCwwLCJTKFxcRGMpIl0sWzEsMSwiSF9tXntCTX0oXFxDb25me219e1xcb0RfbixcXHBhcnRpYWxfLVxcb0Rfbn07XFxMcV9tKSJdLFswLDEsIihxLXFeey0xfSlebVxcbGVmdChWXzFee1xcb3RpbWVzIG59XFxyaWdodClfYiJdLFsxLDAsIkhfbShcXENvbmZ7bX17XFxvRF9ufTtcXExxX20pIl0sWzAsMiwiKHEtcV57LTF9KV5tXFxtYXRocm17T3B9X3tcXERjfShlXzApIiwyXSxbMiwxLCJpIiwwLHsic3R5bGUiOnsidGFpbCI6eyJuYW1lIjoiaG9vayIsInNpZGUiOiJ0b3AifX19XSxbMywxXSxbMCwzXV0=
    \[\begin{tikzcd}
        {S(\Dc)} & {H_m(\Conf{m}{\oD_n};\Lq_m)} \\
        {(q-q^{-1})^m\left(V_1^{\otimes n}\right)_b} & {H_m^{BM}(\Conf{m}{\oD_n,\partial_-\oD_n};\Lq_m)}
        \arrow["{(q-q^{-1})^m\mathrm{Op}_{\Dc}(e_0)}"', from=1-1, to=2-1]
        \arrow["i", hook, from=2-1, to=2-2]
        \arrow[from=1-2, to=2-2]
        \arrow[from=1-1, to=1-2]
    \end{tikzcd}\]
    where $V_1$ is the $q$-generic dimension $2$ representation of $U_{\Z[q^{\pm 1}]}$ given by the same formulas as
    in \Cref{definitionfinitedimensionalmodules}. Moreover, the diagonal map:
    \begin{equation*}
        S(\Dc) \lra H_m^{BM}(\Conf{m}{\oD_n,\partial_-\oD_n};\Lq_m)
    \end{equation*}
    is a map of $\Mod{D_n}$-representations.
\end{theorem}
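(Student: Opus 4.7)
The diagonal map in the statement factors two ways: as $i \circ (e_w \mapsto \Cn(w))$ and as $g_{\underline a} \circ (q-q^{-1})^m \mathrm{Op}_{\Dc}(e_0)$. I would first prove the square commutes; once that is done, the $\Mod{D_n}$-equivariance of the diagonal follows for free, because $\mathrm{Op}_{\Dc}(\cdot)(e_0)\colon S(\Dc)\to V_1^{\otimes n}$ intertwines the skein (tangle-stacking) action on the source with the $\mathcal{R}$-matrix $PB_n$-action on the target by the very definition of the Kauffman operator invariant of framed tangles, and $g_{\underline a}$ is braid-equivariant by the generic-$q$ analogue of \Cref{theoremmartel}(3), whose proof goes through verbatim over $\Z[q^{\pm 1}]$.

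Using \Cref{propositionrelationCntoC}, the commutativity of the square reduces to the identity
\begin{equation*}
    g_{\underline a}\bigl(\mathrm{Op}_{\Dc}(e_w)(e_0)\bigr)=C(w)\quad\text{in}\quad H_m^{BM}(\Conf{m}{\oD_n,\partial_-\oD_n};\Lq_m).
\end{equation*}
I would check this by expanding both sides in the $A$-basis on the homology side and the standard tensor basis $e_{m_1}\otimes\dotsb\otimes e_{m_n}$ on the quantum side. The key structural observation is that both expansions are supported on the same index set $\cp{w}$: on the quantum side this is immediate from the formula for the coevaluation $\Z[q^{\pm1}]\to V_1\otimes V_1$ applied to each cap, which creates exactly a combination of $e_0\otimes e_1$ and $e_1\otimes e_0$; on the homology side it follows from a cellular deformation of each fork $I_{o,c}$ in $C'(w,\um)$ into a combination of the vertical arcs $I_o$ and $I_c$ (the only $1$-cells of $D_n$ available), performed using the cell decomposition underlying \Cref{theoremBMhomology}.

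To match the coefficients, I would argue by induction on the number of caps $m=|\pr{w}|$ using a generic-$q$ lift of the gluing property \Cref{theoremgluinghomological}: cutting $D_n$ by a small disk enclosing an innermost cap $(o,c)$ decomposes both $\Phi(e_w)$ (as a tensor product of a local coev with the operator invariant of the remaining tangle) and the class $C(w)$ (as a Künneth product of a one-cap class with the $(m-1)$-cap class on the complementary disk) in compatible ways. Iterating reduces the identity to the base case $n=2$, $b=0$, $m=1$, $w=()$, where one checks by direct calculation that the $A$-basis expansion of $[I_{1,2},\gamma]$ in $(V_1\otimes V_1)_0$ matches the coevaluation vector, up to the normalization factor $q^{f(w,\um)}$.

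The main obstacle will be the $q$-normalization bookkeeping: the exponent $q^{f(w,\um)}$ built into the definition of $C(w)$ must conspire with the exponents $q^{\sum_{i<j}a_im_j-\tfrac12\sum a_im_j}$ baked into the $A$-classes and the $q$-phases produced by nested coevaluations $\Phi(e_w)$. The under-braiding convention chosen when defining the fork $I_{o,c}$ (innermost caps going under outer ones) is precisely what makes these three normalizations agree: each nested crossing contributes one factor of $q$, matched on the quantum side by the factor picked up when moving an inner coev past an outer strand via the R-matrix. Verifying this bookkeeping in the base case and checking that the gluing step preserves it is the only genuine computation; everything else is structural.
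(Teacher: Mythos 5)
Your proposal is essentially the paper's proof. The paper's argument is exactly: (i) reduce via \Cref{propositionrelationCntoC} to the identity $g_{\ua}\bigl(\mathrm{Op}_{\Dc}(e_0)(e_w)\bigr)=C(w)$ in $H_m^{BM}(\Conf{m}{\oD_n,\partial_-\oD_n};\Lq_m)$; (ii) expand $\mathrm{Op}_{\Dc}(e_0)(e_w)$ in the standard basis of $V_1^{\otimes n}$ using Ohtsuki's explicit coevaluation $1\mapsto -q^{-1/2}e_0\otimes e_1 + q^{1/2}e_1\otimes e_0$ at each cup, and expand $C(w)$ in the $A$-basis by repeatedly applying the local \emph{cut relation} near $\partial_-\oD_n$ (which is precisely your ``cellular deformation of each fork $I_{o,c}$ into a combination of the vertical arcs $I_o$ and $I_c$''); (iii) observe the two expansions coincide; (iv) deduce equivariance because both $(q-q^{-1})^m\mathrm{Op}_{\Dc}(e_0)$ (by the operator-invariant calculus) and $i=g_{\ua}$ (by Martel's theorem, which holds over $\Z[q^{\pm1}]$) are $\Mod{D_n}$-equivariant.

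Two small points. First, the inductive/gluing layer you propose for ``matching the coefficients'' is superfluous: once you apply the cut relation to each fork of $C'(w,\um)$, you already obtain the full expansion $C(w)=\sum_{\um\in\cp{w}}q^{f(w,\um)}(-1)^{v(\um)}A'(\um)$ directly, and comparison with the coevaluation expansion is a one-line normalization check using the definitions of $f(w,\um)$ and the $A$-class renormalization; there is no need to cut out inner caps, invoke a generic-$q$ Künneth, or verify that gluing preserves the bookkeeping. Second, a convention slip: you say the inner caps go under the outer ones, but \Cref{definitionCclasses} fixes the opposite convention --- $I_{o,c}$ goes under $I_{o',c'}$ whenever $o<o'<c'<c$, i.e.\ the \emph{outer} fork goes under the \emph{inner} one. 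If you carried out the computation with your stated convention, the nested-crossing $q$-factors would come out inverted and the normalizations would not match.
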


\begin{proof}
    Given $w\in W$, we will compute $C(w)$ in the basis of $A$-classes and check that it matches
    the map $\mathrm{Op}_{\Dc}(e_0)$. This will prove the commutation of the diagram.
    From \cite[(3.23)]{ohtsuki2001quantum}, $\mathrm{Op}_{\Dc}(e_0)$
    is given by placing at each cup the map:
    \begin{align*}
        \Z[q^{\pm \frac{1}{2}}]&\lra         V_1\otimes V_1 \\
        1                      &\longmapsto  -q^{-\frac{1}{2}}e_0\otimes e_1 + q^{\frac{1}{2}}e_1\otimes e_0.
    \end{align*}
    Hence the image of $e_w$ in $V_1^{\otimes n}$ is:
    \begin{equation*}
        \sum_{\um\in \cp{w}} (-1)^{v(\um)}q^{\frac{u(\um)}{2}-\frac{v(\um)}{2}}e_{m_1}\otimes\dotsb\otimes e_{m_n}.
    \end{equation*}
    On the other hand, $C(w)$ can be computed, using repeatedly the relation in \Cref{cut_relation}, to be:
    \begin{align*}
        C(w) &= \sum_{\um\in \cp{w}} q^{f(w,\um)}(-1)^{v(\um)}A'(\um) \\
             &= \sum_{\um\in \cp{w}} q^{\frac{u(\um)}{2}-\frac{v(\um)}{2}}(-1)^{v(\um)}A(\um).
    \end{align*}
    where $A'(\um)$ denotes the class $A(\um)$ without the renormalization constant.
    Thus $C(w)$ and $e_w$ have the same image in $H_m^{BM}(\Conf{m}{\oD_n,\partial_-\oD_n};\Lq_m)$.

    Now, $(q-q^{-1})^m\mathrm{Op}_{\Dc}(e_0)$ and $i$ in the diagram are maps of 
    $\Mod{D_n}$-representations. Thus, so is their composition.
\end{proof}

\begin{figure}
    \ctikzfig{cut_relation}
    \caption{A local relation near $\partial_-\oD_n$ in the calculus of classes in $H_m(\Conf{m}{\oD_n,\partial_-\oD_n};\Lq_m)$.}
    \label[figure]{cut_relation}
\end{figure}

%-------------------------------------------------------------------------------------------------------------------------

\subsubsection{Proof of \Cref{theoremintersectionform}}\label{subsubproofintersection}

Over $\Z[q^{\pm 1}]$ or $\Q(q)$, when discussing Hermitian forms we consider the involution such that $\overline{q}=q^{-1}$.
We keep the notations of the previous subsubsection. In particular, $\Dc=(D_n,b,1,\dotsc,1)$ such that $m=\frac{n-b}{2}$ is an integer.

\begin{proposition}
    The $\Mod{D_n}$-representation $S(\Dc)\otimes \Q(q)$ is irreducible.
\end{proposition}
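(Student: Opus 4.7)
The plan is to reduce the question to the representation theory of the Temperley-Lieb algebra $TL_n$ at generic $q$, by showing (i) that the image of $\Mod{D_n}$ in $\End(S(\Dc)\otimes\Q(q))$ is as large as the image of $TL_n$, and (ii) that $S(\Dc)\otimes\Q(q)$ is an irreducible $TL_n\otimes\Q(q)$-module. Together these imply that $S(\Dc)\otimes\Q(q)$ is irreducible as a $\Mod{D_n}$-representation.

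For step (i), recall from \Cref{braidgroupaction} that the action of $\Mod{D_n}\simeq UPB_n$ on $S(\Dc)$ factors through $TL_n$ via the map sending a framed braid to its associated tangle. Under this map, an elementary positive crossing $\sigma_i$ is sent to $q^{\frac{1}{2}}\id + q^{-\frac{1}{2}}e_i$ by the first Skein relation in \Cref{skein_relations}. Hence over $\Q(q)$ one has $e_i = q^{\frac{1}{2}}(\sigma_i - q^{\frac{1}{2}}\id)$, so the image of $\Mod{D_n}$ in $TL_n\otimes \Q(q)$ contains all generators $e_1,\dotsc,e_{n-1}$. Since these generators generate $TL_n$ as an algebra (as recalled right after the definition of the Jones-Wenzl idempotents $f_n$), the image is all of $TL_n\otimes\Q(q)$.

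For step (ii), the classical representation theory of the Temperley-Lieb algebra at generic parameter (see \cite{westburyRepresentationTheoryTemperleyLieb1995}) asserts that $TL_n\otimes\Q(q)$ is semisimple and its simple modules are indexed by two-row Young diagrams $(\frac{n+\nu}{2},\frac{n-\nu}{2})$ for $\nu\in\{0,1,\dotsc,n\}$ with $n-\nu$ even. The flat-tangle basis exhibits $S(\Dc)\otimes \Q(q)$ as the generic-$q$ analogue of the module appearing in \Cref{TLrepresentations}: the same construction of $f_\nu$ out of the generic $A_{r-1}$ makes sense over $\Q(q)$ with no denominators, and the resulting projector cuts out the same cell module. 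Thus $S(\Dc)\otimes\Q(q)$ is the simple $TL_n\otimes\Q(q)$-module corresponding to $(\frac{n+b}{2},\frac{n-b}{2})$.

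Combining (i) and (ii), the image of $\Q(q)[\Mod{D_n}]$ in $\End(S(\Dc)\otimes\Q(q))$ equals the image of $TL_n\otimes\Q(q)$, which acts irreducibly. I expect no real obstacle beyond checking the sign/power of $q$ in the Skein-to-generators formula and citing the generic semisimplicity of $TL_n$ together with the classification of its simple modules; the mildly delicate point is ensuring that the specialization-at-$\zeta_r$ identification of $S(\Dc)$ with the two-row Young-diagram module lifts verbatim to generic $q$, which it does since the Jones-Wenzl idempotents $f_\nu$ and the flat-tangle basis are defined over $A_{n-1}\subset\Q(q)$.
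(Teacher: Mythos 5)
Your proof is correct and takes the same route as the paper: reduce to the Temperley--Lieb algebra and invoke generic semisimplicity of $TL_n$ with the classification of its simple modules from \cite{westburyRepresentationTheoryTemperleyLieb1995}. The paper states the equivalence with $TL_n$-irreducibility without spelling out that the subalgebra of $\End(S(\Dc)\otimes\Q(q))$ generated by the image of $\Mod{D_n}$ contains each $e_i$ (via $e_i = q^{1/2}\sigma_i - q\,\id$ from the Skein relation); you make that step explicit, which is a welcome elaboration but not a different argument.
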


\begin{proof}
    The module $S(\Dc)\otimes \Q(q)$ is irreducible as a $\Mod{D_n}$-representation if and only if it is irreducible as a representation
    of the Temperley-Lieb algebra $TL_n$. But it is just the representation of $TL_n$ associated to the Young diagram $(m+b,m)$,
    which is irreducible over $\Q(q)$ (see \cite[4]{westburyRepresentationTheoryTemperleyLieb1995} for example).
\end{proof}

\begin{corollary}\label{corollaryuniqueform}
    If $h'$ is a $\Mod{D_n}$-invariant sesquilinear form on $S(\Dc)$ then $h'=a h_{\Dc}$ for some $a\in \Q(q)$.
\end{corollary}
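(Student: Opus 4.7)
The plan is to deduce this corollary directly from Schur's lemma applied to the irreducible $\Mod{D_n}$-representation $S(\Dc) \otimes \Q(q)$ established in the preceding proposition. The idea is that invariant sesquilinear forms on an irreducible module form a one-dimensional space, so any two are proportional.

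More concretely, I would begin by translating sesquilinear forms into linear maps. To any $\Mod{D_n}$-invariant sesquilinear form $h'$ on $S(\Dc)$ I associate the $\Q(q)$-linear map
\begin{equation*}
    \phi_{h'}:S(\Dc)\lra \overline{S(\Dc)}^\vee,\qquad v\longmapsto \bigl(w\mapsto h'(v,w)\bigr),
\end{equation*}
and similarly $\phi_{h_\Dc}$. Invariance under $\Mod{D_n}$ means both maps are $\Mod{D_n}$-equivariant, where $\overline{S(\Dc)}^\vee$ is given its natural (sesquilinearly twisted) $\Mod{D_n}$-structure. The preceding proposition asserts that $S(\Dc)\otimes\Q(q)$ is irreducible; the same argument (or a general fact for absolutely irreducible Temperley--Lieb Specht modules associated to Young diagrams, see \cite{westburyRepresentationTheoryTemperleyLieb1995}) shows that $\overline{S(\Dc)}^\vee\otimes\Q(q)$ is likewise irreducible.

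Next I would use that $h_\Dc$ is nonzero, so $\phi_{h_\Dc}\neq 0$. By Schur's lemma applied to a nonzero equivariant map between two simple $\Q(q)$-modules, $\phi_{h_\Dc}$ is an isomorphism after tensoring with $\Q(q)$. Therefore the composition
\begin{equation*}
    \phi_{h_\Dc}^{-1}\circ \phi_{h'}\in \End_{\Mod{D_n}}\!\bigl(S(\Dc)\otimes\Q(q)\bigr)
\end{equation*}
is a $\Mod{D_n}$-equivariant endomorphism of the simple module. Since the module is absolutely irreducible, Schur's lemma gives $\End_{\Mod{D_n}}(S(\Dc)\otimes\Q(q))=\Q(q)$, so this composition equals $a\cdot\id$ for some scalar $a\in\Q(q)$. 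Unraveling the definitions, this translates back to $h'=a\,h_\Dc$, which is the desired conclusion.

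The only potentially delicate point is the identification $\End_{\Mod{D_n}}(S(\Dc)\otimes\Q(q))=\Q(q)$, i.e., absolute irreducibility of the Specht module. However this is standard for Temperley--Lieb representations over $\Q(q)$ with $q$ generic (the algebra is semisimple and split over $\Q(q)$), so no real obstacle arises: the corollary is essentially a one-line consequence of Schur once one sets up the correspondence between invariant sesquilinear forms and equivariant maps to the twisted dual.
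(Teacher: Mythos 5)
Your proof is correct and follows essentially the same route as the paper: convert both sesquilinear forms to $\Mod{D_n}$-equivariant maps $S(\Dc)\to\overline{S(\Dc)}^\vee$, observe that the one coming from $h_\Dc$ is an isomorphism over $\Q(q)$ (the paper states this as $h_\Dc$ being perfect; you derive it from Schur's lemma), compose with the inverse to get an equivariant endomorphism, and conclude it is a scalar. You are slightly more careful than the paper in flagging that the final step requires $\End_{\Mod{D_n}}(S(\Dc)\otimes\Q(q))=\Q(q)$, i.e.\ absolute irreducibility rather than mere irreducibility over $\Q(q)$; as you note, this holds because Temperley--Lieb Specht modules are absolutely irreducible at generic $q$, so both proofs are sound.
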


\begin{proof}
    Such a form $h'$ induces a $\Mod{D_n}$-invariant map $S(\Dc)\ra \overline{S(\Dc)}^\vee$. Now, $h_{\Dc}$ is perfect on $S(\Dc)\otimes \Q(q)$.
    Hence we have a $\Mod{D_n}$-invariant map $\overline{S(\Dc)}^\vee\otimes \Q(q)\ra S(\Dc)\otimes\Q(q)$. Composing the two maps,
    we get a $\Mod{D_n}$-invariant endomorphism of $S(\Dc)\otimes \Q(q)$. Now, as $S(\Dc)\otimes \Q(q)$ is irreducible, by Schur's lemma
    this endomorphism is $a\cdot \id$ for some $a\in \Q(q)$. Thus $h'=ah_{\Dc}$.
\end{proof}

\begin{proposition}\label{propositiongenericform}
    Consider the map:
    \begin{equation*}
        f:S(\Dc) \lra H_m(\Conf{m}{D_n};\Lq_m)
    \end{equation*}
    induced by $\Cn$-classes as in \Cref{theoremtwomodels}. Let $s_{\Dc}$ be the intersection form on $H_m(\Conf{m}{D_n};\Lq_m)$.
    Then we have:
    \begin{equation*}
        f^*s_{\Dc} = (q-q^{-1})^m h_{\Dc}.
    \end{equation*}
\end{proposition}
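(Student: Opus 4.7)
The plan is to use the uniqueness of $\Mod{D_n}$-invariant sesquilinear forms on the irreducible module $S(\Dc)\otimes\Q(q)$, provided by \Cref{corollaryuniqueform}, to reduce the claim to a single scalar computation.

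First I would verify that $f^*s_{\Dc}$ is a $\Mod{D_n}$-invariant sesquilinear form on $S(\Dc)$. Sesquilinearity is built into $s_{\Dc}$, which is obtained by composing the Hermitian structure on $\Lq_m$ with the Poincaré pairing $H_m(\Conf{m}{D_n};\Lq_m)\otimes H_m^{BM}(\Conf{m}{D_n};\Lq_m)\to\Q(q)$. Invariance holds because intersection pairings are topologically natural and because, by the last sentence of \Cref{theoremtwomodels}, the diagonal map $S(\Dc)\to H_m^{BM}(\Conf{m}{\oD_n,\partial_-\oD_n};\Lq_m)$ through which $f$ factors is $\Mod{D_n}$-equivariant; combined with the map $H_m(\Conf{m}{D_n};\Lq_m)\to H_m^{BM}(\Conf{m}{D_n};\Lq_m)$ from \Cref{propositionrelationCntoC}, this gives $\Mod{D_n}$-equivariance of the ingredients defining $f^*s_{\Dc}$.

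By \Cref{corollaryuniqueform} one concludes $f^*s_{\Dc}=a(q)\,h_{\Dc}$ for some $a(q)\in\Q(q)$, and what remains is to show $a(q)=(q-q^{-1})^m$. For this I would compute both forms on a single well-chosen basis vector, e.g.\ the nested sequence $w_0=\bm{((}\dotsb\bm{))}$, or, even more economically, reduce to the case $n=2$, $m=1$ via the gluing property of \Cref{theoremgluinghomological} (which already matches the Temperley--Lieb stacking used to build $h_{\Dc}$). The homological side is then pinned down as follows: by \Cref{propositionrelationCntoC}, the image of $\Cn(w_0)$ in $H_m^{BM}$ equals $(q-q^{-1})^m C(w_0)$, and by the expansion of $C$-classes in the $A$-basis worked out in the proof of \Cref{theoremtwomodels} (dually, of the noodle class $\Cn(w_0)$ in the $B$-basis), the intersection $s_{\Dc}(\Cn(w_0),\Cn(w_0))$ reduces, via the geometric pairing computation of \Cref{pairing} and the identification with the bilinear form of \Cref{bilinearformquantum}, to $(q-q^{-1})^m$ times the algebraic form on $V_1^{\otimes n}$ under the Temperley--Lieb embedding $\mathrm{Op}_{\Dc}$. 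On the Skein side, $h_{\Dc}(e_{w_0},e_{w_0})$ is the standard loop evaluation product of quantum integers (see \cite[3.11,3.12]{marcheIntroductionQuantumRepresentations2021}), which matches exactly the algebraic form applied to $\mathrm{Op}_{\Dc}(e_{w_0})\otimes\overline{\mathrm{Op}_{\Dc}(e_{w_0})}$.

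The main obstacle is bookkeeping the conjugation and orientation conventions: one must check that the Hermitian structure on $\Lq_m$, the orientation-reversing flip used to turn Poincaré--Lefschetz duality into a pairing $H_m\times H_m\to\Q(q)$ (\Cref{equationrelativedualitypairing}), and the antilinear involution $\overline{q}=q^{-1}$ underlying $h_{\Dc}$ are all consistently aligned. Once these signs are settled, a single intersection computation fixes $a(q)=(q-q^{-1})^m$ and the formula extends to all of $S(\Dc)$ by the uniqueness argument.
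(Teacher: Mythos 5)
Your strategy is exactly the paper's: invoke \Cref{corollaryuniqueform} to reduce to a scalar, then pin down that scalar by evaluating on one element. Where you diverge is in the choice of test vector and the route for computing the scalar, and both divergences make the calculation harder than it needs to be. The paper picks the \emph{separated} sequence $w = \bm{(}\bm{)}\bm{(}\bm{)}\dotsm\bm{(}\bm{)}\bm{\cdot}\dotsm\bm{\cdot}$, not the nested $\bm{(}\bm{(}\dotsm\bm{)}\bm{)}$. The Skein-side norm is the same in both cases (concentric unlinked circles still give $(-q-q^{-1})^m$), but the homological intersection is not: with separated cups, the fork support $C'(w,\um)$ and the noodle support $\Cn'(w,\um)$ are a disjoint union of $m$ independent local figures (one fork and one small noodle per pair of punctures), so the intersection number is just an $m$-th power of the $n=2,m=1$ count, which is read off a single small picture; with nested cups the forks and noodles pass through each other and the count no longer factorizes. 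Your alternative suggestion of reducing to $n=2$, $m=1$ via \Cref{theoremgluinghomological} is actually closest to what the paper does in spirit — the separated $w$ \emph{is} the gluing of $m$ copies of the one-cup case — so that would work, but the paper simply does the product computation directly without invoking gluing.

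Your proposed route through the $A$- and $B$-basis expansions and the bilinear form of \Cref{bilinearformquantum} would eventually give the right answer, but it is more indirect than what is needed, and it leans on an expansion the paper does not supply: you would need the $B$-basis expansion of $\Cn(w)$, which is not the dual of the $A$-basis expansion of $C(w)$ given in the proof of \Cref{theoremtwomodels} without an argument (the $C$- and $\Cn$-classes live in different homologies and their relation is controlled by \Cref{propositionrelationCntoC}, not by formal duality of bases). The paper sidesteps all of this: from \Cref{propositionrelationCntoC} it writes $s_{\Dc}(\Cn'(w,\um),\Cn'(w,\um)) = (1-q^{-2})^m\, s_{\Dc}(C'(w,\um),\Cn'(w,\um))$ and then counts the geometric intersections of $C'(w,\um)$ with the noodle $\Cn(w,\um)$ directly, obtaining $(-1-q^2)^m$. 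Combining factors gives $(q-q^{-1})^m(-q-q^{-1})^m$, matching $(q-q^{-1})^m h_{\Dc}(e_w,e_w)$. You have the right skeleton, but to make your argument complete you would need either to switch to the separated test vector and do the direct intersection count, or to actually derive the $B$-basis expansion of the noodle classes — which is extra work the paper's choice avoids.
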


\begin{proof}
    Let $p:H_m(\Conf{m}{D_n};\Lq_m)\ra \Hmid(\Conf{m}{D_n};\Lq_m)$ be the projection. Then $s_\Dc$ is defined on $\Hmid(\Conf{m}{D_n};\Lq_m)$.
    As by \Cref{theoremtwomodels}, $p\circ f$ is a map of $\Mod{D_n}$-representations, $f^*s_{\Dc}$ is a $\Mod{D_n}$-invariant sesquilinear form on $S(\Dc)$.
    Hence, by \Cref{corollaryuniqueform}, $f^*s_{\Dc}=ah_{\Dc}$ for some $a\in \Q(q)$. Let us now compute $a$ by computing both forms on
    well chosen elements. Let $w$ be the sequence:
    \begin{equation*}
        "\bm{(}","\bm{)}",\dotsc,"\bm{(}","\bm{)}","\bm{\cdot}",\dotsc,"\bm{\cdot}"
    \end{equation*}
    with $n$ characters, $2m$ of which are parentheses.
    Set:
    \begin{equation*}
        \um=(0,1,0,1,\dotsc,0,1,0,\dotsc,0)\in\cp{w}.
    \end{equation*}
    Then the second Skein relation yields $h_{\Dc}(e_w,e_w)=(-q-q^{-1})^m$. Now by \Cref{propositionrelationCntoC},
    $s_{\Dc}(\Cn'(w,\um),\Cn'(w,\um))=(1-q^{-2})^ms_{\Dc}(C'(w,\um),\Cn'(w,\um))$.
    The geometric intersection of $C'(w,\um)$ and $\Cn(w,\um)$ is represented in \Cref{intersection_computation},
    and is computed to be $(-1-q^2)^m$. We get:
    \begin{align*}
        s_{\Dc}(\Cn(w),\Cn(w)) &= s_{\Dc}(\Cn'(w,\um),\Cn'(w,\um))\\
                               &=(1-q^{-2})^m(-1-q^2)^m \\
                               &=(-q)^m(1-q^{-2})^m(-q-q^{-1})^m \\
                               &=(q-q^{-1})^mh_{\Dc}(e_w,e_w).
    \end{align*}
    Hence $a=(q-q^{-1})^m$.
\end{proof}

\begin{figure}
    \ctikzfig{intersection_computation}
    \caption{The geometric intersection of $C'(w,\um)$ and $\Cn(w,\um)$ as in the proof of \Cref{propositiongenericform} for $n=6$ and $m=2$.}
    \label[figure]{intersection_computation}
\end{figure}

\begin{corollary}\label{corollaryformsforTL}
    Over $\Q(\zeta_r)$, under the isomorphism:
    \begin{equation*}
        \Nusl(\Dc) \simeq \Hmid(\Conf{m}{D_n};\Line_{\Dc})
    \end{equation*}
    we have:
    \begin{equation*}
        s_{\Dc} = (q-q^{-1})^m h_{\Dc}.
    \end{equation*}
    where $h$ is the Hermitian structure of $\Nusl$ and $s_{\Dc}$ is the intersection form.
\end{corollary}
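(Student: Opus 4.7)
The plan is to obtain the corollary as a specialization of Proposition~\ref{propositiongenericform} at $q=\zeta_r^2$ (i.e.\ $q^{1/2}=-\zeta_r$). First I would observe that the map $f$ from $C$-classes and $\Cn$-classes is defined over $\Z[q^{\pm 1}]$: the coefficients appearing in the definition of $\Cn(w)$ are integer powers of $q$, and the formulas of Section~\ref{subsubUaction} that define the local system $\Lq_m$ make sense integrally. Base-changing along $q\mapsto \zeta_r^2$ yields a map
\begin{equation*}
    f_r:S_{\zeta_r}(\Dc)\lra H_m(\Conf{m}{D_n};\Line_{\Dc}),
\end{equation*}
and composing with the canonical surjection gives $\overline{f}_r: S_{\zeta_r}(\Dc)\to \Hmid(\Conf{m}{D_n};\Line_{\Dc})$.

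Next I would show that $\overline{f}_r$ descends to an isomorphism $\Nusl(\Dc)\simeq \Hmid(\Conf{m}{D_n};\Line_{\Dc})$ coinciding with the one of \Cref{theoremhomologicalconstruction}. The commutative diagram in \Cref{theoremtwomodels} identifies the diagonal map $S(\Dc)\to H_m^{BM}(\Conf{m}{\oD_n,\partial_-\oD_n};\Lq_m)$ with $(q-q^{-1})^m\mathrm{Op}_{\Dc}(e_0)$ after post-composing with the inclusion $i$. Specializing to $q=\zeta_r^2$ and post-composing with the surjection onto $\Hmid$, we obtain the composite
\begin{equation*}
    S_{\zeta_r}(\Dc)\xrightarrow{\mathrm{Op}_{\Dc}(e_0)} \hom_{U_{\zeta_r}}(V_b,V_1^{\otimes n})\otimes\Q(\zeta_r)\lra \Nusl(\Dc)\xrightarrow{\sim}\Hmid(\Conf{m}{D_n};\Line_{\Dc}),
\end{equation*}
where the first two arrows realize the skein-to-quantum-group isomorphism of \Cref{theoremquantumgroupconstruction} and the last one is the isomorphism of \Cref{theoremhomologicalconstruction}, because the latter is constructed using exactly the $B$-basis and $A$-basis whose Poincar\'e pairing controls $C$- and $\Cn$-classes. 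Therefore $\overline{f}_r$ factors as the quotient $S_{\zeta_r}(\Dc)\twoheadrightarrow \Nusl(\Dc)$ followed by the geometric isomorphism.

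Now specializing Proposition~\ref{propositiongenericform} at $q=\zeta_r^2$ gives $\overline{f}_r^*s_{\Dc}=(\zeta_r^2-\zeta_r^{-2})^m h_{\Dc}$ as sesquilinear forms on $S_{\zeta_r}(\Dc)$. Since $\overline{f}_r$ descends to the isomorphism $\Nusl(\Dc)\simeq \Hmid(\Conf{m}{D_n};\Line_{\Dc})$ and the quotient $\Nusl(\Dc)=S_{\zeta_r}(\Dc)/\ker h_{\Dc}$ equals the quotient by the kernel of either side of the identity, the equality descends to the desired one on $\Nusl(\Dc)$.

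The main obstacle is the second step: verifying that $\overline{f}_r$ realizes the same isomorphism $\Nusl(\Dc)\simeq \Hmid$ as the one constructed in \Cref{subsubmainproofs}. Concretely, this amounts to checking that, after specialization, the image of $e_w$ via $f_r$ can be expanded in Martel's $A$-basis (respectively $B$-basis) with the same coefficients as the image of $e_w$ under the composition through $\bhom{V_b}{V_1^{\otimes n}}$. The computation done in the proof of \Cref{theoremtwomodels} — showing that $C(w)$ expands in the $A$-basis via the Kauffman bracket formula for $\mathrm{Op}_{\Dc}(e_0)$ — gives this identification in the Borel-Moore model. The corresponding statement for the absolute cycles $\Cn(w)$ relative to the $B$-basis follows from \Cref{propositionrelationCntoC} together with the duality of $A$- and $B$-bases established via the intersection pairing. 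Once this compatibility is in place, the corollary is immediate.
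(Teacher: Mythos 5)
Your argument is correct and is essentially the paper's: specialize \Cref{propositiongenericform} at $q=\zeta_r^2$, use a commutative diagram relating $S_{\zeta_r}(\Dc)$, the generic-$q$ homology, the $\zeta_r$-homology, and $\Nusl(\Dc)$, and then descend along the surjection $\pi\colon S_{\zeta_r}(\Dc)\twoheadrightarrow\Nusl(\Dc)$. The compatibility you flag as ``the main obstacle'' is exactly the commutativity the paper's proof asserts without further comment, and your justification via \Cref{theoremtwomodels} and \Cref{propositionrelationCntoC} correctly supplies it; the only small imprecision is that the relevant coefficient ring for the skein side is $A_b$ (so that the Jones--Wenzl projector $f_b$ is defined), not $\Z[q^{\pm1}]$, which is harmless since $b\leq r-2<r$.
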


\begin{proof}
    Specializing coefficients by $A_b\ra\Z[\zeta_r]$, we get a commutative diagram:
    % https://q.uiver.app/#q=WzAsNSxbMCwwLCJTKFxcRGMpIl0sWzEsMCwiSF9tKFxcQ29uZnttfXtEX259O1xcTHFfbSkiXSxbMCwyLCJcXE51c2woXFxEYykiXSxbMSwxLCJIX20oXFxDb25me219e0Rfbn07XFxMaW5lX20pIl0sWzEsMiwiXFxIbWlkKFxcQ29uZnttfXtEX259O1xcTGluZV97XFxEY30pIl0sWzEsMywiXFxtYXRocm17ZXZ9X3tcXHpldGFfcn0iXSxbMyw0LCJwIl0sWzIsNCwiIiwyLHsic3R5bGUiOnsidGFpbCI6eyJuYW1lIjoiYXJyb3doZWFkIn19fV0sWzAsMiwiXFxwaSIsMix7InN0eWxlIjp7ImhlYWQiOnsibmFtZSI6ImVwaSJ9fX1dLFswLDEsImYiLDAseyJzdHlsZSI6eyJ0YWlsIjp7Im5hbWUiOiJob29rIiwic2lkZSI6InRvcCJ9fX1dXQ==
    \[\begin{tikzcd}
        {S(\Dc)} & {H_m(\Conf{m}{D_n};\Lq_m)} \\
        & {H_m(\Conf{m}{D_n};\Line_m)} \\
        {\Nusl(\Dc)} & {\Hmid(\Conf{m}{D_n};\Line_{\Dc})}
        \arrow["{\mathrm{ev}_{\zeta_r}}", from=1-2, to=2-2]
        \arrow["p", from=2-2, to=3-2]
        \arrow[tail reversed, from=3-1, to=3-2]
        \arrow["\pi"', two heads, from=1-1, to=3-1]
        \arrow["f", hook, from=1-1, to=1-2]
    \end{tikzcd}\]
    where ${\mathrm{ev}_{\zeta_r}}$ and $p$ are compatible with intersection forms.
    Now, as $\pi$ is surjective, the relation $f^*s_{\Dc} = (q-q^{-1})^m h_{\Dc}$ in $S(\Dc)$
    induces $s_{\Dc} = (q-q^{-1})^m h_{\Dc}$ in $\Nusl(\Dc)$.
\end{proof}

To conclude the proof, we need to reduce the equality between the forms to the case $a_1=\dotsb=a_n=1$
considered in \Cref{corollaryformsforTL}.
Choose $\Dc=(D_n,b,a_1,\dotsc,a_n)$ any colored disk. Then denote, for $i=1,\dotsc,n$,
by $\Dc_i$ the disk $(D_{a_i},a_i,1,\dotsc,1)$. Set $\Dc'=(D_{a_1+\dotsb +a_n},b,1\dotsc,1)$. 
Then, applying \Cref{theoremgluinghomological} repeatedly, we get a commutative diagram:
% https://q.uiver.app/#q=WzAsNCxbMCwwLCJcXE51c2woXFxEYylcXG90aW1lc1xcYmlnb3RpbWVzX2lcXE51c2woXFxEY19pKSJdLFsxLDAsIlxcTnVzbChcXERjJykiXSxbMCwxLCJcXEhtaWQoXFxDb25me219e0Rfbn07XFxMaW5lX3tcXERjfSlcXG90aW1lcyBcXGJpZ290aW1lc19pXFxIbWlkKFxcQ29uZnswfXtEX3thX2l9fTtcXExpbmVfe1xcRGNfaX0pIl0sWzEsMSwiXFxIbWlkKFxcQ29uZnttfXtEX3thXzErXFxkb3RzYithX259fTtcXExpbmVfe1xcRGMnfSkiXSxbMCwxLCIiLDAseyJzdHlsZSI6eyJ0YWlsIjp7Im5hbWUiOiJob29rIiwic2lkZSI6InRvcCJ9fX1dLFswLDIsIiIsMix7InN0eWxlIjp7InRhaWwiOnsibmFtZSI6ImFycm93aGVhZCJ9fX1dLFsxLDMsIiIsMCx7InN0eWxlIjp7InRhaWwiOnsibmFtZSI6ImFycm93aGVhZCJ9fX1dLFsyLDMsIiIsMix7InN0eWxlIjp7InRhaWwiOnsibmFtZSI6Imhvb2siLCJzaWRlIjoidG9wIn19fV1d
\[\begin{tikzcd}
	{\Nusl(\Dc)\otimes\bigotimes_i\Nusl(\Dc_i)} & {\Nusl(\Dc')} \\
	{\Hmid(\Conf{m}{D_n};\Line_{\Dc})\otimes \bigotimes_i\Hmid(\Conf{0}{D_{a_i}};\Line_{\Dc_i})} & {\Hmid(\Conf{m}{D_{a_1+\dotsb+a_n}};\Line_{\Dc'})}
	\arrow[hook, from=1-1, to=1-2]
	\arrow[tail reversed, from=1-1, to=2-1]
	\arrow[tail reversed, from=1-2, to=2-2]
	\arrow[hook, from=2-1, to=2-2]
\end{tikzcd}\]
where two-sided arrows are isomorphisms. Moreover, the top inclusion is compatible with the Hermitian structure $h$,
and the bottom inclusion is compatible with intersection forms.

Now, by \Cref{corollaryformsforTL}, the equality of forms is valid
for $\Dc'$ and the $\Dc_i$. Actually, $\Nusl(\Dc_i)\simeq \Q(\zeta_r)$ canonically with trivial form $(a,b)\mapsto a\overline{b}$.
Hence, $h_{\Dc}$ is the restriction of $h_{\Dc'}$ and $s_{\Dc}$ is the restriction of $s_{\Dc'}$.
As $s_{\Dc'} = (q-q^{-1})^m h_{\Dc'}$, by restriction we have $s_{\Dc} = (q-q^{-1})^m h_{\Dc}$. This proves \Cref{theoremintersectionform}.

%-------------------------------------------------------------------------------------------------------------------------

\section{Integral variations of Hodge structures.}\label{sectionproofHodge}

\subsection{Remarks on stacks.}

Although we use stacks, we will not need Hodge theory of stacks and will always be able to reduce to the case of varieties as all
stacks considered will be of the following kind.

\begin{definition}
    An stack over $\C$ is a global finite quotient if it has a finite Galois cover by a variety.
\end{definition}

Note that a global finite quotient may have non trivial generic inertia. If $\mathcal{S}=[V/G]$,
then we always have $\mathrm{H}^*(\mathcal{S};\Q)=\mathrm{H}^*(V;\Q)^G$.
There is a subtelty in defining cohomology of $\mathcal{S}$ with coefficients in $\Z$
but we shall not be concerned by it as we will only be interested in the torsion free part and we have
$\mathrm{H}^*(\mathcal{S};\Z)/\mathrm{torsion}=(\mathrm{H}^*(V;\Z)/\mathrm{torsion})^G$. This can be deduced from the spectral sequence of
the fibration $V\ra\mathcal{S}\ra\mathrm{B}G$.

\begin{theorem}\label{globalquotient}
    Let $r\geq 1$, $m,n\geq 0$ with $m+n\geq 3$. Then $\Mrpb{m}{n}{r}$ is a global finite quotient.
\end{theorem}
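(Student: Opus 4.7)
The plan is to exhibit $\Mrpb{m}{n}{r}$ as a quotient $[V/G]$ with $V$ a smooth projective variety and $G$ a finite group, building up the stacky structure in two stages: first handling the twisting of $\Mrp{m+n}{r}$ at the boundary, then trivializing the $\mu_r^n$-gerbe structure of $\Mrpb{m}{n}{r} \to \Mrp{m+n}{r}$.

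For the first stage, I would start from the fact that $\M{m+n}$ is a smooth projective variety for $m+n\geq 3$ (Knudsen--Keel), whose boundary is a simple normal crossings divisor with irreducible components $D_T$ indexed by splittings of $\{1,\ldots,m+n\}$. Using Chiodo's description of the stack of $r$-twisted curves, in genus $0$ the natural map $\Mrp{m+n}{r} \to \M{m+n}$ realizes $\Mrp{m+n}{r}$ as the iterated root stack of $\M{m+n}$, obtained by taking the $r$-th root along each $D_T$. A root stack of a smooth projective variety along a Cartier divisor given by $(L,s)$ is locally of the form $[\mathrm{Spec}(R[T]/(T^r-s))/\mu_r]$ and globally a quotient of a smooth projective variety by $\mu_r$ via the cyclic cover construction; iterating over the finitely many boundary divisors yields $\Mrp{m+n}{r} \simeq [W/\mu_r^k]$ for some smooth projective variety $W$, where $k$ is the number of boundary divisors, the individual $\mu_r$-actions commuting because the $D_T$ meet transversally.

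For the second stage, each gerbe $\sigma_i \to \Mrp{m+n}{r}$ is banded by $\mu_r$ and classified, via the Bockstein $H^1(X,\mathbb{G}_m) \to H^2(X,\mu_r)$, by the class of a specific line bundle $\mathcal{L}_i$ on $\Mrp{m+n}{r}$ (the cotangent line at the $i$-th marked section, whose $r$-th root gerbe recovers the orbifold structure on the universal curve). Such a gerbe becomes trivial after pulling back to the total space of the $r$-th root of $\mathcal{L}_i$, which is a smooth variety finite over $W$. Carrying out this construction simultaneously for $i=1,\ldots,n$, starting from $W$, produces a smooth projective variety $V$ carrying a $\mu_r^{k+n}$-action whose quotient stack is $\Mrpb{m}{n}{r}$, proving the claim.

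The main obstacle lies in the first stage: carefully verifying that $\Mrp{m+n}{r}$ in genus $0$ is precisely the iterated root stack of $\M{m+n}$ along its boundary, which requires unpacking Chiodo's balanced twisting condition at nodes and matching it with the local structure of root stacks. A more conceptual alternative that sidesteps this computation is to invoke the general theorem (see, e.g., the work of Edidin--Hassett--Kresch--Vistoli on the resolution property) that any smooth proper tame Deligne--Mumford stack with quasi-projective coarse moduli space is a global finite quotient; since $\Mrpb{m}{n}{r}$ is a tame smooth proper DM stack with quasi-projective coarse moduli $\M{m+n}$, this gives the theorem directly without requiring the explicit root-stack description.
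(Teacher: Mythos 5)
Both routes you propose have genuine gaps. The ``conceptual alternative'' at the end is simply false as a general statement: a smooth proper tame Deligne--Mumford stack over $\C$ with projective coarse moduli space need not be a global finite quotient. The obstruction lives in the gerbe part. Concretely, the $r$-th root gerbe $\sqrt[r]{\mathcal{O}(1)/\mathbb{P}^2}$ is a smooth proper tame DM stack with coarse moduli $\mathbb{P}^2$, but if it were $[V/G]$ with $V$ a variety and $G$ finite, the generic inertia $\mu_r$ would have to act trivially on $V$ (since it fixes every point of a reduced scheme), so $V \to \mathbb{P}^2$ would be a $(G/\mu_r)$-torsor over the simply connected variety $\mathbb{P}^2$ and hence trivial, forcing the gerbe class in $H^2(\mathbb{P}^2,\mu_r)\cong\Z/r$ to vanish --- contradiction. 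So no theorem of Edidin--Hassett--Kresch--Vistoli or Kresch gives what you want; those results produce presentations as $[Y/\mathrm{GL}_n]$, not quotients by finite groups, and the passage to finite groups is exactly where the content of \Cref{globalquotient} sits.

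The explicit construction has the same gap, just hidden. When you claim a root stack $\sqrt[r]{(X,D)}$ is ``globally a quotient of a smooth projective variety by $\mu_r$ via the cyclic cover construction,'' you are tacitly assuming $\mathcal{O}_X(D)$ has an $r$-th root in $\mathrm{Pic}(X)$; the cyclic cover does not exist otherwise. For $X=\M{m+n}$ the boundary classes $D_T$ typically have no $r$-th root (already for $\M{0,4}\cong\mathbb{P}^1$ each boundary point has class $\mathcal{O}(1)$), and since $\M{m+n}$ is simply connected you cannot first pass to a finite cover of $\M{m+n}$ to make them appear. The same issue recurs in your second stage, where you want $r$-th roots of the $\psi$-classes $\mathcal{L}_i$. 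What actually saves the day is that one must find a single finite \emph{étale} cover of the stack $\Mrpb{m}{n}{r}$ itself --- equivalently a ramified cover of $\M{m+n}$ whose ramification along every boundary stratum has order exactly $r$ --- and verify it has trivial inertia. This is precisely what the paper does, and the work is group-theoretic rather than sheaf-theoretic: after reducing to $n\le 1$ by observing that the extra gerbes $\sigma_2,\dotsc,\sigma_n$ become trivial over $\sigma_1$, the proof computes the abelianization of $\pi_1(\Mrpb{m}{1}{r})=PB_m/\langle T_\gamma^r\rangle$, which is the finite group $(\Z/r)^{\binom{m}{2}}$, and then shows by a direct multicurve argument that the abelianization map is injective on all local isotropy subgroups. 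The resulting finite abelian cover therefore has no inertia and is the desired variety. So the two approaches differ in substance: yours tries to build the cover divisor-by-divisor, which fails for lack of individual roots, whereas the paper produces the right global cover in one stroke from the fundamental group, with the injectivity-on-inertia check replacing all the root-existence questions.
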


\begin{proof}
    If $n>0$, we see that $\Mrpb{m}{n}{r}$ is a trivial gerbe over $\Mrpb{m+n-1}{1}{r}$. Hence we may assume that $n$ is $0$ or $1$.
    Let us treat the case $n=1$ first. In that case $\pi_1(\Mrpb{m}{1}{r})$ is the quotient of the pure braid group on $m$ strands
    by $r$-th powers of Dehn twists.

    Its abelianization is isomorphic to $(\Z/r\Z)^{\binom{m}{2}}$ with coordinates generated by Dehn twists $T_{i,j}$
    around pairs of points $1\leq i<j\leq m$ (see \cite[9.3]{farbPrimerMappingClass2011}).
    Now, a non-trivial element of a local isotropy group corresponds
    to a product $T_{\gamma_1}^{a_1}\dotsb T_{\gamma_k}^{a_k}$ of Dehn twists around disjoint non-isotopic {\scc}
    $\gamma_1,\dotsc,\gamma_k$ on the disk, with $a_1,\dotsc,a_k\neq 0\;(\mathrm{mod}\;r)$.
    Let $\gamma_l$ be the outermost curve. Then it contains a point $j$ that is contained in no other $\gamma_{l'}$.
    Let $i$ be another point inside $\gamma_l$. Then $T_{\gamma_1}^{a_1}\dotsb T_{\gamma_k}^{a_k}$ has coefficient $a_l\neq 0$
    on the coordinate corresponding to $\{i,j\}$.

    Hence the abelianization map $\pi_1(\Mrpb{m}{1}{r})\ra (\Z/r\Z)^{\binom{m}{2}}$ is injective on local isotropy groups. The corresponding
    finite Galois cover has no inertia and is thus a variety.

    In the case $n=0$, $\pi_1(\Mrp{m}{r})$ is the quotient of the pure braid group on $m-1$ strands by its center and the abelianization
    is the quotient of $(\Z/r\Z)^{\binom{m-1}{2}}$ by the vector $(1,\dotsc,1)$. We can repeat the argument above,
    noticing that we may find $3$ distinct points $i$, $i'$ and $j$ with $i$ and $j$ inside $\gamma_l$, $j$
    inside no other $\gamma_{l'}$ and $i'$ outside $\gamma_l$. Then $T_{\gamma_1}^{a_1}\dotsb T_{\gamma_k}^{a_k}$
    has coefficient $0$ on coordinate $\{j,i'\}$ and $a_l\neq 0$ on coordinate $\{j,i\}$ in $(\Z/r\Z)^{\binom{m-1}{2}}$.
    Hence $T_{\gamma_1}^{a_1}\dotsb T_{\gamma_k}^{a_k}$ is non-zero in the quotient by $(1,\dotsc,1)$.
    Again, the abelianization describes a Galois cover by a variety.
\end{proof}

Note that generalizations of this fact to higher genus are proved using $\mathfrak{sl}_2$ quantum representations by Eyssidieux and Funar,
see \cite[Thm 1.1]{eyssidieuxOrbifoldKahlerGroups2021}.
In the following subsections, we will mostly omit the reduction to the case of varieties when it is relatively easy.

\subsection{Proof of existence of integral variations of Hodge structures.}

Here we prove \Cref{corollaryHodge} by explaining how the geometric construction shows that the quantum representations have geometric origin\footnote{See \Cref{definitiongeometricorigin}.} and yields the integral variation of Hodge structures.

Let $b,a_1,\dotsc,a_n\in\{0,1,\dotsc,r-2\}$ such that $m=\frac{a_1+\dotsb+a_n-b}{2}$ is a non-negative integer.
As $\Line=\NuabO(b;\ua,(r-2)^{\bullet m})\otimes \epsilon$ has finite monodromy with image in $\mu_{2r}$. The monodromy action on $\mu_{2r}$ corresponds to an abelian covering:
\begin{equation*}
    f:\widetilde{\mathcal{M}}\lra\sMrpo{m+n}{r}/S_m.
\end{equation*}
Then $\Line=(f_*\underline{\Z})^\epsilon\subset f_*\underline{\Z}$ is the isotropy subspace corresponding to the signature representation of $\mu_{2r}$,
with its natural $\Z[\zeta_r]=\Z[\mu_{2r}]^\epsilon$-structure.
Now let $p$ be the forgetful map:
\begin{equation*}
    p:\sMrpo{m+n}{r}/S_m\lra \sMrpo{n}{r}.
\end{equation*}
Then, as $f$ is a finite covering:
\begin{equation*}
    R(p\circ f)_*=Rp_*\circ Rf_*=Rp_*\circ f_*\text{ and }R(p\circ f)_!=Rp_!\circ Rf_!=Rp_!\circ f_*.
\end{equation*}
Hence \Cref{theoremgeometricconstruction} says that $\Nusl(b;\ua)$ is a sub local system of
the weight $0$ part of $R^m(p\circ f)_*\underline{\Q}$. Consider the maps:
\begin{equation*}
    \overline{f}:\overline{\widetilde{\mathcal{M}}}\lra\Mrpo{m+n}{r}/S_m\text{ and }\overline{p}:\Mrpo{m+n}{r}/S_m\lra \Mrpo{n}{r}
\end{equation*}
where $\overline{f}$ is the covering associated to $\Line$.
Then $\overline{p}\circ \overline{f}$ is proper and $\left(R^m(\overline{p}\circ \overline{f})_*\underline{\Z}\right)_{\mid{\sMrpo{n}{r}}}$
surjects onto the weight $0$ part of $R^m(p\circ f)_*\underline{\Z}$. Hence, on $\sMrpo{n}{r}$,
$\Nusl(b;\ua)$ is a sub local system of a quotient of
$R^m(\overline{p}\circ \overline{f})_*\underline{\Q}$, and is of geometric origin.
The non-framed version of $\Nusl(b;\ua)$ is also of geometric origin by \Cref{propprojectiveequivalence}.

By general Hodge theory, local systems of geometric origin
support integral variations of Hodge structures (see \cite{griffithsPeriodsIntegralsAlgebraic1970} for example).
However, for the convenience of the reader, we give an argument here in our case.
Now, as $p\circ f$ is smooth and a topological fibration, $R^m(p\circ f)_*\underline{\Z}$ supports an integral variation of
mixed Hodge structures. Similarly for $R^m(p\circ f)_!\underline{\Z}$. Moreover the map:
\begin{equation*}
    R^m(p\circ f)_!\underline{\Z}\lra R^m(p\circ f)_*\underline{\Z}
\end{equation*}
is a map of Hodge structures and factors through the degree $0$ part of the weight filtration of $R^m(p\circ f)_*\underline{\Z}$.
Hence its image supports an integral variation of (pure) Hodge structures.
But over $\Q$, the $\epsilon$-isotropic part of this image for the action of $\mu_{2r}$ is isomorphic to $\Nusl(b;\ua)$
by \Cref{theoremgeometricconstruction}.
So $\Nusl(b;\ua)$ supports an integral variation of Hodge structures on $\sMrpo{n}{r}$.
Now, by a result of Griffiths (see \cite[(9.5)]{griffithsPeriodsIntegralsAlgebraic1970}), the structure extends to $\Mrpo{n}{r}$.

%-------------------------------------------------------------------------------------------------------------------------

\subsection{Proof of \Cref{gluingishodge}}\label{subproofgluinghodge}

In this subsection, we prove that gluing isomorphisms are isomorphisms of rational variations of Hodge structures.

Let $\Mb_m$ be the pre-image of $\justM=\sMrpo{n}{r}$ in $\Mrpo{n+m}{r}/S_m$
and $\Mpb_m$ be the pre-image of $\Mp$ in $\Mrpo{n+m}{r}/S_m$.
We shall use the notations $\pb:\Mb_m\ra \justM$ and $\pb':\Mpb_m\ra \Mp$
for the projections.

\begin{proposition}\label{pbsmooth}
  The map $\pb:\Mb_m\ra \justM$ is smooth.
\end{proposition}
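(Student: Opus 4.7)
The plan is to show that $\pb:\Mb_m \ra \justM$ is flat with smooth fibers, whence smooth. I would proceed in the following order.

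First I would reduce to a situation with no stacky structure on the source or target. The target $\justM = \sMrpo{n}{r}$ is a $(\mu_r)^{n+1}$-gerbe over $\mathcal{M}_{0,n}$ (the smooth locus of the coarse moduli of $n$-pointed genus $0$ stable curves with the additional ``outer'' section), and similarly for $\Mb_m$ upstairs after accounting for the $S_m$-action. Since smoothness of a representable morphism of Deligne–Mumford stacks can be tested étale-locally on the target, and since gerbes for finite groups and tame quotient stacks are étale-locally trivial in the required sense, it suffices to prove smoothness of the underlying forgetful morphism $\overline{\mathcal{M}}_{0,n+m+1}|_{\mathcal{M}_{0,n+1}} \ra \mathcal{M}_{0,n+1}$ between coarse moduli spaces (the $S_m$-quotient is by a finite group acting on a smooth space, which does not affect smoothness of a map in the stack sense).

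Next I would argue flatness. The forgetful map $\overline{\mathcal{M}}_{0,N+1} \ra \overline{\mathcal{M}}_{0,N}$ is the universal curve and is a flat proper morphism of relative dimension $1$ (this is a classical fact; see e.g.\ Knudsen). Composing $m$ such universal curve maps yields a flat morphism $\overline{\mathcal{M}}_{0,n+m+1} \ra \overline{\mathcal{M}}_{0,n+1}$, and flatness is preserved under base change to $\mathcal{M}_{0,n+1}$.

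Then I would verify that the fibers over the smooth target are smooth. Over a point $[C,x_0,\dotsc,x_n]\in \mathcal{M}_{0,n+1}$, the fiber of $\pb$ (before the $S_m$-quotient) is the moduli of stable curves obtained by adding $m$ labelled marked points to $C$, which is isomorphic to the Fulton–MacPherson compactification of the ordered configuration space of $m$ points on $C\setminus\{x_0,\dotsc,x_n\}$. Fulton–MacPherson compactifications of smooth varieties are smooth, so these fibers are smooth; taking the further $S_m$-quotient preserves smoothness in the stack sense. Combining flatness and smoothness of fibers, $\pb$ is smooth.

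The main obstacle is matching the fibers precisely with the Fulton–MacPherson picture when the curve is allowed to be $r$-twisted and the extra points are symmetrized. I expect this to be a bookkeeping issue rather than a substantive one, since the $r$-twisting only affects the gerbe structure at the original $n+1$ marks (not at the new $m$ marks, which are smooth sections), and the $S_m$ symmetrization is a finite quotient. Alternatively, one could bypass the fiber description entirely via deformation theory: at any point of $\Mb_m$ over $\justM$, the relative tangent space classifies deformations of the $m$ added sections with the underlying curve and the first $n+1$ sections fixed, and these deformations are unobstructed because the added points are smooth and distinct on the (possibly nodal) curve obtained before stabilization — this gives a direct Jacobian criterion argument for smoothness.
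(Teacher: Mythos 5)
Your approach --- flatness of $\pb$ plus smoothness of its geometric fibers --- is a genuinely different route from the paper's. The paper works pointwise and by induction on $m$: at a nodal $[C_m]$ lying over a smooth $[C]$, a combinatorial argument on the dual tree of $C_m$ produces a forgettable marking $b_i$ on a vertex with at most one edge, so that the one-step forgetful map $f_{b_i}$ (a universal curve) is smooth at $[C_m]$ (the corresponding point of the lower curve is not a node), and the inductive hypothesis closes the argument. Your route instead invokes the global criterion ``flat morphism of finite presentation with regular fibers is smooth''; the flatness half is entirely fine, since each $\M{N+1}\to\M{N}$ is a universal curve and flatness is preserved by composition and base change.

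The smooth-fiber half is where the substance lies, and the Fulton--MacPherson identification you invoke is not correct as stated. The space $\Conf{m}{C\setminus\{x_0,\dotsc,x_n\}}$ is not compact, so its Fulton--MacPherson compactification is not proper and cannot be the proper fiber of $\pb$; on the other hand the Fulton--MacPherson space $C[m]$ of the proper curve $C$ does not allow bubbling at the fixed marks $x_i$, whereas the fiber of $\pb$ does. For $m=2$ and $n=2$, $C[2]=\mathbb{P}^1\times\mathbb{P}^1$ (the diagonal in $C^2$ is already a divisor, so nothing is blown up), while the fiber of $\pb$ is $\M{5}$, a del~Pezzo surface of degree~$5$ --- already not isomorphic. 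The fiber \emph{is} smooth, but that is precisely what requires proof, and once you try to give one (say by induction through the iterated universal curve, checking that the fiber of $f_{b_i}$ does not acquire a node) you are essentially forced into the paper's dual-graph argument. Your alternative deformation-theoretic paragraph has the right instinct, but the clause ``the added points are smooth and distinct on the curve obtained before stabilization'' is exactly where the combinatorics hides: for a bad choice of $b_i$ (a vertex with two edges), the image of $b_i$ after stabilization \emph{is} a node, and $f_{b_i}$ is not smooth there; the paper's selection of $b_i$ on a leaf of the dual tree with a forgettable half-edge is what rules this out.
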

\begin{proof}
  Let $[C]$ be a smooth curve in $\Mrpo{n}{r}$ and $[C_m]$ be a nodal curve in $\Mrpo{n+m}{r}$ over $[C]$.
  Let us show that $\pi_m:\Mrpo{n+m}{r}\ra\Mrpo{n}{r}$ is smooth at $[C_m]$ by induction on $m$.

  Let $\Gamma$ be the graph of $C_m$, where components represent vertices, nodes edges and markings half-edges (see, for example \cite[0.2]{pandharipandeRelationsOverlineMathcal2015}).
  Note that since we are in genus $0$, this graph is a tree.

  Let $a_1,\dotsc,a_n$ be the half-edges corresponding to the markings of $[C]$ and let $b_1,\dotsc,b_m$ be the half-edges that are forgotten by $\pi_m$.
  We claim that there is a vertex $v$ in the graph that is connected to at most one edge $e$, at least a half-edge $b_i$, and maybe other half-edges.
  Indeed, if this was not the case we could find in $\Gamma$ a vertex $v$ contracted by $\pi_m$ such that $v$ is connected to exactly one edge and only half-edges in the $a_i$.
  This would contradict the assumption that $[C]$ is smooth.

  Denote by $f_{b_i}:\Mrpo{n+m}{r}\ra\Mrpo{n+m-1}{r}$ the map forgetting $b_i$. Then $[C_m]$ is a smooth point of $f_{b_i}$.
  But $\pi_m$ factorizes through $f_{b_i}$. We conclude by induction that $\pi_m$ is smooth at $[C_m]$.

  Now $u:\Mrpo{n+m}{r}\ra \Mrpo{n+m}{r}/S_m$ is an étale cover, so $u([C_m])$ is a smooth point of $\Mrpo{n+m}{r}/S_m\ra\Mrpo{n}{r}$. Hence the smoothness of $\pb$.
\end{proof}

To prove \Cref{gluingishodge}, we need only prove that the map:
\begin{equation}
  R^mp'_!\Line'_{\mid \B}\lra \sheafHmid'_{\mid \B} \tag{\ref{equationgeometricmap} revisited}
\end{equation}
is strict for the Hodge filtration.
To show this, we shall decompose it into maps that are known to be maps of (mixed)
Hodge structures. But first we need to introduce some more theory of variations of Hodge
structures.

\begin{theorem}[monodromy, {\cite{landmanPicardLefschetzTransformationAlgebraic1973}}]
  Let $X$ be a projective orbifold and $D\subset X$ a divisor with normal crossing.
  Let $(E,\nabla)$ be a geometric integral variation of Hodge structure on $X\setminus D$,
  then the eigenvalues of the monodromy of $\nabla$ around the components of $D$
  are roots of unity.
\end{theorem}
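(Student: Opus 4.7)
The plan is to reduce to the local situation around a smooth point of a single component of $D$ and then invoke the classical quasi-unipotency of monodromy for geometric local systems. Since the statement is local on $D$, I first choose a smooth point $x$ of an irreducible component $D_0\subset D$, pick a small analytic transversal disk $\Delta\subset X$ meeting $D_0$ only at $x$, and observe that the monodromy of $\nabla$ around $D_0$ is the monodromy of the restriction $(E,\nabla)_{\mid \Delta^*}$, where $\Delta^*=\Delta\setminus\{x\}$. (Passing to a suitable étale cover of $X$ kills the orbifold inertia at $x$, cf.\ \Cref{globalquotient}, so we may work on a smooth neighborhood.) Hence the problem is: for a geometric integral variation of Hodge structure on $\Delta^*$, the monodromy has eigenvalues that are roots of unity.

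Next I unpack \Cref{definitiongeometricorigin}: shrinking $\Delta^*$ if needed, there is a dense open $U\subset\Delta^*$, a proper map $p:Y\to U$, and an integer $m$ such that $E_{\mid U}$ is a flat subquotient of $R^mp_*\underline{\Z}$. Since monodromy of a flat subquotient is a subquotient of the ambient monodromy, and a subquotient of a quasi-unipotent operator is quasi-unipotent (the Jordan form descends), it suffices to prove the result for $R^mp_*\underline{\Z}$ itself. I would then spread $p$ over the full $\Delta^*$ (possibly after passing to a finite cover of $\Delta^*$ that compactifies $U\subset\Delta^*$), so that without loss of generality $p:Y\to\Delta^*$ is a smooth proper family.

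The core step is the semi-stable reduction theorem of Mumford-Kempf-Knudsen-Saint-Donat: after a finite base change $\Delta^*\to\Delta^*$ of the form $z\mapsto z^k$ and a birational modification of the total space, $p$ extends to a semi-stable degeneration $\tilde p:\tilde Y\to\Delta$, i.e.\ $\tilde Y$ is smooth and $\tilde p^{-1}(0)$ is a reduced normal crossing divisor. By the Picard-Lefschetz / Clemens-Schmid theorem, the local monodromy of $R^m\tilde p_*\underline{\Q}$ around $0$ is then unipotent. Pulling back along $z\mapsto z^k$ multiplies each monodromy eigenvalue by its $k$-th power, so the original monodromy eigenvalues are $k$-th roots of unity. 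Combined with the subquotient step above, this shows that the monodromy of $\nabla$ around $D_0$ has eigenvalues in $\mu_\infty$.

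The main obstacle is not conceptual but technical: making the passage ``geometric origin on a dense Zariski open $U\subset X\setminus D$'' interact cleanly with the local analytic picture on $\Delta^*$, and then applying semi-stable reduction in a way that is compatible with the original integral structure and with subquotients. This is well handled in the literature (Landman's original paper, Grothendieck's $\ell$-adic monodromy theorem, and Schmid's nilpotent orbit theorem all give essentially the same conclusion), so the key is simply to cite these results correctly; the only genuine verification needed is that quasi-unipotency is preserved under taking flat subquotients, which is immediate from linear algebra on the monodromy operator.
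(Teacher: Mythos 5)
The paper does not prove this statement; it is cited as a known classical result from Landman's 1973 paper and used as a black box. So there is no paper proof to compare against, and what you have written amounts to a self-contained proof sketch of a cited external theorem. Your sketch is mathematically sound and follows the standard modern route: localize to a transversal punctured disk, reduce to the case $E = R^m p_*\underline{\Z}$ via the subquotient observation, invoke semi-stable reduction to make the degeneration semi-stable after a ramified base change $z\mapsto z^k$, conclude unipotency via Clemens-Schmid, and unwind the base change to get $k$-th roots of unity as eigenvalues. This is a legitimate and well-known argument, though I would note that Landman's original approach used embedded resolution of the degenerate fiber and a direct inductive analysis rather than semi-stable reduction (which only became available slightly later through Mumford et al.), so you are not reproducing the cited reference's proof so much as a cleaner later synthesis.

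Two small imprecisions worth tightening. First, your appeal to \Cref{globalquotient} is not quite right: that result is a statement about the specific moduli stacks $\Mrpb{m}{n}{r}$ being global finite quotients, not about arbitrary projective orbifolds. What you actually need is only the local statement that an orbifold is, by definition, étale-locally a finite quotient of a smooth space, which lets you compute the monodromy on a disk in a smooth chart; no global quotient structure is required, and citing \Cref{globalquotient} here is misleading (though harmless for the paper's eventual application). Second, the passage from a dense Zariski open $U\subset X\setminus D$ to the transversal disk $\Delta^*$ deserves one extra sentence: you should choose the base point $x\in D_0$ generically so that the transversal disk lies entirely inside $U$; otherwise ``spreading $p$ over the full $\Delta^*$'' or ``compactifying $U\subset\Delta^*$'' is not obviously meaningful (note $\Delta^*$ is not compact and $U\cap\Delta^*$ need not be cofinite in it a priori). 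With those two clarifications, the argument is complete and correct.
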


Actually, this theorem is still true when $(E,\nabla)$ is not assumed to be geometric.
But we shall not need this generalization.

Thanks to the monodromy theorem, the local system $R^m\pb_*\Line$
on $\justM$ has unipotent monodromy around boundary divisors after pullback
by $\Mrpo{n}{r'}\ra\Mrpo{n}{r}$ for some $r|r'$.
Let us pullback our situation by this map. 
The morphism of \Cref{equationgeometricmap} is strict for the Hodge filtration if and only if its pullback is.
\textbf{From now until the end of this section we will replace $\justM=\Mrpo{n}{r}$ by $\justM=\Mrpo{n}{r'}$. Similarly for $\justM'$.
We also replace $\justM_m$ and $\justM'_m$ by their pullback.}

\begin{definition}[Deligne canonical extension, {\cite[pp. 91-5]{deligneEquationsDifferentiellesPoints1970}}]
  Let $(E,\nabla)$ be an integral variation of Hodge structures on $U\setminus D$ where
  $U$ is a global finite quotient of a smooth quasi-projective variety and $D$ is a smooth divisor. Assume $\nabla$
  has unipotent monodromy around every connected component of $D$.

  Then $(E,\nabla)$ has a unique holomorphic extension $E^\ext$ to $U$ such that $\nabla$ has logarithmic
  singularities at $D$ with nilpotent residues. Moreover, the Hodge filtration of $E$ extends uniquely to a
  holomorphic filtration of $E^\ext$.
\end{definition}

\begin{proposition}[Clemens-Schmid]\label{propositionschmid}
  Let $p:Y\ra U$ be a proper map of smooth analytic Deligne-Mumford stacks such that étale-locally on $U$, $Y$ is a global finite quotient of projective varieties.
  Assume $D\subset U$ is a smooth divisor such
  that $p_{sm}:p^{-1}(U\setminus D)\ra U\setminus D$ is smooth and $p$ is locally topologically trivial over $D$.
  Let $\Line$ be a local system on $Y$ with finite monodromy and $m\geq 0$. Then the degeneration map:
  \begin{equation*}
    \nu: \left(R^mp_*\Line\right)_{\mid D}\lra \left(R^mp_{sm*}\Line\right)^\ext_{\mid D}
  \end{equation*}
  is strict for the Hodge filtrations.
\end{proposition}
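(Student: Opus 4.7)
The plan is to reduce the statement to the classical Clemens--Schmid theorem for one-parameter degenerations of smooth projective varieties, which asserts that the specialization map from the cohomology of the nodal fiber to the limit mixed Hodge structure is a morphism of mixed Hodge structures, hence strict for the Hodge filtration.

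First I would reduce to constant coefficients. Since $\Line$ has finite monodromy, there is a finite \'etale cover $f:\widetilde{Y}\ra Y$ such that $f^*\Line$ is a constant $\Q$-local system. Then $R^mp_*\Line$ sits as a direct summand of $R^m(p\circ f)_*\underline{\Q}$ (using the finite Galois action), and this decomposition is compatible with the Hodge filtration on both $\left(R^mp_*\Line\right)_{\mid D}$ and $\left(R^mp_{sm*}\Line\right)^\ext_{\mid D}$, the latter because the canonical Deligne extension commutes with direct summands cut out by finite automorphism groups. Hence it suffices to prove strictness for constant $\Q$-coefficients.

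Next I would localize. Strictness for the Hodge filtration is a fiberwise statement, so I may pick a point $s\in D$ and restrict. By the hypothesis that $Y$ is \'etale-locally on $U$ a global finite quotient of projective varieties, and by the reduction step of \Cref{globalquotient}-type arguments, I can pass to an \'etale neighborhood of $s$ where $Y$ is of the form $[V/G]$ with $V$ a projective variety and $G$ a finite group. Cohomology on $Y$ is then $G$-invariants of the cohomology of $V$, and since this operation is exact and preserves mixed Hodge structures, it again suffices to check the analogous strictness statement for $V$ in place of $Y$. Taking a small analytic disk $\Delta\subset U$ centered at $s$ and transverse to $D$, using the local topological triviality of $p$ along $D$ to identify the relevant restriction data, I reduce to the following classical situation: a proper family $\pi:X\ra\Delta$ of varieties, smooth over $\Delta^*=\Delta\setminus\{0\}$, with $X_0$ the nodal fiber.

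The degeneration map $\nu$ becomes, under these identifications, the specialization morphism
\begin{equation*}
\mathrm{sp}:\;H^m(X_0;\Q)\lra H^m_{\lim}(X_t;\Q),
\end{equation*}
where the right-hand side is the limit mixed Hodge structure attached to $\pi$, identified with the fiber at $0$ of the Deligne canonical extension of $R^m\pi_{sm*}\underline{\Q}$ with its limit Hodge filtration in the sense of Schmid. By the classical theory of Clemens--Schmid and Steenbrink (see for instance \cite{griffithsPeriodsIntegralsAlgebraic1970} and the standard one-parameter degeneration literature), $\mathrm{sp}$ is a morphism of mixed Hodge structures, hence strict with respect to the Hodge filtration. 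Reassembling the $G$-invariants and the $\epsilon$-type direct summands recovers strictness for $\nu$ itself.

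The main obstacle, I expect, will be checking that the degeneration map denoted $\nu$ in the statement genuinely matches the classical specialization morphism once one passes through the reductions (covers, $G$-invariants, disk transversals). The rest is essentially bookkeeping: the Deligne canonical extension is functorial under finite pushforward and compatible with direct summands cut out by finite group actions, and the Hodge filtration on the special fiber is inherited from that of a resolution or from the underlying simplicial scheme, so strictness is preserved at each reduction step.
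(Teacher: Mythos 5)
Your proposal follows essentially the same route as the paper's own proof: reduce étale-locally to a one-parameter disk, use the global-finite-quotient hypothesis to replace $Y$ by a projective variety $V$ with a finite group action $G$, invoke Steenbrink's limit Hodge structure theorem for the family $V\ra\Delta$, and then pass back to $Y$ by taking $G$-invariants, which is exact and preserves Hodge filtrations. The one place where you are more explicit than the paper is the preliminary reduction to constant coefficients via a finite étale cover $\widetilde{Y}\ra Y$ trivializing $\Line$; the paper leaves this step implicit when it cites Steenbrink for $Y'\ra U$, and your added care there is welcome bookkeeping rather than a divergence of method.
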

\begin{proof}[Reduction to {\cite{steenbrinkLimitsHodgeStructures1976}}.]
    Note that this result can be proved étale locally. Hence we can reduce to the case where $U$ is a polydisk $\Disk^k$ with $D=\{0\}\times\Disk^{k-1}$.
    Now the result can be proved after pullback to a disk $\Disk$ with $D=\{0\}$, by functoriality of the Deligne canonical extension.
    Hence we assume that $U$ is such a disk.
    By the assumption $Y$ is $Y'/G$ for $G$ a finite group and $Y'$ a projective variety. Let $\nu'$ be the map associated $Y'\ra U$.
    By \cite{steenbrinkLimitsHodgeStructures1976}, $\nu'$ is strict for the Hodge filtrations. Now $\nu'$ is $G$-equivariant and $G$ preserves the Hodge filtrations.
    But $\nu$ is obtained from $\nu'$ by taking $G$-invariants, which is exact. Hence $\nu$ is strict for the Hodge filtrations.
\end{proof}

The map $\nu$ is actually a map of mixed Hodge structures on the fibers, but we do not need this here. 

Let us describe the map $\nu$. Let $V\ra U$ be an étale map from a polydisk $V\simeq \Disk^k$
with $D\cap U\simeq \{0\}\times\Disk^{k-1}$.  Then there exists a retraction:
\begin{equation*}
  r: Y_{\mid V}\lra Y_{\mid V\cap D}
\end{equation*}
over the map $\Disk^k\ra\Disk^k$, $(x,v)\mapsto (0,v)$. Actually, $r$ is a homotopy equivalence.
Fix $v\in\Disk^{k-1}$ and $\phi\in H^m(Y_{(0,v)};\Line)$. For each $x\in\Disk$, we have a map:
\begin{equation*}
  H^m(Y_{(0,v)};\Line)\xrightarrow{r^*} H^m(Y_{\Disk\times\{v\}};\Line) \lra H^m(Y_{(x,v)};\Line).
\end{equation*}
Let $\phi_x$ be the image of $\phi$ by this map. Then the family $(\phi_x)_{x\in\Disk\setminus \{0\}}$
extends to a section of $\left(R^mp_{sm*}\Line\right)^\ext$ on $\Disk\times\{v\}$.
In particular, this describes a map $\left(R^mp_*\Line\right)_{(0,v)}\ra \left(R^mp_{sm*}\Line\right)^\ext_{(0,v)}$.

We can now prove that the map of \Cref{equationgeometricmap} is strict for the Hodge filtrations.
It decomposes as in the following commutative diagram.
% https://q.uiver.app/#q=WzAsNSxbMCwwLCJSXm1wJ18hXFxMaW5lJ197XFxtaWQgXFxCfSJdLFswLDIsIlxcc2hlYWZIbWlkJ197XFxtaWQgXFxCfSJdLFsxLDAsIlJebVxccGJfKidcXExpbmUnX3tcXG1pZCBcXEJ9Il0sWzEsMSwiXFxsZWZ0KFJebVxccGJfKlxcTGluZVxccmlnaHQpXlxcZXh0X3tcXG1pZCBcXEJ9Il0sWzEsMiwiXFxsZWZ0KFdfbVJebXBfKlxcTGluZVxccmlnaHQpXlxcZXh0X3tcXG1pZCBcXEJ9Il0sWzAsMl0sWzIsMywiXFxudSJdLFszLDQsIiIsMCx7InN0eWxlIjp7ImhlYWQiOnsibmFtZSI6ImVwaSJ9fX1dLFsxLDQsIiIsMix7InN0eWxlIjp7InRhaWwiOnsibmFtZSI6Imhvb2siLCJzaWRlIjoidG9wIn19fV0sWzAsMSwiKCopIiwyLHsic3R5bGUiOnsiaGVhZCI6eyJuYW1lIjoiZXBpIn19fV1d
\[\begin{tikzcd}
	{R^mp'_!\Line'_{\mid \B}} & {R^m\pb_*'\Line'_{\mid \B}} \\
	& {\left(R^m\pb_*\Line\right)^\ext_{\mid \B}} \\
	{\sheafHmid'_{\mid \B}} & {\left(W_mR^mp_*\Line\right)^\ext_{\mid \B}}
	\arrow[from=1-1, to=1-2]
	\arrow["\nu", from=1-2, to=2-2]
	\arrow[two heads, from=2-2, to=3-2]
	\arrow[hook, from=3-1, to=3-2]
	\arrow["{(*)}"', two heads, from=1-1, to=3-1]
\end{tikzcd}\]

To apply \Cref{propositionschmid} to the map $\nu$ of the diagram, we need to check that étale-locally on $\justM'$,
$\justM'_m$ is the quotient of a projective variety by a finite group.
So we may work before pullback by $\Mrpo{n}{r'}\ra\Mrpo{n}{r}$.
But $\justM'$ is a gerbe over the locus in $\Mrp{n+1}{r}$
of curves with at most one node, and gerbes are étale-locally trivial. Hence we may replace $\justM'$ by this Zariski open in $\Mrp{n+1}{r}$
and $\justM'_m$ by a Zariski open in $\Mrp{n+m+1}{r}$. By \Cref{globalquotient},
$\Mrp{n+m+1}{r}$ is a global finite quotient. The corresponding Galois cover is projective as it is finite over the coarse
space $\M{n+m+1} $of $\Mrp{n+m+1}{r}$.
So we can apply \Cref{propositionschmid} to $\nu$.

All the arrows except $(*)$ are already known to be strict for the Hodge filtrations.
Hence $(*)$ is also strict for the Hodge filtrations.

%-------------------------------------------------------------------------------------------------------------------------

\subsection{Proof of \Cref{intersectionformpolarizes}}

In this subsection, we prove that the TQFT hermitian form polarizes the integral variations of Hodge structures on the
$\SO$ quantum representations.

\begin{proposition}[{\cite[II.7]{voisinHodgeTheoryComplex2002}}]\label{primitivepolarization}
Let $p:Y\ra X$ a proper smooth map of smooth global finite quotients of relative dimension $m$.
Let $[\omega]$ be a Kähler class in $H^2(Y,\Z)$ and $(\Line,h)$
an integral unitary local system of finite monodromy on $Y$ with unitary form $h$.
Let $s$ be the sesquilinear intersection form on $R^mp_*\Line$ induced by $h$ (see \Cref{subsubhermitianstructure}).
Denote by $PR^mp_*\Line$ the kernel of $\alpha\mapsto \alpha\wedge [\omega]$ in $R^mp_*\Line$.

Then $PR^mp_*\Line$ is a sub-IVHS of $R^mp_*\Line$ and $Q=(-1)^{\frac{m(m-1)}{2}}s$
is a polarization on $PR^mp_*\Line$.
\end{proposition}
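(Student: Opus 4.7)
The plan is to reduce to the classical Hodge--Riemann bilinear relations via a finite étale cover that trivializes the local system. Since $\Line$ has finite monodromy, there exists a finite étale Galois cover $f:\widetilde Y\to Y$ with group $G$ such that, after extending scalars to a suitable cyclotomic extension of $\OK$, $f^*\Line$ decomposes into trivial summands and $\Line$ is recovered as the $\chi$-isotypic component of $f_*\underline{\OK}$ for some character $\chi:G\to \OK^\times$. The composition $\widetilde p = p\circ f$ remains proper smooth and étale-locally a global finite quotient of projective varieties, so $R^m\widetilde p_*\underline{\Q}$ supports a polarized integral variation of Hodge structures by the classical theory. The sub-VHS property of $PR^mp_*\Line$ then follows formally from the fact that cup product with the Hodge class $[\omega]$ is a morphism of VHS of Hodge type $(1,1)$, so its kernel inherits a pure Hodge sub-structure in each fiber.

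For the polarization statement, I would apply the classical relative Hodge--Riemann theorem to $\widetilde p$ with the fiberwise Kähler class $f^*[\omega]$ (étaleness preserves positivity): on each fiber the primitive cohomology $PH^m(\widetilde Y_x,\C)$ is polarized by the sesquilinear pairing $\widetilde Q(\alpha,\beta) = (-1)^{m(m-1)/2}\int_{\widetilde Y_x}\alpha\wedge \overline\beta$. The finite group $G$ acts by isometries of $\widetilde Q$ because it preserves $f^*\omega$ and acts via orientation-preserving biholomorphisms on the fibers; therefore the decomposition of $PH^m(\widetilde Y_x,\C)$ into $G$-isotypic components is $\widetilde Q$-orthogonal and compatible with the Hodge decomposition. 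Taking the $\chi$-isotypic component yields a polarization on the corresponding sub-VHS, since positivity on each $H^{p,q}$ of $PR^m\widetilde p_*\C$ restricts to positivity on every sub-Hodge structure, and flatness of $\widetilde Q$ in families passes to flatness of its restriction.

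The main obstacle, and the only genuinely substantive step, is to verify carefully that the sesquilinear form $s$ on $R^mp_*\Line$ induced by the unitary form $h$ matches, under the identification $R^mp_*\Line \simeq (R^m\widetilde p_*\underline{\C})_\chi$, the restriction of $\widetilde Q$ (up to the overall constant $(-1)^{m(m-1)/2}$ and the degree of $f$). Because $\chi$ is unitary, one has a canonical isomorphism $\overline{\Line}\simeq \Line^\vee$ induced by $h$, under which the expression $s(\alpha,\beta) = \int h(\alpha\wedge\overline\beta)$ corresponds, after pullback to $\widetilde Y$ and renormalization by $\deg f$, exactly to the cup product integral restricted to the $(\chi,\overline\chi)$ isotypic piece. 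Once this matching is established, the factor $(-1)^{m(m-1)/2}$ transfers verbatim from $\widetilde Q$ to $Q$ and the polarization axioms are verified.
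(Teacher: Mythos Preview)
The paper does not supply its own proof of this proposition: it is stated with a citation to Voisin's book \cite[II.7]{voisinHodgeTheoryComplex2002} and then used as a black box in the proof of \Cref{intersectionformpolarizes}. Voisin's text treats the classical Hodge--Riemann bilinear relations for a compact K\"ahler manifold with constant coefficients, so the extension to the relative setting with a unitary local system of finite monodromy on global finite quotients is left implicit.

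Your proposal is precisely the standard way to make that extension explicit, and it is correct. Passing to a finite \'etale Galois cover trivializing $\Line$, applying the classical Hodge--Riemann relations fiberwise (the global-finite-quotient hypothesis guarantees the fibers are, after a further finite cover, smooth projective varieties, so Hodge theory applies), and then taking $G$-isotypic components recovers both the sub-VHS statement and the polarization. The matching of $s$ with the restriction of the cup-product pairing on the cover is routine once one unwinds the identification $\Line\simeq (f_*\underline{\OK})_\chi$, and your observation that $G$ acts by isometries preserving the Hodge decomposition is what makes the descent go through. In short: the paper offers no argument to compare against, and your sketch fills the gap in the expected manner.
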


The sheaf $PR^mp_*\Line$ is called the primitive part of $R^mp_*\Line$. 

The form $(-1)^{\frac{m(m-1)}{2}}s$ is defined over $\Z[\zeta_r]$.
It remains to prove that the Hodge decomposition is orthogonal and that $i^{p-q}(-1)^{\frac{m(m-1)}{2}}s$
is positive on $H^{p,q}$. For this, we will relate the form and the decomposition to
the primitive part of the cohomology of a fibration in smooth proper Kähler manifolds and apply
\Cref{primitivepolarization}.

We will use the notations $\justM=\sMrpo{n}{r}$, $\justM_m=\sMrpo{m+n}{r}/S_m$.
Let $\Mb_m$ be the pre-image of $\justM$ in $\Mrpo{m+n}{r}/S_m$
and $\pb:\Mb_m\ra\justM$ the projection. Notice that $\pb$ is proper, but also smooth by \Cref{pbsmooth}.
Let $\Line=\Nuab(b;a_1,\dotsc,a_n,-2,\dotsc,-2))\otimes\epsilon$. Then we have a factorization:
\begin{equation*}
  R^mp_!\Line \xrightarrow{\alpha} R^m\pb_*\Line \xrightarrow{\beta} R^mp_*\Line
\end{equation*}
with $\Nu(b;a_1,\dotsc,a_n)\simeq \myim{\beta\circ\alpha}$.

Let $\sline$ be the intersection form on $R^m\pb_*\Line$.
Now $\Mrpo{m+n}{r}/S_m$ is a projective orbifold.
Hence it has an integral Kähler class $[\omega]\in H^2\left(\Mrpo{m+n}{r}/S_m;\Z\right)/\mathrm{torsion}$.
Let us show that the image of $\alpha$ is in the primitive part $PR^m\pb_*\Line$.

The coarse space of $\Mrpo{m+n}{r}$ is $\M{m+n+1}$. Now, $H^2(\M{m+n+1};\Q)$
is generated by the classes of the normal crossing boundary divisor $\partial\M{m+n+1}=\M{m+n+1}\setminus \sM{m+n+1}$
(see \cite[6 p.550]{keelIntersectionTheoryModuli1992}).
As $H^2(\M{m+n+1};\Q)\simeq H^2\left(\Mrpo{m+n}{r};\Q\right)$ and $H^2\left(\Mrpo{m+n}{r};\Q\right)^{S_m}\simeq H^2\left(\Mrpo{m+n}{r}/S_m;\Q\right)$,
this is also the case for $H^2\left(\Mrpo{m+n}{r}/S_m;\Q\right)$. Hence $[\omega]$ is rationally a sum of boundary classes.
Now, for each $x\in \justM$, $\pb^{-1}(x)\setminus p^{-1}(x)$ is included in the boundary strata of $\Mrpo{m+n}{r}/S_m$.
Hence, for $u$ in $H_c^*(p^{-1}(x);\Line)$, $u\wedge [\omega]$ is $0$ in $H^*(\pb^{-1}(x);\Line)$.
So the image of $\alpha$ is in the primitive part $PR^m\pb_*\Line$.

Now $\alpha$ is compatible with $s$ and $\sline$. We can thus deduce the result from \Cref{primitivepolarization}.

%-------------------------------------------------------------------------------------------------------------------------

\bibliographystyle{plain}
\bibliography{biblio}

\end{document}